\numberwithin{equation}{section}
\newcommand{\vp}{\varphi}
\newcommand{\dr}{\partial}
\DeclareMathOperator{\Jac}{Jac}
\DeclareMathOperator{\diver}{div}
\DeclareMathOperator{\sgn}{sgn}
\DeclareMathOperator{\dist}{dist}
\DeclareMathOperator{\supp}{supp}
\DeclareMathOperator{\diam}{diam}
\DeclareMathOperator{\diag}{diag}
\DeclareMathOperator{\Tr}{Tr}
\DeclareMathOperator{\Angle}{Angle}
\def\div{\mathop{\operatorname{div}}}
\DeclareMathOperator{\loc}{loc}
\newcommand{\1}{{\mathds 1}}
\newcommand{\ds}{\displaystyle}
\newcommand{\ms}{\medskip}
\newcommand{\R}{\mathbb R}
\newcommand{\N}{\mathbb N}
\renewcommand{\H}{\mathcal H}
\newcommand{\bp}{\noindent {\em Proof: }}
\newcommand{\ep}{\hfill $\square$ \medskip}
\newcommand{\sm}{\setminus}
\newcommand{\wt}{\widetilde}
\newcommand{\D}{\mathbb D}
\newcommand{\bb}{\mathfrak b}
\renewcommand{\aa}{\mathfrak a}
\newcommand{\A}{\mathcal A}
\newcommand{\C}{\mathcal C}
\newcommand{\G}{\mathcal G}
\newcommand{\B}{\mathcal B}
\newcommand{\cS}{\mathcal S}
\newcommand{\Rn}{\mathbb R^n}
\newcommand{\norm}[1]{\left\Vert#1\right\Vert}
\newcommand{\abs}[1]{\left\vert#1\right\vert}
\newcommand{\br}[1]{\left(#1\right)}
\newcommand{\set}[1]{\left\{#1\right\}}
\newcommand{\om}{\Omega}
\newcommand{\pom}{\partial\Omega}
\newcommand{\dint}{\int\!\!\!\!\!\int}
\def\Yint#1{\mathchoice
	{\YYint\displaystyle\textstyle{#1}}%
	{\YYint\textstyle\scriptstyle{#1}}%
	{\YYint\scriptstyle\scriptscriptstyle{#1}}%
	{\YYint\scriptscriptstyle\scriptscriptstyle{#1}}%
	\!\dint}
\def\YYint#1#2#3{{\setbox0=\hbox{$#1{#2#3}{\iint}$}
		\vcenter{\hbox{$#2#3$}}\kern-.51\wd0}}
\def\longdash{\mkern-1.5mu{-}\mkern-7.5mu{-}} 
\def\fiint{\Yint\longdash}
\theoremstyle{plain}
\newtheorem{theorem}[equation]{Theorem}
\newtheorem{lemma}[equation]{Lemma}
\newtheorem{corollary}[equation]{Corollary}
\newtheorem{proposition}[equation]{Proposition}
\newtheorem{definition}[equation]{Definition}
\theoremstyle{definition}
\theoremstyle{remark}
\newtheorem{remark}[equation]{Remark}
\begin{document}

\title{Green functions and smooth distances}

\author[Feneuil]{Joseph Feneuil}
\address{Joseph Feneuil. Mathematical Sciences Institute, Australian National University, Acton, ACT, Australia}
\email{joseph.feneuil@anu.edu.au}

\author[Li]{Linhan Li}
\address{Linhan Li. School of Mathematics, The University of Edinburgh, Edinburgh, UK }
\email{linhan.li@ed.ac.uk}

\author[Mayboroda]{Svitlana Mayboroda}
\address{Svitlana Mayboroda. School of Mathematics, University of Minnesota, Minneapolis, MN 55455, USA}
\email{svitlana@math.umn.edu}

\thanks{S. Mayboroda was partly supported by the NSF RAISE-TAQS grant DMS-1839077 and the Simons foundation grant 563916, SM. J. Feneuil was partially supported by the Simons fundation grant 601941, GD and the ERC grant ERC-2019-StG 853404 VAREG.}

\maketitle

\begin{abstract} 
In the present paper, we show that for an optimal class of elliptic operators with non-smooth coefficients on a 1-sided Chord-Arc domain, the boundary of the domain is uniformly rectifiable if and only if the Green function $G$ behaves like a distance function to the boundary, in the sense that $\abs{\frac{\nabla G(X)}{G(X)}-\frac{\nabla D(X)}{D(X)}}^2D(X) dX$ is the density of a Carleson measure, where $D$ is a regularized distance adapted to the boundary of the domain. The main ingredient in our proof is a corona
decomposition that is compatible with Tolsa’s $\alpha$-number of uniformly rectifiable sets. We believe that the method can be applied to many other problems at the intersection of PDE and geometric measure theory, and in particular, we are able to derive a generalization of the classical F. and M. Riesz theorem to the same class of elliptic operators as above.
\end{abstract}

\ms\noindent{\bf Key words:} Uniform rectifiability, Chord-Arc domains, elliptic operators with non-smooth coefficients, Green functions, regularized distance, Dahlberg-Kenig-Pipher condition.

\ms\noindent

\tableofcontents

\section{Introduction}
\label{S1}

\subsection{Motivation and predecessors}
We consider elliptic operators $L$ on a domain $\om\subset\R^n$. In recent years a finale of an enormous body of work brought a characterization of uniform rectifiability in terms of absolute continuity of harmonic measure (see \cite{AHMMT}, a sample of earlier articles: \cite{DJ}, \cite{HM1}, \cite{HM2}, \cite{HMU}, \cite{AHM3TV}, \cite{Azzam}, see also the related article \cite{NTV} which proves the David-Semmes conjecture in codimension 1 and is a key step for the converse established in \cite{AHM3TV}). It also became clear that this characterization has its restrictions, for it fails in the domains with lower dimensional boundaries and it requires, in all directions, restrictions on the coefficients -- see a discussion in \cite{DM2}. In these contexts, the Green function emerged as a more suitable object to define the relevant PDE properties. Already the work in \cite{A} and \cite{DM2} suggested a possibility of a Green function characterization of regularity of sets. However, factually, \cite{DM2} provided more than satisfactory ``free boundary" results and only weak ``direct" results (no norm control). The papers  \cite{DLM1}, and \cite{DLM2}, and \cite{DFMGinfty} aimed at the desired quantitative version of such ``direct" results but were restricted to either Lipschitz graphs or sets with lower dimensional boundaries.
The primary goal of the article is to show that if $L$ is reasonably well-behaved, and $\om$ provides some access to its boundary, then the boundary of $\om$ is reasonably regular (uniformly rectifiable) if and only if the Green function behaves like a distance to the boundary.

Let us discuss some predecessors of this work, including the aforementioned ones, in more details. In \cite{A} Theorem VI, it is shown that the affine deviation of the Green function for the Laplace operator is related to the linear deviation of the boundary of the domain.
In \cite{DM2}, G.~David and the third author of the paper show that for a class of  elliptic operators, the Green function can be well approximated by distances to planes, or by a smooth distance to $\pom$, if and only $\pom$ is uniformly rectifiable. The bounds on the Green function given in \cite{DM2} are weak, more precisely, they carry no norm control of the sets where the Green function is close to a distance. Later, stronger and quantitative estimates on the comparison of the Green function and some distance functions are obtained in \cite{DFMGinfty}, \cite{DLM1}, and \cite{DLM2}. 
In \cite{DLM1}, a quantitative comparison between the Green function and the distance function to the boundary is given for an optimal class of elliptic operators on the upper half-space. Moreover, the proximity of the Green function and the distance function is shown to be precisely controlled by the oscillation of the coefficients of the operator. Next, \cite{DLM2} extends the result of \cite{DLM1} to $\R^n\setminus\R^d$ with $d$ strictly less than $n$. But the methods employed in \cite{DLM1} and \cite{DLM2} seem to the authors difficult to be adapted to domains whose boundaries are rougher than Lipschitz graphs. 
In \cite{DFMGinfty}, a bound for the difference of the Green function and smooth distances is obtained for sets with uniformly rectifiable boundaries, but its proof, which might appear surprising, is radically dependent on the fact that the boundary is of codimension strictly larger than 1. Also, the class of operators considered in \cite{DFMGinfty} is not optimal. So the instant motivation for the present work is to obtain a strong estimate on the Green function for an optimal class of operators, similar to the one considered in \cite{DLM1}, in a ``classical" setting: on domains with uniformly rectifiable boundaries of codimension 1. 
The method employed here is completely different from \cite{DFMGinfty} or \cite{DLM1}, and has the potential to be applicable to many other problems at the intersection of PDE and geometric measure theory.

We should also mention that in \cite{HMT} and \cite{DLM1}, some Carleson measure estimates on the {\it second derivatives} of the Green function have been obtained, and that in \cite{A}, the second derivative of the Green function for the Laplace operator is linked to the regularity (uniform rectifiability) of the boundary of the domain. However, the result of \cite{A} is only for the Laplace operator, the class of elliptic operators considered in \cite{HMT} is more general but still
not optimal, and the estimates obtained in \cite{DLM1} are restricted to Lipschitz graph domains.
We think that our estimates might shed some light on proving an estimate on second derivatives of the Green function for an optimal class of elliptic operators on chord-arc domains. 

For the ``free boundary" direction, since the weak type property of the Green function considered in \cite{DM2} already implies uniform rectifiablity, one expects the strong estimate on the Green function that we consider in this paper to automatically give uniform rectifiability. However, linking the two conditions directly seems to be more subtle than it might appear, and we actually need to obtain uniform rectifiablity from scratch. We point out that our result also holds for bounded domains, and thus dispensing with the unboundedness assumption on the domain in \cite{DM2}.

All in all, this paper is a culmination of all of the aforementioned efforts, featuring a true equivalence (characterization) of geometry through PDEs, and an optimal class of operators.

\subsection{Statements of the main results.}
We take a domain $\Omega \subset \R^n$ whose boundary $\partial \Omega$ is $(n-1)$-Ahlfors regular (AR for shortness), which means that there exists a measure $\sigma$ supported on $\partial \Omega$ such that 
\begin{equation} \label{defADR}
C_\sigma^{-1}r^{n-1} \leq \sigma(B(x,r)) \leq C_\sigma r^{n-1} \qquad \text{ for } x\in \partial \Omega, \, r \in (0, \diam \Omega).
\end{equation}
The domain $\om$ can be bounded or unbounded. In the unbounded case, $\diam\om=\infty$.
In the rest of the paper, $\sigma$ will always be an Ahlfors regular measure on $\partial \Omega$. It is known that the Ahlfors regular measures are the ones that can be written as $d\sigma = w d\mathcal H^{n-1}|_{\partial \Omega}$, where $\mathcal H^{n-1}|_{\partial \Omega}$ is the $n-1$ dimensional Hausdorff measure on $\partial \Omega$, and $w$ is a weight in $L^\infty(\partial \Omega, \mathcal H^{n-1}|_{\partial \Omega})$ such that $C^{-1} \leq w \leq C$ for some constant $C>0$.

We shall impose more assumptions on our domain. For both the ``free boundary" and the ``direct" results, we will assume that $\Omega$ is a 1-sided Chord Arc Domain (see Definition \ref{defi:CAD}). For the ``direct" result, we will rely on the assumption that $\partial \Omega$ is uniformly rectifiable (see \cite{DS1,DS2} and Section \ref{SUR} below), and thus ultimately assuming that $\Omega$ is a (2-sided) Chord Arc Domain. The optimality of the assumptions on $\om$ is discussed in more details in the end of this subsection. Since the dimension $n-1$ plays an important role in our paper, and in order to lighten the notion, we shall write $d$ for $n-1$.

\ms

Without any more delay, let us introduce the regularized distance to a set $\partial \Omega$. The Euclidean distance to the boundary is denoted by
\begin{equation}
\delta(X):= \dist(X,\partial \Omega).
\end{equation} 
For $\beta >0$, we define 
\begin{equation} \label{IdefD}
D_\beta(X):= \left(\int_{\dr \Omega} |X-y|^{-d-\beta} d\sigma(y) \right)^{-1/\beta} \qquad \text{ for } X\in\Omega.
\end{equation}
The fact that the set $\dr \Omega$ is $d$-Ahlfors regular is enough to have that 
\begin{equation} \label{equivD}
C^{-1} \delta(X) \leq D_\beta(X) \leq C\delta(X) \quad \text{ for } X\in\om,
\end{equation}
where the constant depends on $C_\sigma$, $\beta$, and $n$. The proof is easy and can be found after Lemma 5.1 in \cite{DFM3}.

\medskip

The notion of Carleson measure will be central all over our paper. We say that a quantity $f$ defined on $\Omega$ satisfies the Carleson measure condition - or $f\in CM_\Omega(M)$ for short - if there exists $M$ such that for any $x\in \dr \Omega$ and $r<\diam(\Omega)$,
\begin{equation} \label{defCarleson}
\iint_{B(x,r) \cap \Omega} |f(X)|^2 \delta(X)^{-1} dX \leq M r^{n-1}.
\end{equation}

\ms

Our operators are in the form $L=-\div \A \nabla$ and defined on $\Omega$. We shall always assume that they are uniformly elliptic and bounded, that is, there exists $C_{\A}>1$ such that
\begin{equation} \label{defelliptic}
\A(X)\xi \cdot \xi \geq C_{\A}^{-1} |\xi|^2 \qquad \text{ for } X\in \Omega, \, \xi \in \R^n,
\end{equation}
and
\begin{equation} \label{defbounded}
|\A(X)\xi \cdot \zeta| \leq C_{\A} |\xi||\zeta| \qquad \text{ for } X\in \Omega, \, \xi,\zeta \in \R^n.
\end{equation}
A weak solution to $Lu=0$ in $E \subset \Omega$ lies in $W^{1,2}_{loc}(E)$ and is such that
\begin{equation} \label{defsol}
\int_\Omega \A\nabla u \cdot \nabla \varphi \, dX \qquad \text{ for } \varphi \in C^\infty_0(E). 
\end{equation}
If $\Omega$ has sufficient access to the boundary (and $\partial \Omega$ is $(n-1)$-Ahlfors regular), then for any ball $B$ centered on $\partial \Omega$ and any function $u$ in $W^{1,2}(B\cap \Omega)$, we have notion of trace for $u$ on $B\cap \partial \Omega$. It is well known that if $u \in W^{1,2}(B\cap \Omega)$ is such that $\Tr(u) = 0$ on $B\cap \partial \Omega$, and if $u$ is a weak solution to $Lu=0$ on $B\cap \Omega$ with $L$ satisfying \eqref{defelliptic} and \eqref{defbounded}, then $u$ is continuous $B\cap \Omega$ and can be continuously extended by 0 on $B\cap \partial \Omega$. 

In addition to \eqref{defelliptic} and \eqref{defbounded}, we assume that our operators satisfy a weaker variant of the Dahlberg-Kenig-Pipher condition. The Dahlberg-Kenig-Pipher (DKP) condition was introduced by Dahlberg and shown to be sufficient for the $L^p$ solvability of the Dirichlet problem for some $p>1$ by Kenig and Pipher (\cite{KePiDrift}). It was also shown to be essentially necessary in \cite{CFK,MM}. The DKP condition says that the coefficient matrix $\A$ satisfies 
\begin{equation}\label{DKP}
    \delta(\cdot)\sup_{B(\cdot,\,\delta(\cdot)/2)}\abs{\nabla\A}\in CM_\om(M) \qquad\text{for some }M<\infty.
\end{equation}

Our assumption, slightly weaker than the classical DKP, is as follows.
\begin{definition}[Weak DKP condition]
An elliptic operator $L=-\div \A \nabla$ is said to satisfy the weak DKP condition with constant $M$ on $\om$ if there exists a decomposition $\A = \B+ \C$ such that
\begin{equation} \label{Main1a}
|\delta\nabla \B| + |\C| \in CM_\Omega(M).
\end{equation}
\end{definition}
Obviously, this condition is weaker than \eqref{DKP}: it allows for small Carleson perturbations and carries no supremum over the Whitney cubes. Moreover, we show in Lemma \ref{LellipB=ellipA} that the weak DKP condition self improves.

We are now ready for the statement of our main result.
\begin{theorem} \label{Main1}
Let $\beta >0$, $\Omega \subset \Rn$ be a 1-sided Chord-Arc Domain, and $L=-\div \A \nabla$ be a uniformly elliptic operator -- i.e., that verifies \eqref{defelliptic} and \eqref{defbounded} -- which satisfies the weak DKP condition with constant $M$ on $\om$. We write $G^{X_0}$ for the Green function of $L$ with pole at $X_0$.
The following are equivalent:
\begin{enumerate}[(i)]
    \item $\Omega$ is a Chord-Arc Domain,
    \item $\partial \Omega$ is uniformly rectifiable,
    \item there exists $C\in (0,\infty)$ such that for any ball $B$ centered on the boundary, and for any positive weak solution $u$ to $Lu=0$ in $2B \cap \Omega$ for which $\Tr u = 0$ on $2B \cap \dr \Omega$, we have
    \begin{equation} \label{Main1b}
    \iint_{\Omega \cap B} \left| \frac{\nabla u}{u} - \frac{\nabla D_\beta}{D_\beta} \right|^2 D_\beta \, dX \leq C \sigma(B),
    \end{equation}
    \item there exists $C\in (0,\infty)$  such that for any $X_0 \in \Omega$ and for any ball $B$ centered on the boundary satisfying $X_0 \notin 2B$, we have
    \begin{equation} \label{Main2b}
    \iint_{\Omega \cap B} \left| \frac{\nabla G^{X_0}}{G^{X_0}} - \frac{\nabla D_\beta}{D_\beta} \right|^2 D_\beta \, dX \leq C \sigma(B),
    \end{equation}
    \item there exist $X_0\in \Omega$ and $C\in (0,\infty)$  such that for any ball $B$ centered on the boundary that satisfies $X_0\notin 2B$, we have \eqref{Main2b}.
\end{enumerate}
Moreover, the constants $C$ in \eqref{Main1b}--\eqref{Main2b} can be chosen to depend only on $C_\A$, $M$, the CAD constants of $\Omega$, $\beta$, and $n$. 
\end{theorem}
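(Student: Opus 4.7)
The plan is to establish the five-way equivalence by first disposing of the trivial implications and classical equivalences, then concentrating on the two directions that carry the new content. The equivalence (i)$\Leftrightarrow$(ii) is classical: a Chord-Arc Domain always has a uniformly rectifiable boundary, and conversely a 1-sided CAD whose boundary is UR satisfies the exterior corkscrew condition and is therefore a 2-sided CAD. The implications (iii)$\Rightarrow$(iv)$\Rightarrow$(v) are immediate, since $G^{X_0}$ is a positive weak solution to $LG^{X_0}=0$ in $2B\cap\Omega$ with zero trace on $2B\cap\partial\Omega$ whenever $X_0\notin 2B$. The two remaining implications are (ii)$\Rightarrow$(iii), the direct direction, and (v)$\Rightarrow$(ii), the free boundary direction.

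For (ii)$\Rightarrow$(iii), the flat model is already available: when $\Omega$ is a half-space, the estimate \eqref{Main1b} holds for any operator with weak DKP coefficients by the results of \cite{DLM1}, combined with the self-improvement of the weak DKP condition recorded in Lemma~\ref{LellipB=ellipA}. To transfer this to a 1-sided CAD with UR boundary, I would construct a corona decomposition of the dyadic lattice on $\partial\Omega$ compatible with Tolsa's $\alpha$-numbers, partitioning the cubes into coherent stopping-time regions $\{S\}$ on which the boundary stays close to an affine hyperplane $P_S$ in a quantitative, Carleson-summable sense. The left-hand side of \eqref{Main1b} is then broken up Whitney cube by Whitney cube, grouped according to the stopping-time region containing the associated boundary cube. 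Within each $S$ I would compare $D_\beta$ to the regularized distance $D_{\beta,S}$ to $P_S$ (with error controlled by the $\alpha$-number of $S$) and compare $u$ to its flat analogue $u_S$ in the half-space $P_S^+$ (via the boundary Harnack principle and Carleson-type stability of the weak DKP class). The half-space estimate then applies to $(u_S,D_{\beta,S})$, while the two comparison errors pack in the Carleson sense by the defining property of the corona decomposition and Tolsa's theorem. Summing over $S$ yields \eqref{Main1b} on $\Omega$. The technical heart of this argument, and the main obstacle, is precisely the construction of the $\alpha$-compatible corona decomposition, which the abstract identifies as the central new ingredient of the paper.

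For (v)$\Rightarrow$(ii), I would extract uniform rectifiability of $\partial\Omega$ from the Carleson bound \eqref{Main2b} for a single Green function by showing that $\partial\Omega$ has Carleson-summable $\alpha$-numbers, whence UR follows from Tolsa's characterization. The Green function enters via elliptic boundary estimates: on each Whitney cube one has $|\nabla G^{X_0}|/G^{X_0}\sim\delta^{-1}$ by boundary Harnack, and the direction of $\nabla G^{X_0}/G^{X_0}$ identifies the inward normal to a best-fitting hyperplane. The hypothesis \eqref{Main2b} then forces $\nabla D_\beta/D_\beta$ to approximate the same direction in the Carleson sense, and testing the definition \eqref{IdefD} against affine functions converts this proximity into an $\alpha$-number estimate for $\partial\Omega$. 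One cannot directly invoke \cite{DM2} here: that argument treats a weak uniform-type hypothesis rather than the integrated Carleson bound \eqref{Main2b}, and it also required $\Omega$ unbounded. The hard part is therefore adapting the free boundary mechanism to the present integrated hypothesis uniformly across all scales and independently of the pole $X_0$, for which the same $\alpha$-compatible corona decomposition is the natural organizational tool.
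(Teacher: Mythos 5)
Your reduction to (ii)$\Rightarrow$(iii) and (v)$\Rightarrow$(i) and your treatment of the easy implications are correct, and you rightly identify the $\alpha$-compatible corona decomposition as the central new ingredient for the direct direction. However, two of your key steps do not match what the paper does and, as stated, contain gaps.

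For (ii)$\Rightarrow$(iii), your proposal is to compare $u$ on $\Omega$ with a ``flat analogue $u_S$ in the half-space $P_S^+$ via the boundary Harnack principle.'' This comparison is not well-defined: $u_S$ would be a solution of a different equation on a different domain, and boundary Harnack does not compare solutions of distinct operators on distinct domains, nor is it clear how to control $\nabla\log(u/u_S)$ in a way that packs with $\alpha$-numbers. The paper avoids this entirely by never introducing a separate flat solution. Instead, on each coherent regime $\cS$ it builds a small Lipschitz graph $\Gamma_\cS$ (not merely a plane) whose crucial new property is that it approximates $\partial\Omega$ at least as well as the best plane, in the sense of Tolsa's $\alpha$-numbers (Lemma~\ref{Lbbalpha}); then it constructs an explicit bi-Lipschitz map $\rho_\cS$ flattening $\Gamma_\cS$ and pulls the original solution $u$ back to $v=u\circ\rho_\cS$, which solves a conjugated equation $L_\cS v=0$ of the same weak-DKP class in the half-space. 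The flat case is proved in Lemma~\ref{lemflat} by an integration-by-parts argument (not by citing \cite{DLM1}, which the paper explicitly states uses a different method); and that argument requires Lemma~\ref{lemlogk}, which needs $\omega_{L^*}\in A_\infty(\sigma)$ --- supplied by Theorem~\ref{Main3}, itself a consequence of Theorem~\ref{Main4}. Your outline omits this $A_\infty$ ingredient entirely, and without it the integration-by-parts term involving $\ln(Ku/|t|)$ cannot be controlled.

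For (v)$\Rightarrow$(i), your plan to extract Carleson-summable $\alpha$-numbers directly by reading off a ``best-fitting normal'' from $\nabla G^{X_0}/G^{X_0}$ is not how the paper proceeds and it is far from clear that it can be made to work: the direction of $\nabla G^{X_0}$ near the boundary is not readily converted into $\alpha$-number control, and the corona decomposition plays no role in the paper's argument here. The paper instead uses Chebyshev to pass from the integrated Carleson hypothesis \eqref{Main2b} to a prevalent (weak) condition (Lemma~\ref{lem.Carlprev}), then runs a compactness/blow-up argument (Lemma~\ref{lem.cvg}): if the conclusion fails, one extracts a sequence of domains, operators, and Green functions whose limits satisfy $G^\infty=cD_\beta$ exactly for a constant-coefficient operator, which forces $\omega^\infty\approx\mathcal H^{n-1}$ and hence uniform rectifiability by \cite{HMMTZ}, contradicting the failure of the corkscrew condition in the rescaled domains. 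Your proposal should be revised to go through this Chebyshev-plus-compactness route, or you would need to supply a genuinely new argument for the direct $\alpha$-number extraction.
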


\begin{remark}
The bound \eqref{Main1b} is a local one, meaning for instance that the bound will hold with a constant $C$ independent of the $B$ and the solution $u$ as long as $\om$ is chord-arc locally in $2B$ (that is, we only need the existence of Harnack chains and of corkscrew points in $2B\cap\om$) and the uniformly elliptic operator $L$ satisfies the weak DKP condition in $2B$. 
\end{remark}

The equivalence $(i) \Longleftrightarrow (ii)$ is already well known, see Theorem 1.2 in \cite{AHMNT}. Moreover, $(iii) \implies (iv) \implies (v)$ is immediate. So we need only to prove $(ii)\implies(iii)$ and $(v)\implies(i)$ in Theorem \ref{Main1}. 

\medskip

When the domain is unbounded, we can use the Green function with pole at infinity instead of the Green function. 
The Green function with pole at infinity associated to $L$ is the unique (up to a multiplicative constant) positive weak solution to $Lu=0$ with zero trace. See for instance Lemma 6.5 in \cite{DEM} for the construction (\cite{DEM} treats a particular case but the same argument works as long as we have CFMS estimates, see Lemma \ref{LCFMS} below). So we have that:

\begin{corollary} \label{Main2}
Let $\beta$, $\Omega$ and $L$ be as in Theorem \ref{Main1}. If $\Omega$ is unbounded, the following are equivalent:
\begin{enumerate}[(a)]
\item $\Omega$ is a Chord-Arc Domain, 
\item $\partial \Omega$ is uniformly rectifiable,
\item there exists $C\in (0,\infty)$  such that for any ball $B$ centered on the boundary, we have
\begin{equation} \label{Main2d}
\iint_{\Omega \cap B} \left| \frac{\nabla G^{\infty}}{G^{\infty}} - \frac{\nabla D_\beta}{D_\beta} \right|^2 D_\beta \, dX \leq C \sigma(B),
\end{equation}
\end{enumerate}
\end{corollary}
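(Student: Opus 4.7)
The plan is to derive Corollary \ref{Main2} from Theorem \ref{Main1} together with standard properties of the Green function with pole at infinity. The equivalence $(a)\Leftrightarrow(b)$ is already contained in items $(i)\Leftrightarrow(ii)$ of Theorem \ref{Main1}, so it remains only to prove $(b)\Rightarrow(c)$ and $(c)\Rightarrow(b)$.

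For $(b)\Rightarrow(c)$, the key point is that $G^\infty$ is, by its very construction, a positive weak solution of $LG^\infty=0$ on $\Omega$ with vanishing trace on all of $\partial\Omega$. In particular, for every ball $B$ centered on the boundary, $G^\infty$ restricted to $2B\cap\Omega$ is a positive weak solution with zero trace on $2B\cap\partial\Omega$. Thus the implication $(ii)\Rightarrow(iii)$ of Theorem \ref{Main1}, applied with $u=G^\infty$, immediately yields \eqref{Main2d} with a constant depending only on the same parameters as in Theorem \ref{Main1}.

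For $(c)\Rightarrow(b)$, the plan is to deduce condition $(v)$ of Theorem \ref{Main1} from $(c)$ by a limiting argument. The Green function with pole at infinity is constructed (cf.\ Lemma 6.5 in \cite{DEM}) as the locally uniform limit
\[
  G^\infty(X)=\lim_{k\to\infty}\frac{G^{X_k}(X)}{G^{X_k}(X_*)}
\]
for any sequence $X_k\to\infty$ in $\Omega$ and any fixed reference point $X_*\in\Omega$, and interior Caccioppoli estimates upgrade this to convergence in $W^{1,2}_{\loc}(\Omega)$. Since the logarithmic gradient $\nabla u/u$ is invariant under multiplication of $u$ by a positive constant, it follows that $\nabla G^{X_k}/G^{X_k}\to \nabla G^\infty/G^\infty$ in $L^2_{\loc}(\Omega)$. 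Fixing any ball $B$ centered on $\partial\Omega$ and taking $k$ large enough so that $X_k\notin 2B$, one can then transfer the Carleson estimate \eqref{Main2d} from $G^\infty$ to $G^{X_k}$ on $B$ with only a small loss in the constant; choosing $X_0:=X_{k_0}$ for $k_0$ sufficiently large (but fixed) will verify hypothesis $(v)$ of Theorem \ref{Main1}, which in turn yields $(a)$.

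The step that requires the most care is the transfer of the Carleson bound from $G^\infty$ to $G^{X_0}$ uniformly in the scale of $B$, since a priori we only have $L^2_{\loc}$ convergence of the logarithmic gradients on compact subsets of $\Omega$, whereas the Carleson integral $\iint_{B\cap\Omega}|\cdot|^2 D_\beta\,dX$ extends down to the boundary. I expect this to be handled either by boundary H\"older regularity for the ratio $G^{X_0}/G^\infty$ (a ratio of positive solutions with common zero trace on $2B\cap\partial\Omega$, controlled by boundary Harnack-type estimates), or, more simply, by observing that the argument for $(v)\Rightarrow(i)$ in Theorem \ref{Main1} should run verbatim with $G^\infty$ in place of $G^{X_0}$, since it only uses that the Green function is a positive weak solution with zero trace on the balls under consideration, a property $G^\infty$ enjoys on \emph{every} such ball; in this second alternative no approximation is needed at all.
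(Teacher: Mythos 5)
Your proposal matches the paper's approach. The forward direction $(b)\Rightarrow(c)$ is exactly as you describe: $G^\infty$ is a positive weak solution with zero trace on every ball, so $(ii)\Rightarrow(iii)$ of Theorem \ref{Main1} applies verbatim with $u=G^\infty$. For $(c)\Rightarrow(b)$, the paper takes precisely your ``second alternative'': Theorem \ref{thm.conv} in Section \ref{Sconv} is explicitly stated to allow $X_0=\infty$, so the converse argument runs unchanged with $G^\infty$ in place of $G^{X_0}$ and no approximation is needed. You were right to be suspicious of the first (limiting) route --- the $L^2_{\loc}$ convergence of $\nabla G^{X_k}/G^{X_k}$ on compact subsets of $\Omega$ does not by itself control the Carleson integral uniformly down to the boundary, and making it rigorous would require a boundary Harnack comparison of $G^{X_k}$ and $G^\infty$ on $2B\cap\Omega$ together with a careful passage to the limit; the paper sidesteps this entirely, as you anticipated.
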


\medskip

For our proof of the ``direct" result, we need the fact that, for the same operators, the $L$-elliptic measure is $A_\infty$-absolutely continuous with respect to $\sigma$.

\begin{theorem}\label{Main3}
Let $\Omega$ be a Chord-Arc Domain, and let $L=-\div \A \nabla$ be a uniformly elliptic operator -- i.e., that verifies \eqref{defelliptic} and \eqref{defbounded} -- which satisfies the weak DKP condition with constant $M$ on $\om$. 

Then the $L$-elliptic measure $\omega_L\in A_\infty(\sigma)$, i.e. there exists $C,\theta >0$ such that  given an arbitrary surface ball $\Delta=B\cap  \partial\Omega$, with $B=B(x,r)$, $x\in  \partial\Omega$, $0<r<\diam( \partial\Omega)$, and for every Borel set $F\subset\Delta$, we have that
\begin{equation}\label{defAinfty}
\frac{\sigma(F)}{\sigma(\Delta)} \leq C \left(\omega_L^{X_{\Delta}}(F)\right)^\theta,
\end{equation}
where $X_{\Delta}$ is a corkscrew point relative to $\Delta$ (see Definition \ref{def1.cork}).

The constants $C$ and $\theta$ - that are called the intrinsic constants in $\omega_L \in A_\infty(\sigma)$ - depend only on $C_\A$, $M$, the CAD constants of $\Omega$, and $n$. 
\end{theorem}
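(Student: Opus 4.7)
My plan is to prove Theorem~\ref{Main3} by reducing the $A_\infty$ property to a Carleson measure estimate (CME) for bounded solutions, and then establishing the CME via the corona decomposition techniques that drive the rest of the paper. On a chord-arc domain, it is known from the work of Kenig-Koch-Pipher-Toro, Hofmann-Martell-Mayboroda, Cavero-Hofmann-Martell-Toro and others that for the class of operators considered here,
\[
\omega_L\in A_\infty(\sigma)\iff \sup_{B}\frac{1}{\sigma(B)}\iint_{B\cap\Omega}|\nabla u|^2\delta\,dX\leq C\|u\|_\infty^2
\]
for every surface ball $B$ and every bounded $L$-solution $u$ in $2B\cap\Omega$. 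I focus on proving the right-hand side.

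The first reduction peels off the Carleson-perturbation piece of the weak DKP decomposition. Writing $\A=\B+\C$ as in \eqref{Main1a}, the term $|\C|\in CM_\Omega(M)$ is a genuine Carleson perturbation of $-\div\B\nabla$, and by the Fefferman-Kenig-Pipher theory adapted to CAD (Cavero-Hofmann-Martell and subsequent works), such perturbations preserve $A_\infty$. So I may assume $\C\equiv 0$. Lemma~\ref{LellipB=ellipA} then upgrades $|\delta\nabla\B|\in CM_\Omega(M)$ to the classical DKP control involving a supremum over Whitney cubes.

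The main step is the CME for $L=-\div\B\nabla$ with $\B$ satisfying DKP. Since $\partial\Omega$ is uniformly rectifiable, the David-Semmes corona decomposition partitions the dyadic cubes on $\partial\Omega$ into coherent stopping-time regimes $\{\mathcal S_j\}_j$, on which $\partial\Omega$ is close at every scale to a Lipschitz graph of small constant, together with a collection of bad cubes $\mathcal B$ obeying a Carleson packing condition. For each $\mathcal S_j$ I form the associated sawtooth subdomain $\Omega_{\mathcal S_j}$; by the corona hypothesis and the CAD property, $\Omega_{\mathcal S_j}$ is quantitatively bilipschitz to a domain above a small-constant Lipschitz graph, and the DKP property of $\B$ transfers to the pullback with uniformly controlled constants. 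The classical Kenig-Pipher theorem then gives $\omega_{L,\Omega_{\mathcal S_j}}\in A_\infty$, hence the CME on $\Omega_{\mathcal S_j}$. A standard sawtooth-extrapolation argument (in the spirit of Hofmann-Martell) combines these local CMEs with the packing estimate on $\mathcal B$ to deliver the global CME on $B\cap\Omega$.

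The main obstacle is the bookkeeping in this last step: checking that the weighted Carleson norm $|\delta\nabla\B|\in CM_\Omega(M)$ survives, with uniform constants, the bilipschitz change of variables identifying each sawtooth with a domain over a Lipschitz graph, and that the local $A_\infty$'s can be patched along the bad cubes without loss. The remaining ingredients—Carleson perturbation theory on CAD, Kenig-Pipher on Lipschitz domains, and the CME$\iff A_\infty$ equivalence—are by now standard, so the substance of the argument is ensuring that the corona machinery developed in the paper interacts cleanly with the weak DKP hypothesis.
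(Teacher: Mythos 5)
Your high-level reduction of the $A_\infty$ property to a Carleson measure estimate for bounded solutions, and the decision to prove the CME via the corona decomposition, matches the paper's strategy (the paper proves Theorem~\ref{Main4} and then invokes Theorem~1.1 of \cite{CHMT}). From there, however, you take a genuinely different route than the paper does, and that route has a gap that I do not think can be repaired without essentially redoing the paper's argument.

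The gap is in the sentence ``Lemma~\ref{LellipB=ellipA} then upgrades $|\delta\nabla\B|\in CM_\Omega(M)$ to the classical DKP control involving a supremum over Whitney cubes.'' This is not what Lemma~\ref{LellipB=ellipA} says. The lemma replaces $(\B,\C)$ by $(\wt\B,\wt\C)$ so that $\wt\B$ is elliptic with the \emph{same} constant as $\A$ and $|\delta\nabla\wt\B|$ is \emph{uniformly bounded}, while the Carleson control remains the $L^2$-averaged bound $|\delta\nabla\wt\B|+|\wt\C|\in CM_\Omega(CM)$. It does not produce the Whitney-supremum condition \eqref{DKP} or \eqref{DKPhw}. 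Indeed, the introduction to the paper explicitly notes that the result for operators satisfying only the weaker condition \eqref{Main1a} --- without the Whitney supremum --- was ``not proved explicitly'' before, precisely because the known chain (Kenig--Pipher on Lipschitz domains $\to$ pass to CAD via David--Jerison $\to$ Carleson-perturb) requires that supremum at the first step. Since Kenig--Pipher's theorem assumes $\delta(\cdot)\sup_{B(\cdot,\delta/2)}|\nabla\A|\in CM$, your invocation of it on the pulled-back sawtooth domains fails under the hypothesis actually available. The whole second half of your argument (sawtooths, Kenig--Pipher on each, extrapolation across the bad cubes) therefore rests on a condition you do not have.

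The paper avoids this obstacle by \emph{not} passing through Kenig--Pipher at all. Instead it runs a direct integration-by-parts argument in flattened coordinates: the bi-Lipschitz map $\rho_\cS$ sends the Whitney region $W_\Omega(\cS)$ to a subset of $\R^n\setminus\R^{n-1}$, the conjugated operator $L_\cS$ retains the $L^2$-averaged weak DKP property on the relevant support (Lemma~\ref{LprAS}), and Lemma~\ref{LCarlesonflat} proves the CME there using only $(|t\nabla\B|+|\C|)\1_{\supp\phi}\in CM$, i.e.\ the averaged bound, with no supremum over Whitney cubes. This is the reason the paper's method genuinely extends the class of admissible operators. If you want to keep your sawtooth/Kenig--Pipher architecture you would need an independent proof that the averaged weak DKP condition self-improves to the sup version --- but it does not in general, which is exactly what motivates the paper's alternative approach.
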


The above is known for operators satisfying the DKP condition \eqref{DKP} on Chord-Arc domains. In fact, it is shown in \cite{KePiDrift} that $\omega_L\in A_\infty(\sigma)$ for DKP operators on Lipschitz domains. But since the DKP condition is preserved in subdomains, and the Chord-Arc domains are well approximated by Lipschitz subdomains (\cite{DJ}), the $A_\infty$ property can be passed from Lipschitz subdomains to Chord-Arc domains (see \cite{JK}, or \cite{HMMTZ}). Moreover, combined with the stability of the $A_\infty$ property under Carleson perturbation of elliptic operators proved in \cite{CHMT}, it is also known for the elliptic operators $L=-\diver \A \nabla$ for which $\A = \B + \C$, where 
\begin{equation}\label{DKPhw}
   \sup_{B(\cdot,\,\delta(\cdot)/2)}\{\abs{\delta \nabla\B} + \abs{\C}\} \in CM_\om(M) \qquad\text{for some }M<\infty.
\end{equation}
However, to the best of our knowledge, the $A_\infty$-absolute continuity of the elliptic measure was not proved explicitly for elliptic operators satisfying the slightly weaker condition \eqref{Main1a}. 

We obtain Theorem \ref{Main3} as a consequence of the following result - which is another contribution of the article - and Theorem 1.1 in \cite{CHMT}.

\begin{theorem}\label{Main4}
Let $\om$ be a domain in $\R^n$ with uniformly rectifiable (UR) boundary of dimension $n-1$. Let $L$ be a uniformly elliptic operator which satisfies the weak DKP condition with constant $M$ on $\om$. Suppose that $u$ is a bounded solution of $Lu=0$ in $\om$. Then for any ball $B$ centered on the boundary, we have
\begin{equation}\label{Main4a}
    \iint_{\om\cap B}\abs{\nabla u(X)}^2\delta(X)dX\le C\norm{u}_{L^\infty(\om)}^2\sigma(B\cap\pom),
\end{equation}
where the constant $C$ depends only on $n$, $M$, and the UR constants of $\pom$. 
\end{theorem}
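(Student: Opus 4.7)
My plan is to extend the Hofmann--Martell--Mayboroda proof of the Carleson measure estimate (CME) for bounded solutions of DKP operators on UR domains to the weak DKP regime, using a corona decomposition to transfer the problem to Lipschitz sawtooth subdomains, where the classical machinery is available. Normalize $\|u\|_{L^\infty(\Omega)}=1$ and fix $B=B(x_0,r)$ with $x_0\in\partial\Omega$. The quantity to bound is
\begin{equation*}
I:=\iint_{\Omega\cap B}|\nabla u|^2\,\delta\,dX\leq C\,\sigma(B\cap\partial\Omega).
\end{equation*}

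First I would invoke the David--Semmes corona decomposition for uniformly rectifiable boundaries (or equivalently the $\alpha$-corona built earlier in the paper) applied to the dyadic cubes $Q\subset Q_0$, where $Q_0$ is a dyadic ancestor of $B\cap\partial\Omega$ of comparable size. This produces a partition of $\mathcal{D}(Q_0)$ into coherent stopping-time trees $\{\mathbf{S}\}$ with tops $Q(\mathbf{S})$ and a family $\mathcal{B}$ of bad cubes, with the Carleson packing $\sum_{\mathbf{S}:\,Q(\mathbf{S})\subset Q_0}\sigma(Q(\mathbf{S}))+\sum_{Q\in\mathcal{B},\,Q\subset Q_0}\sigma(Q)\leq C\sigma(Q_0)$, and such that within each $\mathbf{S}$ the boundary is Hausdorff-close to a Lipschitz graph $\Gamma_{\mathbf{S}}$ with controlled small constant. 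To each $\mathbf{S}$ I associate a sawtooth subdomain $\Omega_{\mathbf{S}}\subset\Omega$ whose Whitney cubes are precisely those tagged to cubes of $\mathbf{S}$; $\Omega_{\mathbf{S}}$ is a chord-arc subdomain and, up to a piece of $\partial\Omega_{\mathbf{S}}$ contained in $\partial\Omega$, its boundary sits on a Lipschitz graph.

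Next, on each sawtooth $\Omega_{\mathbf{S}}$, I observe that the operator $L$ still satisfies the weak DKP condition with respect to $\delta_{\mathbf{S}}:=\dist(\cdot,\partial\Omega_{\mathbf{S}})$, with constants independent of $\mathbf{S}$: on the interior Whitney cubes of $\Omega_{\mathbf{S}}$ one has $\delta_{\mathbf{S}}\sim\delta$, so the Carleson condition on $|\delta\nabla\mathcal{B}|+|\mathcal{C}|$ transfers after packing up the scales with factor at most $O(1)$. Since $\Omega_{\mathbf{S}}$ has (nearly) Lipschitz boundary, the self-improvement of weak DKP (Lemma \ref{LellipB=ellipA}) together with the Kenig--Pipher theorem on Lipschitz domains for DKP operators and the Carleson-perturbation stability of $A_\infty$ from \cite{CHMT} yields $\omega_L^{\Omega_{\mathbf{S}}}\in A_\infty(\sigma_{\mathbf{S}})$ with quantitative constants. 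Applying the $A_\infty\Rightarrow$ CME direction of the Kenig--Koch--Pipher--Toro theorem on $\Omega_{\mathbf{S}}$ to the bounded solution $u$ gives
\begin{equation*}
\iint_{\Omega_{\mathbf{S}}\cap B}|\nabla u|^2\,\delta_{\mathbf{S}}\,dX\leq C\,\|u\|_{L^\infty(\Omega_{\mathbf{S}})}^{2}\,\sigma_{\mathbf{S}}(\partial\Omega_{\mathbf{S}}\cap 2B)\leq C\,\sigma(Q(\mathbf{S})).
\end{equation*}

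Finally I would assemble: since the Whitney cubes of $\Omega\cap B$ (save those tagged to cubes of $\mathcal{B}$) are partitioned among the $\Omega_{\mathbf{S}}$, and $\delta\leq\delta_{\mathbf{S}}$ on $\Omega_{\mathbf{S}}$, the contribution from the good trees is bounded by $\sum_{\mathbf{S}}C\sigma(Q(\mathbf{S}))\leq C\sigma(Q_0)\lesssim\sigma(B\cap\partial\Omega)$ via the corona packing. The leftover contribution from Whitney cubes tagged to $Q\in\mathcal{B}$ is bounded by Caccioppoli: for each such Whitney cube $I$, $\iint_{I}|\nabla u|^2\delta\,dX\lesssim\ell(I)^{n-1}\lesssim\sigma(Q)$, and summing uses the Carleson packing of $\mathcal{B}$. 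The main obstacle I anticipate is the bookkeeping in the transfer step: checking that the decomposition $\mathcal{A}=\mathcal{B}+\mathcal{C}$ can be taken with weak-DKP constants on $\Omega_{\mathbf{S}}$ uniform in $\mathbf{S}$, because the relevant Whitney regions near the manufactured portion $\partial\Omega_{\mathbf{S}}\setminus\partial\Omega$ of the sawtooth boundary are not controlled by the original Carleson measure on $\Omega$; handling these requires either a cutoff modification of $\mathcal{B}$ near the sawtooth walls (with the induced error absorbed into $\mathcal{C}$) or redistributing the packing against the Carleson packing of tops $\sum\sigma(Q(\mathbf{S}))\leq C\sigma(Q_0)$.
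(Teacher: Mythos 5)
Your outline matches the paper's Steps 1--3 (reduce to dyadic cubes, Caccioppoli on bad Whitney regions, corona decomposition), but from there the paper takes a genuinely different and lighter route. Instead of constructing sawtooth subdomains $\Omega_{\mathbf S}$ and invoking $A_\infty$ of elliptic measure on them, the paper uses the explicit bi-Lipschitz change of variables $\rho_{\cS}$ of Section \ref{Srho} to flatten the small-constant Lipschitz graph $\Gamma_{\cS}$, together with the cutoff $\Psi_{\cS}$ supported in $W^*_\Omega(\cS)$ (where $\delta\approx\dist(\cdot,\Gamma_{\cS})$ by Lemma \ref{LclaimPib}). The flat estimate, Lemma \ref{LCarlesonflat}, is then a direct integration by parts that uses only $\|u\|_\infty\le 1$ and $Lu=0$: no elliptic measure, no comparison principle, no $A_\infty$, no corkscrew or Harnack chain. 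This is not a cosmetic difference --- Theorem \ref{Main4} is stated for domains that may have no interior access at all, so a proof routed through $\omega_L\in A_\infty$ on an auxiliary chord-arc subdomain carries extra baggage that the paper deliberately strips away. Your chain (Kenig--Pipher $\Rightarrow$ $A_\infty$ on Lipschitz, David--Jerison to upgrade to chord-arc, \cite{CHMT} for Carleson perturbation, then $A_\infty\Rightarrow$CME) is essentially the ``combination of known papers'' route the introduction acknowledges, but the paper chose a self-contained argument precisely to avoid it.

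The obstacle you flag in the last paragraph is not bookkeeping; it is a genuine gap, and as stated it sinks the sawtooth approach. The weak DKP condition on $\Omega$ does \emph{not} transfer to weak DKP on $\Omega_{\mathbf S}$. Near the manufactured part $\partial\Omega_{\mathbf S}\setminus\partial\Omega$ of the sawtooth boundary, $\delta(X)\approx\ell$ for a fixed stopping-cube scale $\ell$, while $\delta_{\mathbf S}(X)$ runs down to $0$. For any $(n-1)$-Ahlfors regular boundary, $\iint_{B(x,r)}\delta_{\mathbf S}^{-1}\,dX$ diverges logarithmically in the number of dyadic scales, so any bounded $\C$ that is not decaying along the sawtooth wall (and weak DKP on $\Omega$ cannot force such decay, since the wall sits inside a Whitney region of $\Omega$) already gives $\iint|\C|^2\delta_{\mathbf S}^{-1}\,dX=\infty$. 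Your proposed cutoff modification does not fix this: replacing $\A$ by $\A\phi+(1-\phi)I$ near the wall changes the operator, and $u$ is no longer a solution of the modified operator there. The paper sidesteps the issue entirely by never asking for a Carleson bound on $\Omega_{\mathbf S}$ near the wall: Lemma \ref{LprAS} asserts the Carleson bound for $\A_{\cS}$ only after multiplying by $\1_{\rho^{-1}_{\cS}(W^*_\Omega(\cS))}$, and the cutoff $\Psi_{\cS}$ of Lemma \ref{lemWOG} both kills the integral off $W^*_\Omega(\cS)$ and has $|\delta\nabla\Psi_{\cS}|\in CM_\Omega$, so the boundary-layer error is controlled by the Carleson packing of the tops $\sum\sigma(Q(\cS))$. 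That is the rigorous version of your informal ``redistribute the packing against the tops''; to make your route work, you would have to open up the $A_\infty\Rightarrow$CME implication and localize its integration-by-parts argument away from the wall, rather than invoke it as a black box --- at which point you have effectively rederived Lemma \ref{LCarlesonflat}.
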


Notice that in this theorem, we completely dispense with the Harnack chain and corkscrew conditions (see Definitions \ref{def1.cork} and \ref{def1.hc}) for the domain. Previously, an analogous result was obtained for bounded {\it harmonic functions} in \cite{HMMDuke} (see also \cite{GMT} for the converse) and for DKP operators in \cite[Theorem 7.5]{HMMtrans}. The result for elliptic operators which satisfy the weak DKP condition is again not explicitly written anywhere. However, slightly changing the proofs of a combination of papers would give the result. For instance, we can adapt Theorem 1.32 in \cite{DFM3} to the codimension 1 case to prove Theorem \ref{Main3} in the flat case, then extending it to Lipschitz graph by using the change of variable in \cite{KePiDrift}, and finally proving \ref{Main3} to all complement of uniformly rectifiable domains by invoking Theorem 1.19 (iii) in \cite{HMMtrans}).

Here, we claim that we can directly demonstrate Theorem \ref{Main4} using a strategy similar to our proof of Theorem \ref{Main1}. In Section~\ref{SecPfofThm4}, we explain how to modify the proof of Theorem \ref{Main1} to obtain Theorem \ref{Main4}.
By \cite{CHMT} Theorem 1.1, assuming that $\om$ is 1-sided CAD, the estimate \eqref{Main4a} implies that $\omega_L\in A_\infty(\sigma)$, and therefore our Theorem \ref{Main3} follows from Theorem \ref{Main4}. 
 Note that the bound \eqref{defAinfty} is a characterization of $A_\infty$, see for instance Theorem 1.4.13 in \cite{KenigB}.

Let us discuss in more details our assumptions for Theorem \ref{Main1}.  In order to get the bound \eqref{Main1b}, we strongly require that the boundary $\partial \Omega$ is uniformly rectifiable and that the operator $L$ satisfies the weak DKP condition. We even conjecture that those conditions are necessary, that is, if $\partial \Omega$ is not uniformly rectifiable, then the bound \eqref{Main1b} holds for none of the weak DKP operators. 

The corkscrew condition and the Harnack chain condition (see Definitions \ref{def1.cork} and \ref{def1.hc}) are only needed for the proof of Lemma \ref{lemlogk} - where we used the comparison principle - and for the implication $(iii)\implies(i)$ in Theorem \ref{Main1}. However, since most of our intermediate results can be proved without those conditions and could be of interest in other situations where the Harnack chain is not assumed (like - for instance - to prove Theorem \ref{Main4}), {\bf we avoided to use the existence of Harnack chains and of corkscrew points in all the proofs and the intermediate results} except for Lemma \ref{lemlogk} and in Section \ref{Sconv}, even if it occasionally slightly complicated the arguments.

These observations naturally lead to the question about the optimality of our conditions on $\om$, and more precisely, whether we can obtain the estimate \eqref{Main1b} assuming only uniform rectifiablity. The answer is no, as we can construct a domain $\om$ which has uniformly rectifiable boundary but is only semi-uniform (see Definition \ref{defSUD}) where \eqref{Main1b} fails. More precisely, we prove in Section \ref{Scount} that:

\begin{proposition}
There exists a semi-uniform domain $\Omega$ and a positive harmonic function $G$ on $\Omega$ such that \eqref{Main1b} is false.
\end{proposition}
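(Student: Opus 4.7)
The plan is to produce an explicit pair $(\Omega, G)$ for which \eqref{Main1b} fails, guided by the implication $(iii) \Longrightarrow (i)$ of Theorem \ref{Main1}: since the Carleson bound forces $\Omega$ to be $1$-sided CAD, any violation must come from a semi-uniform domain in which the Harnack chain condition fails. The construction therefore pivots on producing such a Harnack-chain obstruction while keeping the boundary uniformly rectifiable.

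I would take $\Omega \subset \R^2$ to be a bounded planar ``two rooms plus narrow neck'' domain: $\Omega = \Omega_1 \cup N \cup \Omega_2$, where $\Omega_1, \Omega_2$ are two open squares of unit diameter and $N = (-L,L) \times (0,\epsilon)$ is a thin channel (with $\epsilon \ll L$) joining them along matching boundary arcs. Then $\pom$ is piecewise linear, hence $1$-Ahlfors regular and uniformly rectifiable. Corkscrews of size comparable to the radius exist for every boundary ball, and the corresponding carrot paths stay in the nearest room or in $N$, so $\Omega$ is semi-uniform. However, any curve from $\Omega_1$ to $\Omega_2$ must traverse $N$, requiring $\asymp L/\epsilon$ Whitney balls, so the Harnack chain condition fails.

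For $G$ I would take the Green function $G = G^{X_0}$ of the Laplacian with pole $X_0 \in \Omega_1$ well inside the first room. Standard separation of variables in the strip $N$ together with the maximum principle forces $G$ to behave in the bulk of $N$ like the lowest Dirichlet eigenmode $A\,e^{-\pi x_1/\epsilon}\sin(\pi x_2/\epsilon)$, modulo exponentially smaller corrections from higher modes. Consequently the horizontal logarithmic derivative $\partial_{x_1} G/G \approx -\pi/\epsilon$ is large and essentially constant in the bulk of $N$, while $D_\beta \asymp \delta$ and $\nabla D_\beta/D_\beta \asymp \nabla \delta/\delta$ has only a vertical component at leading order (since $\delta(X) \approx \min(x_2,\epsilon-x_2)$ in $N$). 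Hence $|\nabla G/G - \nabla D_\beta/D_\beta|^2 D_\beta \gtrsim \delta/\epsilon^2$ throughout the bulk of $N$.

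The single-module computation yields a Carleson contribution of order $L$ against $\sigma(B) \asymp L$ — comparable but not dominant — so a single module does not yet defeat \eqref{Main1b}. To force the failure I would iterate: attach a sequence of such two-room-plus-neck modules at geometrically decreasing scales so that the resulting domain $\Omega$ is a bounded semi-uniform planar domain with UR boundary, and the Carleson integral accumulates contributions from infinitely many scales while $\sigma(B)$ for a fixed ball $B$ enclosing the full construction remains bounded. The main obstacle is the bookkeeping for this iteration: one must verify that uniform rectifiability and Ahlfors regularity of $\pom$ persist, that corkscrew and carrot-path constants are uniform across scales (preserving semi-uniformity), and that the single-module analysis of $G$ carries over to each module up to controlled perturbations. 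Once this is done, selecting $B$ to be a boundary ball containing the iterated construction yields the failure of \eqref{Main1b} with any finite constant $C$, proving the proposition.
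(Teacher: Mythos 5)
Your broad instinct --- engineer a semi-uniform domain with a Harnack-chain obstruction so that $\nabla G/G$ deviates badly from $\nabla D_\beta/D_\beta$ --- matches the paper's, but both the geometry and the mechanism of divergence are substantially different from what the paper actually does, and your proposal has two genuine gaps.

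First, the semi-uniformity of your two-room-and-neck domain is asserted rather than proved, and it is doubtful. Semi-uniformity is \emph{not} a statement about nearby points: it requires, for \emph{every} interior point $X$ and boundary point $x$ with $|X-x|<\Lambda\rho$, a short Harnack chain from $X$ to a corkscrew of $x$ at scale $\rho$. Take $X$ deep in $\Omega_2$ and $x$ on the far side of $\Omega_1$ with $\rho\asymp|X-x|$. Because the two rooms are in markedly different positions relative to $x$, there is no reason why $B(x,\rho)\cap\Omega_2$ should contain a ball of radius $c\rho$, so the only corkscrews of $x$ at that scale may lie in $\Omega_1$, and reaching them from $X$ forces a chain of length $\asymp L/\epsilon$ through the neck. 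This is precisely the failure that semi-uniformity is supposed to exclude. The paper avoids this by taking a \emph{symmetric} unbounded domain, $\Omega=\R^2\setminus\bigcup_{k\in\mathbb Z}\{|x-2k|+|t|<\tfrac12\}$: by the symmetry in $t$, any boundary ball has corkscrews on both sides of $\{t=0\}$, so whichever side $X$ is on, a short chain exists, and semi-uniformity holds even though the Harnack chain condition between the two half-planes fails.

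Second, and more seriously, you concede that the single-module estimate only gives a Carleson contribution comparable to $\sigma(B)$ --- that is no contradiction at all --- and then propose an unproved iteration over geometrically shrinking modules. This is exactly the kind of construction that a Carleson measure is designed to tolerate: contributions of size $\asymp\sigma(B_j)$ at scales $r_j\to 0$ can sum to a bounded Carleson norm. You give no argument that the iterated sum would overflow the Carleson bound for a single ball $B$, and without one the construction does not prove the proposition. The paper's divergence mechanism is of a different nature: with $\Omega$ unbounded and $G$ a Green function ``with pole at $t=-\infty$'' (constructed by a periodic Lax--Milgram argument), $G$ turns out to be \emph{bounded} on $\Omega\cap\R^2_+$ (Lemma \ref{lemGeq1}, via the maximum principle of Lemma \ref{maxprinciple}), so $\partial_t G/G = o(1/t)$ as $t\to\infty$ (Lemma \ref{lemdtG<c/t}), whereas $\partial_t D_\beta/D_\beta\gtrsim 1/t$ always. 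The integrand is therefore $\gtrsim 1/t$ on $\{t\geq t_0\}$, and integrating this over $B_r$ yields a factor $\log r$ against $\sigma(B_r)\asymp r$, which is an honest, quantified blow-up. Your eigenmode heuristic in the neck is sound as intuition for exponential decay of $G$ across a bottleneck, but it does not by itself produce a logarithmically divergent Carleson ratio, and the iteration step needed to replace it is left entirely to the reader.

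In short, you would need to (a) verify semi-uniformity for your domain (which likely requires symmetrizing it along the lines of the paper's construction), and (b) replace the vague multi-scale iteration by a concrete single-scale mechanism that breaks the Carleson bound; the paper accomplishes both at once with a periodic unbounded domain and a log-divergent integral.
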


But of course, assuming $\Omega$ is a Chord-Arc Domain is not necessary for \eqref{Main1b} since \eqref{Main1b} obviously holds when $\Omega = \R^n \setminus \R^{n-1} = \R^n_+ \cup \R^n_-$, because we can apply Theorem \ref{Main1} to both $\Omega_+ = \R^n_+$ and $\Omega_- = \R^n_-$ and then sum.
\medskip

\subsection{Main steps of the proof of $(ii) \implies (iii)$}\label{Sproof}

In this section, we present the outline of the proof of $(ii) \implies (iii)$ in Theorem \ref{Main1}. More exactly, this subsection aims to link the results of all other sections of the paper in order to provide the proof.

The approach developed in this article is new and it is interesting by itself, because it gives an alternative to proofs that use projection and extrapolation of measures. Aside from Theorems \ref{Main1} and \ref{Main4}, we claim that our approach can be used to obtain a third proof of the main result from \cite{FenUR} and \cite{DM1}, which establishes the $A_\infty$-absolute continuity of the harmonic measure when $\Omega$ is the complement of a uniformly rectifiable set of low dimension and $L$ is a properly chosen degenerate operator.

\medskip

Let $\Omega$ and $L$ be as in the assumptions of Theorem \ref{Main1}, and let $\B$ and $\C$ denote the matrices in \eqref{Main1a}. Take $B := B(x_0,r)$ to be a ball centered at the boundary, that is $x_0 \in \partial \Omega$, and then a non-negative weak solution $u$ to $Lu=0$ in $2B \cap \Omega$ such that $\Tr(u) = 0$ on $B \cap \partial \Omega$.

\medskip

\noindent {\bf Step 1: From balls to dyadic cubes.} We construct a dyadic collection $\D_{\partial \Omega}$ of pseudo-cubes in $\partial \Omega$ in the beginning of Section \ref{SUR}, and a collection of Whitney regions $W_\Omega(Q),W_\Omega^*(Q)$ associated to $Q\in \D_{\partial \Omega}$ in the beginning Section \ref{SWhitney}. We claim that \eqref{Main1b} is implied by cubes 
\begin{equation} \label{Main1c}
I := \sum_{\begin{subarray}{c} Q \in \D_{\partial \Omega} \\ Q \subset Q_0 \end{subarray}} \iint_{W_\Omega(Q)} \left| \frac{\nabla u}{u} - \frac{\nabla D_\beta}{D_\beta} \right|^2 \delta \, dX \leq C \sigma(Q_0)
\end{equation}
for any cube $Q_0\in \D_{\partial \Omega}$ satisfying $Q_0 \subset \frac87B \cap \partial\Omega$ and $\ell(Q_0) \leq 2^{-8}r$. It follows from the definition of $W^*_{\Omega}(Q)$ \eqref{defWO*} that
\begin{equation} \label{Main1d}
W^*_{\Omega}(Q) \subset \frac74B \qquad \text{ for } Q \subset Q_0
\end{equation}
and $Q_0$ as above.

We take $\{Q_0^i\}\subset\D_{\pom}$ as the collection of dyadic cubes that intersects $B \cap \partial\Omega$ and such that $2^{-9}r < \ell(Q_0^i) \leq 2^{-8}r$. There is a uniformly bounded number of them, each of them satisfies $Q_0^i \subset \frac32B \cap \Omega$ and $\ell(Q_0^i) \leq 2^{-8}r$ and altogether, they verify
\[B  \cap \Omega \subset \{X\in B, \, \delta(X) > 2^{-9}r\} \bigcup \Bigl(\bigcup_{i} \bigcup_{Q\in \D_{\partial \Omega}(Q_0^i)} W_\om(Q) \Bigr).\]
The estimate \eqref{Main1b} follows by applying \eqref{Main1c} to each $Q_0^i$ - using \eqref{equivD} and \eqref{defADR} when needed - and \eqref{Main1e} below to $\{X\in B, \, \delta(X) > 2^{-9}r\}$.
\medskip

\noindent {\bf Step 2: Bound on a Whitney region.} In this step, we establish that if $E\subset \frac74B$ is such that $\diam(E) \leq K\delta(E)$, then
\begin{equation} \label{Main1e}
J_E:= \iint_{E} \left| \frac{\nabla u}{u} - \frac{\nabla D_\beta}{D_\beta} \right|^2 \delta \, dX \leq C_K \delta(E)^{n-1}.
\end{equation}
We could use Lemma \ref{lemflat} to prove this, but it would be like using a road roller to crack a nutshell, because it is actually easy. We first separate 
\[J_E \leq \iint_{E} \left| \frac{\nabla u}{u}\right|^2 \delta \, dX + \iint_{E} \left|\frac{\nabla D_\beta}{D_\beta} \right|^2 \delta \, dX := J^1_E + J^2_E.\]
We start with $J_E^2$. Observe that $|\nabla [D_\beta^{-\beta}]| \leq (d+\beta) D_{\beta+1}^{-\beta-1}$, so $|\frac{\nabla D_\beta}{D_\beta}| \lesssim \delta^{-1}$ by \eqref{equivD}. We deduce that $J_E^2 \lesssim |E| \delta(E)^{-1} \lesssim \delta(E)^{n-1}$ as desired. 
As for $J_E^1$, we construct 
\[E^*:= \{X\in \Omega, \dist(X,E) \leq \delta(E)/100\}\subset \frac{15}{8}B,\] and then the Harnack inequality (Lemma \ref{Harnack}) and the Caccioppoli inequality (Lemma \ref{Caccio}) yield that
\[J_E^1 \lesssim \delta(E) (\sup_{E^*} u)^{-2} \iint_{E} |\nabla u|^2 dX \lesssim \delta(E)^{-1} (\sup_{E^*} u)^{-2} \iint_{E^*} u^2 dX \leq \delta(E)^{-1}|E^*| \lesssim \delta(E)^{n-1}.\]
The bound \eqref{Main1e} follows.

\medskip

\noindent {\bf Step 3: Corona decomposition.} Let $Q_0$ as in Step 1. One can see that we cannot use \eqref{Main1e} to each $E=W_\Omega(Q)$ and still hope to get the bound \eqref{Main1c} for $I$. We have to use \eqref{Main1e} with parsimony. We first use a corona decomposition of $D_{\partial \Omega}(Q_0)$, and we let the stopping time region stops whenever $\alpha(Q)$ or the angle between the approximating planes are too big. We choose $0 < \epsilon_1 \ll \epsilon_0 \ll 1$ and Lemma \ref{Lcorona} provides a first partition of $\D_{\partial \Omega}$ into bad cubes $\B$ and good cubes $\G$ and then a partition of $\G$ into a collection of coherent regimes $\{\cS\}_{\cS \in \mathfrak S}$. 

Let $\B(Q_0):= \B \cap \D_{\partial \Omega}(Q_0)$ and then $\mathfrak S(Q_0) = \{\cS \cap \D_{\partial \Omega}(Q_0)\}$. 
Observe that $\mathfrak S(Q_0)$ contains the collection of $\cS \in \mathfrak S$ such that $Q(\cS) \subset Q_0$ and maybe {\bf one} extra element, in the case where $Q_0 \notin \B \cup \bigcup_{\cS \in \mathfrak S} Q(\cS)$, which is the intersection with $\D_{\partial \Omega}(Q_0)$ of the coherent regime $\cS \in \mathfrak S$ that contains $Q_0$. 
In any case $\mathfrak S(Q_0)$ is a collection of (stopping time) coherent regimes. In addition, Lemma \ref{Lcorona} shows that $\mathfrak S(Q_0)$ and $\B(Q_0)$ verifies the Carleson packing condition 
\begin{equation} \label{Main1f}
\sum_{Q\in \B(Q_0)} \sigma(Q) + \sum_{\cS \in \mathfrak S(Q_0)} \sigma(Q(\cS)) \leq C \sigma(Q_0).
\end{equation}
We use this corona decomposition to decompose the sum $I$ from \eqref{Main1c} as
\begin{multline} \label{Main1g} 
I  = \sum_{Q  \in \B(Q_0)} \iint_{W_\Omega(Q)} \left| \frac{\nabla u}{u} - \frac{\nabla D_\beta}{D_\beta} \right|^2 \delta \, dX + \sum_{\cS \in \mathfrak S(Q_0)}  \iint_{W_\Omega(\cS)} \left| \frac{\nabla u}{u} - \frac{\nabla D_\beta}{D_\beta} \right|^2 \delta \, dX \\ := I_1 + \sum_{\cS \in \mathfrak S(Q_0)} I_{\cS},
\end{multline}
where $W_\Omega(\cS) = \bigcup_{Q\in \cS} W_\Omega (Q)$. For each cube $Q\in \B(Q_0)$, the regions $W_\Omega(Q)$ are included in $\frac74B$ and verify $\diam(W_\Omega(Q)) \leq 8 \delta(W_\Omega(Q)) \leq 8\ell(Q)$, so we can use \eqref{Main1e} and we obtain 
\begin{equation} \label{Main1h}
I_1 \lesssim \sum_{Q\in \B(Q_0)} \ell(Q)^{n-1} \lesssim \sigma(Q_0).
\end{equation}
by \eqref{defADR} and \eqref{Main1f}. 

\medskip

\noindent {\bf Step 4: How to turn the estimation of $I_{\cS}$ into a problem on $\R^n_+$.} Now, we take $\cS$ in $\mathfrak S(Q_0)$, which is nice because $\partial \Omega$ is well approximated by a small Lipschitz graph $\Gamma_{\cS}$ around any dyadic cube of $\cS$ (see Subsection \ref{SSLipschitz} for the construction of $\Gamma_\cS$). For instance, fattened versions of our Whitney regions $W^*_\Omega(Q)$, $Q\in \cS$, are Whitney regions in $\R^n \setminus \Gamma_{\cS}$ (see Lemma \ref{LclaimPib}). More importantly, at any scale $Q\in \cS$, the local Wasserstein distance between $\sigma$ and the Hausdorff measure of $\Gamma_\cS$ is bounded by the local Wasserstein distance between $\sigma$ and the best approximating plane, which means that $\Gamma_\cS$ approximate $\partial \Omega$ better (or at least not much worse) than the best plane around any $Q\in \cS$. Up to our knowledge, it is the first time that such a property on $\Gamma_\cS$ is established. It morally means that $D_\beta(X)$ will be well approximated by 
\[D_{\beta,\cS}(X):= \left( \int_{\Gamma_\cS} |X-y|^{-d-\beta} d\mathcal H^{n-1}(y)  \right)^{-\frac1\beta}\]
whenver $X\in X_\Omega(\cS)$, and that the error can be bounded in terms of the Tolsa's $\alpha$-numbers. 

Nevertheless, what we truly want is the fact $\partial \Omega$ is well approximated by a plane - instead of a Lipschitz graph - from the standpoint of any $X \in W_\Omega(\cS)$, because in this case we can use Lemma \ref{lemflat}. 
Yet, despite this slight disagreement, $\Gamma_{\cS}$ is much better than a random uniformly rectifiable set, because $\Gamma_{\cS}$ is the image of a plane $P$ by a bi-Lipschitz map.  
So we construct a bi-Lipschitz map $\rho_{\cS} : \, \R^n \to \R^n$ that of course maps a plane to $\Gamma_{\cS}$, but which also provides an explicit map from any point $X$ in $W_\Omega(\cS)$ to a plane $\Lambda(\rho_{\cS}^{-1}(X))$ that well approximates $\Gamma_{\cS}$ - hence $\partial \Omega$ - from the viewpoint of $X$. So morally, we constructed $\rho_\cS$ such that we have a function 
\[X \mapsto \dist(X,\Lambda(\rho^{-1}_{\cS}(X)))\]
which, when $X\in W_\Omega(\cS)$, is a good approximation of 
\[D_{\beta,\cS}(X):= \left( \int_{\Gamma_\cS} |X-y|^{-d-\beta} d\mathcal H^{n-1}(y)  \right)^{-\frac1\beta}\]
in terms of the Tolsa's $\alpha$-numbers.

We combine the two approximations to prove (see Lemma \ref{LprDb}, which is a consequence of Corollary \ref{CestD} and our construction of $\rho_{\cS}$) that
\[\iint_{W_\Omega(Q)} \left| \frac{\nabla D_\beta(X)}{D_\beta(X)} - \frac{N_{\rho^{-1}_{\cS}(X)}(X)}{\dist(X,\Lambda(\rho^{-1}_{\cS}(X))} \right|^2\,  \delta(X) \, dX \leq C |\alpha_{\sigma,\beta}(Q)|^2 \sigma(Q) \qquad \text{ for } Q\in \cS,\]
where $Y \to N_{\rho_{\cS}^{-1}(X)}(Y)$ is the gradient of the distance to $\Lambda(\rho_{\cS}^{-1}(X))$.
And since the $\alpha_{\sigma,\beta}(Q)$ satisfies the Carleson packing condition, see Lemma \ref{LalphabetaCM}, we deduce that
\begin{equation} \label{Main1i}
I_{\cS} \leq 2I'_{\cS} + C \sum_{Q\in \cS} |\alpha_{\sigma,\beta}(Q)|^2  \sigma(Q) \leq 2 I'_{\cS} + C \sigma(Q(\cS))
\end{equation}
where 
\[ I'_{\cS}:= \iint_{W_\Omega(\cS)} \left| \frac{\nabla u}{u} - \frac{N_{\rho^{-1}_{\cS}(X)}(X)}{\dist(X,\Lambda(\rho^{-1}_{\cS}(X))} \right|^2 \delta \, dX.\]

We are left with $I'_{\cS}$. We make the change of variable $(p,t) = \rho^{-1}_{\cS}(X)$ in the integral defining $I'_{\cS}$ and we obtain that
\[\begin{split}
I'_{\cS} & = \iint_{\rho_{\cS}^{-1}(W_\Omega(\cS))} \left| \frac{(\nabla u) \circ \rho_{\cS} (p,t)}{u \circ \rho (p,t) } - \frac{N_{p,t}(\rho_{\cS}(p,t))}{\dist(\rho_{\cS}(p,t),\Lambda(p,t)} \right|^2 \delta \circ \rho_{\cS}(p,t) \det\br{\Jac(p,t)}\, dt\, dp \\
& \le 2 \iint_{\rho_{\cS}^{-1}(W_\Omega(\cS))} \left|\frac{\nabla \br{u \circ \rho_{\cS} (p,t)}}{u \circ \rho (p,t)} - \frac{\Jac(p,t) N_{p,t}(\rho_{\cS}(p,t))}{\dist(\rho_{\cS}(p,t),\Lambda(p,t))} \right|^2 \, |t| \, dt\, dp ,
\end{split}\]
where $\Jac(p,t)$ is the Jacobian matrix of $\rho_{\cS}$, which is close to the identity by Lemma \ref{LestonJ}. We have also used that $\delta \circ \rho_{\cS}(p,t) \approx t$ since $\delta(X) \approx \dist(X,\Gamma_{\cS})$ on $W_\Omega(\cS)$ and the bi-Lipschitz map $\rho_{\cS}^{-1}$ preserves this equivalence. 
Even if the term
\[\frac{\Jac(p,t) N_{p,t}(\rho_{\cS}(p,t))}{\dist(\rho_{\cS}(p,t),\Lambda(p,t))}\]
looks bad, all the quantities inside are constructed by hand, and of course, we made them so that they are close to the quotient $\frac{\nabla d_P}{d_P}$, where $d_P$ is the distance to a plane that depends only on $\cS$. With our change of variable, we even made it so that $P = \R^{n-1} \times \{0\}$, that is $\frac{\nabla d_P}{d_P} = \frac{\nabla t}{t}$. Long story short, Lemma \ref{LprNpt} gives that
\begin{equation} \label{Main1j}
I'_{\cS} \leq 4I''_{\cS} + \sigma(Q(\cS))
\end{equation}
where 
\[ I''_{\cS}:= \iint_{\rho^{-1}(W_\Omega(\cS))} \left| \frac{\nabla v}{v} - \frac{\nabla t}{t}\right|^2 |t| \, dt\, dp\,, \qquad v=u\circ\rho_\cS.\]

\medskip

\noindent {\bf Step 5: Conclusion on $I_{\cS}$ using the flat case.} 
It is easy to see from the definition that Chord-Arc Domains are preserved by bi-Lipschitz change of variable, and the new CAD constants depends only on the old ones and the Lipschitz constants of the bi-Lipschitz map. Since the bi-Lipschitz constants of $\rho_{\cS}$ is less than 2 (and so uniform in $\cS$), we deduce that $\rho^{-1}_{\cS}(\Omega)$ is a Chord-Arc Domain with CAD constants that depends only on the CAD constants of $\Omega$.

Then, in Section \ref{SWhitney}, we constructed a cut-off function $\Psi_{\cS}$ adapted to $W_\Omega(\cS)$. We have shown in Lemma \ref{LprWS} that $\Psi_{\cS}$ is 1 on $W_\Omega(\cS)$ and supported in $W^*_\Omega(\cS)$, on which we still have $\delta(X) \approx \dist(X,\Gamma_{\cS})$. In Lemma \ref{LprPsiS}, we proved that the support of $\nabla \Psi_{\cS}$ is small, in the sense that implies $\1_{\supp \nabla \Psi_{\cS}} \in CM_\Omega$. What is important is that those two properties are preserved by bi-Lipschitz change of variable, and thus $\Psi_{\cS} \circ \rho_{\cS}$ is as in Definition \ref{defcutoffboth}.

We want the support of $\Psi_{\cS} \circ \rho_{\cS}$ to be included in a ball $B_{\cS}$ such that $2B_{\cS}$ is a subset of our initial ball $B$, and such that the radius of $B_{\cS}$ is smaller than $C\ell(Q(\cS))$. But those facts are an easy consequence of \eqref{Main1d} and the definition of $W^*_\Omega(\cS)$ (and the fact that $\rho_{\cS}$ is bi-Lipschitz with the Lipschitz constant close to 1).

We also want $u\circ \rho_{\cS}$ to be a solution of $L_{\cS}(u\circ \rho_{\cS}) = 0$ for a weak-DKP operator $L_{\cS}$. The operator $L_{\cS}$ is not exactly weak-DKP everywhere in $\rho^{-1}_{\cS}(\Omega)$, but it is the case on the support of $\Psi_{\cS}$ (see Lemma \ref{LprAS}), which is one condition that we need for applying Lemma \ref{lemflat}.

To apply Lemma \ref{lemflat} - or more precisely for Lemma \ref{lemlogk}, where one term from the integration by parts argument is treated - we need to show that $\omega_{L^*} \in A_\infty(\sigma)$. This is a consequence of Theorem \ref{Main3}. Indeed, since the adjoint operator $L^*$ is also a weak DKP operator on $\om$, Theorem \ref{Main3} asserts that $\omega_{L^*}\in A_\infty(\sigma)$, where $\sigma$ is an Ahlfors regular measure on $\pom$. A direct computation shows that for any set $E\subset\pom$ and any $X_0\in\om$,
\[
\omega_{L^*_\cS}^{\rho_\cS^{-1}(X_0)}\br{\rho_{\cS}^{-1}(E)}=\omega_{L^*}^{X_0}(E),
\]
and since the mapping $\rho_\cS$ is bi-Lipschitz,  $\omega_{L^*}\in A_\infty(\sigma)$ implies that the $L^*_\cS$-elliptic measure $\omega_{L^*_\cS}\in A_\infty(\wt\sigma)$, where $\wt\sigma$ is an Ahlfors regular measure on the boundary of $\rho_{\cS}^{-1}(\om)$. Moreover, the intrinsic constants in $\omega_{L^*_\cS}\in A_\infty(\wt\sigma)$ depend only on the intrinsic constants in $\omega_{L^*}\in A_\infty(\sigma)$ because the  the bi-Lipschitz constants of $\rho_\cS$ are bounded uniformly in $\cS$.

All those verification made sure that we can apply Lemma \ref{lemflat}, which entails that
\begin{equation} \label{Main1k}
I''_{\cS} \leq  \iint_{\rho^{-1}(W_\Omega(\cS))} |t| \left| \frac{\nabla (u\circ\rho_\cS)}{u\circ\rho_\cS} - \frac{\nabla t}{t}\right|^2 (\Psi_{\cS} \circ \rho_{\cS})^2 \, dt\, dp \lesssim \ell(Q(\cS))^{n-1} \lesssim \sigma(Q(\cS)).
\end{equation}

\medskip

\noindent {\bf Step 6: Gathering of the estimates.}
We let the reader check that \eqref{Main1f}--\eqref{Main1k} implies \eqref{Main1c}, and enjoy the end of the sketch of the proof!

\subsection{Organisation of the paper}

In Section \ref{SMisc}, we present the exact statement on our assumptions on $\Omega$, and we give the elliptic theory that will be needed in Section \ref{Sflat}. 

Sections \ref{SUR} to \ref{Sflat} proved the implication $(ii) \implies (iii)$ in Theorem \ref{Main1}.
Section \ref{SUR} introduces the reader to the uniform rectifiability and present the corona decomposition that will be needed. 
The corona decomposition gives a collection of (stopping time) coherent regimes $\{\cS\}_{\mathfrak S}$. From Subsection \ref{SSLipschitz} to Section \ref{Srho}, $\cS \in \mathfrak S$ is fixed. 
We construct in Subsection \ref{SSLipschitz} a set $\Gamma_{\cS}$ which is the graph of a Lipschitz function with small Lipschitz constant. 

Section \ref{SWhitney} associate a ``Whitney'' region $W_\Omega(\cS)$ to the coherent regime $\cS$ so that from the stand point of each point of $W_\Omega(\cS)$, $\Gamma_{\cS}$ and $\partial \Omega$ are well approximated by the same planes.

In Section \ref{SDbeta}, we are applying the result from Section \ref{SWhitney} to compare $D_\beta$ with the distance to a plane that approximate $\Gamma_{\cS}$. 

Section \ref{Srho} construct a bi-Lipschitz change of variable $\rho_{\cS}$ that flattens $\Gamma_\cS$, and we use the results from Sections \ref{SWhitney} and \ref{SDbeta} in order to estimate the difference
\[\frac{\nabla[D_\beta \circ \rho_{\cS}]}{D_\beta \circ \rho_{\cS}} - \frac{\nabla t}{t}\]
in terms of Carleson measure. Sections \ref{SUR} to \ref{Srho} are our arguments for the geometric side of the problem, in particular, the solutions to $Lu=0$ are barely mentionned (just to explain the effect of $\rho_{\cS}$ on $L$). 

Section \ref{Sflat} can be read independently and will contain our argument for the PDE side of the problem. Morally speaking, it proves Theorem \ref{Main1} $(ii)\implies (iii)$ when $\Omega = \R^n_+$.

Section \ref{SecPfofThm4} presents a sketch of proof of Theorem \ref{Main4}. The strategy is similar to our proof of Theorem \ref{Main1}, and in particular, many of the constructions and notations from Section~\ref{SUR} to Section~\ref{Sflat} are adopted in Section~\ref{SecPfofThm4}. But since we do not need to deal with the regularized distance $D_\beta$, the proof is much shorter.

 Section \ref{Sconv} tackles the converse implication, proving $(v) \implies (i)$ in Theorem \ref{Main1}. The proof adapts an argument of \cite{DM2}, which states that if $G$ is sufficiently close to $D_\beta$, then $\partial \Omega$ is uniformly rectifiable. As mentioned earlier, we unfortunately did not succeed to link our strong estimate \eqref{Main2b} directly to the weak ones assumed in \cite{DM2}, which explains why we needed to rewrite the argument.

We finish with Section \ref{Scount}, where we construct a semi-uniform domain and a positive harmonic solution on it for which our estimate \eqref{Main1b} is false. 

\section{Miscellaneous} 

\label{SMisc}

\subsection{Self improvement of the Carleson condition on $\A$}

\begin{lemma} \label{LellipB=ellipA}
Let $\A$ be a uniformly elliptic matrix on a domain $\Omega$, i.e. a matrix function that satisfies \eqref{defelliptic} and \eqref{defbounded} with constant $C_\A$. 
Assume that $\A$ can be decomposed as $\A = \B + \C$ where
\begin{equation} \label{elBelA1}
|\delta \nabla \B| + |\C| \in CM_{\Omega}(M).
\end{equation}
Then there exists another decomposition $\A = \wt \B + \wt \C$ such that 
\begin{equation} \label{elBelA2}
|\delta \nabla \wt\B| + |\wt\C| \in CM_{\Omega}(CM)
\end{equation}
with a constant $C>0$ that depends only on $n$, and $\wt\B$ satisfies \eqref{defelliptic} and \eqref{defbounded} with the same constant $C_{\A}$ as $\A$. In addition, 
\begin{equation}\label{eqwtB}
    |\delta \nabla \wt \B| \leq CC_{\A}.
\end{equation}
\end{lemma}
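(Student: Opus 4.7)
My plan is to construct $\wt\B$ as a Whitney-scale mollification of $\A$ itself. Fix a smooth nonnegative bump $\eta$ with $\int \eta = 1$ and $\supp\eta \subset B(0,1/16)$, let $r(X) := \delta(X)/8$ (so that $B(X,r(X))\subset\Omega$), and set
$$\wt\B(X) := \int_\Omega \A(Y)\,\eta_{r(X)}(X-Y)\,dY, \qquad \wt\C := \A - \wt\B,$$
where $\eta_r(Z) := r^{-n}\eta(Z/r)$. Since $\wt\B(X)\xi\cdot\xi$ and $\wt\B(X)\xi\cdot\zeta$ are weighted averages over $B(X,r(X))\subset\Omega$ of the corresponding expressions for $\A(Y)$, $\wt\B$ inherits \eqref{defelliptic} and \eqref{defbounded} with the same constant $C_\A$. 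The pointwise gradient bound \eqref{eqwtB} then follows from a direct computation: since $\delta$, hence $r$, is Lipschitz with constant $1/8$, differentiating $\wt\B(X) = r(X)^{-n}\int\A(Y)\,\eta((X-Y)/r(X))\,dY$ in $X_j$ produces contributions from the prefactor $r(X)^{-n}$ and from the inner argument of $\eta$, both bounded pointwise by $C\|\A\|_\infty/r(X) \leq CC_\A/\delta(X)$.

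The main work is verifying the Carleson condition on $\wt\C$. I would write $\wt\B = \B^{\mathrm{avg}} + \C^{\mathrm{avg}}$, where $f^{\mathrm{avg}}(X) := \int f(Y)\,\eta_{r(X)}(X-Y)\,dY$, so that
$$\wt\C = (\B - \B^{\mathrm{avg}}) + (\C - \C^{\mathrm{avg}}), \qquad |\wt\C|^2 \lesssim |\B - \B^{\mathrm{avg}}|^2 + |\C|^2 + |\C^{\mathrm{avg}}|^2.$$
The $|\C|^2$ term is in $CM_\Omega(M)$ by hypothesis. For $|\C^{\mathrm{avg}}|^2$, Jensen gives $|\C^{\mathrm{avg}}(X)|^2 \leq \int|\C(Y)|^2\eta_{r(X)}(X-Y)\,dY$; swapping the order of integration in $\iint_{B(x_0,r)\cap\Omega}|\C^{\mathrm{avg}}|^2\,\delta^{-1}\,dX$ and using that the set of admissible $X$ (with $Y\in B(X,r(X))$) has measure $\lesssim\delta(Y)^n$ with $\delta(X)\approx\delta(Y)$ there, this reduces to the Carleson hypothesis on $|\C|$ on a slightly larger box.

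The subtlest piece is $|\B - \B^{\mathrm{avg}}|^2$, where the difficulty is that $\B$ has only $W^{1,2}_{\mathrm{loc}}$-regularity coming from $|\delta\nabla\B|\in CM_\Omega(M)$, and in particular no pointwise control. Here I would use a Whitney decomposition $\Omega = \bigcup_j W_j$ with $\ell_j := \diam(W_j)\approx\dist(W_j,\partial\Omega)$ and a constant fattening $W_j^*\supset W_j$ containing all points $Y$ contributing to $\B^{\mathrm{avg}}(X)$ for $X\in W_j$. Poincar\'e in $W_j^*$ yields $\int_{W_j^*}|\B - \B_{W_j^*}|^2 \lesssim \ell_j^2\int_{W_j^*}|\nabla\B|^2$, and Jensen applied to the mollifier gives the same bound for $\int_{W_j}|\B^{\mathrm{avg}} - \B_{W_j^*}|^2$; combining them yields $\int_{W_j}|\B - \B^{\mathrm{avg}}|^2 \lesssim \ell_j^2\int_{W_j^*}|\nabla\B|^2$. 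Summing over those $W_j$ intersecting $B(x_0,r)\cap\Omega$ and dividing by $\delta\approx\ell_j$,
$$\iint_{B(x_0,r)\cap\Omega}\frac{|\B - \B^{\mathrm{avg}}|^2}{\delta}\,dX \lesssim \sum_j \ell_j\int_{W_j^*}|\nabla\B|^2 \lesssim \iint_{B(x_0,Cr)\cap\Omega}\frac{|\delta\nabla\B|^2}{\delta}\,dY \lesssim Mr^{n-1},$$
by the bounded-overlap property of the $W_j^*$. Altogether these three estimates give $|\wt\C|\in CM_\Omega(CM)$, establishing \eqref{elBelA2}. The only conceptual obstacle is the limited regularity of $\B$, which is precisely what the Whitney-cube Poincar\'e argument in the last step handles.
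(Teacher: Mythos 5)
Your construction of $\wt\B$ by Whitney-scale mollification is the same as the paper's, and your Whitney--Poincar\'e treatment of the $|\B - \B^{\mathrm{avg}}|$ piece of $\wt\C$ is a perfectly valid alternative to the paper's direct Poincar\'e/Cauchy--Schwarz computation. However, there is a genuine gap: the conclusion \eqref{elBelA2} demands $|\delta\nabla\wt\B| + |\wt\C| \in CM_\Omega(CM)$, and you have established only the pointwise bound $|\delta\nabla\wt\B|\leq CC_\A$ (which is the separate claim \eqref{eqwtB}) together with the Carleson estimate for $\wt\C$. You never prove that $\delta\nabla\wt\B$ is itself a Carleson measure density, and a uniform pointwise bound does not yield this: the integral $\iint_{B(x,r)\cap\Omega}\delta(X)^{-1}\,dX$ diverges logarithmically (each dyadic layer $\{2^{-k-1}r<\delta\leq 2^{-k}r\}$ contributes $\approx r^{n-1}$), so $f\in L^\infty$ does not place $f$ in $CM_\Omega$.

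To close the gap within your framework, you would need the same integration-by-parts device the paper uses for this piece. Writing $\delta(X)\nabla_X\bigl[\eta_{r(X)}(X-Y)\bigr]$, the part of $\nabla_X$ hitting the argument $X-Y$ equals $-\nabla_Y$ applied to the kernel (up to terms coming from $\nabla r(X)$, which are of the same type). Pairing against $\A = \B + \C$: on the $\B$ piece, integrate by parts in $Y$ to transfer the derivative onto $\B$, obtaining (up to bounded factors) the average $\fiint_{B(X,r(X))}\delta|\nabla\B|$; on the $\C$ piece, do \emph{not} integrate by parts (since $\C$ may have no derivative) but instead use the raw pointwise bound $|\delta(X)\nabla_X\eta_{r(X)}(X-Y)|\lesssim\delta(X)^{-n}$ to get $\lesssim\fiint_{B(X,r(X))}|\C|$. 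Together this gives $|\delta\nabla\wt\B(X)|\lesssim\fiint_{B(X,r(X))}(\delta|\nabla\B|+|\C|)$, and then the Jensen/Fubini argument you already use for $|\C^{\mathrm{avg}}|$ finishes it. Without this step, the proof does not establish \eqref{elBelA2}.
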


\bp
Let $\A = \B + \C$ as in the assumption of the lemma. Let $\theta \in C^\infty_0(\R^n)$ be a nonnegative function such that $\supp \theta \subset B(0,\frac1{10})$, $\iint_{\R^n} \theta(X) dX = 1$. Construct $\theta_X(Y) := \delta(X)^{-n} \theta\big(\frac{Y-X}{\delta(X)}\big)$ and then
\begin{equation} \label{defwtB}
\wt{\B}(X) := \iint_{\R^n} \A(Y)  \, \theta_X(Y)  \, dY \quad \text{ and }  \quad \wt \C := \A - \wt \B.
\end{equation}
 We see that $\wt \B$ is an average of $\A$, so $\wt B$ verifies \eqref{defelliptic} and \eqref{defbounded} with the same constant as $\A$. 
 So it remains to prove \eqref{elBelA2} and \eqref{eqwtB}. Observe that
\begin{multline*}
\nabla_X \theta_X(Y) = - n \delta(X)^{-n-1} \nabla \delta(X) \theta\Big(\frac{Y-X}{\delta(X)}\Big) - \delta(X)^{-n-1} (\nabla \theta)\Big(\frac{Y-X}{\delta(X)}\Big) \\ - \delta(X)^{-n-2} \nabla \delta(X)  (Y-X) \cdot (\nabla \theta)\Big(\frac{Y-X}{\delta(X)}\Big).
\end{multline*}
Let $\Theta(Z)$ denote $Z\theta(Z)$, then $\div\Theta(Z)=n\theta(Z)+Z\cdot\nabla\theta(Z)$. So 
\[
 \delta(X) \nabla_X \theta_X(Y)  = - \delta(X)^{-n}  (\nabla \theta)\Big(\frac{Y-X}{\delta(X)}\Big) - \delta(X)^{-n} \nabla \delta(X) (\div\Theta)\Big(\frac{Y-X}{\delta(X)}\Big). 
\]
From here, one easily sees that $\abs{\delta(X) \nabla_X \theta_X(Y)}$ is bounded by $C\delta(X)^{-n}$ uniformly in $X$ and $Y$, and thus 
 \[|\delta(X)\nabla \wt B(X)| \lesssim \fiint_{B(X,\delta(X)/2)} |\A(Y)| dY \leq C_{\A}\,,\]
which proves \eqref{eqwtB}. Set $\Theta_X(Y) = \delta(X)^{-n} \Theta\Big(\frac{Y-X}{\delta(X)}\Big) $. Then
\[
 \delta(X) \nabla_X \theta_X(Y)  = - \delta(X) \nabla_Y \theta_X(Y) -  \delta(X) \nabla \delta(X)\diver_Y\Theta_X(Y).
\]
As a consequence, 
\[\begin{split}
\delta(X) \nabla \wt \B(X) & = \iint_{\R^n} (\B + \C)(Y)  \, \delta(X) \nabla_X \theta_X(Y)  \, dY \\
& = \delta(X)  \iint_{\R^n} \nabla \B(Y)  \, \theta_X(Y)  \, dY + \delta(X) \nabla \delta(X)  \iint_{\R^n} \nabla \B(Y)  \cdot \Theta_X(Y)  \, dY \\
& \qquad  +   \iint_{\R^n} \C(Y)  \, [\delta(X) \nabla_X \theta_X(Y)]  \, dY.
\end{split}\]
We deduce that 
\[|\delta(X) \nabla \wt \B(X)| \lesssim \fiint_{B(X,\delta(X)/10)} (\delta |\nabla \B(Y)| + |\C(Y)|) \, dY,\]
and so the fact that $|\delta\nabla \B| + |\C| \in CM_\Omega(M)$ is transmitted to $\delta \nabla \wt \B$, i.e. $\delta \nabla \wt \B \in CM_\Omega(CM)$.   

As for $\wt \C$, since $\iint \theta_X(Y) dY = 1$, we have
\begin{equation}\label{wtCsplit}
    \begin{split}
|\wt \C(X)| & = \left|\iint_{\R^n} (\A(Y) - \A(X)) \theta_X(Y)  \, dY \right| \\
& \leq \iint_{\R^n} (|\B(Y) - \B(X)| + |\C(Y)| + |\C(X)|) \theta_X(Y)  \, dY \\
& \lesssim |\C(X)| + \fiint_{B(X,\delta(X)/10)} (|\B(Y) - \B(X)| + |\C(Y)|) \, dY. 
\end{split}
\end{equation}
By Fubini's theorem, to show that $\abs{\wt \C}\in CM_{\om}(CM)$, it suffices to show that for any ball $B$ centered on the boundary,
\[
\iint_{B\cap\om}\fiint_{B(Z,\delta(Z)/4)}\abs{\wt \C(X)}^2dX\frac{dZ}{\delta(Z)}\le CM\sigma(B\cap\pom).
\]
From this one sees that the terms on the right-hand side of \eqref{wtCsplit} that involves $\C$  can be easily controlled using $\abs{\C}\in CM_\om(M)$. So by the Cauchy-Schwarz inequality, it suffices to control
\begin{equation}\label{BLHS}
    \iint_{Z\in B\cap\om}\fiint_{X\in B(Z,\delta(Z)/4)}\fiint_{Y\in B(X,\delta(X)/10)}\abs{\B(Y)-\B(X)}^2dYdX\frac{dZ}{\delta(Z)}.
\end{equation}
Notice that for all $X\in B(Z,\delta(Z)/4)$, $B(X,\delta(X)/10)\subset B(Z,\delta(Z)/2)$, and thus
\[\fiint_{Y\in B(X,\delta(X)/10)}\abs{\B(Y)-\B(X)}^2dY\lesssim \fiint_{Y\in B(Z,\delta(Z)/2)}\abs{\B(Y)-\B(X)}^2dY.\]
Therefore, 
\begin{multline*}
   \eqref{BLHS}\lesssim \iint_{Z\in B\cap\om}\fiint_{X\in B(Z,\delta(Z)/4)}\fiint_{Y\in B(Z,\delta(Z)/2)}\abs{\B(Y)-\B(X)}^2dYdX\frac{dZ}{\delta(Z)}\\
   \lesssim\iint_{Z\in B\cap\om}\fiint_{X\in B(Z,\delta(Z)/2)}\abs{\nabla\B(X)}^2dX\delta(Z)dZ\\
   \lesssim\iint_{X\in 2B\cap\om}\abs{\nabla\B(X)}^2\delta(X)dX\le CM\sigma(B\cap\pom)
\end{multline*}
by the Poincar\'e inequality, Fubini's theorem, and $\abs{\delta\nabla\B}\in CM_\om(M)$.  
So again, the Carleson bound on $|\delta\nabla \B| + |\C|$ is given to $\wt \C$ as well. The lemma follows.
\ep

\subsection{Definition of Chord-Arc Domains.}
\label{SSCAD}

\begin{definition}[{\bf Corkscrew condition}, \cite{JK}]\label{def1.cork}
We say that a domain $\Omega\subset \Rn$
satisfies the {\it corkscrew condition} with constant $c\in(0,1)$ if 
for every surface ball $\Delta:=\Delta(x,r),$ with $x\in \partial\Omega$ and
$0<r<\diam(\Omega)$, there is a ball
$B(X_\Delta,cr)\subset B(x,r)\cap\Omega$.  The point $X_\Delta\subset \Omega$ is called
a {\bf corkscrew point} relative to $\Delta$ (or, for $x$ at scale $r$).
\end{definition}

\begin{definition}[{\bf Harnack Chain condition}, \cite{JK}]\label{def1.hc}
We say that $\Omega$ satisfies the {\it Harnack Chain condition} with constants $M$, $C>1$ if for every $\rho >0,\, \Lambda\geq 1$, and every pair of points
$X,X' \in \Omega$ with $\delta(X),\,\delta(X') \geq\rho$ and $|X-X'|<\Lambda\,\rho$, there is a chain of
open balls
$B_1,\dots,B_N \subset \Omega$, $N\leq M(1+\log\Lambda)$,
with $X\in B_1,\, X'\in B_N,$ $B_k\cap B_{k+1}\neq \emptyset$
and $C^{-1}\diam (B_k) \leq \dist (B_k,\partial\Omega)\leq C\diam (B_k).$  The chain of balls is called
a {\it Harnack Chain}.
\end{definition}

\begin{definition}[\bf 1-sided NTA and NTA]\label{def1.1nta}
We say that a domain $\Omega$ is a {\it 1-sided NTA  domain} with constants $c,C,M$ if it satisfies the corkscrew condition with constant $c$ and Harnack Chain condition with constant $M, C$.
Furthermore, we say that $\Omega$ is an {\it NTA  domain} if it is a 1-sided NTA domain and if, in addition, $\Omega_{\rm ext}:= \Rn\setminus \overline{\Omega}$ also satisfies the corkscrew condition.
\end{definition}

\begin{definition}[\bf 1-sided CAD and CAD]\label{defi:CAD}
A \emph{1-sided chord-arc domain} (1-sided CAD) is a 1-sided NTA domain with AR boundary. The 1-sided NTA constants and the AR constant are called the 1-sided CAD constants. 
A \emph{chord-arc domain} (CAD, or 2-sided CAD) is an NTA domain with AR boundary. The 1-sided NTA constants, the corkscrew constant for $\om_{\rm ext}$, and the AR constant are called the CAD constants.
\end{definition}

Uniform rectifiability (UR) is a quantitative version of rectifiability.

\begin{definition}[\bf UR]
We say that $E$ is uniformly rectifiable if $E$ has big pieces of Lipschitz images, that is, if $E$ is $(n-1)$-Ahlfors regular \eqref{defADR}, and there exists $\theta, M>0$ such that, for each $x\in \Gamma$ and $r>0$, there is a Lipschitz mapping $\rho$ from the ball $B(0,r) \subset \R^d$ into $\R^n$ such that $\rho$ has Lipschitz norm $\leq M$ and 
\[\sigma(\Gamma \cap B(x,r) \cap \rho(B_{\R^d}(0,r))) \geq \theta r^d.\]
\end{definition}

However, we shall not use the above definition. What we do require is the characterization of UR by Tolsa's $\alpha$-numbers (\cite{Tolsa09} ), as well as a modification of the corona decomposition of uniformly rectifiable sets constructed in \cite{DS1}. See Section~\ref{SUR} for details. 
We shall also need the following result.
\begin{lemma}\label{lem.UReqv}
Suppose that $\om\subset\Rn$ is 1-sided chord-arc domain. Then the following are equivalent:
\begin{enumerate}
    \item $\pom$ is uniformly rectifiable.
    \item $\om_{\rm ext}$ satisfies the corkscrew condition, and hence, $\om$ is a chord-arc domain.
\end{enumerate}
\end{lemma}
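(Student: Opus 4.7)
The plan is to verify both directions by invoking results already established in the literature, since this equivalence is well known.

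For the direction $(2) \implies (1)$, if $\om$ is a chord-arc domain then $\om$ is in particular an NTA domain with $(n-1)$-Ahlfors regular boundary. By the classical work of David and Jerison \cite{DJ}, every such domain contains big pieces of boundaries of Lipschitz subdomains, which gives big pieces of Lipschitz graphs in $\pom$ and hence uniform rectifiability of $\pom$ in the sense of David-Semmes.

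For the direction $(1) \implies (2)$, this is exactly Theorem 1.2 of \cite{AHMNT}, which the authors have already cited in the introduction for the equivalence $(i) \Longleftrightarrow (ii)$ of Theorem \ref{Main1}. Under the 1-sided CAD hypothesis on $\om$, uniform rectifiability of $\pom$ is equivalent to $\om_{\rm ext}$ satisfying the corkscrew condition, which in turn upgrades $\om$ to a (2-sided) chord-arc domain. Since $\om$ is already 1-sided CAD, the ``hence'' part of (2) is automatic once exterior corkscrews are produced.

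The main obstacle, if one were to reprove $(1) \implies (2)$ from scratch, is the production of exterior corkscrews at every scale. The strategy in \cite{AHMNT} is to run a corona decomposition of $\pom$ into coherent stopping-time regimes $\cS$, each approximated at every scale by a Lipschitz graph $\Gamma_\cS$ with arbitrarily small Lipschitz constant. In the interior of such a regime one obtains an exterior corkscrew by crossing $\Gamma_\cS$ transversely: the small-constant bound on $\Gamma_\cS$ keeps the candidate point at a definite distance from $\pom$. The 1-sided CAD structure, specifically the interior corkscrew condition and the Harnack chain condition, is then used to glue these local constructions together, while the Carleson packing of the stopping-time cubes absorbs the scales at which the direct construction fails. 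The subtlety is precisely this propagation from scales tied to the corona decomposition to arbitrary scales, which is why the 1-sided CAD hypothesis (and not just Ahlfors regularity) is needed for $(1) \implies (2)$.
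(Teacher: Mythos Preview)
Your proposal is correct and matches the paper's own proof, which simply cites \cite{AHMNT} for $(1)\implies(2)$ and \cite{DJ} (or alternatively the $A_\infty$ route via \cite{AHMNT} Theorem 1.2) for $(2)\implies(1)$. One small inaccuracy: the sketch you attribute to \cite{AHMNT} is not quite their argument---they pass through the $A_\infty$ property of harmonic measure (UR $+$ 1-sided CAD $\implies$ $\omega \in A_\infty$ $\implies$ exterior corkscrews) rather than producing exterior corkscrews directly from a corona decomposition of $\pom$; but this does not affect the validity of your proof, which rests on the citation itself.
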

That (1) implies (2) was proved in \cite{AHMNT}. That (2) implies (1) can be proved via the $A_\infty$ of harmonic measure (see \cite{AHMNT} Theorem 1.2), or directly as in \cite{DJ}.

\subsection{Preliminary PDE estimates}

\begin{lemma}[The Caccioppoli inequality] \label{Caccio}
Let $L=-\diver A\nabla$ be a uniformly elliptic operator and $u\in W^{1,2}(2B)$ be a solution of $Lu=0$ in $2B$, where $B$ is a ball with radius $r$. Then there exists $C$ depending only on $n$ and the ellipticity constant of $L$ such that 
\[
\fint_{B}\abs{\nabla u(X)}^2dX\le \frac{C}{r^2}\fint_{2B}\abs{u(X)}^2dX.
\]
\end{lemma}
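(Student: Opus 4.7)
The proof is the classical Caccioppoli argument via a quadratic test function, and I expect no real obstacle: the ellipticity and boundedness assumptions on $\A$ are precisely what is needed for the standard absorption trick.

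The plan is to fix a smooth cutoff $\eta \in C^\infty_0(2B)$ with $0 \leq \eta \leq 1$, $\eta \equiv 1$ on $B$, and $|\nabla \eta| \leq C/r$, and then use $\varphi = \eta^2 u$ as a test function in the weak formulation \eqref{defsol}. Since $u \in W^{1,2}(2B)$, the product $\eta^2 u$ lies in $W^{1,2}_0(2B)$ and is a legitimate test function (by approximation in $C^\infty_0(2B)$). Computing $\nabla \varphi = \eta^2 \nabla u + 2 \eta u \nabla \eta$ and plugging into the weak equation gives
\[
\iint \eta^2\, \A \nabla u \cdot \nabla u \, dX = -2 \iint \eta u \, \A \nabla u \cdot \nabla \eta \, dX.
\]

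Next, I would apply the ellipticity \eqref{defelliptic} on the left-hand side to bound it below by $C_{\A}^{-1} \iint \eta^2 |\nabla u|^2\, dX$, and the boundedness \eqref{defbounded} together with the Cauchy--Schwarz and Young's inequalities on the right-hand side to obtain, for any $\varepsilon > 0$,
\[
\left| 2 \iint \eta u\, \A \nabla u \cdot \nabla \eta \, dX \right| \leq \varepsilon \iint \eta^2 |\nabla u|^2\, dX + \frac{C_{\A}^2}{\varepsilon} \iint |\nabla \eta|^2 u^2 \, dX.
\]
Choosing $\varepsilon = \tfrac{1}{2} C_{\A}^{-1}$ allows the first term on the right to be absorbed into the left, yielding
\[
\iint \eta^2 |\nabla u|^2\, dX \leq C \iint |\nabla \eta|^2 u^2\, dX
\]
with $C$ depending only on $C_{\A}$.

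Finally, using $\eta \equiv 1$ on $B$ and $|\nabla \eta| \leq C/r$ on $2B$, the left-hand side dominates $\iint_B |\nabla u|^2\, dX$ and the right-hand side is bounded by $C r^{-2} \iint_{2B} u^2 \, dX$. Dividing through by $|B|$ and using $|2B| = 2^n |B|$ delivers the stated averaged inequality with a constant depending only on $n$ and $C_{\A}$. The only mild subtlety is justifying the use of $\eta^2 u$ as a test function, but this is immediate since $u \in W^{1,2}(2B)$ and $\eta$ is smooth and compactly supported, so $\eta^2 u \in W^{1,2}_0(2B)$ by density.
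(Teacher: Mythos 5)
Your proof is correct and is the standard textbook Caccioppoli argument. The paper states this lemma without proof, treating it as a well-known classical fact from the elliptic regularity theory, so there is no paper proof to compare against; your argument (test against $\eta^2 u$, use ellipticity and boundedness with Young's inequality to absorb, then normalize by the ball volume) is precisely the canonical one the authors are implicitly invoking.
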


\begin{lemma}[The Harnack inequality] \label{Harnack}
Let $L$ be as in Lemma \ref{Caccio} and let $u$ be a nonnegative solution of $Lu=0$ in $2B\subset\om$. Then there exists constant $C\ge 1$ depending only on $n$ and the ellipticity constant of $L$ such that 
\[
\sup_B u \le C\inf_B u.
\]
\end{lemma}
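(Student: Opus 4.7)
The plan is to follow Moser's classical iteration argument. The proof splits into three pieces: local boundedness of $u$ (bounding $\sup_B u$ by an $L^p$ average), a matching lower bound on $\inf_B u$ in terms of a negative $L^p$ average, and a bridging $L^p$--$L^{-p}$ estimate coming from the $\mathrm{BMO}$ property of $\log u$ together with the John--Nirenberg inequality.

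For the first two pieces, I would test the weak formulation $\int \A\nabla u\cdot\nabla\varphi\,dX=0$ against $\varphi=\eta^2 u^{2\gamma-1}$ with $\gamma\ge 1$ and $\eta\in C_c^\infty$ a smooth cutoff between concentric subballs of $2B$. Arguing exactly as in the proof of Lemma \ref{Caccio}, ellipticity and Young's inequality yield
\[\int \eta^2|\nabla u^\gamma|^2\,dX \le C\gamma^2 \int u^{2\gamma}|\nabla\eta|^2\,dX,\]
and the Sobolev embedding upgrades this to a reverse H\"older inequality with gain $\chi=n/(n-2)>1$ (or any $\chi>1$ in low dimensions). Iterating at geometric scales and exponents $\gamma_k=\chi^k$ produces $\sup_B u\le C\bigl(\fiint_{2B}u^2\,dX\bigr)^{1/2}$, and a standard self-improvement argument allows the exponent $2$ to be replaced by any positive exponent. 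Replacing $u$ by $(u+\varepsilon)^{-1}$ (which is a positive subsolution of a similar equation, up to sending $\varepsilon\to 0$) and repeating the same iteration gives the matching lower bound $\inf_B u\ge C^{-1}\bigl(\fiint_{2B}u^{-p}\,dX\bigr)^{-1/p}$ for every small $p>0$.

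The bridging step is to show that $\log u\in\mathrm{BMO}_{\mathrm{loc}}(2B)$ with a constant depending only on $n$ and the ellipticity constant. Testing the equation against $\varphi=\eta^2/u$ and expanding $\nabla\varphi = 2\eta\nabla\eta/u - \eta^2\nabla u/u^2$, the ellipticity together with Cauchy--Schwarz delivers the key estimate
\[\int \eta^2|\nabla\log u|^2\,dX \le C\int|\nabla\eta|^2\,dX,\]
which via Poincar\'e converts into a uniform oscillation bound for $\log u$ on arbitrary subballs of $2B$. John--Nirenberg then provides a small $p_0>0$ with $\bigl(\fiint_{2B}u^{p_0}\,dX\bigr)\bigl(\fiint_{2B}u^{-p_0}\,dX\bigr)\le C$, and chaining the three pieces yields $\sup_B u\le C\inf_B u$. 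The main obstacle is precisely this $\mathrm{BMO}$ estimate: unlike the iteration steps, which are essentially mechanical once Caccioppoli and Sobolev are in hand, it demands the specific nonlinear test function $\eta^2/u$ and is the one place where positivity of $u$ plays an essential role. This is Moser's central insight for divergence-form operators with merely bounded measurable coefficients.
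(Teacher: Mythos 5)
The paper states this lemma without proof, as a classical black-box result for divergence-form operators with merely bounded measurable coefficients (it is implicit in the references to \cite{KenigB} and the cited elliptic-theory literature). Your Moser-iteration outline is exactly the canonical proof of that classical result: the test functions $\eta^2 u^{2\gamma-1}$ for the local sup bound, the analogous iteration on a negative power of $u$ for the inf bound, the log-Caccioppoli estimate from $\varphi=\eta^2/u$ giving $\log u\in\mathrm{BMO}$, and John--Nirenberg to close the gap. Your identification of the $\mathrm{BMO}$ step as the crux, and of positivity as essential there, is correct. One small remark: the replacement $u\mapsto(u+\varepsilon)^{-1}$ does yield a subsolution (the map $t\mapsto t^{-1}$ is convex decreasing and $u$ is a supersolution), but the more common bookkeeping is simply to allow $\gamma<0$ in the same family of test functions $\eta^2 u^{2\gamma-1}$; both routes work. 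Since the paper offers no proof, there is no "paper's approach" to compare against, but what you wrote is the standard and correct one.
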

Write $L^*$ for the transpose of $L$ defined by 
$L^*=-\diver A^\top\nabla$, where $A^\top$ denotes the transpose matrix of $A$. Associated with $L$ and $L^*$ one can respectively construct the elliptic measures $\{\omega_L^X\}_{X\in \Omega}$ and $\{\omega_{L^*}^X\}_{X\in\Omega}$, and the Green functions $G_L$ and $G_{L^*}$ on domains with Ahlfors regular boundaries (cf. \cite{KenigB}, \cite{HMT}).
\begin{lemma}[The Green function]\label{lemagreen}
Suppose that $\Omega\subset\R^{n}$ is an open set such that $\partial\Omega$ is Ahlfors regular. Given an elliptic operator $L$, there exists a unique Green function $G_L(X,Y): \Omega \times \Omega \setminus  \diag(\Omega) \to \R$ 
with the following properties: 
$G_L(\cdot,Y)\in W^{1,2}_{\rm loc}(\Omega\setminus \{Y\})\cap C(\overline{\Omega}\setminus\{Y\})$, 
$G_L(\cdot,Y)\big|_{\pom}\equiv 0$ for any $Y\in\Omega$, 
and $L G_L(\cdot,Y)=\delta_Y$ in the weak sense in $\Omega$, that is,
\begin{equation*}\label{Greendef}
    \iint_\Omega A(X)\,\nabla_X G_{L}(X,Y) \cdot\nabla\vp(X)\, dX=\vp(Y), \qquad\text{for any }\vp \in C_c^\infty(\Omega).
\end{equation*}
In particular, $G_L(\cdot,Y)$ is a weak solution to $L G_L(\cdot,Y)=0$ in $\Omega\setminus\{Y\}$. Moreover,
\begin{equation}\label{eq2.green}
G_L(X,Y) \leq C\,|X-Y|^{2-n}\quad\text{for }X,Y\in\Omega\,,
\end{equation}
\begin{equation*}\label{eq2.green2}
c_\theta\,|X-Y|^{2-n}\leq G_L(X,Y)\,,\quad \text{if } \,\,\,|X-Y|\leq \theta\, \dist(X,\pom)\,, \,\, \theta \in (0,1)\,,
\end{equation*}
\begin{equation*}
\label{eq2.green3}
G_L(X,Y)\geq 0\,,\quad G_L(X,Y)=G_{L^*}(Y,X), \qquad \text{for all } X,Y\in\Omega\,,\, 
X\neq Y.
\end{equation*}
\end{lemma}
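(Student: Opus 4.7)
The proof plan is to construct $G_L$ by a standard approximation argument, establish the pointwise bounds via the De Giorgi--Nash--Moser theory, and deduce symmetry and uniqueness from the weak formulation. None of the individual steps is difficult in isolation; the only subtlety is upgrading the construction, which classically is carried out on domains that are more regular (e.g.\ NTA or with capacitary fatness), to the Ahlfors-regular boundary setting. Since $\partial\Omega$ is $(n-1)$-Ahlfors regular, every boundary point is Wiener regular for $L$, and this is what allows the standard scheme to go through.

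First I would exhaust $\Omega$ by an increasing sequence of smooth bounded subdomains $\Omega_k\Subset\Omega_{k+1}\Subset\Omega$. On each $\Omega_k$ the classical theory (see, e.g., Gr\"uter--Widman or the presentation in \cite{KenigB}) produces a Green function $G_L^{(k)}(\cdot,Y)\in W^{1,2}_{\mathrm{loc}}(\Omega_k\setminus\{Y\})\cap C(\overline{\Omega_k}\setminus\{Y\})$ vanishing on $\partial\Omega_k$ and solving $L G_L^{(k)}(\cdot,Y)=\delta_Y$ in $\Omega_k$. The key uniform estimate is the pointwise bound
\[
G_L^{(k)}(X,Y)\le C|X-Y|^{2-n},
\]
which follows from Moser's $L^\infty$ bound applied to $G_L^{(k)}(\cdot,Y)$ away from $Y$, combined with the standard $L^2$ estimate $\|G_L^{(k)}(\cdot,Y)\|_{L^{2n/(n-2)}(\Omega_k\setminus B(Y,r))}\lesssim r^{(2-n)/2}$ obtained by testing the equation against truncations. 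By Caccioppoli (Lemma \ref{Caccio}) the family $\{G_L^{(k)}(\cdot,Y)\}$ is uniformly bounded in $W^{1,2}_{\mathrm{loc}}(\Omega\setminus\{Y\})$, so up to a subsequence it converges locally uniformly and weakly in $W^{1,2}_{\mathrm{loc}}$ to a limit $G_L(\cdot,Y)$ that inherits the upper bound and solves $LG_L(\cdot,Y)=\delta_Y$ in the weak sense.

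The continuous extension by zero to $\partial\Omega$ is the step where Ahlfors regularity enters: for $x\in\partial\Omega$ and any $r\in(0,\diam\Omega)$ one has $\mathcal H^{n-1}(\partial\Omega\cap B(x,r))\gtrsim r^{n-1}$, which forces the complement $\Omega^c$ to carry enough capacity in $B(x,r)$ that the Wiener criterion holds uniformly. Consequently $G_L(\cdot,Y)$ extends continuously by $0$ on $\partial\Omega$ for each $Y\in\Omega$, giving the boundary behavior. The lower bound $G_L(X,Y)\ge c_\theta|X-Y|^{2-n}$ for $|X-Y|\le\theta\,\delta(Y)$ is obtained by comparing $G_L(\cdot,Y)$ inside $B(Y,\delta(Y))$ with the fundamental solution of $L$ via the Harnack inequality (Lemma \ref{Harnack}) and the equation $LG_L(\cdot,Y)=\delta_Y$; alternatively, one deduces it from the fact that on $B(Y,\delta(Y)/2)$ the difference $G_L(\cdot,Y)-\Gamma(\cdot-Y)$ (with $\Gamma$ the fundamental solution built by freezing coefficients at $Y$) is a bounded solution of an elliptic equation with controlled right-hand side.

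For the symmetry identity $G_L(X,Y)=G_{L^*}(Y,X)$ I would construct $G_{L^*}$ by the same procedure and then use the weak formulation on a bounded subdomain: for $X\ne Y$ and a small $\varepsilon>0$ use a truncated Green function $G_L^\varepsilon(\cdot,Y)$ and $G_{L^*}^\varepsilon(\cdot,X)$, plug each into the equation of the other, integrate by parts, and let $\varepsilon\to 0$; the boundary terms vanish because both functions are zero on $\partial\Omega$, and the interior $\delta$-masses yield the required identity. Uniqueness is then a short energy argument: if $\widetilde G$ is another candidate, then $\widetilde G-G_L$ is a weak solution of $L u=0$ in $\Omega$ that vanishes on $\partial\Omega$ and is locally bounded near $Y$ (since both have the same singularity), so it is identically zero by testing the equation against itself after a Moser truncation. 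The main obstacle is not any single calculation but ensuring that the approximation $\Omega_k\nearrow\Omega$ plays well with the boundary regularity: this is precisely where the Ahlfors-regular condition is crucial, guaranteeing the uniform Wiener regularity needed to pass the vanishing trace to the limit.
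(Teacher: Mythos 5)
The paper does not prove this lemma; it is stated as background and referenced to the literature (Kenig's CBMS notes \cite{KenigB} and \cite{HMT}). Your sketch is the standard construction those references carry out, and it is correct in outline: exhaustion by smooth subdomains, uniform pointwise bound via Moser and $L^{2n/(n-2)}$ estimates, compactness via Caccioppoli, boundary continuity from the capacity density condition implied by $(n-1)$-Ahlfors regularity, lower bound by comparison with a frozen-coefficient fundamental solution, symmetry by pairing the two weak formulations, and uniqueness by an energy argument.

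Two places would need more care if this were written out in full. First, the uniqueness step on an unbounded $\Omega$: testing $L(\widetilde G-G_L)=0$ against the difference itself is not available directly because the difference is only in $W^{1,2}_{\rm loc}$; one must test against $(\widetilde G-G_L)\eta^2$ for large cutoffs $\eta$ and then use the pointwise decay $\abs{X-Y}^{2-n}$ (and the resulting $L^2$ bound for the gradient far from $Y$) to kill the error terms before concluding $\nabla(\widetilde G-G_L)\equiv 0$, then invoke the zero boundary trace and connectedness. Second, the statement implicitly assumes $n\ge 3$ (the bound $\abs{X-Y}^{2-n}$ is the $n\ge 3$ profile; for $n=2$ the singularity is logarithmic), so your estimates are consistent with the paper's convention but the restriction should be noted. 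Neither of these is a gap in the approach; they are details that the cited references handle and that a complete write-up would need to reproduce.
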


The following lemma will be referred to as the CFMS estimates (cf. \cite{CFMS}, \cite{KenigB} for NTA domains, and \cite{HMT2} or \cite{DFMprelim2} for 1-sided CAD).

\begin{lemma}[The CFMS estimates] \label{LCFMS}
Let $\Omega$ be a 1-sided CAD domain. Let $L$ be an elliptic operator satisfying \eqref{defelliptic} and \eqref{defbounded}. There exist $C$ depending only on $n$, $C_\A$, and the $1$-sided CAD constants, such that for any $B:=B(x,r)$, with $x\in \pom$, $0<r<\diam(\pom)$ and $\Delta:=\Delta(x,r)$, we have the following properties. 
\begin{enumerate}
\item The elliptic measure is non-degenerate, that is
\[C^{-1} \leq \omega_L^{X_\Delta}(\Delta) \leq C.\]
\item For
$X\in \Omega\setminus 2\,B$ we have
\begin{equation}\label{eq.CFMS}
\frac1C\omega_L^X(\Delta)
\leq
r^{n-1}G_L(X,X_\Delta) \leq C \omega_L^X(\Delta).
\end{equation} 

\item If $0\leq u, v\in W^{1,2}_{\rm loc}(4\,B\cap\Omega)\cap C(\overline{4\,B\cap\Omega})$ are two nontrivial weak solutions of $Lu=Lv=0$ in $4\,B\cap\Omega$ such that $u=v=0$ in $4\,\Delta$, then
$$
C^{-1}\frac{u(X_\Delta)}{v(X_\Delta)}\le \frac{u(X)}{v(X)}\le C\frac{u(X_\Delta)}{v(X_\Delta)},\qquad\text{for all }X\in B\cap\Omega.
$$
\end{enumerate}

\end{lemma}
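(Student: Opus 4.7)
The plan is to prove the three items in order, since (1) provides the nondegeneracy that drives both (2) and (3). I would follow the classical CFMS--Jerison--Kenig strategy. The only technical wrinkle here is the lack of an exterior corkscrew condition (we assume only 1-sided CAD), which is compensated for by the fact that $\partial\Omega$ is Ahlfors regular: nonnegative $L$-solutions with vanishing trace are then H\"older continuous up to the boundary by a standard De~Giorgi--Nash--Moser argument, and this is enough to justify every maximum-principle step that follows.

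For part (1), the upper bound $\omega_L^{X_\Delta}(\Delta)\le 1$ is automatic since elliptic measure is a probability measure. For the lower bound I would use a Bourgain-type argument: by the corkscrew condition $B(X_\Delta,cr)\subset B\cap\om$, and combining the pointwise upper bound \eqref{eq2.green} on $G_L$ away from the pole, Harnack's inequality at $X_\Delta$, and the Ahlfors lower bound $\sigma(\Delta)\gtrsim r^{n-1}$, one shows that a definite portion of boundary mass must be ``seen'' from $X_\Delta$, which translates to $\omega_L^{X_\Delta}(\Delta)\gtrsim 1$. For part (2), I would observe that $X\mapsto \omega_L^X(\Delta)$ and $X\mapsto r^{n-1}G_L(X,X_\Delta)$ are both nonnegative $L$-solutions in $\om\setminus 2B$ with vanishing trace on $\partial\Omega\setminus 2B$. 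On the interior portion of $\partial(2B)$ both are comparable to one and the same positive constant, thanks to part (1) combined with Harnack chains for the elliptic measure, and thanks to \eqref{eq2.green} together with its lower counterpart for the Green function; the maximum principle, valid by the up-to-the-boundary H\"older regularity noted above, then yields \eqref{eq.CFMS} in both directions.

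For part (3), the boundary Harnack inequality, I would normalize $u(X_\Delta)=v(X_\Delta)=1$ and use parts (1)--(2) to show that on $B\cap\om$ both $u$ and $v$ are two-sidedly comparable to $X\mapsto \omega_L^X(2\Delta)$. The upper comparison comes from the maximum principle applied in $4B\cap\om$ together with the representation of $u$ via elliptic measure, splitting the trace into the part on $4\Delta$ (where it vanishes) and the bounded part far away. The lower comparison uses Harnack chains between $X$ and $X_\Delta$ to transfer the normalization $u(X_\Delta)=1$ to the quantitative lower bound near the boundary. Dividing the bounds for $u$ by those for $v$ gives the required pointwise ratio estimate. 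The step that requires the most care, and which I expect to be the main obstacle, is the lower comparison $u(X)\gtrsim u(X_\Delta)\,\omega_L^X(2\Delta)$ near $\partial\Omega$: this is precisely where the absence of exterior corkscrews would traditionally bite, but here it is circumvented by recasting the usual reflection/barrier arguments purely in terms of $\omega_L$ and $G_L$, using Ahlfors regularity and the interior corkscrew as the only geometric inputs.
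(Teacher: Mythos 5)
The paper does not prove Lemma \ref{LCFMS}; it is stated as a known result and the reader is referred to \cite{CFMS}, \cite{KenigB} for NTA domains and to \cite{HMT2}, \cite{DFMprelim2} for the 1-sided CAD setting. Your sketch follows the standard CFMS/Jerison--Kenig strategy that those references implement, so in that sense it is the ``same approach'' as the paper relies on.

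That said, a few places in your outline compress steps that are genuinely delicate and are where the cited references spend most of their effort. In part (2), saying that both $\omega_L^{\cdot}(\Delta)$ and $r^{n-1}G_L(\cdot,X_\Delta)$ are ``comparable to one and the same positive constant'' on $\partial(2B)\cap\Omega$ is not quite right as stated: near $\partial\Omega\cap\partial(2B)$ both quantities degenerate, so the comparison on that sphere cannot be to a constant. What one actually does is first prove the inequality with the maximum principle applied in $\Omega\setminus\overline{2B}$, estimating each function at a corkscrew point on $\partial(2B)$ via part (1), Harnack, and the Green-function size bounds, and then transfer across the sphere with a chaining/Carleson-type argument rather than a flat constant. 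In part (3), the two-sided comparison of a local nonnegative solution with zero trace to $X\mapsto\omega_L^X(2\Delta)$ is precisely the content of the boundary Harnack principle; the Carleson estimate gives $u\lesssim u(X_\Delta)$ on $B\cap\Omega$, and upgrading this to $u(X)\lesssim u(X_\Delta)\,\omega_L^X(2\Delta)$, as well as obtaining the matching lower bound, uses a dyadic iteration (or a Green-function representation in the truncated domain $4B\cap\Omega$) that your sketch leaves implicit. You are right that the absence of exterior corkscrews is what makes this nontrivial and that the Ahlfors regularity of $\partial\Omega$ (giving H\"older continuity of solutions with zero trace) is the substitute input; this is indeed how \cite{HMT2} and \cite{DFMprelim2} handle it. In short, the outline is sound and aligned with the literature, but a self-contained proof would need to fill in the iteration behind the Carleson estimate and the maximum-principle step in (2) with considerably more care.
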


\section{Characterization of the uniform rectifiability}

\label{SUR}

In all this section, we assume that $\partial \Omega$ is uniformly rectifiable, and we plan to prove a corona decomposition of the uniformly rectifiable set which is ``Tolsa's $\alpha$-number compatible''. 

Instead of a long explanation of the section, which will not be helpful anyway to any reader who is not already fully familiar with the corona decomposition (C3) in \cite{DS1} and the Tolsa $\alpha$-number (see \cite{Tolsa09}), we shall only state below the results proved in the section (the definition of all the notions and notation will be ultimately given in the section below).

\begin{lemma} \label{LcoronaG}
Let $\partial \Omega$ be a uniformly rectifiable set. Given any positive constants $0 <\epsilon_1 < \epsilon_0 < 1$, there exists a disjoint decomposition $\D_{\pom} = \G \cup \B$ such that
\begin{enumerate}[(i)]
\item The ``good'' cubes $Q\in \G$ are such that $\alpha_\sigma(Q) \leq \epsilon_1$ and
\begin{equation}
\sup_{y \in 999\Delta_Q} \dist(y,P_Q) + \sup_{p\in P_Q \cap 999B_Q} \dist(p,\partial \Omega) \leq \epsilon_1 \ell(Q).
\end{equation}
\item The collection $\G$ of ``good'' cubes can be further subdivided into a disjoint family $\G = \ds \bigcup_{\cS \in \mathfrak S} \cS$ of {\bf coherent} regimes such that for any $\cS\in \mathfrak S$, there a hyperplane $P:=P_{\cS}$ and a $2\epsilon_0$-Lipschitz function $\mathfrak b_\cS:= \bb$ on $P$ such that  
\begin{equation} \label{newstatement}
    \int_{P\cap \Pi(2B_Q)} \dist(\mathfrak b(p),P_Q) \, dp \leq C \ell(Q) \sigma(Q)  \alpha_\sigma(Q) \qquad \text{ for } Q\in \cS,
\end{equation}
where $C$ depends only on $n$.
\item The cubes in $\B$ (the ``bad'' cubes) and the maximal cubes $Q(\cS)$ satisfies the Carleson packing condition
 \begin{equation}
\sum_{\begin{subarray}{c} Q\in \B \\ Q \subset Q_0 \end{subarray}} \sigma(Q) + \sum_{\begin{subarray}{c} \cS\in \mathfrak S \\ Q(\cS) \subset Q_0 \end{subarray}} \sigma(Q(\cS)) \leq C_{\epsilon_0,\epsilon_1} \sigma(Q_0) \qquad \text{ for all } Q_0\in \D_{\partial \Omega}.
\end{equation}
\end{enumerate}
\end{lemma}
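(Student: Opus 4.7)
The plan is to build on the classical David--Semmes corona decomposition (C3) from \cite{DS1} for the uniformly rectifiable set $\partial\Omega$, refining it in two ways: adding an extra stopping rule driven by Tolsa's $\alpha$-numbers and the pointwise flatness condition in (i), and then constructing the Lipschitz graph $\mathfrak b_\cS$ in a $\sigma$-sensitive way so that (\ref{newstatement}) becomes a matter of testing against the definition of $\alpha_\sigma(Q)$. First I would take the David--Semmes decomposition at a parameter comparable to $\epsilon_0$, which gives a first partition into bad cubes and coherent regimes on which $\partial\Omega$ lies within Hausdorff distance $\epsilon_0\ell(Q)$ of a single Lipschitz graph $\Gamma_\cS$ of small slope and is well approximated by a plane $P_Q$ at each scale. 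Then I would impose additional stopping whenever $\alpha_\sigma(Q) > \epsilon_1$ or whenever the pointwise flatness of (i) fails; intersecting each original coherent regime with the corresponding stopping-time region produces the final collection $\mathfrak S$. Coherence is preserved because each stopping rule is hereditary.

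The Carleson packing assertion (iii) splits into three pieces. The packing of David--Semmes bad cubes and maximal cubes of the initial regimes is part of (C3). The packing of cubes where the pointwise flatness $\ell(Q)^{-1}\sup_{y\in 999\Delta_Q}\dist(y,P_Q) + \ell(Q)^{-1}\sup_{p\in P_Q\cap 999B_Q}\dist(p,\partial\Omega) > \epsilon_1$ follows from a Dorronsoro-type estimate for UR sets (equivalently, the fact that the $\beta_\infty$-numbers, or their bilateral version, form a Carleson sequence on UR sets). The packing of cubes with $\alpha_\sigma(Q) > \epsilon_1$ is exactly Tolsa's theorem from \cite{Tolsa09}. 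Summing these three packing bounds yields (iii) with $C_{\epsilon_0,\epsilon_1}$ growing polynomially in $1/\epsilon_1$.

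The heart of the proof, and the main obstacle, is (\ref{newstatement}). For each final coherent regime $\cS$, I would pick $P = P_\cS$ as the plane associated to $Q(\cS)$ and then, in the spirit of Semmes's Lipschitz-graph construction inside a DS corona, define $\mathfrak b_\cS$ on $P$ as a partition-of-unity regularization whose averages are taken against $\sigma$ rather than Lebesgue measure: away from the projection of the stopping cubes, $\mathfrak b(p)$ is an average over $\partial\Omega$-points whose $\Pi$-image lies in a ball of radius comparable to the distance from $p$ to the stopping set, weighted by $\sigma$ and a smooth cutoff. For $Q \in \cS$ and $p \in \Pi(2B_Q)$, this averaging scale is bounded by $\ell(Q)$, so the averaging takes place inside a bounded dilate of $B_Q$. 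On that scale, $P_Q$ varies affinely, so $\dist(\mathfrak b(p),P_Q)$ is bounded by the $\sigma$-average of $\dist(y,P_Q)$ over the same ball. Integrating over $p\in P\cap\Pi(2B_Q)$ and applying Fubini then bounds the left-hand side of (\ref{newstatement}) by a constant multiple of
\[
\int_{CB_Q\cap\partial\Omega} \dist(y,P_Q)\,d\sigma(y).
\]
To control this $\sigma$-integral by $\ell(Q)\sigma(Q)\alpha_\sigma(Q)$, I would test the definition of Tolsa's $\alpha$-number against the Lipschitz function $f(y) = \chi_Q(y)\dist(y,P_Q)$ (with $\chi_Q$ a unit-Lipschitz cutoff supported in the ball used to compute $\alpha_\sigma(Q)$, normalized by $\ell(Q)^{-1}$ to make $f$ have Lipschitz norm of order $1$). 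The flat measure $c\mathcal H^d|_{P_Q}$ gives zero contribution because $\dist(\cdot,P_Q)$ vanishes on $P_Q$, so the definition of $\alpha_\sigma(Q)$ yields exactly the required bound. The delicate point is to construct $\mathfrak b_\cS$ globally on $P$ so that, simultaneously, it is $2\epsilon_0$-Lipschitz, and at every scale $Q\in\cS$ the averaging radius is no larger than $\ell(Q)$; this is what forces the use of a Whitney-type partition of unity on $P\setminus Z_\cS$ (where $Z_\cS$ is the projection of the stopping cubes), matching David--Semmes's original scheme but performed with $\sigma$-weights. Everything else, including the comparison between $\mathfrak b_\cS$ and $\Gamma_\cS$ via the slope condition, is then a mechanical verification.
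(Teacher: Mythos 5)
Your overall scheme mirrors the paper's: start from the David--Semmes corona decomposition, add a stopping rule driven by $\alpha_\sigma$, further subdivide the original coherent regimes, then build a Lipschitz graph inside each regime and test the $L^1$ deviation against the definition of $\alpha_\sigma(Q)$. Two remarks, one minor, one substantive.

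The minor one concerns (i) and the packing of the set of non-flat cubes. You pack the large-$\alpha_\sigma$ cubes (Tolsa) and the non-flat cubes (Dorronsoro/$\beta_\infty$) as two separate families. The paper avoids the second sum entirely: Lemma~\ref{Lalphatobeta} shows that $\alpha_\sigma(Q)\le C^{-n}\epsilon_1^n$ already \emph{implies} the bilateral flatness in (i), so once one stops on $\alpha_\sigma$ the flatness is automatic. This is cleaner and removes a dependence on a second Carleson packing result, although your version is not wrong.

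The substantive issue is your construction of $\mathfrak b_\cS$ and the bound $\dist(\mathfrak b(p),P_Q)\lesssim \fint \dist(y,P_Q)\,d\sigma(y)$. If $\mathfrak b(p)$ is the $\sigma$-weighted average of the full points $y\in\partial\Omega$, convexity does give this bound, but then $\Pi(\mathfrak b(p))\ne p$ in general and you have not produced a graph over $P$. If instead (as you must, to define a Lipschitz function $b:P\to P^\bot$) you average only the $\Pi^\bot$-coordinates and set $\bb(p):=(p,b(p))$, Jensen does not apply directly: the displaced projections $\Pi(y)-p$ contribute, via the tilt $\Angle(P,P_Q)\le\epsilon_0$, an error which is naively of size $\epsilon_0\, d(p)\lesssim\epsilon_0\,\ell(Q)$, and $\epsilon_0$ is \emph{not} dominated by $\alpha_\sigma(Q)$. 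One can rescue this by exploiting the near-symmetry of the averaging kernel (the first moment of $\Pi(y)-p$ against $\sigma$ is itself $O(\ell(Q)\alpha_\sigma(Q))$ by testing a linear function against $\alpha_\sigma$), but this is exactly the kind of subtlety the paper's construction sidesteps. There, on each Whitney cube $R$ of $P\setminus Z$ the function $b_R$ is taken as the \emph{affine $L^1(\sigma)$-minimizer} (equation \eqref{defbR}); minimality against the competitor $a_Q$ whose graph is $P_Q$ immediately converts $\int_{2R}|b_R-a_Q|$ into the $\sigma$-integral of $\dist(y,P_Q)$ with no $\epsilon_0$ loss, and the affine structure of both $b_R$ and $a_Q$ together with the flatness of $\partial\Omega$ (many well-spread points in $2R$) furnish the passage from the $dp$-integral to the $d\sigma$-integral. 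You should either reproduce that minimization device or carry out the moment-cancellation argument explicitly; as written, the inequality you assert by ``$P_Q$ varies affinely'' has a gap. You also leave the set $Z=\{d(p)=0\}$ unaddressed (your averaging radius degenerates there), whereas the paper treats the $Z$-part of the integral separately via the graph relation $\bb(p)\in\partial\Omega$ and the same test-function trick.
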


In the above lemma, $\sigma$ is the Ahlfors regular measure for $\partial \Omega$ given in \eqref{defADR}, $\mathbb D_{\partial \Omega}$ is a dyadic decomposition of $\partial \Omega$, $\Pi:=\Pi_{\cS}$ is the orthogonal projection on $P_\cS$, $P_Q$ is the best approximating plane of $\partial \Omega$ around $Q$, and $\alpha_\sigma$ is the Tolsa $\alpha$-number for $\sigma$. The novelty, which is not similar to any of the corona decompositions that the authors are aware of, is \eqref{newstatement}, which quantify the difference between $\partial \Omega$ and the approximating graph $\Gamma_\cS$ in terms of $\alpha$-number.

Corona decompositions are a useful and popular tool in the recent literature pertaining to uniformly rectifiable sets, see for instance \cite{DS1}, \cite{HMMDuke}, \cite{GMT}, \cite{AGMT}, \cite{BH}, \cite{AHMMT}, \cite{BHHLN}, \cite{MT}, \cite{CHM}, \cite{MPT} to cite only a few.

\subsection{Dyadic decomposition} \label{SSdyadic} 
We construct a dyadic system of pseudo-cubes on $\partial \Omega$. In the presence of the Ahlfors regularity property,  such construction appeared for instance  in \cite{David88}, \cite{David91}, \cite{DS1} or \cite{DS2}. We shall use the very nice construction of Christ \cite{Ch}, that allow to bypass the need of a measure on $\partial \Omega$. More exactly, one can check that the construction of the dyadic sets by Christ to not require a measure, and as such are independent on the measure on $\partial \Omega$.

There exist a universal constant $0<a_0<1$ and a collection $\mathbb{D}_{\partial \Omega} = \cup_{k \in \mathbb Z} \mathbb{D}_{k}$ of Borel subsets of $\partial \Omega$, with the following properties. We write 
\[\mathbb{D}_{k}:=\{Q_{j}^k\subset \D_{\partial \Omega}: j\in \mathfrak{I}_k\}, \]
where $\mathfrak{I}_k$ denotes some index set depending on $k$, but sometimes, to lighten the notation, we shall forget about the indices and just write $Q \in \mathbb{D}_k$ and refer to $Q$ as a cube (or pseudo-cube) of generation $k$. Such cubes enjoy the following properties:

\begin{enumerate}[(i)]

\item $\partial \Omega =\cup_{j} Q_{j}^k \,\,$ for any $k \in \mathbb Z$.

\item If $m > k$ then either $Q_{i}^{m}\subseteq Q_{j}^{k}$ or
$Q_{i}^{m}\cap  Q_{j}^{k}=\emptyset$.

\item $Q_i^m \cap Q_j^m=\emptyset$ if $i\neq j$. 

\item Each pseudo-cube $Q\in\mathbb{D}_k$ has a ``center'' $x_Q\in \D$ such that
\begin{equation}\label{cube-ball}
\Delta(x_Q,a_02^{-k})\subset Q \subset \Delta(x_Q,2^{-k}).
\end{equation}
\end{enumerate}

Let us make a few comments about these cubes. We decided to use a dyadic scaling (by opposition to a scaling where the ratio of the sizes between a pseudo-cube and its parent is, in average, $\epsilon < \frac12$) because it is convenient.
The price to pay for forcing a dyadic scaling is that if $Q \in \mathbb{D}_{k+\ell}$ and $R$ is the cube of $\mathbb{D}_k$ that contains $Q$ (it is unique by ($ii$), and it is called an ancestor of $Q$) is {\em not} necessarily strictly larger (as a set) than $Q$.
We also considered that the $\partial \Omega$ was unbounded, to avoid separating cases. If the boundary $\partial \Omega$ is bounded, then $\mathbb D_{\partial \Omega} := \bigcup_{k\leq k_0} \mathbb D_k$ where $k_0$ is such that $2^{k_0-1} \leq \diam(\Omega) \leq 2^{k_0-1}$, and we let the reader check that this variation doesn't change a single argument in the sequel.

If $\mu$ is any doubling measure on $\partial \Omega$ - that is if $\mu(2\Delta) \leq C_\mu \mu(\Delta)$ for any boundary ball $\Delta \subset \partial \Omega$ - then we have the following extra property:
\begin{enumerate}[(i)] \addtocounter{enumi}{4}
\item $\mu(\partial Q_i^k) = 0$ for all $i,k$.
\end{enumerate}
In our construction, (i) and (iii) forces the $Q_i^k$ to be neither open nor closed. But this last property (v) means that taking the interior or the closure of $Q_i^k$ instead of $Q_i^k$ would not matter, since the boundary amounts to nothing. 

Let us introduce some extra notation. When $E \subset \partial \Omega$ is a set, $\D_{\partial \Omega}(E)$ is the sub-collection of dyadic cubes that are contained in $E$. When $Q\in \mathbb D_{\partial \Omega}$, we write $k(Q)$ for the generation of $Q$ and $\ell(Q)$ for $2^{-k(Q)}$, which is roughly the diameter of $Q$ by \eqref{cube-ball}. We also use $B_Q \subset \R^n$ for the ball $B(x_Q,\ell(Q))$ and $\Delta_Q$ for the boundary ball $\Delta(x_Q,\ell(Q))$ that appears in \eqref{cube-ball}. For $\kappa \geq 1$, the dilatation $\kappa Q$ is
\begin{equation} \label{defkappaQ}
\kappa Q = \{x\in \partial \Omega, \, \dist(x,Q) \leq (\kappa - 1) \ell(Q)\},
\end{equation}
which means that $\kappa Q \subset \kappa \Delta_Q \subset (\kappa+1) Q$.

\medskip

The dyadic decomposition of $\partial \Omega$ will be the one which is the most used. However, we also use dyadic cubes for other sets, for instance to construct Whitney regions, and we use the same construction and notation as the one for $\partial \Omega$. In particular, we will use dyadic cubes in $\R^n$ and in a hyperplane $P$ that still satisfy \eqref{cube-ball} for the universal constant $a_0$ - i.e. the dyadic cubes are not real cubes - and the definition \eqref{defkappaQ} holds even in those contexts.

\subsection{Tolsa's $\alpha$ numbers}

Tolsa's $\alpha$ numbers estimate how far a measure is from a flat measure, using Wasserstein distances. We denote by $\Xi$ the set of affine $n-1$ planes in $\R^n$, and for each plane $P\in \Xi$, we write $\mu_P$ for the restriction to $P$ of the $(n-1)$-dimensional Hausdorff measure, that is $\mu_P$ is the Lebesgue measure on $P$. A flat measure is a measure $\mu$ that can be written $\mu = c\mu_P$ where $c>0$ and $P \in \Xi$, the set of flat measure is then denoted by $\mathcal F$.

\begin{definition}[local Wasserstein distance]
If $\mu$ and $\sigma$ are two $(n-1)$-Ahlfors regular measures on $\R^n$, and if $y \in \R^n$ and $s>0$, we define
\[\dist_{y,s}(\mu,\sigma) := s^{-n} \sup_{f\in Lip(y,s)} \left| \int f\, d\mu - \int f \, d\sigma \right|\]
where $Lip(y,s)$ is the set of $1$-Lipschitz functions that are supported in $B(y,s)$.

If $Q \in \D_{\partial \Omega}$, then we set $\dist_{Q}(\mu,\sigma) := \dist_{x_Q,10^3\ell(Q)}(\mu,\sigma)$ and $Lip(Q) := Lip(x_Q,10^3\ell(Q))$, where $x_Q$ is as in \eqref{cube-ball}. Moreover, if $\sigma$ is an Ahlfors regular measure on $\partial \Omega$, then we set
\[\alpha_\sigma(Q) := \inf_{\mu \in \mathcal F} \dist_Q(\mu,\sigma).\]
\end{definition}
Note that 
\begin{equation}\label{eqalphaQbdd}
    0\le\alpha_\sigma(Q)\le C \qquad\text{for all }Q\in\D_{\dr\Omega}
\end{equation} 
where $C<\infty$ depends only on the Ahlfors constants of $\mu$ and $\sigma$. 

The uniform rectifiability of $\partial \Omega$ is characterized by the fact that, for any $(n-1)$-Ahlfors regular measure $\sigma$ supported on $\partial \Omega$, and any $Q_0\in\D_{\pom}$, we have  
\begin{equation}\label{defUR}
\sum_{Q\in \D_{\partial \Omega}(Q_0)} \alpha_\sigma(Q)^2 \sigma(Q) \leq C \sigma(Q_0) \approx \ell(Q_0)^{n-1}
\end{equation}
and, for any $\epsilon>0$,
\begin{equation}\label{geomlem}
\sum_{\begin{subarray}{c} Q\in \D_{\partial \Omega}(Q_0)\\ \alpha_\sigma(Q) > \epsilon  \end{subarray}} \sigma(Q) \leq C_\epsilon \sigma(Q_0) \approx \ell(Q_0)^{n-1}.
\end{equation}
For a proof of these results, see Theorem 1.2 in \cite{Tolsa09}. 

It will be convenient to introduce the following notation. Given $Q\in \D_{\partial \Omega}$, the quantities $c_Q$, $P_Q$, and $\mu_Q$ are such that 
\begin{equation}\label{defmuQ}
\mu_Q = c_Q \mu_{P_Q} \quad \text{ and } \quad \dist_Q(\sigma,\mu_Q) \leq 2 \alpha_\sigma(Q),
\end{equation}
that is $\mu_Q$ is a flat measure which well approximates $\sigma$ (as long as $\alpha_\sigma(Q)$ is small). So it means that 
\begin{equation} \label{ftestinalpha}
\left| \int f\, d\sigma - \int f \, d\mu_Q \right| \leq 2(10^3\ell(Q))^{n} \alpha_\sigma(Q) \qquad \text{ for } f\in Lip(Q). 
\end{equation}

Let us finish the subsection with the following simple result.

\begin{lemma} \label{Lalphatobeta}
There exists $C>1$ depending only on $C_\sigma$ and $n$ such that if $Q \in \D_{\partial \Omega}$ and $\epsilon\in (0,C^{-n})$ verify $\alpha_\sigma(Q) \leq \epsilon$, then 
\begin{equation} \label{Lab1}
\sup_{y \in 999\Delta_Q} \dist(y,P_Q) + \sup_{p\in P_Q \cap 999B_Q} \dist(p,\partial \Omega) \leq C \epsilon^{1/n} \ell(Q).
\end{equation}
\end{lemma}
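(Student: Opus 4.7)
The strategy is to plug carefully chosen Lipschitz bump functions into the Wasserstein-type inequality \eqref{ftestinalpha} and to exploit the Ahlfors regularity of $\sigma$ to convert smallness of $\alpha_\sigma(Q)$ into smallness of the two one-sided Hausdorff distances between $\partial\Omega \cap 999\Delta_Q$ and $P_Q \cap 999 B_Q$.

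\emph{Step 1 (a priori reduction).} I first argue that, for $\epsilon$ smaller than some universal constant $\epsilon_\ast$, both quantities on the left of \eqref{Lab1} are automatically at most $2\ell(Q)$. Indeed, if some $y \in 999\Delta_Q \cap \partial\Omega$ satisfied $\dist(y,P_Q) > 2\ell(Q)$, then the 1-Lipschitz function $f(z):=\max(0,\ell(Q)-|z-y|)$ lies in $Lip(Q)$, vanishes on $P_Q$, and by Ahlfors regularity gives $\int f\,d\sigma \gtrsim \ell(Q)^n$ while $\int f\,d\mu_Q = 0$; by \eqref{ftestinalpha} this forces $\alpha_\sigma(Q) \gtrsim 1$, a contradiction for small $\epsilon$. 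A symmetric argument (using that $c_Q$ is bounded below, see Step 3) handles the $P_Q$-to-$\partial\Omega$ direction.

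\emph{Step 2 (bound on $\sup_{y\in 999\Delta_Q}\dist(y,P_Q)$).} Fix $y \in 999\Delta_Q$ and set $r := \dist(y,P_Q)$; by Step 1 we may assume $r \le 2\ell(Q)$, so that $B(y,r/2) \subset B(x_Q, 10^3\ell(Q))$. Choose the test function
\[
f(z) := \max\bigl(0,\tfrac{r}{2}-|z-y|\bigr),
\]
which is $1$-Lipschitz, supported in $B(y,r/2)$, and hence belongs to $Lip(Q)$. Since $B(y,r/2)\cap P_Q = \emptyset$, we have $\int f\,d\mu_Q = 0$. Using $y \in \partial\Omega$ and \eqref{defADR},
\[
\int f\,d\sigma \;\ge\; \tfrac{r}{4}\,\sigma(\Delta(y,r/4)) \;\ge\; C_\sigma^{-1}\bigl(\tfrac{r}{4}\bigr)^n.
\]
Inserting this into \eqref{ftestinalpha} yields $(r/4)^n \le 2C_\sigma\,(10^3\ell(Q))^n\,\alpha_\sigma(Q)$, i.e.\ $r \le C\epsilon^{1/n}\ell(Q)$, which is the first half of \eqref{Lab1}.

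\emph{Step 3 (lower bound on $c_Q$ and bound on $\sup_{p\in P_Q \cap 999 B_Q}\dist(p,\partial\Omega)$).} To prove $c_Q \ge c_0 > 0$ for $\epsilon$ small, I test \eqref{ftestinalpha} with a function $g\in Lip(Q)$ that equals $\ell(Q)$ on $B_Q$ and is supported in $2B_Q$: Ahlfors regularity gives $\int g\,d\sigma \gtrsim \ell(Q)^n$, while $\int g\,d\mu_Q \le c_Q\cdot C\ell(Q)^n$; combining with \eqref{ftestinalpha} we get $c_Q \ge C^{-1} - C\epsilon \ge c_0$ for small $\epsilon$. Now, given $p\in P_Q\cap 999 B_Q$ with $r:=\dist(p,\partial\Omega)\le 2\ell(Q)$, take
\[
f(z):=\max\bigl(0,\tfrac{r}{2}-|z-p|\bigr)\in Lip(Q).
\]
Then $\int f\,d\sigma=0$ since $f$ is supported in $B(p,r/2)$ which is disjoint from $\partial\Omega$, while $f \ge r/4$ on the $(n-1)$-dimensional disk $P_Q\cap B(p,r/4)$, giving
\[
\int f\,d\mu_Q \;=\; c_Q\int_{P_Q\cap B(p,r/2)} f\,d\mu_{P_Q} \;\ge\; c_0\, C^{-1} r^n.
\]
Invoking \eqref{ftestinalpha} a final time, $r^n \lesssim \epsilon\,\ell(Q)^n$, hence $r\le C\epsilon^{1/n}\ell(Q)$, completing \eqref{Lab1}.

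The only real subtlety is the a priori step and the lower bound on $c_Q$; both are handled by a standard ``flatness measure is comparable to area'' argument via a single additional Lipschitz test function, so no deep input beyond Ahlfors regularity and the definition of $\alpha_\sigma$ is needed.
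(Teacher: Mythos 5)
Your proof is correct and takes essentially the same approach as the paper: Lipschitz bump functions tested against \eqref{ftestinalpha}, together with Ahlfors regularity and a lower bound on $c_Q$. The only differences are cosmetic: you parameterize the bump radius by the unknown distance $r$ (hence the a priori Step~1), while the paper fixes the radius $\eta\ell(Q)$ with $\eta\approx\epsilon^{1/n}$ and shows the distance cannot exceed it, and the paper extracts the lower bound on $c_Q$ as a byproduct of the same computation rather than via a separate test function.
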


\bp Assume that $\alpha_\sigma(Q) \leq \epsilon =  8000^{-n} C_\sigma^{-1} \eta ^{n}$ with $\eta \in (0,1)$. 
For a given point $y\in 999\Delta_Q$, we set the function $f_1(z):= \max\{0,\eta \ell(Q) - |y-z|\} \in Lip(Q)$. Observe that 
\[\int f_1 d\sigma \geq \frac{\eta\ell(Q)}{2} \sigma\br{\frac{\eta}2 \Delta_Q} \geq C_\sigma^{-1} \Big(\frac{\eta \ell(Q)}2\Big)^n.\]
and thanks to \eqref{ftestinalpha}
\[\begin{split} 8000^{-n} C_\sigma^{-1} \eta^{n} & > \alpha_\sigma(Q) \geq \frac{(1000\ell(Q))^{-n}}2 \dist_Q(\sigma,\mu_Q) \\ 
& \geq (2000\ell(Q))^{-n} \left| \int f_1 \, d\sigma - \int f_1 \, d\mu_Q \right|.
\end{split}\]
By combining the two inequalities above, we have
\[\left| \int f_1 \, d\sigma - \int f_1 \, d\mu_Q \right| \leq C_\sigma^{-1} \Big(\frac{\eta \ell(Q)}4\Big)^n \leq \frac12 \int f_1 d\sigma.\]
So necessarily, the support of $f_1$ intersects the support of $\mu_Q$, that is $ \dist(y,P_Q) \leq \eta \ell(Q)$ and the first part of \eqref{Lab1} is proved. But notice also that the same computations force the constant $c_Q$ in the flat measure $\mu_Q = c_Q \mu_{P_Q}$ to be larger than $(2c_{n-1}C_\sigma)^{-1}$, where $c_n$ is the volume of the $n$-dimensional unit ball. We take now a point $p\in P_Q \cap 999B_Q$ and construct $f_2:= \max\{0,\eta \ell(Q) - |p-z|\} \in Lip(Q)$. We have
\[\begin{split}
\left| \int f_2 \, d\sigma - \int f_2 \, d\mu_Q \right| & \leq 2(1000\ell(Q))^n \alpha_\sigma(Q) < C_\sigma^{-1} \Big(\frac{\eta \ell(Q)}4\Big)^n < \int f_2 \, d\mu_Q.
\end{split}\]
So necessarily, the the support of $f_1$ intersects the support of $\sigma$, that is $ \dist(p,\partial \Omega) \leq \eta \ell(Q)$. The lemma follows. \ep

\subsection{Corona decomposition}

We first introduce the notion of coherent subset of $\D_{\partial \Omega}$.

\begin{definition}
Let $\mathcal S \subset \D_{\partial \Omega}$. We say that $\mathcal S$ is {\bf coherent} if
\begin{enumerate}[(a)]
\item $\cS$ contains a unique maximal element $Q(\mathcal S)$, that is $Q(\mathcal S)$ contains all the other element of $\cS$ as subsets.
\item If $Q\in \cS$ and $Q \subset R \subset Q(\cS)$, then $R\in \cS$.
\item Given a cube $Q \in \cS$, either all its children belong to $\cS$ or none of them do.
\end{enumerate}
\end{definition}

The aim of the section is to prove the following corona decomposition for a uniformly rectifiable boundary $\partial \Omega$.

\begin{lemma} \label{Lcorona}
Let $\partial \Omega$ be a uniformly rectifiable set. Given any positive constants $\epsilon_1 < \epsilon_0 \in (0,1)$, there exists a disjoint decomposition $\D_{\pom} = \G \cup \B$ such that
\begin{enumerate}[(i)]
\item The ``good'' cubes $Q\in \G$ are such that $\alpha_\sigma(Q) \leq \epsilon_1$ and
\begin{equation} \label{defP'Qb}
\sup_{y \in 999\Delta_Q} \dist(y,P_Q) + \sup_{p\in P_Q \cap 999B_Q} \dist(p,\partial \Omega) \leq \epsilon_1 \ell(Q).
\end{equation}
\item The collection $\G$ of ``good'' cubes can be further subdivided into a disjoint family $\G = \ds \bigcup_{\cS \in \mathfrak S} \cS$ of {\bf coherent} regimes that satisfy
\begin{equation} \label{angleS}
\Angle(P_Q,P_{Q'}) \leq \epsilon_0 \qquad \text{ for all } \cS \in \mathfrak S \text{ and } Q,Q'\in \cS.
\end{equation}
\item The cubes in $\B$ (the ``bad'' cubes) and the maximal cubes $Q(\cS)$ satisfies the Carleson packing condition
 \begin{equation} \label{packingBS}
\sum_{\begin{subarray}{c} Q\in \B \\ Q \subset Q_0 \end{subarray}} \sigma(Q) + \sum_{\begin{subarray}{c} \cS\in \mathfrak S \\ Q(\cS) \subset Q_0 \end{subarray}} \sigma(Q(\cS)) \leq C_{\epsilon_0,\epsilon_1} \sigma(Q_0) \qquad \text{ for all } Q_0\in \D_{\partial \Omega}.
\end{equation}
\end{enumerate}
\end{lemma}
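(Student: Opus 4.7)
The proof follows the template of the classical David-Semmes corona decomposition (\cite{DS1}), with the Tolsa $\alpha$-number playing the role of the flatness indicator. The ingredients we will exploit are the two global estimates on $\alpha_\sigma$ at our disposal: the geometric packing \eqref{geomlem}, and Tolsa's $L^2$-estimate \eqref{defUR}.

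\textbf{Step 1: the bad cubes.} By Lemma~\ref{Lalphatobeta}, there is $C=C(n,C_\sigma)$ such that $\alpha_\sigma(Q)\leq \epsilon$ forces $\sup_{y\in 999\Delta_Q}\dist(y,P_Q)+\sup_{p\in P_Q\cap 999B_Q}\dist(p,\pom)\leq C\epsilon^{1/n}\ell(Q)$. Choose $\epsilon_1':=\min\{\epsilon_1,(\epsilon_1/C)^n\}$ and set $\B:=\{Q\in\D_{\pom}:\alpha_\sigma(Q)>\epsilon_1'\}$. The flatness condition \eqref{defP'Qb} is then automatic on $\G:=\D_{\pom}\setminus\B$, and the Carleson packing of $\B$ follows immediately from \eqref{geomlem}.

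\textbf{Step 2: constructing the coherent regimes.} I build the regimes top-down, by stopping time. Pick a maximal cube $R\in\G$ (it has no ancestor in $\G$, up to $Q_0$) and declare $R\in\cS(R)$ with $P_\cS:=P_R$. Recursively, given $Q\in\cS$ with children $Q_1,\dots,Q_m$: if \emph{every} $Q_j$ satisfies $Q_j\in\G$ and $\Angle(P_{Q_j},P_R)\leq \epsilon_0/2$, add all children to $\cS$; otherwise add none, declaring $Q$ a \emph{leaf}. The all-or-nothing rule enforces the coherence condition (c), and (a), (b) are built in. Each child of a leaf that lies in $\G$ is taken as the root of a new regime; this exhausts $\G$ as a disjoint union of coherent regimes. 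For any $Q,Q'\in\cS$, $\Angle(P_Q,P_{Q'})\leq \Angle(P_Q,P_R)+\Angle(P_R,P_{Q'})\leq\epsilon_0$, which is \eqref{angleS}.

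\textbf{Step 3: the key angle-change lemma, and packing of regime tops.} The main task is to bound $\sum_{\cS:Q(\cS)\subset Q_0}\sigma(Q(\cS))$. Each new regime root $R'$ is a child of a leaf $Q_L$ of some parent regime $\cS'$, and is split off because some sibling $Q_s$ of $R'$ is either in $\B$ (\emph{$\B$-triggered}) or has $\Angle(P_{Q_s},P_{\cS'})>\epsilon_0/2$ (\emph{angle-triggered}). The $\B$-triggered tops are charged directly to Step 1's packing. For the angle-triggered tops I establish:
\begin{equation*}
\Angle(P_Q,P_{Q'})\ \leq\ C_{\mathrm{key}}\,\bigl(\alpha_\sigma(Q)+\alpha_\sigma(Q')\bigr)^{1/n} \qquad \text{when }Q\subset Q',\ \ell(Q')=2\ell(Q).
\end{equation*}
This follows from Lemma~\ref{Lalphatobeta} applied to both $Q$ and $Q'$, together with the existence (by $(n-1)$-Ahlfors regularity of $\pom$) of $n$ affinely independent $\ell(Q)$-separated points in $\pom\cap B_Q$; these points lie within a controlled distance of both $P_Q$ and $P_{Q'}$, and $n$ such points determine the plane up to the stated angle.

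\textbf{Step 4: completing the packing.} Combining the key lemma with \eqref{defUR}, I control the angle-triggered tops. One way is to add to $\B$ the further set $\B':=\{Q:\alpha_\sigma(Q)+\alpha_\sigma(\text{parent}(Q))>\kappa\}$ with $\kappa=\kappa(\epsilon_0)$ small, which still packs by \eqref{geomlem}; inside a regime, every parent-to-child angle jump is then at most $C_{\mathrm{key}}\kappa^{1/n}$. A Cauchy-Schwarz argument applied to the telescoping estimate $\epsilon_0/2 \leq \sum_{\text{chain}(R',Q_s)}\Angle(P_Q,P_{\text{parent}})$ combined with Tolsa's bound $\sum_Q\alpha_\sigma(Q)^2\sigma(Q)\lesssim\sigma(Q_0)$ produces the packing $\sum_{R'\text{ angle-triggered}}\sigma(R')\leq C_{\epsilon_0,\epsilon_1}\sigma(Q_0)$. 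Summing with the $\B$-triggered contribution yields \eqref{packingBS}.

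\textbf{The main obstacle} is Step 4: converting a single scalar cumulative-angle inequality into a Carleson packing estimate for regime tops. The Cauchy-Schwarz combined with \eqref{defUR} needs to be done carefully (with the right weight) so that the resulting double sum $\sum_{R'}\sum_{Q\in\text{chain}(R',Q_s)}$ swaps to a sum controlled by a fixed multiple of $\sigma(Q_0)$; the telescoping weight and the geometric factor $\sigma(Q)/\sigma(Q_c)\sim 2^{d(k(Q_c)-k(Q))}$ are what must be balanced. This is the classical bottleneck in \cite{DS1} and our setting is structurally identical once the key lemma is in place.
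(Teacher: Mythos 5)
Your proposal takes a genuinely different route from the paper. The paper does not attempt to prove the Carleson packing for the regime tops from scratch: it quotes the David--Semmes corona decomposition (Lemma \ref{LcoronaDS}) as a black box, applies it with $\epsilon_2=\epsilon_0/2$, $\epsilon_3=\epsilon_1$, then further excises from the good cubes those with $\alpha_\sigma(Q)$ above a threshold and re-coheres the pieces; the new regime tops are then charged either to the David--Semmes tops or to the parent/sibling of a newly-bad cube, which handles the packing \eqref{packingBS} in a few lines. You instead try to build the stopping-time regions directly from $\alpha$-numbers and to re-derive the hard packing estimate, which is the part the paper deliberately avoids.

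Within your route there is a concrete gap in Steps 3--4. Your ``key lemma'' is derived from Lemma~\ref{Lalphatobeta}, which converts $\alpha_\sigma(Q)\leq\epsilon$ into pointwise flatness at scale $C\epsilon^{1/n}\ell(Q)$; this is why you get $\Angle(P_Q,P_{Q'})\lesssim(\alpha_\sigma(Q)+\alpha_\sigma(Q'))^{1/n}$. That $1/n$ exponent is fatal for the Cauchy--Schwarz argument you sketch in Step 4: the telescoping inequality $\epsilon_0/2\lesssim\sum_{\text{chain}}\alpha_\sigma(Q)^{1/n}$ squares to a sum of $\alpha_\sigma(Q)^{2/n}$, whereas Tolsa's estimate \eqref{defUR} controls $\sum\alpha_\sigma(Q)^{2}\sigma(Q)$. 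Since $2/n<2$, the quantity $\sum_{Q\subset Q_0}\alpha_\sigma(Q)^{2/n}\sigma(Q)$ is \emph{not} controlled by $\sigma(Q_0)$ in general, and the packing of angle-triggered tops does not close. To make this route work you would need the \emph{linear} comparison $\Angle(P_Q,P_{Q'})\lesssim\alpha_\sigma(Q)+\alpha_\sigma(Q')$ for parent/child pairs. This is in fact true, but it requires an $L^1$-averaging argument against test functions in $Lip(Q)$ (of the sort used in the paper's Lemma~\ref{Lbbalpha} or in the first part of the proof of Lemma~\ref{Lmupr}) rather than the $L^\infty$ flatness from Lemma~\ref{Lalphatobeta}; the pointwise bound loses the $1/n$ power irrecoverably. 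Finally, even with the corrected exponent, Step 4 remains essentially the hard half of \cite{DS1} Sections 6--11 and is only gestured at in your write-up, whereas the paper's choice to bootstrap from Lemma~\ref{LcoronaDS} bypasses it entirely.
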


\begin{remark}
What we secretly expect is, in addition to \eqref{angleS}, to also have a control on the constants $c_Q$  - defined in \eqref{defmuQ} - that belongs to the same $\cS$. For instance, we would like to have
\[|c_Q-c_{Q(\cS)}| \leq \epsilon_0.\]
Imposing this extra condition while keeping the number of $\cS$ low should be doable, but we do not need it, so we avoided this complication.
\end{remark}

The difficult part in the above lemma is to prove that \eqref{angleS} holds while keeping the number of coherent regimes $\cS$ small enough so that \eqref{packingBS} stays true. To avoid a long and painful proof, we shall prove Lemma \ref{Lcorona} with the following result as a startpoint.

\begin{lemma}[\cite{DS1}] \label{LcoronaDS}
Let $\partial \Omega$ be a uniformly rectifiable set. Given any positive constants $\epsilon_3 < \epsilon_2 \in (0,1)$, there exists a disjoint decomposition $\D_{\pom} = \G' \cup \B'$ such that
\begin{enumerate}[(i)]
\item The ``good'' cubes $Q\in \G'$ are such that there exists an affine plane $P'_Q \in \Xi$ such that 
\begin{equation} \label{defP'Q}
\dist(x,P'_Q) \leq \epsilon_3 \ell(Q) \qquad \text{ for } x\in 999\Delta_Q
\end{equation}
\item The collection $\G'$ of ``good'' cubes can further subdivided into a disjoint family $\G' = \ds \bigcup_{\cS' \in \mathfrak S'} \cS'$ of {\bf coherent} stopping time regimes that satisfies
\begin{equation} \label{angleSDS}
\Angle(P'_Q,P'_{Q(\cS')}) \leq \epsilon_2 \qquad \text{ for all } \cS'\in \mathfrak S' \text{ and } Q\in \cS'.
\end{equation}.
\item The cubes in $\B'$ and the maximal cubes $Q(\cS')$ satisfies the Carleson packing condition
 \begin{equation} \label{packingBSDS}
\sum_{\begin{subarray}{c} Q\in \B' \\ Q \subset Q_0 \end{subarray}} \sigma(Q) + \sum_{\begin{subarray}{c}\cS' \in \mathfrak S' \\ Q(\cS') \subset Q_0 \end{subarray}} \sigma(Q(\cS')) \leq C_{\epsilon_2,\epsilon_3} \sigma(Q_0) \qquad \text{ for all } Q_0\in \D_{\partial \Omega}.
\end{equation}.
\end{enumerate}
\end{lemma}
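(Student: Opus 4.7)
The statement is attributed to Chapter~6 of \cite{DS1}, so my plan would be to invoke (or reconstruct) the David--Semmes corona machinery. The starting point is the bilateral weak geometric lemma (BWGL), which characterizes uniform rectifiability: for each fixed $\varepsilon > 0$, the set of $Q \in \D_{\pom}$ for which no affine plane $P$ satisfies $\dist(y,P) \leq \varepsilon \ell(Q)$ uniformly over $y \in 999\Delta_Q$ forms a Carleson family. Declaring those cubes (with $\varepsilon = \epsilon_3$) to be $\B'$ immediately provides the existence of the planes $P'_Q$ in \eqref{defP'Q} for every $Q \in \G'$, and the first half of \eqref{packingBSDS} for $\B'$.

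\textbf{Building the coherent regimes.} For the remaining cubes in $\G'$ I would run a top-down stopping-time construction. Pick a top cube $R \in \G'$ whose parent is either nonexistent or outside $\G'$, declare $R = Q(\cS')$, and inductively add a cube $Q \subsetneq R$ to $\cS'$ provided that every ancestor of $Q$ up to $R$ already lies in $\cS'$ and $\Angle(P'_Q, P'_R) \leq \epsilon_2$. To enforce the ``all or nothing'' coherence condition, whenever one child of a cube in $\cS'$ passes both tests, add all of its siblings that still satisfy the angle constraint; otherwise, stop the regime at the parent. Iterate by taking each maximal stopped cube that lies in $\G'$ as the top of a new regime, and placing into $\B'$ any stopped cube that is not in $\G'$ (its contribution is already absorbed by the BWGL packing).

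\textbf{Packing of stopped cubes.} The main obstacle is the Carleson packing of the maximal cubes $Q(\cS')$. A stopped cube $Q^\star$ either has a parent in $\B'$ (controlled by the $\B'$-packing already obtained), or it satisfies $\Angle(P'_{Q^\star}, P'_R) > \epsilon_2$ where $R$ is the top of the regime containing its parent. For parent/child pairs $Q' \subset R'$ in $\G'$, both $P'_{Q'}$ and $P'_{R'}$ approximate the same piece of $\pom$ inside $999\Delta_{Q'}$ to error $\lesssim \epsilon_3 \ell(R')$, whence $\Angle(P'_{Q'}, P'_{R'}) \lesssim \epsilon_3$. The heart of the argument is then to establish
\[
\sum_{Q \subset R} \Angle\bigl(P'_Q, P'_{\mathrm{parent}(Q)}\bigr)^2 \sigma(Q) \lesssim \sigma(R),
\]
which lets a standard telescoping/Chebyshev argument on cumulative rotation show that the $\sigma$-measure of descendants that stop by angle violation is $\lesssim \epsilon_2^{-2} \sigma(R)$, and iterating this yields the bound \eqref{packingBSDS}.

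\textbf{Main obstacle.} The delicate point is the quadratic summability of parent-child angle rotations. One natural route is to bound $\Angle(P'_Q, P'_{\mathrm{parent}(Q)})^2$ by a local deviation of $\sigma$ from its best approximating plane at scale $Q$, and then close the estimate by the Tolsa $\alpha$-number characterization \eqref{defUR} (or equivalently the $\beta_2$-number characterization of \cite{DS1}). Carrying this out rigorously at the level of generality needed here is exactly the content of \cite[Part~I, Chapter~6]{DS1}, so in practice I would cite that result and verify that the formulation I need agrees with theirs after relabeling $\epsilon_2, \epsilon_3$.
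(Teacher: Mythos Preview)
The paper does not give its own proof of this lemma; it simply records that the statement is the combination of Lemma~7.1 and Lemma~7.4 in \cite{DS1}, with the full argument contained in Sections~6--11 there. Your proposal ultimately does the same thing---cite \cite{DS1} after checking that the formulation matches---and the stopping-time/packing sketch you give is a fair outline of the David--Semmes construction, so you are aligned with the paper's approach.
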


The proof of Lemma \ref{LcoronaDS} is contained in Sections 6 to 11 of \cite{DS1}, and the statement that we gave is the combination of Lemma 7.1 and Lemma 7.4 in \cite{DS1}. Lemma \ref{Lcorona} might already be stated and proved in another article, and we apologize if it were the case. Moreover, the proof of Lemma \ref{Lcorona} is probably obvious to anyone that is a bit familiar with this tool. However, every corona decomposition has its own small differences, and we decided to write our own using only the results of David and Semmes as a prerequisite.

\medskip

\noindent {\em Proof of Lemma \ref{Lcorona} from Lemma \ref{LcoronaDS}.} 
We pick then $\epsilon_1$ and $\epsilon_0$ small such that $\epsilon_1\ll \epsilon_0 \ll 1$. We apply Lemma \ref{LcoronaDS} with the choices of $\epsilon_2:= \epsilon_0/2$ and $\epsilon_3= \epsilon_1$. Note that we can choose
\begin{equation} \label{PQP'Q}
P'_Q = P_Q \qquad \text{ when } Q \in \G' \text{ and } \alpha_\sigma(Q) \leq C^{-n} \epsilon_1^{n}
\end{equation}
if $C>0$ is the constant from Lemma \ref{Lalphatobeta}.

\medskip

Since we applied Lemma \ref{LcoronaDS}, we have a first disjoint decomposition $\D_{\partial \Omega} = \G' \cup \B'$ and a second decomposition $\G' = \bigcup \cS'$ into coherent regimes which satisfy \eqref{defP'Q}, \eqref{angleSDS}, and \eqref{packingBSDS}. 

We define $\G$ as
\[\G:= \G' \cap \{Q\in \D, \, \alpha_\sigma(Q) \leq C^{-n}\epsilon_1^{n}\}\]
where $C$ is the constant in  Lemma \ref{Lalphatobeta}. Of course, it means that $\B:= \B' \cup (\G'\sm \G)$. The coherent regimes $\cS'$ may not be contained in $\G$, that is $\cS' \cap \G$ may not be a coherent regime anymore. 
So we split further $\cS' \cap \G$ into a disjoint union of (stopping time) coherent regimes $\{\cS_{i}\}_{i\in I_{\cS'}}$ that are maximal in the sense that the minimal cubes of $\cS_{i}$ are those for which at least one children belongs to $\D \setminus (\cS' \cap \G)$.
The collection $\{\cS\}_{\cS \in \mathfrak S}$ is then the collection of all the $\cS_{i}$ for $i\in I_{\cS'}$ and $\cS' \in \mathfrak S'$.

It remains to check that the $\mathcal G$, $\B$ and $\{\cS\}_{\cS \in \mathfrak S}$ that we just  built satisfy \eqref{angleS} and \eqref{packingBS}. For the former, we use the fact that a regime $\cS$ is necessarily included in a $\cS'$, so for any $Q\in \cS$, we have
\begin{multline} \Angle(P_Q,P_{Q(\cS)}) \leq \Angle(P_Q,P_{Q(\cS')}) + \Angle(P_{Q(\cS')},P_{Q(\cS')}) \\
 = \Angle(P'_Q,P'_{Q(\cS')}) + \Angle(P'_{Q(\cS)},P'_{Q(\cS')}) \leq 2 \epsilon_2 = \epsilon_0\end{multline}
by \eqref{PQP'Q}, \eqref{angleSDS}, and our choice of $\epsilon_2$. The fact that $\B$ satisfies the Carleson packing condition
\begin{equation} \label{packingB} \sum_{\begin{subarray}{c} Q\in \B \\ Q \subset Q_0 \end{subarray}} \sigma(Q) \leq C_{\epsilon_0,\epsilon_1} \sigma(Q_0) \qquad \text{ for all } Q_0\in \D_{\partial \Omega}
\end{equation}
 is an immediate consequence of the definition of $\B$, \eqref{geomlem}, and \eqref{packingBSDS}. Finally, by the maximality of the coherent regimes $\cS$, then either $Q(\cS)$ is the maximal cube of a coherent regime from the collection $\{\cS'\}_{\cS'\in \mathfrak S'}$, or (at least) the parent or one sibling of $Q(\cS_i)$ belongs to $\B$. Therefore, if $Q^*$ denotes the parent of a dyadic cube $Q$, then for any $Q_0\in \D_{\partial \Omega}$,
\[ \sum_{\begin{subarray}{c} \cS\in \mathfrak S \\ Q(\cS) \subset Q_0 \end{subarray}} \sigma(Q(\cS)) \leq \sum_{\begin{subarray}{c} \cS'\in \mathfrak S' \\ Q(\cS') \subset Q_0 \end{subarray}} \sigma(Q(\cS'))  + \sum_{\begin{subarray}{c} Q\in \B \\ Q \subset Q_0 \end{subarray}} \sigma(Q^*) \lesssim \sigma(Q_0)\]
because of the Carleson packing conditions \eqref{packingBSDS} and \eqref{packingB}, and because $\sigma(Q^*) \approx \ell(Q)^{n-1} \approx \sigma(Q)$. The lemma follows. 
\ep

\subsection{The approximating Lipschitz graph} \label{SSLipschitz}

In this subsection, we show that each coherent regime given by the corona decomposition is well approximated by a Lipschitz graph. We follow the outline of Section 8 in \cite{DS1} except that we are a bit more careful about our construction in order to obtain Lemma \ref{Lbbalpha} below. That is, instead of just wanting the Lipschitz graph $\Gamma_\cS$ to be close to $\partial \Omega$, we aim to prove that the Lipschitz graph is an approximation of $\partial \Omega$ at least as good as the best plane.

Pick $0 < \epsilon_1 \ll \epsilon_0 \ll 1$, and then construct the collection of coherent regimes $\mathfrak S$ given by Lemma \ref{Lcorona}. Take $\cS$ to be either in $\mathfrak S$, or a coherent regime included in an element of $\mathfrak S$, and let it be fixed. Set $P := P_{Q(\cS)}$ and define $\Pi$ as the orthogonal projection on $P$. Similarly, we write $P^\bot$ for the linear plane orthogonal to $P$ and $\Pi^\bot$ for the projection onto $P^\bot$.  We shall also need the function $d$ on $P$: for $p\in P$, define
\begin{equation} \label{defdx}
d(p) := \inf_{Q\in \cS} \{ \dist(p,\Pi(2B_Q)) + \ell(Q)\}.
\end{equation}
We want to construct a Lipschitz function $b: \, P \mapsto P^\bot$. First, we prove a small result. We claim that for $x,y\in \partial \Omega \cap 999B_{Q(\cS)}$, we have
\begin{equation} \label{claimPibot}
|\Pi^\bot(x) - \Pi^\bot(y)| \leq 2\epsilon_0 |\Pi(y) - \Pi(x)| \qquad \text{ whenever } |x-y| > 10^{-3} d(\Pi(x))
\end{equation}
Indeed, with such choices of $x$ and $y$, we can find $Q\in \cS$ such that 
\[0 < |x-y| \approx \dist(\Pi(x),\Pi(Q)) + \ell(Q)\]
and by taking an appropriate ancestor of $Q$, we find $Q^*$ such that $|x-y| \approx \ell(Q^*)$. Since $x,y\in 999B_{Q(\cS)}$, we can always take $Q^* \subset Q(\cS)$ - that is $Q^* \in \cS$ thanks to the coherence of $\cS$ - and $x,y\in 999B_{Q^*}$. Due to \eqref{defP'Qb}, we deduce that 
\[\dist(x,P_{Q^*}) + \dist(y,P_{Q^*}) \leq \epsilon_1 \ell(Q^*) \ll \epsilon_0 |x-y|\]
if $\epsilon_1/\epsilon_0$ is sufficiently small. Since $\Angle(P_{Q^*},P) \leq \epsilon_0$ by \eqref{angleS}, we conclude 
\[|\Pi^\bot(x) - \Pi^\bot(y)| \leq \dist(x,P_{Q^*}) + \dist(y,P_{Q^*}) + \frac12 \epsilon_0 |x-y| \leq \frac34 \epsilon_0 |x-y| \leq \epsilon_0 |\Pi(x) - \Pi(y)|\]
if $\epsilon_0$ is small enough. The claim \eqref{claimPibot} follows.

\medskip
Define the closed set 
\begin{equation}\label{def Z}
    Z = \{p\in P, \, d(p) = 0\}.
\end{equation}
The Lipschiz function $b$ will be defined by two cases.  

\noindent{\bf Case $d(p) = 0$.} 
That is, $p\in Z$. In this case, since $\partial \Omega$ is closed, there necessarily exists $x\in \partial \Omega$ such that $\Pi(x) = p$. Moreover, \eqref{claimPibot} shows that such $x$ is unique, that is $\Pi$ is a one to one map on $Z$, and we define 
\begin{equation} \label{defAonZ}
b(p) := \Pi^\bot(\Pi^{-1}(p))  \qquad \text{ for } p\in Z.
\end{equation}

\medskip

\noindent{\bf Case $d(p)>0$.} We partition $P \setminus Z$ with a union of dyadic cubes, in the spirit of a Whitney decomposition, as follows. Construct the collection $\mathcal W_P$ as the subset of the dyadic cubes of $P$ that are maximal for the property 
\begin{equation} \label{prR1}
0 < 21 \ell(R) \leq \inf_{q\in 3R} d(q).
\end{equation}
By construction, $d(p) \approx d(q)$ whenever $p,q\in 3R\in \mathcal W_P$. Moreover, let us check that
\begin{equation} \label{prR3}
\ell(R_1)/\ell(R_2) \in \{1/2,1,2\} \quad \text{ whenever }  R_1,R_2 \in \mathcal W_P \text{ are such that } 3R_1 \cap 3R_2 \neq \emptyset.
\end{equation}
Indeed, if $R \in \mathcal W_P$ and $S$ is such that $\ell(S) = \ell(R)$ and $3S \cap 3R \neq \emptyset$, then $3S \in 9R$ and hence 
\[20\ell(S) = 20\ell(R) \leq \inf_{p\in 3R} d(p) \leq \inf_{p\in 9R} d(p) + 6\ell(R) \leq \inf_{p\in 3S} d(p) + 6\ell(S).\]
So every children of $S$ has to satisfies \eqref{prR1}, which proves \eqref{prR3}.

By construction of $\mathcal W_P$, for each $R\in \mathcal W_P$, we can find $Q_R\in \cS$ such that
\begin{multline} \label{prR2}
\dist(R,\Pi(Q_R)) \leq (2^6-2) \ell(R), \quad \ell(Q_R) \leq 2^5 \ell(R),\\
\text{ and either } Q_R =Q(\cS) \text{ or }  \ell(Q_R) = 2^5\ell(R) \approx \inf_{q\in 2R} d(q) \approx \sup_{q\in 2R} d(q).
\end{multline}
We want to associate each $R$ with an affine function $b_R: \, P \mapsto P^\bot$ such that the image of the function $\bb_R$ defined as $\bb_R(p) = (p,b_R(p))$ approximates $\partial \Omega$ well. First, we set 
\begin{equation} \label{defbRout}
b_R \equiv 0 \quad \text{ when } Q_R = Q(\mathcal \cS).
\end{equation}
When $Q_R \neq Q(\mathcal \cS)$, we take $b_R$ such that $\bb_R$ verifies
\begin{equation} \label{defbR}
\int_{999\Delta_{Q(\cS)}} |y - \bb_R(\Pi(y))| \1_{\Pi(y) \in 2R} \, d\sigma(y) := \min_{a} \int_{999\Delta_{Q(\cS)}} |y - \aa_R(\Pi(y))| \1_{\Pi(y) \in 2R} \, d\sigma(y),
\end{equation}
where the minimum is taken over the affine functions $a:\, P \mapsto P^\bot$ and $\aa(p) := (p,a(p))$. The uniqueness of the minimum is not guaranteed, but it does not matter for us. The existence is guaranteed, because $R\subset \Pi(3B_{Q_R}) \subset P \cap 999B_{Q_R}$ by \eqref{prR2}, and hence \eqref{defP'Qb} entails that the graph of the $a$ that almost realize the infimum are very close to the plane $P_Q$ which makes a small angle with $P$. The same argument shows that
\begin{equation} \label{prbR2}
\sup_{y \in 999\Delta_{Q_R}} |y - \bb_R(\Pi(y))| + \sup_{p\in \Pi(999B_{Q_R})} \dist(\bb_R(p),\partial \Omega) \leq C \epsilon_1 \ell(Q_R).
\end{equation}
for a constant $C>0$ that depends only on $n$ and
\begin{equation} \label{prbR}
\text{$b_R$ is $1.1\epsilon_0$-Lipschitz}
\end{equation}
if $0<\epsilon_1 \ll \epsilon_0 \ll 1$. We associate to the collection $\mathcal W_P$ a partition of unity $\{\varphi_R\}_{R\in \mathcal W_P}$ such that $\varphi_R \in C^\infty_0(2R_i)$, $|\nabla \varphi_R| \lesssim \ell(R)^{-1}$, and $\sum_R \varphi_R \equiv 1$ on $P \sm Z$.
We then define
\begin{equation} \label{defAonZc}
b(p) := \sum_{R\in \mathcal W_P}  \varphi_R(p) b_{R}(p) \qquad \text{ for } p\in P \sm Z.
\end{equation}
Due to \eqref{prR3}, the sum in \eqref{defAonZc} is finite and thus the quantity $b(p)$ is actually well defined. 

For $p\in P$, we define $\bb(p) := (p,b(p))$ to be the graph of $b$.  

\medskip

\begin{lemma} \label{LALip}
The function $b$ defined by \eqref{defAonZ} and \eqref{defAonZc} is $2\epsilon_0$-Lipschitz and supported in $P\cap 4B_{Q(\cS)}$.
\end{lemma}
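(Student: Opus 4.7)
The plan is to separately control $b$ on the closed set $Z$ where $d$ vanishes and on the open set $P\setminus Z$ where it is defined by the partition of unity, then glue the two pieces by continuity.

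The easiest part is the $2\epsilon_0$-Lipschitz bound on $Z$: for $p,q\in Z$ the points $x=\Pi^{-1}(p)$ and $y=\Pi^{-1}(q)$ lie in $\partial\Omega\cap999B_{Q(\cS)}$, and since $d(\Pi(x))=0$ the hypothesis $|x-y|>10^{-3}d(\Pi(x))$ of \eqref{claimPibot} is automatic, giving $|b(p)-b(q)|=|\Pi^{\bot}(x)-\Pi^{\bot}(y)|\leq 2\epsilon_0|p-q|$. On $P\setminus Z$ I would compute, for a fixed $R_0\in\mathcal W_P$ with $p\in2R_0$,
\[\nabla b(p)=\sum_{R}\nabla\varphi_R(p)\bigl(b_R(p)-b_{R_0}(p)\bigr)+\sum_R\varphi_R(p)\nabla b_R,\]
using $\sum_R\nabla\varphi_R\equiv0$. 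The second sum is bounded by $1.1\epsilon_0$ thanks to \eqref{prbR}. For the first sum, only cubes $R$ with $p\in2R$ contribute, and by \eqref{prR3} there is a uniformly bounded number of them, each with $\ell(R)\asymp\ell(R_0)\asymp d(p)$; the key claim to establish is
\[|b_R(p)-b_{R_0}(p)|\leq C\epsilon_1\,\ell(R),\]
which multiplied by $|\nabla\varphi_R|\lesssim\ell(R)^{-1}$ yields a contribution $\lesssim C\epsilon_1$, hence $|\nabla b|\leq 1.1\epsilon_0+C\epsilon_1\leq 2\epsilon_0$ once $\epsilon_1\ll\epsilon_0$.

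To prove this claim I would use \eqref{prbR2}: pick $y\in\partial\Omega$ with $|y-\bb_R(p)|\leq C\epsilon_1\ell(Q_R)$ and analogously $y'$ for $b_{R_0}$. Both $\Pi(y)$ and $\Pi(y')$ lie within $C\epsilon_1\ell(R)$ of $p$, so $|\Pi(y)-\Pi(y')|\lesssim\epsilon_1\ell(R)$. Applying \eqref{claimPibot} to $y,y'\in\partial\Omega\cap 999B_{Q(\cS)}$ then bounds $|\Pi^{\bot}(y)-\Pi^{\bot}(y')|$ by either $2\epsilon_0|\Pi(y)-\Pi(y')|$ or $|y-y'|$, both $\lesssim\epsilon_1\ell(R)$ after using the triangle inequality $|b_R(p)-b_{R_0}(p)|\leq 2C\epsilon_1\ell(R)+|\Pi^{\bot}(y)-\Pi^{\bot}(y')|$. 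Continuity of $b$ across $\partial Z$ follows by observing that if $p_n\to q\in Z$ with $p_n\in P\setminus Z$, then $d(p_n)\to0$, the cubes $R$ with $p_n\in 2R$ shrink, and $\bb_R(p_n)$ converges to $\Pi^{-1}(q)=\bb(q)$ by \eqref{prbR2}. Combining (A), (B), (C) and decomposing a segment $[p,q]\subset P$ into pieces inside $Z$ and $P\setminus Z$ yields $|b(p)-b(q)|\leq 2\epsilon_0|p-q|$ globally.

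For the support, $Z$ lies in $P\cap3B_{Q(\cS)}$ since every $Q\in\cS$ satisfies $Q\subset Q(\cS)$, so $\Pi(2B_Q)\subset 3B_{Q(\cS)}\cap P$ and $Z$ is the closure of such points. For $p\in P\setminus Z$ with $p\notin 4B_{Q(\cS)}$, I claim every $R\in\mathcal W_P$ with $p\in 2R$ has $Q_R=Q(\cS)$, so $b_R\equiv0$ by \eqref{defbRout}. Indeed, if $Q_R\neq Q(\cS)$ then \eqref{prR2} gives $\ell(Q_R)=32\ell(R)$, $\dist(R,\Pi(Q_R))\leq 62\ell(R)$, and $Q_R\subsetneq Q(\cS)$ implies $\ell(Q_R)\leq\ell(Q(\cS))/2$; using $\Pi(Q_R)\subset B(\Pi(x_{Q(\cS)}),2\ell(Q(\cS)))$ together with $|\Pi(x_{Q(\cS)})-x_{Q(\cS)}|\leq\epsilon_1\ell(Q(\cS))$ from \eqref{defP'Qb} leads to $|p-x_{Q(\cS)}|\leq (64+32)\ell(R)+2\ell(Q(\cS))+\epsilon_1\ell(Q(\cS))<4\ell(Q(\cS))$, a contradiction.

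The main obstacle is the delicate estimate $|b_R(p)-b_{R_0}(p)|\lesssim\epsilon_1\ell(R)$: the affine pieces $b_R$ arise from $L^1$-best fits of $\partial\Omega$ in \eqref{defbR}, so they are only known to approximate $\partial\Omega$ (not each other) pointwise. The bridge is provided by \eqref{claimPibot}, which encodes the fact that at scales $\gtrsim d(p)$ the boundary behaves like a $2\epsilon_0$-Lipschitz graph over $P$, and this is precisely the scale $\ell(R)\asymp d(p)$ at which the two cubes $R,R_0$ live. Once this reconciliation is achieved, everything else is bookkeeping with the geometry of the Whitney cubes $\mathcal W_P$ and the stopping-time regime $\cS$.
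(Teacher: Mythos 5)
Your overall architecture matches the paper's (Lipschitz on $Z$ via \eqref{claimPibot}, pointwise gradient bound on $P\setminus Z$ using $\sum\nabla\varphi_R\equiv 0$, glue by continuity, and a separate support argument), but the proof of the central estimate $|b_R(p)-b_{R_0}(p)|\lesssim\epsilon_1\ell(R)$ does not close. You pick $y,y'\in\partial\Omega$ with $|y-\bb_R(p)|\lesssim\epsilon_1\ell(Q_R)$ and $|y'-\bb_{R_0}(p)|\lesssim\epsilon_1\ell(Q_{R_0})$, note that $|\Pi(y)-\Pi(y')|\lesssim\epsilon_1\ell(R)$, and then claim that \eqref{claimPibot} controls $|\Pi^\bot(y)-\Pi^\bot(y')|$ in either case of the dichotomy $|y-y'|\gtrless 10^{-3}d(\Pi(y))$. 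But when $|y-y'|\leq 10^{-3}d(\Pi(y))$ you only get $|\Pi^\bot(y)-\Pi^\bot(y')|\leq|y-y'|\lesssim d(\Pi(y))\approx\ell(R)$, not $\lesssim\epsilon_1\ell(R)$; the claim that ``both are $\lesssim\epsilon_1\ell(R)$'' in that branch is circular, since you are trying to prove that $|y-y'|$ is small in the first place. Concretely there is an uncontrolled window $C\epsilon_1\ell(R)<|y-y'|\leq 10^{-3}d(\Pi(y))$ in which \eqref{claimPibot} does not apply and the trivial bound is a full power of $\ell(R)$ too weak. The issue is structural: $\partial\Omega$ near $\Pi^{-1}(p)$ is only controlled as a $2\epsilon_0$-Lipschitz graph at scales $\gtrsim d(p)\approx\ell(R)$, so the nearest boundary point to $\bb_R(p)$ and the nearest to $\bb_{R_0}(p)$ need not be comparable in the $P^\bot$-direction with the $\epsilon_1$-gain.

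The paper sidesteps this by not comparing boundary points to each other at all. Instead, both $\bb_R(p)$ and $\bb_{R_0}(p)$ are shown to lie within $C\epsilon_1\ell(Q_{R_0})$ of the \emph{same} approximating plane $P_{Q_{R_0}}$ (by \eqref{prbR2} together with \eqref{defP'Qb} applied to a nearby boundary point in $999\Delta_{Q_{R_0}}$); since the two points share the same $\Pi$-component $p$ and $P_{Q_{R_0}}$ makes an angle $\leq\epsilon_0$ with $P$ by \eqref{angleS}, the vertical separation is $\lesssim\epsilon_1\ell(Q_{R_0})$ immediately. You should replace your $y,y'$ argument by this ``common plane'' argument. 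The remaining pieces of your proof (the Lipschitz bound on $Z$, the gradient decomposition, the support computation, the continuity sketch) are sound.
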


\bp Recall that the property \eqref{prR2} implies that $2R \subset P \cap \Pi(3B_{Q_R})$ as long as $Q_R \neq Q(\cS)$. 
So if $p\notin P\cap \Pi(3B_{Q(\cS)})$ and $R\in \mathcal W_P$ is such that $p\in 2R$, we necessarily have $Q_R = Q(\cS)$ and then $b_R(p) = 0$ by \eqref{defbRout}. We conclude that $b(p) = 0$ and thus that $b$ is supported in $P\cap \Pi(3B_{Q(\cS)}) \subset P \cap 4B_{Q(\cS)}$.

\medskip

Now, we want to show that $b$ is Lipschitz. The fact that $b$ is Lipschitz on $Z$ is an immediate consequence from the definition \eqref{defAonZ} and \eqref{claimPibot}. Let us prove now that $b$ is Lipschitz on the interior of $2R_0$ for every $R_0\in \mathcal W_P$. Take $R_0 \in \mathcal W_P$ and $p\in 2R_0 \setminus \partial (2R_0)$. Then, since $\sum \nabla \varphi_R(p) = 0$, we have 
\begin{equation} \label{LALip1}\begin{split}
|\nabla b(p)| & = \left| \sum_{\begin{subarray}{c} R\in \mathcal W_P \\ 2R \cap 2R_0 \neq \emptyset \end{subarray}} \varphi_R(p) \nabla b_{Q_R}(p) + \sum_{\begin{subarray}{c} R\in \mathcal W_P \\ 2R \cap 2R_0 \neq \emptyset \end{subarray}} b_{Q_R}(p) \nabla \varphi_R(p) \right|  \\
& \leq \sup_{\begin{subarray}{c} R\in \mathcal W_P \\ 2R \cap 2R_0 \neq \emptyset \end{subarray}} |\nabla b_{Q_R}(p)| + \sum_{\begin{subarray}{c} R\in \mathcal W_P \\ 2R \cap 2R_0 \neq \emptyset \end{subarray}} |\nabla \varphi_R(p)| |b_{Q_R}(p) - b_{Q_{R_0}}(p)| \\
& \leq 1.1\epsilon_0 + C \ell(R_0)^{-1} \sup_{\begin{subarray}{c} R\in \mathcal W_P \\ 2R \cap 2R_0 \neq \emptyset \end{subarray}}  |b_{Q_R}(p) - b_{Q_{R_0}}(p)|
\end{split}\end{equation}
by \eqref{prbR} and \eqref{prR3}. We can assume that $p\in 2R_0 \subset  P\cap 4B_{Q(\cS)}$, because we have already shown that $b(p) = 0$ otherwise. So due to \eqref{prR2} and \eqref{prR3}, both $Q_R$ and $Q_{R_0}$ are close to $2R_0$, in the sense that 
\[3R_0 \subset P \cap 999\Pi(B_{Q_R}),\]
so we can invoke \eqref{prbR2} to say that $\dist(\mathfrak b_{Q_R}(p),P_{Q_{R_0}}) \lesssim \epsilon_1 \ell(Q_{R_0})$ and then 
\begin{equation} \label{LALip2}
|b_{Q_R}(p) - b_{Q_{R_0}}(p)| \lesssim \epsilon_1 \ell(Q_{R_0}).
\end{equation}
 So if $\epsilon_1 \ll \epsilon_0$ is small enough, \eqref{LALip1} becomes $|\nabla b(p)| \leq 2\epsilon_0$.

We proved that $b$ is Lipschitz on $Z$ and $P\sm Z$, so it remains to check that $b$ is continuous at every point in $\partial Z$. Take $z\in \partial Z$ and set $x:= \mathfrak b(z) \in \partial \Omega$. Take also $p\in P\sm Z$ such that $|p-z| \ll 1$. Due to \eqref{prbR2} and \eqref{LALip2}, we have the existence of $y\in \partial \Omega$ such that, for any $R\in \mathcal W_P$ satisfying $p\in 2R$, we have 
\begin{equation} \label{LALip3}
|y - \mathfrak b_{Q_R}(p)| \lesssim \epsilon_1 \ell(R) \lesssim \epsilon_1 d(p) \leq \epsilon_1 |p-z|
\end{equation}
by \eqref{prR1} and the fact that $q\to d(q)$ is 1-Lipschitz. The latter bound shows in particular that 
\begin{equation} \label{LALip5}
|y - \mathfrak b(p)| \leq \epsilon_0 |p-z|
\end{equation}
if $\epsilon_1/\epsilon_0$ is small enough. The bound \eqref{LALip5} also implies that $\Pi(x) \neq \Pi(y)$ and then $x \neq y$, and so \eqref{claimPibot} entails that 
\begin{equation} \label{LALip4}
|b(z) - \Pi^\bot(y)| = |\Pi^\bot(x) - \Pi^\bot(y)| \leq 2\epsilon_0|z - \Pi(y)|.
\end{equation}
The combination of \eqref{LALip5} and \eqref{LALip4} proves that the restriction of $b$ to $P\sm Z$ has the limit $b(z)$ at the point $z\in \partial \Omega$. Since it is true for all $z\in \partial Z$, and since $b$ is already continuous (even Lipschitz) on $Z$ and $P\sm Z$, we conclude that $b$ is continuous on $P$.
The lemma follows.
\ep

We prove that the graph of $b$ is well approximated by the same plane as the ones that approximate $\partial \Omega$, as shown below.

\begin{lemma} \label{Lbbeps}
For  $Q\in \cS$, we have
\[  \sup_{p\in P \cap \Pi(2^8B_Q)} \Big[\dist(\bb(p),\partial \Omega) + \dist(\bb(p), P_{Q}) \Big] \lesssim \epsilon_1 \ell(Q).\]
\end{lemma}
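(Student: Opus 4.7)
The plan is to produce, for each $p \in P \cap \Pi(2^8 B_Q)$, a witness point $y \in \partial\Omega$ close to $\bb(p)$ and for which $\dist(y,P_Q)\lesssim\epsilon_1\ell(Q)$; the triangle inequality then yields both bounds simultaneously. The witness will come directly from the construction of $\bb$: when $p\in Z$ we take $y=\bb(p)=\Pi^{-1}(p)\in\partial\Omega$, and when $p\in P\setminus Z$ we take $y$ to be a boundary point supplied by \eqref{prbR2} for the Whitney cube $R_0\in\mathcal W_P$ containing $p$. The underlying size estimate is that setting $Q'=Q$ in \eqref{defdx} gives $d(p)\le(2^8+1)\ell(Q)$, so via \eqref{prR1} and \eqref{prR2} we have $\ell(R_0),\ell(Q_{R_0})\lesssim\ell(Q)$ (the borderline case $Q_{R_0}=Q(\cS)$ forces $\ell(Q(\cS))\lesssim\ell(Q)$ and simplifies everything).

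First I will bound $\dist(\bb(p),\partial\Omega)$. The case $p\in Z$ is immediate since $\bb(p)\in\partial\Omega$. When $p\in P\setminus Z$, \eqref{prbR2} produces $y\in 999\Delta_{Q_{R_0}}$ with $|\bb_{R_0}(p)-y|\le C\epsilon_1\ell(Q_{R_0})$, while the identity $\bb(p)-\bb_{R_0}(p)=\sum_R\varphi_R(p)(\bb_R(p)-\bb_{R_0}(p))$ together with the estimate on $|b_{Q_R}(p)-b_{Q_{R_0}}(p)|$ obtained in the proof of Lemma~\ref{LALip}, cf.~\eqref{LALip2}, yields $|\bb(p)-\bb_{R_0}(p)|\lesssim\epsilon_1\ell(Q_{R_0})$. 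Hence $|\bb(p)-y|\lesssim\epsilon_1\ell(Q)$.

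The main obstacle is bounding $\dist(y,P_Q)$: the witness $y$ need not lie in $999\Delta_Q$, and \eqref{angleS} only gives $\epsilon_0$-closeness (not $\epsilon_1$) of the involved planes. First I show $|y-x_Q|\lesssim\ell(Q)$ by applying \eqref{claimPibot} to the pair $(x_{Q_{R_0}},x_Q)\in\partial\Omega\cap 999 B_{Q(\cS)}$ (if instead $|x_{Q_{R_0}}-x_Q|\le 10^{-3}d(\Pi(x_Q))\le 10^{-3}\ell(Q)$, the bound is immediate): this controls the normal offset by $\epsilon_0$ times the tangential offset, so $|x_{Q_{R_0}}-x_Q|\lesssim\ell(Q)$ and hence $|y-x_Q|\le|y-x_{Q_{R_0}}|+|x_{Q_{R_0}}-x_Q|\lesssim\ell(Q)$. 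By coherence of $\cS$, I then choose the smallest ancestor $Q^*\in\cS$ of $Q$ for which $y\in 999\Delta_{Q^*}$ (taking $Q^*=Q(\cS)$ in the edge case); the bound just obtained shows $\ell(Q^*)\lesssim\ell(Q)$, so \eqref{defP'Qb} yields $\dist(y,P_{Q^*})\le\epsilon_1\ell(Q^*)\lesssim\epsilon_1\ell(Q)$. The crux is to upgrade the comparison between $P_Q$ and $P_{Q^*}$ from $\epsilon_0$- to $\epsilon_1$-precision: both planes approximate $\partial\Omega$ within $\epsilon_1\ell(Q^*)\lesssim\epsilon_1\ell(Q)$ on $999\Delta_Q$, whose projection onto $P$ contains a ball of radius $\gtrsim\ell(Q)$ since $\partial\Omega$ near $Q$ is a near-graph over $P$ with slope $\le\epsilon_0$. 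Representing $P_Q,P_{Q^*}$ as graphs over $P$, the standard fact that two affine functions agreeing within $\delta$ on a ball of radius $D$ have slopes differing by $O(\delta/D)$ forces slope difference $\lesssim\epsilon_1$ and vertical offset $\lesssim\epsilon_1\ell(Q)$ on any ball of radius $\lesssim\ell(Q)$ around $x_Q$ containing $y$. Combining yields $\dist(y,P_Q)\lesssim\epsilon_1\ell(Q)$, and the triangle inequality completes the proof.
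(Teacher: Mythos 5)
Your argument for $\dist(\bb(p),\partial\Omega)$ matches the paper's: extract a witness $y$ from \eqref{prbR2}, and transfer the estimate to $\bb(p)$ via \eqref{LALip2}. The divergence comes in the second half, and your route is substantially more elaborate than what is needed.

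The paper's mechanism for $\dist(\bb(p),P_Q)$ is short: having found the boundary witness $x$ with $|\bb(p)-x|\lesssim\epsilon_1\ell(Q)$, it applies \eqref{claimPibot} directly to the pair $(x,x_Q)$. Since $|\Pi(x)-\Pi(x_Q)| \le |\Pi(x)-p| + |p-\Pi(x_Q)| \le (2^8+C\epsilon_1)\ell(Q)$ and \eqref{claimPibot} makes the $P^\bot$-component subordinate, one gets $|x-x_Q|\le 2^9\ell(Q)$, i.e.\ $x\in 2^9\Delta_Q\subset 999\Delta_Q$. Then \eqref{defP'Qb} for $Q$ itself gives $\dist(x,P_Q)\le\epsilon_1\ell(Q)$, and the triangle inequality finishes. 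No ancestor $Q^*$, no plane comparison.

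You instead estimate $|y-x_Q|$ by routing through $x_{Q_{R_0}}$, which produces a bound $|y-x_Q|\lesssim\ell(Q)$ with a constant you don't track (and which, through $|y-x_{Q_{R_0}}|\le 999\ell(Q_{R_0})$ with $\ell(Q_{R_0})$ up to $2^9\ell(Q)$, can a priori exceed $999$). This forces you to pass to an ancestor $Q^*$ with $y\in 999\Delta_{Q^*}$ and then upgrade the comparison between $P_Q$ and $P_{Q^*}$ from $\epsilon_0$- to $\epsilon_1$-accuracy by an affine-rigidity argument (both planes $\epsilon_1\ell(Q)$-close to $\partial\Omega$ on a set whose projection is $\epsilon_1\ell(Q)$-dense in a ball of radius $\gtrsim\ell(Q)$, hence the affine graphs differ by $O(\epsilon_1\ell(Q))$ after extrapolation). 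As far as I can tell this rigidity argument is correct, but it is entirely avoidable: if one applies \eqref{claimPibot} directly to $(y,x_Q)$ (or to the closest boundary point to $\bb(p)$, as the paper does) instead of to $(x_{Q_{R_0}},x_Q)$, the constant in $|y-x_Q|\lesssim\ell(Q)$ comes out $\le 2^9 < 999$, so $Q^*=Q$ and the upgrade step vanishes. The takeaway is that your approach trades a sharper application of \eqref{claimPibot} for an extra lemma-within-the-proof; it still works, but the paper's shortcut is worth internalizing since it reappears in Lemmas~\ref{LclaimPib} and \ref{Lmupr}.
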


\bp
Take $p\in \Pi(2^8B_Q)$. If $p\in Z$, then $\bb(p) \in \partial \Omega$, but since we also have \eqref{claimPibot}, we deduce $\bb(p)\in 2^9\Delta_Q$. The bound $\dist(\bb(p),P_Q) \leq C\epsilon_1\ell(Q)$ is then just a consequence of \eqref{defP'Qb}. 

Assume now that $p\in P\setminus Z$. We have $d(p) \leq 2^8\ell(Q)$ so any $R$ that verifies $p\in 2R$ is such that $21\ell(R) \leq d(p) \leq 2^8\ell(Q)$ by \eqref{prR1}, that implies $\ell(Q_R) \leq 2^9\ell(Q)$ by \eqref{prR2}. 
Since $\bb(p)$ is a weighted average of the $\bb_R(p)$, the estimate \eqref{prbR2} on $\bb_R(p)$ gives that 
\[\dist(\bb(p),\partial \Omega) \lesssim \epsilon_1 \sup_{R: \, p\in 2R}\ell(Q_R) \lesssim \epsilon_1 \ell(Q).\]
If $x\in \partial \Omega$ is such that $|\bb(p) - x| = \dist(\bb(p),\partial \Omega)$, then we have again by \eqref{claimPibot} that $x\in 2^9\Delta_Q$ so \eqref{defP'Qb} gives that $\dist(x,P_Q) \leq \epsilon_1$. We conclude
\[\dist(\bb(p),P_Q) \leq |\bb(p) - x| + \dist(x,P_Q) \lesssim \epsilon_1\]
as desired.
\ep

We also need a $L^1$ version of the above lemma, and with a better control in term of the $\alpha_\sigma(Q)$ (which is smaller than $\epsilon_1$ when $Q\in \cS$). 

\begin{lemma} \label{Lbbalpha}
For  $Q\in \cS$, we have
\[\int_{P\cap \Pi(2B_{Q})} \dist(\bb(p), P_{Q}) \, dp \lesssim \ell(Q)^n \alpha_\sigma(Q).\]
\end{lemma}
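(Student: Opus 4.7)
The plan is to compare $\bb$ with the flat graph of $a_Q$, where $a_Q \colon P \to P^\bot$ is the affine function whose graph is $P_Q$, then bound $|b - a_Q|$ via the $L^1$-minimization property \eqref{defbR} of the $b_R$'s, and finally control the resulting $\sigma$-integral by Tolsa's $\alpha$-number through \eqref{ftestinalpha}. Since $\Angle(P_Q,P) \leq \epsilon_0$, $|\nabla a_Q| \lesssim \epsilon_0$, so the ``vertical distance'' $F(X) := |\Pi^\bot(X) - a_Q(\Pi(X))|$ is $(1+O(\epsilon_0))$-Lipschitz on $\R^n$, comparable to $\dist(X,P_Q)$, and vanishes on $\supp \mu_Q = P_Q$.

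The first step is to establish
\[
\int_{\partial\Omega \cap B(x_Q,C\ell(Q))} F\,d\sigma \lesssim \ell(Q)^n\,\alpha_\sigma(Q).
\]
Choose Lipschitz cutoffs $\eta \colon P \to [0,1]$ and $\chi \colon \R \to [0,1]$ with $\eta \equiv 1$ on a neighborhood of $\Pi(2B_Q)$, $\chi \equiv 1$ on $[-A\ell(Q),A\ell(Q)]$, both with derivatives $O(\ell(Q)^{-1})$, and $A$ large enough that $F \leq A\ell(Q)$ on the relevant region. Then $g(X) := F(X)\,\eta(\Pi(X))\,\chi(F(X))$ is $O(1)$-Lipschitz, bounded by $O(\ell(Q))$, and supported in $B(x_Q,10^3\ell(Q))$. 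Since $F|_{P_Q} \equiv 0$, we have $\int g\,d\mu_Q = 0$, so \eqref{ftestinalpha} applied to $g$ divided by its Lipschitz constant yields the display.

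For the non-$Z$ contribution, $\sum_R \varphi_R \equiv 1$ on $P\setminus Z$ lets us write $b(p) - a_Q(p) = \sum_R \varphi_R(p)\,h_R(p)$ with $h_R := b_R - a_Q$ affine on $P$. Any $R \in \mathcal W_P$ with $2R \cap \Pi(2B_Q) \neq \emptyset$ satisfies $\ell(R) \leq \ell(Q)/21$. In the main case $Q_R \neq Q(\cS)$, the minimization \eqref{defbR} applied to the affine candidate $a_Q$ and the triangle inequality yield
\[
\int_{\partial\Omega \cap 999\Delta_{Q(\cS)} \cap \Pi^{-1}(2R)} |h_R(\Pi(y))|\,d\sigma(y) \leq 2 \int_{\partial\Omega \cap 999\Delta_{Q(\cS)} \cap \Pi^{-1}(2R)} F(y)\,d\sigma(y).
\]
By \eqref{defP'Qb} at scale $Q_R$, $\partial\Omega \cap 999 B_{Q_R}$ lies within $\epsilon_1 \ell(Q_R)$ of the small-slope graph $P_{Q_R}$ over $P$; combined with the Ahlfors regularity of $\sigma$, this forces $\Pi_\sharp(\sigma|_{\partial\Omega \cap \Pi^{-1}(2R)})$ to dominate a positive constant times Lebesgue on $2R$, whence for any affine $h$ on $P$, $\int_{2R} |h(p)|\,dp \lesssim \int_{\partial\Omega \cap \Pi^{-1}(2R)} |h(\Pi(y))|\,d\sigma(y)$. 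Applying this to $h_R$, summing over $R$ using bounded overlap of $\{2R\}_{R \in \mathcal W_P}$ and $\bigcup_R 2R \subset \Pi(CB_Q)$, and invoking the first step yields
\[
\int_{(P\setminus Z) \cap \Pi(2B_Q)} |b - a_Q|\,dp \lesssim \int_{\partial\Omega \cap B(x_Q,C'\ell(Q))} F\,d\sigma \lesssim \ell(Q)^n\,\alpha_\sigma(Q).
\]
For the $Z$ part, $\bb \colon Z \to \partial\Omega$ is bi-Lipschitz by \eqref{claimPibot}, so the area formula plus Ahlfors regularity give $\int_{Z \cap \Pi(2B_Q)} F(\bb(p))\,dp \lesssim \int_{\bb(Z \cap \Pi(2B_Q))} F\,d\sigma \lesssim \ell(Q)^n \alpha_\sigma(Q)$. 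Combining both pieces and using $\dist(\bb(p), P_Q) \leq F(\bb(p)) = |b(p) - a_Q(p)|$ finishes the proof.

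The main obstacle is the change-of-measure step $\int_{2R}|h|\,dp \lesssim \int_{\partial\Omega \cap \Pi^{-1}(2R)}|h \circ \Pi|\,d\sigma$ uniformly in $R$: establishing it requires combining the small-slope approximation \eqref{defP'Qb} at scale $Q_R$ with the Ahlfors lower bound on $\sigma$ to verify that $\Pi$ is approximately surjective onto $2R$ with Jacobian bounded below on $\partial\Omega$ near $Q_R$. The edge case $Q_R = Q(\cS)$ (where $b_R \equiv 0$) only arises for $R$ near the top of $\cS$ and must be handled by a separate direct estimate.
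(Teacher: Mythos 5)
Your proposal follows the same overall route as the paper: split the integral into the $Z$ and $P\setminus Z$ contributions, reduce to comparing $b$ with the affine function $a_Q$ parametrizing $P_Q$, use the $L^1$-minimization \eqref{defbR} of $b_R$ to pass from $|b_R-a_Q|$ to the ``height'' $F(y)=\dist(y,P_Q)$ on $\partial\Omega$, and bound the resulting boundary integral by a single application of \eqref{ftestinalpha} to a Lipschitz cutoff of $F$. The change-of-measure step $\int_{2R}|h|\,dp\lesssim\int_{\partial\Omega\cap\Pi^{-1}(2R)}|h\circ\Pi|\,d\sigma$ is also the same idea, though your phrasing via pushforward domination is stronger than needed and is not literally true at scales below $\epsilon_1\ell(Q_R)$; the paper's formulation (an affine function on $2R$ is controlled in $L^1$ by its values at $n$ well-separated points, and \eqref{defP'Qb} guarantees such points in $\Pi(\partial\Omega)\cap 2R$, weighted by Ahlfors regularity) avoids this subtlety.

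The edge case you flag resolves cleanly, and you should include it. If $Q\subsetneq Q(\cS)$, then any $R\in\mathcal W_P$ with $2R\cap\Pi(2B_Q)\ne\emptyset$ has $d(q)\le\ell(Q)$ for some $q\in 3R$, so \eqref{prR1} forces $21\ell(R)\le\ell(Q)$, hence $\ell(R)\le 2^{-5}\ell(Q)$ (both being dyadic), and then $\ell(Q_R)\le 2^5\ell(R)\le\ell(Q)<\ell(Q(\cS))$ by \eqref{prR2}, so $Q_R\ne Q(\cS)$: the edge case simply does not occur. If $Q=Q(\cS)$, then $P_Q=P$, so $a_Q\equiv 0$ coincides with $b_R\equiv 0$ from \eqref{defbRout}, and those $R$ contribute nothing. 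No separate estimate is needed.
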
 

\bp  The plane $P$ is the union of $Z$ and $P\setminus Z := \bigcup_{R\in W_P} R$, so 
\[\begin{split}
I & := \int_{P\cap \Pi(2B_{Q})} \dist(\bb(p), P_{Q}) \, dp \\
& =  \int_{Z\cap \Pi(2B_{Q})} \dist(\bb(p), P_{Q}) \, dp + \int_{Z^c \cap \Pi(2B_{Q})}\dist(\bb(p), P_{Q}) \, dp  := I_1 + I_2.
\end{split}\]
The term $I_1$ is easy, because $\bb(p) \in 4\Delta_Q \subset \partial \Omega$ by \eqref{claimPibot}, and so we have
\[I_1 \lesssim  \int_{4\Delta_Q} \dist(y,P_Q) \, d\sigma(y)\]
We apply \eqref{ftestinalpha} with the test function
\[f(y):= \min\{\dist(y,\R^n \setminus 999B_{Q}), \dist(y,P_{Q})\}\]
which lies in $Lip_Q$ and takes the value $0$ on $P_{Q}$ and $\dist(y,P_{Q})$ on $4\Delta_{Q}$, and we conclude that
\[I_1 \lesssim \int f \, d\sigma = \left| \int f \, d\sigma - \int f \, d\mu_{Q}\right| \lesssim \ell(Q)^{n} \alpha_\sigma(Q)\]
as desired.

We turn to the bound on $I_2$. We know that $\Angle(P_{Q},P) \leq \epsilon_0$ so $P_{Q}$ is the graph of an affine function $a_{Q}: \, P \mapsto P^\bot$ with small Lipschitz constant. Therefore, we have
\[ I_2 \approx \int_{P\cap \Pi(2B_Q))} |b(p)- a_{Q}(p)| \, dp\]
Let $\mathcal W_P(Q)$ be the subfamily of $\mathcal W_P$ of elements $R$ such that $2R$ that intersects $\Pi(2B_{Q})$. The fact that $2R \cap \Pi(2B_{Q}) = \emptyset$ implies by \eqref{prR1} that $21 \ell(R) \leq \ell(Q)$. Consequently, $\ell(R) \leq 2^{-5} \ell(Q)$ because both $\ell(R)$ and $\ell(Q)$ are in the form $2^k$, and then $2R \subset \Pi(3B_{Q})$. 

Assume first that $Q \varsubsetneq Q(\cS)$, and check that this condition implies that $\ell(Q_R) \leq 2^5\ell(R) \leq \ell(Q) < \ell(Q(\cS))$, hence $Q_R \neq Q(\cS)$ for every $R\in \mathcal W_P(Q)$. So we have
\[\begin{split}
I_2  & = \int_{Z^c \cap \Pi(2B_{Q})} \left|\sum_{R\in \mathcal W_P(Q)} \varphi_R(p) (b_{R}(p) - a_{Q}(p))\right| \, dp  \leq \sum_{R\in \mathcal W_P(Q)} \int_{2R}  |b_{R}(p) - a_{Q}(p)| \, dp.
\end{split}\]
We want to estimate $\int_{2R}  |b_{R}(p) - a_{Q}(p)| \, dp$, but now both $b_R$ and $a_{Q}$ are affine, so knowing $|b_{R}(p) - a_{Q}(p)|$ for $n$ different points $p\in 2R$ that are far from each other is enough. By \eqref{defP'Qb}, we know that $\Pi(\partial \Omega) \cap 2R$ contains many points all over $2R$, and by using those points to estimate the distance between $b_R$ and $a_{Q}$, we deduce that
\begin{multline*} 
\int_{2R}  |b_{R}(p) - a_{Q}(p)| \, dp  \lesssim \int_{999\Delta_{Q(\cS)}} |b_R(\Pi(y)) - a_{Q}(\Pi(y)) | \1_{\Pi(y) \in 2R} \, d\sigma(y)  \\
 \leq \int_{999\Delta_{Q(\cS)}} |\Pi^{\bot}(y) - b_R(\Pi(y))| \1_{\Pi(y) \in 2R} \, d\sigma(y) + \int_{999\Delta_{Q(\cS)}} |\Pi^\bot(y) - a_{Q}(\Pi(y)) | \1_{\Pi(y) \in 2R} \, d\sigma(y) \\
 \lesssim \int_{999\Delta_{Q(\cS)}} |\Pi^\bot(y) - a_{Q}(\Pi(y)) | \1_{\Pi(y) \in 2R} \, d\sigma(y)
\end{multline*}
by \eqref{defbR}, because $Q_R \neq Q(\cS)$. Since the $2R$ are finitely overlapping, see \eqref{prR3}, the bound on $I_2$ becomes
\begin{equation} \label{boundI2}
I_2 \lesssim  \int_{4\Delta_{Q})} |\Pi^\bot(y) - a_{Q}(\Pi(y)) |  \, d\sigma(y) \lesssim \int_{4\Delta_{Q}} \dist(y,P_{Q}) \, d\sigma(y).
\end{equation}
We had the same bound on $I_1$, and with the same strategy, we can conclude that
\[I_2 \lesssim \int f \, d\sigma = \left| \int f \, d\sigma - \int f \, d\mu_{Q}\right| \lesssim \ell(Q)^{n} \alpha_\sigma(Q)\]
as desired. 

If $Q = Q(\cS)$, the same computations apply. It is possible to have some $R$ in $\mathcal W_P(Q)$ for which $Q_R = Q(\cS)$ and thus $b_R \equiv 0$, but at the same time, we now have $a_Q \equiv 0$, so those $R$ verify $b_R - a_Q \equiv 0$ and do no have any contribution in the above bounds on $I_2$. Therefore, we also conclude that
\[I_2 \lesssim \ell(Q(\cS))^{n} \alpha_\sigma(Q(\cS)) = \ell(Q)^{n} \alpha_\sigma(Q).\]
The lemma follows.
\ep

\section{Whitney regions for coherent regimes}

\label{SWhitney}

 We associate the dyadic cubes of $\partial \Omega$ to Whitney regions in $\Omega$ and therefore associate the coherent family of dyadic cubes  obtained in the corona decomposition to a subset of $\Omega$. The idea is similar to the construction found in \cite{HMMDuke}, but we need different properties than those in \cite{HMMDuke}, so we rewrite the construction.
 
This section will prove the following extension of Lemma \ref{LcoronaG}.

\begin{lemma} \label{lemWOG}
Let $\partial \Omega$ be a uniformly rectifiable sets. We keep the notation from Lemma \ref{LcoronaG}, and we further have the existence of $K^{**}>0$ and a collection $\{\Psi_\cS\}_{\cS \in \mathfrak S}$ of functions such that
\begin{enumerate}[(a)]
    \item $\Psi_\cS$ are cut-off functions, that is $0 \leq \Psi_{\cS}\leq 1$, and $|\nabla \Psi_\cS| \leq 2\delta^{-1}$.
    \item For any $\cS \in \mathfrak S$, if $X \in \supp (1-\Psi_\cS)$, then there exists $Q\in \cS$ such that
    \[ \ell(Q)/2 < \delta(X) = \dist(X,Q) \leq \ell(Q).\]
    \item If $X \in \supp \Psi_{\cS}$, then there exists $Q\in \mathcal \cS$ and such that
    \[ \ell(Q)/2^6 < \delta(X) = \dist(X,2^6\Delta_Q) \leq 2^6 \ell(Q).\]
    \item For any $\cS\in \mathfrak S$ and any $X\in \supp \Psi_\cS$, we have
    \begin{equation} \label{prWSa}
    (1-2\epsilon_0) |X - \bb(\Pi(X))| \leq \delta(X) \leq (1+2\epsilon_0) |X - \bb(\Pi(X))|
    \end{equation}
    and, if $\Gamma_\cS$ is the graph of $\cS$,
    \begin{equation} \label{prWSb}
    (1-2\epsilon_0) \dist(X,\Gamma_{\cS}) \leq \delta(X) \leq (1+3\epsilon_0) \dist(X,\Gamma_{\cS}).
    \end{equation}
    \item There exists a collection of dyadic cubes $\{Q_i\}_{i\in I_\cS}$ in $\D_{\partial \Omega}$ such that $\{2Q_i\}_{i\in I_\cS}$ has an overlap of at most 2, and 
    \[ \Omega \cap (\supp \Psi_{\cS}) \cap \supp (1-\Psi_{\cS}) \subset \bigcup_{i\in I_\cS} \Big\{ (K^{**})^{-1}\ell(Q_i) < \delta(X) = \dist(X,K\Delta_{Q_i}) \leq K^{**}\ell(Q_i)\Big\}.\]
    In particular, $|\delta \nabla \Psi_{\cS}| \in CM_\Omega(C)$ with a constant $C>0$ that depends only on $n$.
\end{enumerate}
\end{lemma}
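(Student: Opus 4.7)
The plan is to build $\Psi_\cS$ from a smooth partition of unity on $\Omega$ indexed by the dyadic cubes of $\pom$, keeping only the pieces tied to $Q\in\cS$. For each $Q\in\D_{\pom}$ I would set
\[
W_\Omega(Q):= \{X\in\Omega : \tfrac12\ell(Q) < \delta(X)\leq \ell(Q),\ \dist(X,Q) = \delta(X)\},
\]
\[
W^*_\Omega(Q):= \{X\in\Omega : 2^{-6}\ell(Q) < \delta(X)\leq 2^6\ell(Q),\ \dist(X,2^6\Delta_Q) = \delta(X)\},
\]
so that $\{W_\Omega(Q)\}_Q$ is an essentially disjoint Whitney-type partition of $\Omega$ and $\{W^*_\Omega(Q)\}_Q$ is a uniformly finite-overlap fattening. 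Choose a smooth partition of unity $\{\phi_Q\}_Q$ subordinate to $\{W^*_\Omega(Q)\}_Q$ with $\phi_Q \in C_0^\infty(W^*_\Omega(Q))$, $\phi_Q\equiv 1$ on a slight shrinking of $W_\Omega(Q)$, $\sum_Q \phi_Q\equiv 1$ on $\Omega$, and $|\nabla \phi_Q|\lesssim \delta^{-1}$; set
\[
\Psi_\cS := \sum_{Q\in\cS}\phi_Q,\qquad W_\Omega(\cS):= \bigcup_{Q\in\cS}W_\Omega(Q),\qquad W^*_\Omega(\cS):=\bigcup_{Q\in\cS}W^*_\Omega(Q).
\]
Properties (a), (b), (c) then fall out of the construction: $\Psi_\cS\equiv 1$ on $W_\Omega(\cS)$, $\supp\Psi_\cS\subset W^*_\Omega(\cS)$, and the scale descriptions come directly from the explicit definitions of $W_\Omega(Q)$ and $W^*_\Omega(Q)$.

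For (d), I would exploit the tight control on $\Gamma_\cS$ near $\cS$. Given $X\in\supp\Psi_\cS$, pick $Q\in\cS$ with $X\in W^*_\Omega(Q)$, so $\delta(X)\approx\ell(Q)$ and $X\in 2^7 B_Q$. Lemma~\ref{Lbbeps} gives
\[
\sup_{p\in P\cap\Pi(2^8 B_Q)}\bigl[\dist(\bb(p),\pom) + \dist(\bb(p),P_Q)\bigr]\lesssim \epsilon_1\ell(Q),
\]
so the local Hausdorff distance between $\pom$ and $\Gamma_\cS$ inside $2^7 B_Q$ is $O(\epsilon_1\ell(Q)) \ll \epsilon_0\delta(X)$; \eqref{prWSb} then follows by comparing nearest-point distances. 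For \eqref{prWSa}, since $\bb$ is $2\epsilon_0$-Lipschitz the graph $\Gamma_\cS$ makes an angle at most $2\epsilon_0$ with $P$, and the line through $X$ perpendicular to $P$ meets $\Gamma_\cS$ precisely at $\bb(\Pi(X))$; thus the segment $[X,\bb(\Pi(X))]$ is nearly normal to $\Gamma_\cS$, so $|X-\bb(\Pi(X))|$ differs from $\dist(X,\Gamma_\cS)$ by at most a factor $(1\pm\epsilon_0)$, and combining with \eqref{prWSb} yields \eqref{prWSa}.

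For (e), observe that $\supp\nabla\Psi_\cS$ is contained in the union of $W^*_\Omega(Q)$ over those $Q\in\cS$ that are dyadically adjacent to some $Q'\notin\cS$. By coherence of $\cS$, such $Q$ is either the maximal cube $Q(\cS)$ or the parent of a cube lying in $\B\cup \bigcup_{\cS'\neq\cS}\{Q(\cS')\}$; both $\{Q(\cS)\}_{\cS\in\mathfrak S}$ and $\B$ are Carleson-packed by Lemma~\ref{Lcorona}(iii), so the resulting collection $\{Q_i\}_{i\in I_\cS}$ inherits the Carleson packing with bounded overlap. Since $|W^*_\Omega(Q)|\lesssim \ell(Q)^n\approx \ell(Q)\sigma(Q)$ and $|\delta\nabla\Psi_\cS|\lesssim 1$ on its support, summing gives $|\delta\nabla\Psi_\cS|\in CM_\Omega(C)$.

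The main obstacle is the lower bound $(1-2\epsilon_0)\dist(X,\Gamma_\cS)\leq \delta(X)$ in (d). Because $\cS$ may descend to arbitrarily small scales, the approximation of $\pom$ by $\Gamma_\cS$ must be scale-invariantly sharp at every level of $\cS$. Lemma~\ref{Lbbeps} provides exactly this, but only thanks to the careful averaged construction of $\bb$ from Section~\ref{SSLipschitz}; a naive projection of $\pom$ onto $P$ would \emph{not} yield a graph with both small Lipschitz constant and good pointwise approximation of $\pom$ at every scale, so the work done in constructing $\bb$ via \eqref{defbR} is essential.
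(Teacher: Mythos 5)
Your proposal takes a genuinely different route from the paper's: you build $\Psi_\cS$ as the $\cS$-part of a smooth partition of unity $\{\phi_Q\}$ subordinate to $\{W_\Omega^*(Q)\}$, whereas the paper defines $\Psi_\cS$ by the explicit formula
\[
\Psi_\cS(X)=\1_\Omega(X)\,\psi\!\left(\frac{d(\Pi(X))}{3|X-\bb(\Pi(X))|}\right)\psi\!\left(\frac{|X-\bb(\Pi(X))|}{2\ell(Q(\cS))}\right),
\]
built from the stopping distance $d$ and the vertical distance to $\Gamma_\cS$. Your derivation of item (d) from Lemma~\ref{Lbbeps} together with the $2\epsilon_0$-Lipschitz bound on $\bb$ captures the right ideas (this is essentially Lemma~\ref{LclaimPib} in the paper). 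However, two steps in your construction do not go through as written.

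First, the partition of unity does not deliver $\Psi_\cS\equiv 1$ on $W_\Omega(\cS)$, which is the content behind item (b) and is exactly what the downstream inequality \eqref{Main1k} requires. If $Q_0\in\cS$ is a minimal cube of $\cS$ and $X\in W_\Omega(Q_0)$, then a child $Q'$ of $Q_0$ lies outside $\cS$, yet $X\in W_\Omega^*(Q')$: indeed the $\delta$-range of $W_\Omega^*(Q')$ covers $\delta(X)$ and the nearest boundary point to $X$ lies in $2^6\Delta_{Q'}$. Hence $\phi_{Q'}(X)$ can be strictly positive, and since $\sum_{Q}\phi_Q\equiv 1$ one gets $\Psi_\cS(X)\leq 1-\phi_{Q'}(X)<1$. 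The paper's explicit $\Psi_\cS$ avoids this completely: on $W_\Omega(\cS)$ one always has $d(\Pi(X))\leq 3|X-\bb(\Pi(X))|$ and $|X-\bb(\Pi(X))|\leq 2\ell(Q(\cS))$, so both $\psi$-factors are identically $1$ (Lemma~\ref{LprWS}).

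Second, for item (e), the collection you propose (essentially $Q(\cS)$ together with the minimal cubes of $\cS$) does not give the required overlap bound for $\{2Q_i\}$. Minimal cubes of a coherent regime can form a geometric cascade of scales accumulating at a single boundary point (e.g.\ $Q_k$ of generation $k$, mutually adjacent), and then $\{2Q_k\}$ has unbounded overlap near that point. Your sentence ``inherits the Carleson packing with bounded overlap'' conflates two different properties: Carleson packing of $\B\cup\{Q(\cS')\}_{\cS'}$ yields constants depending on $\epsilon_0,\epsilon_1$ through Lemma~\ref{Lcorona}, whereas (e) asserts overlap at most $2$ and a Carleson constant for $|\delta\nabla\Psi_\cS|$ that depends only on $n$. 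The paper achieves this in Lemma~\ref{LprPsiS} by a completely different mechanism: the transition region is written explicitly as $E_1\cup E_2$ with $E_1\subset W_\Omega^*(Q(\cS))$, the set $E_2$ is covered by the balls $B(\bb(\Pi(X)),d(\Pi(X))/100)$, a Vitali $5r$-covering argument extracts a subfamily with pairwise disjoint $\{2Q_i\}$, and adding back $Q(\cS)$ keeps the overlap at most $2$.
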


\subsection{Whitney decomposition}

We divide $\Omega$ into Whitney regions. Usually, one constructs them with dyadic cubes of $\R^n$, but we prefer to construct them directly. We recall that $\delta(X):= \dist(X,\partial \Omega)$, and for $Q\in \D_{\partial \Omega}$, we define 
\begin{equation} \label{defWO}
W_{\Omega}(Q) := \{X\in \Omega: \, \exists\, x\in Q \text{ such that } \ell(Q)/2 < \delta(X) = |X-x| \leq \ell(Q)\}.
\end{equation}
It is easy to see that the sets $\{W_\Omega(Q)\}_{Q\in \partial \Omega}$ covers $\Omega$. The sets $W_\Omega(Q)$ are not necessarily disjoint, but we do not care, we are perfectly happy if $\{W_\Omega(Q)\}_{Q\in \D_{\partial \Omega}}$ is finitely overlapping, and we choose $W_\Omega(Q)$ small only because it will make our estimates easier. The sets $W_\Omega(Q)$ can be disconnected and have a bad boundary, but that is not an issue, since - contrary to \cite{HMMDuke} - we won't try to prove that the $W_\Omega(Q)$ are Chord-Arc Domains. 

We also need fattened versions of $W_\Omega(Q)$, that we call $W_\Omega^*(Q)$ and $W_\Omega^{**}(Q)$, which are defined as
\begin{equation} \label{defWO*}
W_{\Omega}^*(Q) := \{X\in \Omega: \, \exists \, x\in 2^6\Delta_Q \text{ such that } 2^{-6}\ell(Q) <  \delta(X) = |X-x|  \leq 2^6\ell(Q)\}
\end{equation}
and
\begin{equation} \label{defWO**}
W_{\Omega}^{**}(Q) := \{X\in \Omega: \, \exists \, x\in K^{**}\Delta_Q \text{ such that } (K^{**})^{-1}\ell(Q) <  \delta(X) = |X-x|  \leq K^{**}\ell(Q)\}
\end{equation}
The exact value of the constant $K^{**}$ does not matter. In Lemma \ref{LprPsiS}, we will choose it large enough to fit our purpose. The first properties of $W_\Omega(Q)$ and $W_\Omega^*(Q)$ are the ones that we expect and are easy to prove. We have
\begin{equation} \label{prWO1}
\Omega = \bigcup_{Q\in D_{\partial \Omega}} W_\Omega(Q),
\end{equation}
\begin{equation} \label{prWO2}
\diam(W^{*}_\Omega(Q)) \leq 2^7 \ell(Q),
\end{equation}
and
\begin{equation} \label{prWO4}
W^{*}_\Omega(Q) \subset 2^8 B_Q.
\end{equation}
We want $W_\Omega(Q)$ and $W_\Omega^*(Q)$ to be so that we can squeeze a cut-off function between the two sets, which is possible because 
\begin{equation} \label{prWO3}
\dist(W_\Omega(Q), \R^n \sm W^*_\Omega(Q)) \geq \frac14 \ell(Q).
\end{equation}
Indeed, if $X\in W_\Omega(Q)$ and $|X-Y| \leq \ell(Q)/4$, then $\ell(Q)/4 \leq \dist(Y,\partial \Omega) \leq 5\ell(Q)/4$ and for any $y \in \partial \Omega$ such that $|Y-y| = \delta(Y)$, we have
\[|y-x| \leq |y-Y| + |Y-X| + |X-x| \leq \frac54\ell(Q) + \frac14 \ell(Q) + \ell(Q) \leq 3\ell(Q),\]
so in particular, $y\in 2^5Q$, and thus $Y\in W^*_\Omega(Q)$. The claim \eqref{prWO3} follows.

\subsection{Coherent regions associated to coherent regimes}

As before, we pick $0 < \epsilon_1 \ll \epsilon_0 \ll 1$, and then construct the collection of coherent regimes $\mathfrak S$ given by Lemma \ref{Lcorona}. Let then $\cS$ be either in $\mathfrak S$, or a coherent regime included in an element of $\mathfrak S$. For such $\cS$, we define the regions
\begin{equation} \label{defWS}
W_{\Omega}(\cS) :=  \bigcup_{Q\in \cS} W_\Omega(Q) \quad \text{ and } \quad W_{\Omega}^*(\cS) :=  \bigcup_{Q\in \cS} W^*_\Omega(Q).
\end{equation}

Associated to the coherent regime $\cS$, we have affine planes $P$ and $P^\bot$, the projections $\Pi$ and $\Pi^\bot$, a Lipschitz function $b: \, P \to P^\bot$, and $\bb(p)=(p,b(p))$ as in  Subsection~\ref{SSLipschitz}. We also have the ``distance function'' $d(p)$ defined in \eqref{defdx}. We now define the Lipschitz graph
\begin{equation} \label{defGS}
\Gamma_{\cS} := \{\mathfrak b(p), \, p\in P\} \subset \R^n.
\end{equation}

\begin{lemma} \label{LclaimPib}
If $X\in W^*_\Omega(\cS)$ and $x\in \partial \Omega$ is such that $|X-x| = \delta(X)$, then
\begin{equation} \label{claimdPib}
(1-2\epsilon_0) \delta(X) \leq |X - \bb(\Pi(X))| \leq (1+2\epsilon_0) \delta(X),
\end{equation}
\begin{equation} \label{claimdGPib}
(1-2\epsilon_0) \dist(X,\Gamma_{\cS}) \leq \delta(X) \leq (1+3\epsilon_0) \dist(X,\Gamma_{\cS}),
\end{equation}
and
\begin{equation} \label{claimbxd}
|\bb(\Pi(X)) - x | \leq 2\epsilon_0 \delta(X).
\end{equation}
\end{lemma}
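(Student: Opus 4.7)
My plan is to reduce to a single cube $Q \in \cS$ with $X \in W^*_\Omega(Q)$, which gives $\delta(X) \in [2^{-6}\ell(Q), 2^6 \ell(Q)]$ and places both $X$ and any minimizer $x$ inside $2^8 B_Q$, so that the approximation bounds at scale $Q$ all apply: by \eqref{defP'Qb} one has $\dist(x, P_Q) \leq \epsilon_1 \ell(Q)$ and $\dist(X_P, \partial\Omega) \leq \epsilon_1 \ell(Q)$ for $X_P := \pi_{P_Q}(X)$; by \eqref{angleS} applied to $Q$ and $Q(\cS)$, $\Angle(P_Q, P) \leq \epsilon_0$; and by Lemma \ref{Lbbeps}, $\bb(p)$ lies within $C\epsilon_1 \ell(Q)$ of $P_Q$ for every $p \in \Pi(2^8 B_Q)$. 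A direct two-sided comparison (using $\pi_{P_Q}(x)$ on one side and a boundary point close to $X_P$ on the other) then gives $|\delta(X) - \dist(X, P_Q)| \leq 2\epsilon_1 \ell(Q)$.

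The heart of the argument is to show that $X - x$ is almost perpendicular to $P_Q$, and hence to $P$. Writing $n_Q$ for the unit $P_Q$-normal with $X - X_P = \dist(X, P_Q)\, n_Q$ and $x - \pi_{P_Q}(x) = \alpha\, n_Q$ (so $|\alpha| \leq \epsilon_1 \ell(Q)$), the orthogonal decomposition $X - x = (|X - X_P| - \alpha)\, n_Q + (X_P - \pi_{P_Q}(x))$ combined with Pythagoras and $|X - x| = \delta(X)$ yields $|X_P - \pi_{P_Q}(x)|^2 \leq 4\epsilon_1 \ell(Q)\, \delta(X)$. Projecting this decomposition onto $P$ and using $|\Pi(n_Q)| = \sin(\Angle(P_Q,P)) \leq \sin\epsilon_0 \leq \epsilon_0$, I obtain $|\Pi(X) - \Pi(x)| \leq \epsilon_0 \delta(X) + C\sqrt{\epsilon_1}\,\delta(X)$, which is at most $(1 + \eta)\epsilon_0 \delta(X)$ for any small $\eta > 0$ provided $\epsilon_1$ is small enough relative to $\epsilon_0^2$. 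Writing $p_X := \Pi(X)$, $p_x := \Pi(x)$ and parametrizing $P_Q$ as the graph of an affine $a_Q: P \to P^\bot$ with slope at most $\tan\epsilon_0 \leq 2\epsilon_0$, Lemma \ref{Lbbeps} at $p_X$ and $p_x \in \Pi(2^8 B_Q)$ gives $|b(p_X) - a_Q(p_X)|,\ |a_Q(p_x) - \Pi^\bot(x)| \lesssim \epsilon_1 \ell(Q)$, so the triangle inequality yields $|b(p_X) - \Pi^\bot(x)| \leq 2\epsilon_0 |p_X - p_x| + C\epsilon_1 \ell(Q) = O(\epsilon_0^2 + \epsilon_1)\delta(X)$.

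Summing squared $P$- and $P^\bot$-components, $|\bb(\Pi(X)) - x|^2 = |p_X - p_x|^2 + |b(p_X) - \Pi^\bot(x)|^2 \leq 4\epsilon_0^2 \delta(X)^2$ once $\epsilon_1 \ll \epsilon_0 \ll 1$, which is \eqref{claimbxd}. Then \eqref{claimdPib} is an immediate triangle inequality from \eqref{claimbxd} and $|X - x| = \delta(X)$, and \eqref{claimdGPib} follows from \eqref{claimdPib} combined with the standard Lipschitz-graph distance inequality $\dist(X, \Gamma_\cS) \leq |X - \bb(\Pi(X))| \leq \sqrt{1 + 4\epsilon_0^2}\, \dist(X, \Gamma_\cS)$ (valid since $\Gamma_\cS$ is a $2\epsilon_0$-Lipschitz graph by Lemma \ref{LALip}). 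The main obstacle is keeping the sharp leading constant in the bound on $|\Pi(X) - \Pi(x)|$: the dominant $\epsilon_0 \delta(X)$ comes cleanly from a single application of the angle bound on $|\Pi(n_Q)|$, but all sub-leading errors arising from the tangential displacement $|X_P - \pi_{P_Q}(x)|$ and the $\epsilon_1$-approximations of $\partial \Omega$ and $\Gamma_\cS$ by $P_Q$ are $O(\sqrt{\epsilon_1})$, so one must track that $\sqrt{\epsilon_1} \ll \epsilon_0$, consistent with (though slightly stronger than) the running assumption $\epsilon_1 \ll \epsilon_0 \ll 1$.
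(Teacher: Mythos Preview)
Your proof is correct and follows essentially the same route as the paper's own argument. Both proofs reduce to a single $Q\in\cS$ with $X\in W^*_\Omega(Q)$, then show that the $P_Q$-tangential component of $X-x$ is $O(\sqrt{\epsilon_1})\,\delta(X)$ (you do this via an explicit orthogonal decomposition and Pythagoras, the paper does it by comparing $|X-x|$ to $|X-y|$ for a boundary point $y$ near $\Pi_Q(X)$; these are the same computation), use the angle bound $\Angle(P_Q,P)\le\epsilon_0$ to control $|\Pi(X)-\Pi(x)|$, and then invoke Lemma~\ref{Lbbeps} together with \eqref{defP'Qb} to bound the $P^\bot$-component $|b(\Pi(X))-\Pi^\bot(x)|$, giving \eqref{claimbxd}; \eqref{claimdPib} and \eqref{claimdGPib} then follow by the triangle inequality and the $2\epsilon_0$-Lipschitz graph distance comparison exactly as you describe.
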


\bp
Since $X\in W^*_\Omega(\cS)$, there exists $Q\in \cS$ such that $X\in W^*_\Omega(Q)$. Such $Q$ verifies $x\in 2^6\Delta_Q$ and
\[2^{-6} |X- x| \leq \ell(Q) \leq 2^6 |X-x|,\]
so $X \in 2^7 B_Q$ and $\Pi(X) \in \Pi(2^7 B_Q)$. Lemma \ref{Lbbeps} and \eqref{defP'Qb} entail that
\[\dist(x,P_Q) + \dist(\bb(\Pi(X)),P_Q) \leq C \epsilon_1 \ell(Q) \leq \frac18 \epsilon_0 |X-x| \]
if $\epsilon_1/\epsilon_0$ is small enough. Because the plane $P_Q$ makes a small angle with $P$, we deduce that
\begin{equation} \label{ccl48a}
|b(\Pi(X)) - \Pi^\bot(x)| \leq \frac14 \epsilon_0 |X-x|
\end{equation}
if $\epsilon_0$ is small enough. Define $\Pi_Q$ and $\Pi^\bot_Q$ as the projection onto $P_Q$ and $P_Q^\bot$. We have $|\Pi_Q(x) - x| \lesssim \epsilon_1 |X-x|$ thanks to \eqref{defP'Qb}. In addition, the projection $\Pi_Q$ lies in $P_Q \cap 2^8 B_Q$, so using \eqref{defP'Qb} again gives the existence of $y\in \partial \Omega$ such that $|\Pi_Q(X) - y| \leq \epsilon_1\ell(Q) \lesssim \epsilon_1 |X-x|$. By definition of $x$, the point $y$ has to be further away from $X$ that $x$ so 
\[\begin{split}
|X-x| & \leq |X-y| \leq |X - \Pi_Q(X) - x + \Pi_Q(x)| + |\Pi_Q(x) - x| + |\Pi_Q(X) - y| \\
& \leq |\Pi^\bot_Q(X)- \Pi^\bot_Q(x)| + C\epsilon_1|X-x|.
\end{split}\]
So one has $|\Pi^\bot_Q(X)-\Pi^\bot_Q(x)| \geq (1-C\epsilon_1) |X-x|$ and hence $|\Pi_Q(X)- \Pi_Q(x)| \leq C\sqrt{\epsilon_1}$. Since $P_Q$ makes an angle at most $\epsilon_0$ with $P$, we conclude that
\begin{equation} \label{ccl48b}
|\Pi(X) - \Pi(x)| \leq \frac32\epsilon_0|X-x|
\end{equation}
if $\epsilon_0$ and $\epsilon_1/\epsilon_0$ are small enough. The two bounds \eqref{ccl48a} and \eqref{ccl48b} easily prove \eqref{claimbxd}, and also prove \eqref{claimdPib} by writing
\[\begin{split}
\Big| |X - \bb(\Pi(X))| - |X-x| \Big| & \leq \Big| |\Pi^\bot(X) - b(\Pi(X))| - |\Pi^\bot(X)- \Pi^\bot(x)| \Big| + |\Pi(X) - \Pi(x)| \\
& \leq |\Pi^\bot(x) - b(\Pi(X))| + |\Pi(X) - \Pi(x)| \\
& \leq 2\epsilon_0|X-x|.
\end{split}\]
The bounds \eqref{claimdGPib} is just a consequence of \eqref{claimdPib} and the fact that $\Gamma_{\cS}$ is the graph of $b$ which is a $2\epsilon_0$-Lipschitz function with $\epsilon_0 \ll 1$.
The lemma follows.
\ep

Let $\psi \in C^\infty_0(\R)$ be such that $0 \leq \psi \leq 1$, $\psi \equiv 1$ on $[0,1]$, $\psi \equiv 0$ on $[2,\infty)$ and $|\nabla \psi| \leq 2$. We set
\begin{equation} \label{defPsiS}
\Psi_{\cS}(X) = \1_\Omega(X) \psi\Big(\frac{d(\Pi(X))}{3|X - \bb(\Pi(X))|}\Big) \psi\Big(\frac{|X - \bb(\Pi(X))|}{2\ell(Q(\cS))}\Big).
\end{equation}

We want to prove the points $(b)$, $(c)$, and $(d)$ of Lemma \ref{lemWOG}, that is

\begin{lemma} \label{LprWS}
The function $\Psi_{\cS}$ is constant equal to 1 on $W_\Omega(\cS)$ and $\Omega \cap \supp \Psi_{\cS} \subset  W^*_\Omega(\cS)$.
Consequently, for any $X\in \supp \Psi_\cS $, we have \eqref{prWSa} and \eqref{prWSb} by Lemma \ref{LclaimPib}.
\end{lemma}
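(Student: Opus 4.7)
The plan is to examine the two $\psi$-factors in the formula \eqref{defPsiS} for $\Psi_\cS$ separately, using the size information supplied by the Whitney regions together with Lemmas \ref{LclaimPib} and \ref{Lbbeps}.

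\emph{Part 1: $\Psi_\cS\equiv 1$ on $W_\Omega(\cS)$.} Fix $X\in W_\Omega(Q)$ for some $Q\in\cS$; by \eqref{defWO}, there is $x\in Q$ with $\ell(Q)/2<\delta(X)=|X-x|\le\ell(Q)$. Since $W_\Omega(Q)\subset W_\Omega^*(Q)\subset W_\Omega^*(\cS)$, Lemma~\ref{LclaimPib} gives $(1-2\epsilon_0)\delta(X)\le|X-\bb(\Pi(X))|\le(1+2\epsilon_0)\delta(X)$. Moreover $|X-x_Q|\le|X-x|+|x-x_Q|\le 2\ell(Q)$, so $X\in 2B_Q$ and $\Pi(X)\in\Pi(2B_Q)$, forcing $d(\Pi(X))\le\ell(Q)$ by \eqref{defdx}. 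The arguments of the two $\psi$-factors are then bounded, respectively, by
\[\frac{\ell(Q)}{3(1-2\epsilon_0)\cdot\ell(Q)/2}=\frac{2}{3(1-2\epsilon_0)}\le 1\quad\text{and}\quad\frac{(1+2\epsilon_0)\ell(Q)}{2\ell(Q(\cS))}\le 1,\]
the first provided $\epsilon_0$ is small enough, and the second using $Q\subset Q(\cS)$. Hence $\Psi_\cS(X)=1$.

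\emph{Part 2: $\Omega\cap\supp\Psi_\cS\subset W_\Omega^*(\cS)$.} Let $X\in\Omega\cap\supp\Psi_\cS$. Since $\psi$ is supported in $[0,2)$, both arguments in \eqref{defPsiS} are $<2$, which rewrites as
\[d(\Pi(X))<6\,|X-\bb(\Pi(X))|\quad\text{and}\quad|X-\bb(\Pi(X))|<4\ell(Q(\cS)).\]
The plan is to exhibit a cube $Q\in\cS$ of scale comparable to $|X-\bb(\Pi(X))|$ whose projection lies close to $\Pi(X)$. By definition of $d$ as an infimum over $\cS$, pick $R\in\cS$ with $\dist(\Pi(X),\Pi(2B_R))+\ell(R)<12\,|X-\bb(\Pi(X))|$, then travel up the chain of dyadic ancestors of $R$ until reaching the smallest ancestor $Q$ with $\ell(Q)\ge|X-\bb(\Pi(X))|$, taking $Q=Q(\cS)$ in the fallback case where the chain exits $\cS$ first. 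By coherence of $\cS$ every intermediate ancestor lies in $\cS$, and the fallback case is legitimate thanks to the second displayed bound, so $Q\in\cS$. By construction $\ell(Q)\approx|X-\bb(\Pi(X))|$ and $\dist(\Pi(X),\Pi(2B_Q))\lesssim\ell(Q)$, hence $\Pi(X)\in\Pi(2^8B_Q)$. Lemma~\ref{Lbbeps} places $\bb(\Pi(X))$ within $C\epsilon_1\ell(Q)$ of $\partial\Omega$, and combined with \eqref{claimPibot} at this scale yields $\delta(X)\approx|X-\bb(\Pi(X))|\approx\ell(Q)$ and puts the nearest boundary point of $X$ inside $2^6\Delta_Q$. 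Therefore $X\in W_\Omega^*(Q)\subset W_\Omega^*(\cS)$, after which \eqref{prWSa}--\eqref{prWSb} are just restatements of \eqref{claimdPib}--\eqref{claimdGPib} in Lemma~\ref{LclaimPib}.

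The main obstacle is the choice of $Q$ in Part 2: we have no scale information on $X$ beyond the two scalar inequalities produced by the support condition, so the work lies in converting them into a specific dyadic cube of $\cS$ at the correct scale. The coherence of $\cS$ together with $|X-\bb(\Pi(X))|<4\ell(Q(\cS))$ is exactly what makes the ancestor walk-up terminate in $\cS$ (either at the right scale or at $Q(\cS)$). Everything else is bookkeeping with triangle inequalities, the $2\epsilon_0$-Lipschitz nature of $\bb$, and the flatness estimate \eqref{claimPibot}.
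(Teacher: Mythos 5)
Your Part~1 is correct and matches the paper's proof in spirit (you bound the argument of each $\psi$-factor using $\delta(X)\approx\ell(Q)$ on $W_\Omega(Q)$ and Lemma~\ref{LclaimPib}, which is the same computation the paper does via \eqref{claimdPib} and \eqref{claimbxd}). Your choice of the cube $Q$ in Part~2 is also the right idea and parallels the paper's construction of $Q_X$.

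The genuine gap is in the final step of Part~2, where you write that Lemma~\ref{Lbbeps} ``combined with \eqref{claimPibot} at this scale yields $\delta(X)\approx|X-\bb(\Pi(X))|\approx\ell(Q)$.'' The upper bound $\delta(X)\lesssim\ell(Q)$ is indeed immediate from Lemma~\ref{Lbbeps}: since $\bb(\Pi(X))$ is within $C\epsilon_1\ell(Q)$ of $\partial\Omega$, we have $\delta(X)\leq|X-\bb(\Pi(X))|+C\epsilon_1\ell(Q)\lesssim\ell(Q)$. But the crucial \emph{lower} bound $\delta(X)\gtrsim\ell(Q)$ --- which is what actually places $X$ in $W_\Omega^*(Q)$ rather than in a much finer Whitney box --- is not a consequence of the tools you cite. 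The estimate \eqref{claimPibot} is a statement about \emph{two boundary points} $x,y\in\partial\Omega$, and $X$ is not on the boundary, so \eqref{claimPibot} gives no direct information about $\dist(X,\partial\Omega)$. One could try to run an argument via the closest boundary points to $X$ and to $\bb(\Pi(X))$, but that requires verifying the precondition $|x-y|>10^{-3}d(\Pi(x))$ and controlling $|\Pi^\bot(X)-\Pi^\bot(x)|$ when $\delta(X)$ is hypothetically small, none of which you carry out.

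The paper's argument for this lower bound goes differently: it uses Lemma~\ref{Lbbeps} together with $\Angle(P,P_{Q_X})\leq\epsilon_0$ to show that $X$ lies at distance $\gtrsim\ell(Q_X)$ from the approximating plane $P_{Q_X}$, namely
$\dist(X,P_{Q_X})\geq(1-\epsilon_0-C\epsilon_1)|X-\bb(\Pi(X))|$, and then invokes the flatness estimate \eqref{defP'Qb} to confine $\partial\Omega\cap 999B_{Q_X}$ within $\epsilon_1\ell(Q_X)$ of $P_{Q_X}$; combining the two yields $\delta(X)\geq\dist(X,P_{Q_X})-C\epsilon_1\ell(Q_X)\geq\frac18\ell(Q_X)$. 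Your write-up does not use \eqref{defP'Qb} at all (nor an equivalent of the ``$X$ is far from $P_{Q_X}$'' step), so as it stands the lower bound is asserted but not proved. You should either reproduce this plane-separation argument, or spell out a correct alternative in full; the one-sentence appeal to \eqref{claimPibot} does not do the job.
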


\begin{remark}
We know from its definition that $\Psi_\cS \equiv 0$ on $\R^n \setminus \Omega$, but the support of $\Psi_\cS$ can reach the boundary $\partial \Omega$. So if $\Omega \cap \supp \Psi_\cS \subset W_\Omega^*(\cS)$, then we actually have
\[\supp \Psi_{\cS} \subset W_\Omega^*(\cS) \cup \Big( \partial \Omega \cap \overline{W_\Omega^*(\cS)} \Big).\]
\end{remark}

\bp Take $Q\in \cS$ and $X\in W_\Omega(Q)$, and pick $x\in Q$ such that $|X-x| = \delta(X)$. We want to show that $\Psi_{\cS} (X) = 1$, i.e. that
\begin{equation} \label{414a}
d(\Pi(X)) \leq 3|X - \bb(\Pi(X))|
\end{equation}
and
\begin{equation} \label{414b}
|X - \bb(\Pi(X))| \leq 2\ell(Q(\cS)).
\end{equation}
For \eqref{414b}, it suffices to notice that $|X - \bb(\Pi(X))| \leq 2 |X-x| \leq 2\ell(Q) \leq 2\ell(Q(\cS))$ by \eqref{claimdPib} and by the definition of $x$ and $Q$. As for \eqref{414a}, observe that 
$\abs{X-x}\le 2\epsilon_0\delta(X)+\abs{X-\bb(\Pi(X))}$ by the triangle inequality and \eqref{claimbxd},
and thus
\[d(\Pi(X)) \leq \ell(Q) \leq 2|X-x| \leq 3 |X - \bb(\Pi(X))|\]
by \eqref{claimdPib}.

It remains to verify that $\supp \Psi_{\cS}$ is supported in $W^*_\Omega(\cS)$, because \eqref{prWSa} and \eqref{prWSb} are then just \eqref{claimdPib} and \eqref{claimdGPib}.
So we pick $X\in \supp \Psi_{\cS}$ which means in particular that 
\begin{equation} \label{414c}
d(\Pi(X)) \leq 6|X - \bb(\Pi(X))|
\end{equation}
and
\begin{equation} \label{414d}
|X - \bb(\Pi(X))| \leq 4\ell(Q(\cS)),
\end{equation}
and we want to show that $X\in W^*_\Omega(\cS)$. By definition of $d(\Pi(X))$, there exists $Q \in \cS$ such that 
\[\dist(\Pi(X),\Pi(2B_Q)) + \ell(Q) = d(\Pi(X))  \leq 6 |X - \bb(\Pi(X))| \leq 24 \ell(Q(\cS))\]
by \eqref{414c} and \eqref{414d}. Since $\cS$ is coherent, by taking a suitable ancestor of $Q$, we can find $Q_X \in \cS$ such that
\begin{equation} \label{414f1}
\frac14 |X - \bb(\Pi(X))| \leq \ell(Q_X) \leq 6 |X - \bb(\Pi(X))|
\end{equation}
and
\begin{equation} \label{414f2}
\Pi(X) \in 26 \Pi(B_{Q_X}).
\end{equation}
We want to prove that $X\in W_\Omega^*(Q_X)$. The combination of \eqref{414f2}, Lemma \ref{Lbbeps}, and \eqref{claimPibot} forces $\bb(\Pi(X)) \in 27B_{Q(\cS)}$ when $\epsilon_0$ is small, and hence $X \in 31B_X$ by \eqref{414f1}.
Let $x\in \partial \Omega$ such that $|X-x| = \delta(X)$. Since $X\in 31B_X$, we have $x\in 2^6\Delta_{Q_X}$, and of course $|X-x| \leq 2^6\ell(Q_{X})$. So it remains to verify if $|X-x| \geq 2^{-6} \ell(Q_X)$. 
In one hand, thanks to \eqref{defP'Qb}, we know that $x$ lies close to $P_Q$, in the sense that $\dist(x,P_{Q_X}) \leq \epsilon_1 \ell(Q_X)$.  In the other hand, if $P_{Q_X}$ is the graph of the function $a_{Q_X}: \, P \mapsto P^\bot$, we have
\[\begin{split}
\dist(X,P_{Q_X}) & \geq (1-\epsilon_0)  |\Pi^\bot(X) - a_{Q_X}(\Pi(X))| \\
& \geq (1-\epsilon_0) \Big[ |X-\bb(\Pi(X))| - \dist(\bb(\Pi(X)),P_{Q_X}) \Big] \\
& \geq (1-\epsilon_0-C\epsilon_1) |X-\bb(\Pi(X))| \\
& \geq \frac16(1-\epsilon_0-C\epsilon_1)  \ell(Q_X)
\end{split}\]
by Lemma \ref{Lbbeps} and \eqref{414f1}; that is $X$ is far from $P_{Q_X}$. Altogether, we deduce that 
\[|X-x| \geq (1-C\epsilon_1) \dist(X,P_{Q_X}) \geq (1-\epsilon_0-C\epsilon_1) |X-\bb(\Pi(X))| \geq \frac18 \ell(Q_X).\]
if $\epsilon_0$ and $\epsilon_1$ are small. The lemma follows. 
\ep

We are left with the proof of point $(e)$ in Lemma \ref{lemWOG}, which is:

\begin{lemma} \label{LprPsiS}
There exists a collection of dyadic cubes $\{Q_i\}_{i\in I_\cS}$ in $\D_{\partial \Omega}$ such that $\{2Q_i\}_{i\in I_\cS}$ has an overlap of at most 2, and 
\[ \Omega \cap (\supp \Psi_{\cS}) \cap \supp (1-\Psi_{\cS}) \subset \bigcup_i W^{**}_\Omega(Q_i).\]
\end{lemma}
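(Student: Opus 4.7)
The plan is to read off the support of $\nabla \Psi_\cS$ directly from \eqref{defPsiS}: writing $\Psi_\cS = \1_\Omega \psi_1 \psi_2$ with $\psi_1(X) = \psi(d(\Pi(X))/(3|X-\bb(\Pi(X))|))$ and $\psi_2(X) = \psi(|X-\bb(\Pi(X))|/(2\ell(Q(\cS))))$, the set $E := \Omega \cap (\supp \Psi_\cS) \cap \supp(1-\Psi_\cS)$ is exactly where $\Psi_\cS(X) \in (0,1)$, which forces $\psi_1(X) \in (0,1)$ or $\psi_2(X) \in (0,1)$. I therefore split $E \subset E_1 \cup E_2$ with $E_1 := \{X \in E : 3|X-\bb(\Pi(X))| < d(\Pi(X)) \leq 6|X-\bb(\Pi(X))|\}$ and $E_2 := \{X \in E : 2\ell(Q(\cS)) < |X-\bb(\Pi(X))| \leq 4\ell(Q(\cS))\}$. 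By Lemma \ref{LprWS}, $E \subset W_\Omega^*(\cS)$, so \eqref{claimdPib} gives $|X-\bb(\Pi(X))| \approx \delta(X)$ everywhere on $E$, which converts the defining inequalities into the scale identifications $d(\Pi(X)) \approx \delta(X)$ on $E_1$ and $\delta(X) \approx \ell(Q(\cS))$ on $E_2$.

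Next, for $X \in E_1$ the fact that $d(\Pi(X)) > 0$ places $\Pi(X)$ in a unique Whitney cube $R(X) \in \mathcal W_P$, and \eqref{prR1} gives $\ell(R(X)) \approx d(\Pi(X)) \approx \delta(X)$. I set $Q_X := Q_{R(X)} \in \cS$ from the construction around \eqref{prR2}; then $\ell(Q_X) \approx \ell(R(X)) \approx \delta(X)$, so the scale condition in the definition of $W^{**}_\Omega(Q_X)$ holds for $K^{**}$ large. For the closest boundary point $x_X$ to $X$, I combine \eqref{claimbxd} (so $|x_X - \bb(\Pi(X))| \leq 2\epsilon_0 \delta(X)$), Lemma \ref{Lbbeps} (so $\dist(\bb(\Pi(X)), P_{Q_X}) \lesssim \epsilon_1 \ell(Q_X)$), and \eqref{defP'Qb} (so $\dist(x_{Q_X}, P_{Q_X}) \leq \epsilon_1 \ell(Q_X)$); projecting onto $P$ using the small angle $\Angle(P_{Q_X}, P) \leq \epsilon_0$, I conclude $|x_X - x_{Q_X}| \lesssim \ell(Q_X)$, so $x_X \in K^{**}\Delta_{Q_X}$ once $K^{**}$ is chosen large. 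For $X \in E_2$, the analogous and easier calculation (using that $\Pi(X)$ sits at distance $\lesssim \ell(Q(\cS))$ from $\Pi(2B_{Q(\cS)})$ because $d(\Pi(X)) \leq 6|X-\bb(\Pi(X))| \leq 24\ell(Q(\cS))$) shows $X \in W^{**}_\Omega(Q(\cS))$.

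I then take $I_\cS := \{Q_R : R \in \mathcal W_P,\ R \cap \Pi(E_1) \neq \emptyset\} \cup \{Q(\cS)\}$. The bounded-overlap claim for $\{2Q_i\}$ is read off the Whitney structure: if $y \in 2Q_R$, then by \eqref{prR2} the point $y$ lies within $O(\ell(R))$ of $\Pi(Q_R)$ and hence of $R$, which pins down $d(\Pi(y)) \approx \ell(R)$ and so the scale of $R$ up to a dimensional constant; \eqref{prR3} and a volume count at that scale then show that at most $C_n$ Whitney cubes $R$ can contribute, giving an overlap controlled by a dimensional constant. Finally, since $|\nabla \Psi_\cS| \lesssim \delta^{-1}$ on $E$, $\delta \approx \ell(Q_i)$ on $W^{**}_\Omega(Q_i)$ and $|W^{**}_\Omega(Q_i)| \lesssim \ell(Q_i)^n \approx \ell(Q_i)\sigma(Q_i)$, the Carleson bound $|\delta \nabla \Psi_\cS| \in CM_\Omega(C)$ follows from bounded overlap plus Ahlfors regularity.

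The main obstacle I expect is the geometric book-keeping inside $E_1$ that forces $x_X \in K^{**}\Delta_{Q_X}$: one must chain the approximation estimates for $\bb$ (Lemma \ref{Lbbeps}) with those for $\partial \Omega$ (\eqref{defP'Qb}) and transfer them through the projection $\Pi$ using only the small-angle control $\Angle(P_{Q_X},P)\leq \epsilon_0$, and these error terms must all be absorbed by a single large but dimension-dependent constant $K^{**}$. If the literal statement ``overlap at most $2$'' is required rather than a general dimensional constant, a final Vitali-type selection among $\{Q_R\}$ would do the job, but for the Carleson corollary any dimensional overlap is enough.
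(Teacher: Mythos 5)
Your covering argument takes a genuinely different route from the paper and, up to constant-chasing, works: you locate $\Pi(X)$ in its Whitney cube $R(X)\in\mathcal W_P$ and pass to the associated $Q_{R(X)}\in\cS$, while the paper instead forms the $\R^n$-balls $B_X:=B(\bb(\Pi(X)),d(\Pi(X))/100)$, runs the Vitali covering lemma to extract a \emph{pairwise disjoint} subfamily $\{B_{X_i}\}$ with $\{5B_{X_i}\}$ covering, and only then picks a dyadic cube $Q_i$ anchored at a boundary point inside $\frac12 B_{X_i}$ so that $2Q_i\subset B_{X_i}$. Both routes identify cubes at the correct scale $\approx d(\Pi(X))\approx\delta(X)$ and location near $\bb(\Pi(X))$, and your chain of approximations (\eqref{claimbxd}, Lemma \ref{Lbbeps}, \eqref{defP'Qb}, plus the $\epsilon_0$-angle bound \eqref{angleS}) does give $X\in W^{**}_\Omega(Q_X)$ for $K^{**}$ large.

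The bounded-overlap claim, however, has a real gap, and it is not merely the slack between ``at most $2$'' and a dimensional constant. You assert that $y\in 2Q_R$ forces $d(\Pi(y))\approx\ell(R)$ and hence pins down the dyadic scale of $R$. The upper bound $d(\Pi(y))\lesssim\ell(R)$ is fine, but the implicit lower bound $d(\Pi(y))\gtrsim\ell(R)$ fails: from \eqref{prR1} one only knows $d\geq 21\ell(R)$ on $3R$, and for $y\in 2Q_R\subset\partial\Omega$ the anchoring \eqref{prR2} only places $\Pi(y)$ within roughly $\dist(R,\Pi(Q_R))+\diam(2Q_R)\sim 10^2\,\ell(R)$ of $R$, so the Lipschitz bound $d(\Pi(y))\geq 21\ell(R)-10^2\ell(R)$ is vacuous. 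In fact $d(\Pi(y))$ can be arbitrarily small (even $0$, e.g. for $y\in\bb(\partial Z)$) while $y$ lies in a fixed dilate of $Q_R$ for a telescoping sequence of Whitney cubes $R$ adjacent to $Z$ near $\Pi(y)$: as $\ell(R)\to 0$, the cubes $Q_R$ shrink with $R$ but stay near $\bb(\Pi(y))$. Thus there is no a priori control on the cardinality of $\{R:\, y\in 2Q_R\}$, and your collection $\{Q_R:\, R\cap\Pi(E_1)\neq\emptyset\}\cup\{Q(\cS)\}$ need not be finitely overlapping at all --- which also undermines the Carleson corollary you draw from it. This is exactly the pathology the Vitali covering lemma is invoked to prune: the disjoint family $\{B_{X_i}\}$ automatically thins out the telescoping scales near $\partial Z$, and only after this selection does one attach dyadic cubes. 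So the ``Vitali-type selection among $\{Q_R\}$'' you mention in passing is not an optional refinement to hit the literal constant $2$; it is the missing step that produces \emph{any} finite overlap, and without it the proof does not close.
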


\bp
Observe that $(\supp \Psi_{\cS}) \cap \supp (1-\Psi_{\cS}) \subset E_1 \cup E_2$ where
\[E_1 := \{X\in W_\Omega^*(\cS),  \, 2\ell(Q(\cS)) \leq |X- \bb(\Pi(X))| \leq 4\ell(Q(\cS))\}\]
and
\[E_2 := \{X\in W_\Omega^*(\cS),  \, d(\Pi(X))/6 \leq |X- \bb(\Pi(X))| \leq d(\Pi(X))/3\}.
\]
Thanks to \eqref{prWSa}, the set $E_1$ is included in $W^{*}_\Omega(Q(\cS))$.

For each $X\in E_2$, we construct the ball $B_X:= B(\bb(\Pi(X)),d(\Pi(X))/100) \subset \R^n$. The radius of $B_X$ is bounded uniformly by $\ell(Q(\cS))/4$. 
So by the Vitali lemma, we can find a non overlapping subfamily $\{B_{X_i}\}_{i\in I_2}$ such that $E_2 \subset \bigcup_{i\in I_2} 5B_{X_i}$.
We use \eqref{claimbxd} and \eqref{claimdPib} to find a point $x_i \in \frac12B_{X_i} \cap \partial \Omega$. We take $Q_i \in \D_{\partial \Omega}$ to be the unique dyadic cube such that such that $x_i \in Q_i$ and $\ell(Q_i) < d(\Pi(X_i))/400 \leq 2\ell(Q_i)$. 
By construction, we have $2Q_i \subset B_{X_i}$, so the $\{2Q_i\}_{i\in I_2}$ are non-overlapping, and $5B_{X_i} \subset 100B_{Q_i}$.

It remains to check that $E_2 \subset \bigcup_i W^{**}_\Omega(Q_i)$. Take $X\in E_2$. From what we proved, there exists an $i\in I_2$ such that 
\begin{equation} \label{423a}
|\Pi(X) - \Pi(X_i)| \leq |\bb(\Pi(X))- \bb(\Pi(X_i))| \leq d(\Pi(X_i))/20.
\end{equation}
Observe from the definition that $d$ is $1$-Lipschitz. Therefore,
\[|d(\Pi(X)) - d(\Pi(X_i))| \leq |\Pi(X) - \Pi(X_i)| \leq d(\Pi(X_i))/20\]
and
\begin{equation} \label{423b}
\frac{19}{20} d(\Pi(X_i)) \leq d(\Pi(X)) \leq \frac{21}{20} d(\Pi(X_i)).
\end{equation}
From \eqref{423a} and \eqref{423b}, we obtain
\[|X-x_i| \leq |X-\bb(X)| + |\bb(X) - \bb(X_i)| + |\bb(X_i) - x_i| \leq d(\Pi(X)) \leq 800\ell(Q_i)\]
and, from \eqref{prWSa} and \eqref{423b}, we get 
\[\delta(X) \geq (1-2\epsilon_0)|X-\bb(\Pi(X))| \geq \frac17 d(\Pi(X)) \geq \frac18 d(\Pi(X_i)) \geq 50\ell(Q_i).\]
The last two computations show that $X\in W^{**}_\Omega(Q_i)$ if $K^{**} \geq 1601$. The lemma follows.
\ep

\section{Replacement lemma and application to the smooth distance $D$}

\label{SDbeta}

As usual, let $0 < \epsilon_1 \ll \epsilon_0 \ll 1$, and then construct the collection of coherent regimes $\mathfrak S$ given by Lemma \ref{Lcorona}. We take then $\cS$ to be either in $\mathfrak S$, or a coherent regime included in an element of $\mathfrak S$. 

In Lemma \ref{Lbbalpha}, we started to show that the graph of $b$ behaves well with respect to the approximating planes $P_Q$, and we want to use the graph of $b$ as a substitute for $\partial \Omega$. Roughly speaking, the graph of the Lipschitz function $b$ is ``a good approximation of $\partial \Omega$ for the regime $\cS$''. Let us explain what we mean by this. The Lipschitz graph $\Gamma_\cS$ defined in \eqref{defGS}
is uniformly rectifiable, that is, $\Gamma_{\cS}$ is well approximated by planes. And even better, we can easily construct explicit planes that approximate $\Gamma_{\cS}$. 

First, we equip $P$ with an Euclidean structure, which means that $P$ can be identified to $\R^{n-1}$. Similarly, we identify $P^\bot$ to $\R$, and of course, we choose $P$ and $P^\bot$ such that $\Pi^{\bot}(P) = \{0\}$ and $\Pi(P^{\bot}) = \{0\}$, and so $\R^n$ can be identified to $P \times P^\bot$.

We take a non-negative radial smooth function $\eta \in C^\infty_0(P,\R_+)$ which is supported in the unit ball and that satisfies $\int_{P} \eta dx = 1$. Even if $P$ depends on the regime $\cS$, $P$ is  identified to $\R^{n-1}$, so morally the smooth function $\eta$ is defined on $\R^{n-1}$ and does not depend on anything but the dimension $n$. For $t\neq 0$, we construct the approximation of identity by  
\begin{equation} \label{defetat}
\eta_t(p) := \abs{t}^{1-n} \eta\Big(\frac p{\abs{t}}\Big),
\end{equation}
then the functions 
\begin{equation} \label{defbt}
b^t := \eta_t * b, \quad \mathfrak b^t := \eta_t * \mathfrak b,
\end{equation}
and the planes 
\begin{equation} \label{defLambda}
\Lambda(p,t):= \{(q,(q-p) \nabla b^t(p) + b^t(p)), \, q\in P\}.
\end{equation}
Notice that $\Lambda(p,t)$ is the tangent plane of the approximating graph $\set{\bb^t(p), \, p\in P}$ at $\bb^t(p)$.
What we actually want is flat measures, so we fix a radial function $\theta \in C^\infty(\R^n)$ such that $0\leq \theta \leq 1$, $\supp \theta \subset B(0,1)$, and $\theta \equiv 1$ on $B(0,\frac12)$. We set then
\begin{equation} \label{deftheta}
\theta_{p,t}(y) := \theta \left( \frac{\mathfrak b^t(p) - y}{t} \right)
\end{equation}
and
\begin{equation} \label{deflambda}
\lambda(p,t) := \dfrac{\ds \int_{\partial \Omega} \theta_{p,t} \, d\sigma}{\ds \int_{\Lambda(p,t)} \theta_{p,t} \, d\mu_{\Lambda(p,t)}} = c_\theta^{-1} \abs{t}^{1-n} \int_{\partial \Omega} \theta_{p,t} \, d\sigma
\end{equation}
where the second inequality uses the fact that we centered $\theta_{p,t}$ at $\mathfrak b^t(p) \in \Lambda(p,t)$, and $c_\theta := \int_{\R^{n-1}} \theta(y) dy$. Note that the Ahlfors regularity of $\sigma$ implies that 
\begin{equation} \label{lambda1}
\lambda(p,t) \approx 1,
\end{equation}
whenever $\bb^t(p)$ is close to $\partial \Omega$ - which is the case when $ d(p) \lesssim \abs{t} \lesssim \ell(Q(\cS))$ - and with constants that depend only on $C_\sigma$ and $n$. 
Finally, we introduce the flat measures 
\begin{equation} \label{defmupr}
\mu_{p,t} := \lambda(p,t) \mu_{\Lambda(p,t)}.
\end{equation}
The flat measures $\mu_{p,t}$ are approximations of the Hausdorff measure on $\Gamma_{\cS}$, and we shall show that the same explicit measures almost minimize the distance from $\sigma$ to flat measures, for the local Wasserstein distances $\dist_Q$ with $Q\in \cS$.

\begin{lemma} \label{Lmupr}
For $Q\in \cS$, $p\in \Pi(\frac32B_Q)$, and $\ell(Q)/4 \leq \abs{t} \leq \ell(Q)/2$,  we have
\begin{equation} \label{Lmupr1}
 \dist_Q(\sigma,\mu_{p,t}) \leq C \alpha_\sigma(Q),
\end{equation}
where $C>0$ depends only on $n$ and $C_\sigma$.
\end{lemma}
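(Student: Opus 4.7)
The plan is to apply the triangle inequality with the near-optimal flat measure $\mu_Q$ from \eqref{defmuQ}: since $\dist_Q(\sigma,\mu_Q) \leq 2\alpha_\sigma(Q)$, it suffices to prove that $\dist_Q(\mu_Q, \mu_{p,t}) \lesssim \alpha_\sigma(Q)$, reducing the problem to a comparison between two flat measures. For this, I need to control (a) the closeness of the planes $P_Q$ and $\Lambda(p,t)$ and (b) the difference $|\lambda(p,t) - c_Q|$ of the densities.

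For (a), write $P_Q$ as the graph of an affine function $a_Q : P \to P^\bot$, which is possible because $\Angle(P_Q, P) \leq \epsilon_0 \ll 1$. Since $\eta$ is radial, $\int \eta_t(p-q)\, a_Q(q)\, dq = a_Q(p)$, and therefore
\[
b^t(p) - a_Q(p) = \int_{P} \eta_t(p-q)\, (b(q) - a_Q(q))\, dq.
\]
Using $\|\eta_t\|_\infty \lesssim |t|^{1-n} \approx \ell(Q)^{1-n}$, the fact that $B(p,|t|) \subset \Pi(2B_Q)$ when $p\in\Pi(\tfrac32 B_Q)$ and $|t|\leq \ell(Q)/2$, and Lemma~\ref{Lbbalpha}, I obtain $|b^t(p) - a_Q(p)| \lesssim \ell(Q)\,\alpha_\sigma(Q)$. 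Differentiating the same identity in $p$ and using $\|\nabla\eta_t\|_\infty \lesssim |t|^{-n}$ gives $|\nabla b^t(p) - \nabla a_Q| \lesssim \alpha_\sigma(Q)$. Hence in the $|t|$-neighborhood of $\bb^t(p)$, the planes $P_Q$ and $\Lambda(p,t)$ differ by $O(\alpha_\sigma(Q)\ell(Q))$, both pointwise and in slope.

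For (b), I plan to plug a suitable rescaling of $\theta_{p,t}$ into \eqref{ftestinalpha}. First, Lemma~\ref{Lbbeps} gives $\dist(\bb(p),\partial\Omega)\lesssim\epsilon_1\ell(Q)$, and $|\bb^t(p)-\bb(p)|\lesssim\epsilon_0|t|$ by the Lipschitz continuity of $b$, so $\bb^t(p)\in B(x_Q, C\ell(Q))$ and $\supp\theta_{p,t}\subset B(x_Q, 10^3\ell(Q))$. Since $|t|\theta_{p,t}$ is Lipschitz with constant depending only on $\theta$ and $n$, \eqref{ftestinalpha} yields
\[
\Big|\int \theta_{p,t}\, d\sigma - \int\theta_{p,t}\, d\mu_Q\Big| \lesssim \alpha_\sigma(Q)\,\ell(Q)^{n-1}.
\]
By definition, $\int \theta_{p,t}\,d\sigma = c_\theta\, |t|^{n-1}\lambda(p,t)$. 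On the other hand, since $\theta$ is radial and $\dist(\bb^t(p), P_Q) \lesssim \alpha_\sigma(Q)\ell(Q)$ by (a), a direct change of variables on the plane $P_Q$ shows that $\int_{P_Q}\theta_{p,t}\,d\mu_{P_Q} = c_\theta|t|^{n-1} + O(\alpha_\sigma(Q)\ell(Q)^{n-1})$. Combined with $c_Q \approx 1$ (as noted in the proof of Lemma~\ref{Lalphatobeta}) and $|t|\approx\ell(Q)$, this gives $|\lambda(p,t) - c_Q| \lesssim \alpha_\sigma(Q)$.

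Finally, for any $f\in Lip(Q)$ (so $|f|\lesssim \ell(Q)$ and $f$ is $1$-Lipschitz), I split
\[
\int f\,d\mu_Q - \int f\,d\mu_{p,t} = (c_Q - \lambda(p,t)) \int_{P_Q} f\, d\mu_{P_Q} + \lambda(p,t)\Bigl(\int_{P_Q} f\, d\mu_{P_Q} - \int_{\Lambda(p,t)} f\, d\mu_{\Lambda(p,t)}\Bigr).
\]
The first term is $O(\alpha_\sigma(Q)\ell(Q)^n)$ by (b). For the second, parameterize both $P_Q$ and $\Lambda(p,t)$ as graphs over $P$: the $L^\infty$ and slope estimates from (a), together with $f$ being $1$-Lipschitz and bounded by $\ell(Q)$ on its support, yield another $O(\alpha_\sigma(Q)\ell(Q)^n)$. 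Dividing by $(10^3\ell(Q))^n$ produces $\dist_Q(\mu_Q, \mu_{p,t}) \lesssim \alpha_\sigma(Q)$, and the triangle inequality concludes. The delicate step is (a): the precision in $\alpha_\sigma(Q)$ (rather than the weaker $\epsilon_1$ one would get from Lemma~\ref{Lbbeps} alone) is exactly what Lemma~\ref{Lbbalpha} was designed to provide, and without it the whole strategy would collapse.
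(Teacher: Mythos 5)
Your proof is correct and follows essentially the same route as the paper: reduce by the triangle inequality to comparing $\mu_Q$ and $\mu_{p,t}$, use the convolution with $\eta_t$ together with Lemma~\ref{Lbbalpha} to control both the offset and the slope of $\Lambda(p,t)$ relative to $P_Q$ at precision $\alpha_\sigma(Q)$, and test $\theta_{p,t}$ against \eqref{ftestinalpha} to get $|\lambda(p,t)-c_Q|\lesssim\alpha_\sigma(Q)$. The only cosmetic difference is that you phrase step (a) in terms of the graph function ($b^t-a_Q=\eta_t*(b-a_Q)$ and its gradient, exploiting $\eta_t*a_Q=a_Q$), whereas the paper works with the orthogonal projection $\Pi_Q^\bot$ and the tangent vectors $\hat v^i$ -- the two are equivalent computations and both hinge on the $L^1$ estimate of Lemma~\ref{Lbbalpha}.
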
 

The lemma is not very surprising. The plane $\Lambda(p,t)$ is obtained by locally smoothing $\Gamma_{\cS}$, which is composed of pieces of planes that approximate $\partial \Omega$. 

\medskip

\bp
Thanks to the good approximation properties of the Lipschitz graph $\bb(p)$ that we obtain in Section~\ref{SSLipschitz}, this lemma can be proved similarly as Lemma 5.22 in \cite{DFM3}. 

Let $Q\in \cS$, $p\in \Pi(\frac32B_Q)$, and $t$ with $\ell(Q)/4 \leq \abs{t} \leq \ell(Q)/2$ be fixed. Denote $r=\abs{t}$. By Lemma \ref{Lcorona}, $\alpha_\sigma(Q)\le\epsilon_1$. Since we have chosen $\epsilon_1$ sufficiently small, Lemma \ref{Lalphatobeta} gives that 
\begin{equation}\label{0PQ}
    \sup_{y\in 999\Delta_Q}\dist(y,P_Q)\le C\epsilon_1^{1/n}\ell(Q)\le 10\ell(Q).
\end{equation}
Define a Lipschitz function $\Psi$ by
\[
\Psi(z):=\begin{cases}
\frac14 \qquad\qquad z\in B(x_Q,100\ell(Q)),\\
\frac{1}{3600\ell(Q)}\br{10^3\ell(Q)-\abs{z-x_Q}}_+ \quad\text{otherwise},
\end{cases}
\]
where $(f(z))_+:=\max\set{0,f(z)}$. Then set $f(z)=\Psi(z)\dist(z, P_Q)$. Observe that $\supp f\subset B(x_Q,10^3\ell(Q))$, and that $\abs{\nabla f(z)}\le \Psi(z)+\dist(z, P_Q)\abs{\nabla\Psi}\le 1$, because $\dist(z,P_Q)\le 10\ell(Q)+10^3\ell(Q)$ by \eqref{0PQ}. Hence $f\in Lip(Q)$. By using successively the facts that $f\geq 0$, $\int f \, d\mu_Q = 0$ and \eqref{ftestinalpha}, we have that
\begin{multline}\label{1PQ}
    \int_{\Delta(x_Q,100\ell(Q))}\dist(z, P_Q)d\sigma(z)=4\int_{\Delta(x_Q,100\ell(Q))}\Psi(z)\dist(z,P_Q)d\sigma(z)\\
    \le 4 \int f\, d\sigma =  4\int f(z)(d\sigma-d\mu_Q)\le C\ell(Q)^n\alpha_\sigma(Q).
\end{multline}
Now we estimate the distance from $\Lambda(p,t)$ to $P_Q$. Write 
\[
 \dist(\bb^t(p),P_Q)\le\int_{q\in B(p,r)\cap P}\eta_t(p-q)\dist(\bb(q),P_Q)dq.
\]
Notice that our choice of $p$ and $t$ ensures that
\begin{equation}\label{Bptsubset}
    B(p,r)\cap P\subset\Pi(2B_Q).
\end{equation} So we have that 
\begin{multline}\label{LambdaPQ1}
    \dist(\bb^t(p),P_Q)\le r^{1-n}\norm{\eta}_\infty \int_{q\in\Pi(2B_Q)}\dist(\bb(q),P_Q)dq \\
    \le Cr^{1-n}\norm{\eta}_\infty\ell(Q)^n\alpha_\sigma(Q)
    \le C\ell(Q)\alpha_\sigma(Q),
\end{multline}
where we have used Lemma \ref{Lbbalpha}. We claim that 
\begin{equation}\label{LambdaPQ2}
    \dist(y,P_Q)\le C\alpha_\sigma(Q)\br{\abs{y-\bb^t(p)}+\ell(Q)} \qquad\text{for all }y\in \Lambda(p,t).
\end{equation}
 Let $y=(q,(q-p) \nabla b^t(p) + b^t(p))\in\Lambda(p,t)$ be fixed. Denote by $\Pi_Q^\bot$ the orthogonal projection on the orthogonal complement of $P_Q$.
Then 
\begin{equation}\label{LambdaPQ21}
     \dist(y,P_Q)\le \abs{\Pi_Q^\bot\br{y-\bb^t(p)}}+\dist(\bb^t(p), P_Q).
\end{equation}
Also, $\Pi_Q^\bot(P_Q)$ is a single point $\xi_Q\in\R$. Denote $v:=y-\bb^t(p)=(q-p, (q-p)\nabla b^t(p))$. Let $\hat v^i=\hat v^i(p,t)=\partial_{p_i}\bb^t(p)$, $i=1,2,\dots,n-1$. Then $v=\sum_{i=1}^{n-1}(q_i-p_i)\hat v^i$. We estimate $\abs{\Pi_Q^\bot(\hat v^i)}$. By definition, we write
\begin{multline*}
    \abs{\Pi_Q^\bot(\hat v^i)}=\abs{\Pi_Q^\bot(\partial_{p_i}\bb^t(p))}=\frac{1}{r}\abs{\Pi_Q^\bot((\partial_i\eta)_t*\bb(p))}
    =\frac{1}{r}\abs{\int_{q\in B(p,r)\cap P}(\partial_i\eta)_t(p-q)\Pi_Q^\bot(\bb(q))dq}\\
    =\frac{1}{r}\abs{\int_{q\in B(p,r)\cap P}(\partial_i\eta)_t(p-q)\br{\Pi_Q^\bot(\bb(q))-\xi_Q}dq},
\end{multline*}
where in the last equality we have used that $\int(\partial_i\eta)_t(x) dx=0$. Notice that $\abs{\Pi_Q^\bot(z)-\xi_Q}=\abs{\Pi_Q^\bot(z)-\Pi_Q^\bot(P_Q)}=\dist(z,P_Q)$, so we have that
\[
 \abs{\Pi_Q^\bot(\hat v^i)}\le\frac{1}{r^n}\norm{\partial_i\eta}_\infty\int_{q\in B(p,r)\cap P}\dist(\bb(q),P_Q)dq\le \frac{C}{r^n}\ell(Q)^n\alpha_\sigma(Q)\le C\alpha_\sigma(Q)
\]
by \eqref{Bptsubset} and Lemma \ref{Lbbalpha}. This gives that \begin{equation}\label{PiQbotv}
    \abs{\Pi_Q^\bot(v)}\le C\alpha_\sigma(Q)\abs{v}.
\end{equation} Then the claim \eqref{LambdaPQ2} follows from \eqref{LambdaPQ21} and \eqref{LambdaPQ1}.

Next we compare $c_Q$ (defined in \eqref{defmuQ}) and $\lambda(p,t)$, and claim that 
\begin{equation}\label{lambdapt-cQ}
    \abs{\lambda(p,t)-c_Q}\le C\alpha_\sigma(Q).
\end{equation}
We intend to apply \eqref{ftestinalpha} to the 1-Lipschitz function $|t|\,\theta_{p,t}/\norm{\theta}_{Lip}$. So we need to check that $\supp\theta_{p,t}\subset B(x_Q,10^3\ell(Q))$. By the construction of $\theta_{p,t}$, we have that $\supp\theta_{p,t}\subset B(\bb^t(p),r)$. By Lemma \ref{LALip} and the fact that $\epsilon_0$ has been chosen to be small,
\begin{equation}\label{eqbtb}
     \abs{\bb^t(p)-\bb(p)}=\abs{b^t(p)-b(p)}=\abs{\int\eta_t(q)\br{b(q)-b(p)}dq}\le\norm{\nabla b}_\infty r\le 2\,\epsilon_0\,r<r.
\end{equation}
So $B(\bb^t(p),r)\subset B(\bb(p),2r)$. We show that 
\begin{equation}\label{bbpxQ}
    \abs{\bb(p)-x_Q}\le 10\ell(Q).
\end{equation}
Then the assumption $r\in [\ell(Q)/4,\ell(Q)/2]$ gives that $\supp\theta_{p,t}\subset B(\bb(p),2r)\subset B(x_Q,10^3\ell(Q))$, as desired. 
To see \eqref{bbpxQ}, we recall that $p\in\Pi(\frac32 B_Q)$, and so $\abs{p-\Pi(x_Q)}\le 3\ell(Q)/2$. Let $x\in\pom$ be a point such that $\abs{\bb(p)-x}=\dist(\bb(p),\pom)\lesssim\epsilon_1\ell(Q)$, where the last inequality is due to Lemma \ref{Lbbeps}.  Notice that by the definition \eqref{defdx}, $d(\Pi(x_Q))\le \ell(Q)$. So if $\abs{x-x_Q}\le 10^{-3}d(\Pi(x_Q))$, then  $\abs{x-x_Q}\le 10^{-3}\ell(Q)$, and thus \[
  \abs{\bb(p)-x_Q}\le \abs{\bb(p)-x}+\abs{x-x_Q}\le C\epsilon_1\ell(Q)+10^{-3}\ell(Q)\le 10\ell(Q),
  \]
  as desired. If $\abs{x-x_Q}> 10^{-3}d(\Pi(x_Q))$, then we can apply \eqref{claimPibot} to get that $\abs{\Pi^\bot(x)-\Pi^\bot(x_Q)}\le 2\epsilon_0\abs{\Pi(x)-\Pi(x_Q)}$.  By the triangle inequality, 
  \(
  \abs{\Pi(x)-\Pi(x_Q)}\le \abs{\Pi(x)-p}+\abs{p-\Pi(x_Q)}\le \br{C\epsilon_1+\frac32}\ell(Q)
  \), and so $\abs{\Pi^\bot(x)-\Pi^\bot(x_Q)}\le 2\epsilon_0\br{\frac32+C\epsilon_1}\ell(Q)$. Hence, we still have that \[
  \abs{\bb(p)-x_Q}\le \abs{\bb(p)-x}+\abs{x-x_Q}\le C\epsilon_1\ell(Q)+2\br{C\epsilon_1+3/2}\ell(Q)\le 10\ell(Q),
  \]
which completes the proof of \eqref{bbpxQ}.
We have justified that $t\, \theta_{p,t}/\norm{\theta}_{Lip}\in Lip(Q)$, so we can apply \eqref{ftestinalpha} to this function and obtain that
\begin{equation}\label{AmuAsigma}
\abs{c_\theta r^{n-1}\lambda(p,t)-c_Q\int\theta_{p,t}d\mu_{P_Q}}
=\abs{\int \theta_{p,t}(d\sigma-d\mu_Q)}\le C\ell(Q)^{n-1}\alpha_\sigma(Q).
\end{equation}
We now estimate $A_\mu:=c_Q\int\theta_{p,t}(z)d\mu_{P_Q}(z)$. Denote by $\Pi_Q$ the orthogonal projection from $\Lambda(p,t)$ to $P_Q$; by \eqref{PiQbotv} this is an affine bijection, with a constant Jacobian $J_Q$ that satisfies 
\begin{equation}\label{eqdetJQ}
    \abs{\det(J_Q)-1}\le\abs{\sqrt{1- C\alpha_\sigma(Q)^2}-1}\le C\alpha_\sigma(Q).
\end{equation}
By a change of variables $z=\Pi_Q(y)$, we write
\begin{equation}\label{eqAmu}
    A_\mu=c_Q\det(J_Q)\int_{y\in\Lambda(p,t)}\theta_{p,t}\br{\Pi_Q(y)}d\mu_{\Lambda(p,t)}(y).
\end{equation}
We compare $\int_{y\in\Lambda(p,t)}\theta_{p,t}\br{\Pi_Q(y)}d\mu_{\Lambda(p,t)}(y)$ and $\int_{y\in\Lambda(p,t)}\theta_{p,t}(y)d\mu_{\Lambda(p,t)}(y)$. For $y\in\Lambda(p,t)$, $\abs{\Pi_Q(y)-y}=\dist(y,P_Q)$. So by \eqref{LambdaPQ2}, for $y\in\Lambda(p,t)$
\begin{equation}\label{lambda-cQ1}
  \abs{\theta_{p,t}\br{\Pi_Q(y)}-\theta_{p,t}(y)}\le\norm{\theta}_{Lip}r^{-1}\abs{\Pi_Q(y)-y}\le C\,r^{-1}\alpha_\sigma(Q)\br{\abs{y-\bb^t(p)}+\ell(Q)}.  
\end{equation}
Moreover, the support property of $\theta_{p,t}$ implies that $\abs{\theta_{p,t}\br{\Pi_Q(y)}-\theta_{p,t}(y)}$ is not zero when either $y\in B(\bb^t(p),r)$ or $\Pi_Q(y)\in B(\bb^t(p),r)$. By the triangle inequality and \eqref{LambdaPQ2},
\[
\abs{y-\bb^t(p)}\le\abs{\Pi_Q(y)-\bb^t(p)}+C\alpha_\sigma(Q)\br{\abs{y-\bb^t(p)}+\ell(Q)}.
\]
Since $\alpha_\sigma(Q)\le\epsilon_1$ is sufficiently small, we get that when $\Pi_Q(y)\in B(\bb^t(p),r)$, 
\(
\abs{y-\bb^t(p)}\le 2r+\C\epsilon_1\ell(Q)\le 3\ell(Q)
\). 
So by \eqref{lambda-cQ1} and the fact that $\supp\theta_{p,t}\subset B(\bb^t(p),r)$, 
\begin{equation}\label{eqthetapt}
   \abs{\int_{y\in\Lambda(p,t)}\br{\theta_{p,t}\br{\Pi_Q(y)}-\theta_{p,t}(y)}d\mu_{\Lambda(p,t)}(y)}\le C\,r^{n-1}\alpha_\sigma(Q). 
\end{equation}
Recalling the definition of $c_\theta$, we have obtained that
$\abs{A_\mu-c_Q\det(J_Q)c_\theta\, r^{n-1}}\le C\,c_Q\,r^{n-1}\alpha_\sigma(Q)$.
By the triangle inequality, \eqref{eqdetJQ}, and the fact that $r \approx \ell(Q)$,
\[
\abs{A_\mu-c_Q\,c_\theta\,r^{n-1}}\le C\,c_Q\alpha_\sigma(Q)\ell(Q)^{n-1}.
\]
By this, the triangle inequality and \eqref{AmuAsigma}, 
\begin{equation}\label{lambda-cQ2}
    c_\theta\,r^{n-1}\abs{\lambda(p,t)-c_Q}=\abs{c_\theta r^{n-1}\lambda(p,t)-c_Q\,c_\theta\,r^{n-1}}\le C(1+c_Q)\alpha_\sigma(Q)\ell(Q)^{n-1},
\end{equation}
which implies that $\abs{\lambda(p,t)-c_Q}\le\frac{1}{2}(1+c_Q)$ because $\alpha_\sigma(Q)$ is sufficiently small. But we know $\lambda(p,t)\approx 1$ by \eqref{lambda1}, so $c_Q\approx 1$ and then \eqref{lambda-cQ2} yields the desired estimate \eqref{lambdapt-cQ}.

Finally, we are ready to show that 
\begin{equation}\label{muQmupt}
    \dist_Q(\mu_Q,\mu_{p,t})\le C\alpha_\sigma(Q).
\end{equation}
Let $f\in Lip(Q)$. We have that
\[
\int f(z)d\mu_Q(z)=c_Q\int_{P_Q}f(z)d\mu_{P_Q}(z)=c_Q\det(J_Q)\int_{\Lambda(p,t)}f\br{\Pi_Q(y)}d\mu_{\Lambda(p,t)}(y).
\]
An argument similar to the one for \eqref{eqthetapt} gives that \[\abs{\int_{\Lambda(p,t)}\br{f\br{\Pi_Q(y)}-f(y)}d\mu_{\Lambda(p,t)}(y)}\le C\alpha_\sigma(Q)\ell(Q)^n.\]
So 
\begin{multline*}
    \abs{\int f\,d\mu_Q-\lambda(p,t)\int f\,d\mu_{\Lambda(p,t)}}\le
    \abs{\int f\,d\mu_Q-c_Q\int f\,d\mu_{\Lambda(p,t)}}+\abs{\lambda(p,t)-c_Q}\abs{\int  f\,d\mu_{\Lambda(p,t)}}\\
    \le Cc_Q\det(J_Q)\alpha_\sigma(Q)\ell(Q)^n +c_Q\abs{\det(J_Q)-1}\abs{\int f\,d\mu_{\Lambda(p,t)}}+\abs{\lambda(p,t)-c_Q}\abs{\int  f\,d\mu_{\Lambda(p,t)}}.
\end{multline*}
By \eqref{eqdetJQ}, \eqref{lambdapt-cQ}, and $c_Q\approx 1$, 
\begin{equation}\label{eqfQfLambda}
    \abs{\int f\,d\mu_Q-\lambda(p,t)\int f\,d\mu_{\Lambda(p,t)}}\le C\alpha_\sigma(Q)\ell(Q)^n,
\end{equation}
which proves \eqref{muQmupt}. Now \eqref{Lmupr1} follows from \eqref{muQmupt} and \eqref{ftestinalpha}.
\qed

\ms
We want to use the flat measures $\mu_{p,t}$ to estimate the smooth distance $D_\beta$ introduced in \eqref{IdefD}. But before that, we shall need to introduce 
\begin{equation} \label{defalphak}
\alpha_\sigma(Q,k) := \alpha_\sigma(Q^{(k)}),
\end{equation}
where $Q^{(k)}$ is the unique ancestor of $Q$ such that $\ell(Q^{(k)}) = 2^k\ell(Q)$, and then for $\beta>0$,
\begin{equation} \label{defalphabeta}
\alpha_{\sigma,\beta}(Q) := \sum_{k\in \N} 2^{-k\beta} \alpha_\sigma(Q,k).
\end{equation}
The collection $\{\alpha_{\sigma,\beta}(Q)\}_Q$ is nice, because we have 
\begin{equation} \label{pralphabeta}
\alpha_{\sigma,\beta}(Q^*) \approx \alpha_{\sigma,\beta}(Q)
\end{equation}
 whenever $Q\in \D_{\partial \Omega}$ and $Q^*$ is the parent of $Q$, a property which is not satisfied by the $\{\alpha_\sigma(Q)\}_Q$. And of course the $\alpha_{\sigma,\beta}(Q)$'s still satisfies the Carleson packing condition.
 
 \begin{lemma} \label{LalphabetaCM} Let $\partial \Omega$ be uniformly rectifiable and $\sigma$ be an Ahlfors regular measure satisfying \eqref{defADR}.
There exists a constant $C_{\sigma,\beta}$ that depends only on the constant in \eqref{defUR}, the Ahlfors regular constant $C_\sigma$, and $\beta$ such that, for any $Q_0 \in \D_{\partial \Omega}$,
\begin{equation} \label{pralphabeta2}
\sum_{Q\in \D_{\pom}(Q_0)} |\alpha_{\sigma,\beta}(Q)|^2 \sigma(Q) \leq C_{\sigma,\beta} \sigma(Q_0).
\end{equation}
\end{lemma}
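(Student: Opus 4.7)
The plan is to prove the Carleson packing estimate for $\alpha_{\sigma,\beta}$ by reducing it, via Cauchy--Schwarz, to the Carleson packing estimate \eqref{defUR} for the unweighted $\alpha_\sigma(Q)$'s, together with the $L^\infty$ bound \eqref{eqalphaQbdd}. The first step is to apply the discrete Cauchy--Schwarz inequality in the variable $k$ to the definition \eqref{defalphabeta}:
\[
|\alpha_{\sigma,\beta}(Q)|^2 \leq \Bigl( \sum_{k\in\N} 2^{-k\beta}\Bigr) \Bigl( \sum_{k\in\N} 2^{-k\beta} \alpha_\sigma(Q,k)^2 \Bigr) \leq C_\beta \sum_{k\in\N} 2^{-k\beta} \alpha_\sigma(Q^{(k)})^2.
\]
Summing over $Q\in\D_{\pom}(Q_0)$, multiplying by $\sigma(Q)$, and interchanging the two sums reduces the problem to proving, for each $k\ge 0$,
\[
S_k(Q_0) := \sum_{Q\in\D_{\pom}(Q_0)} \alpha_\sigma(Q^{(k)})^2 \sigma(Q) \leq C(1+k)\sigma(Q_0),
\]
since the weight $2^{-k\beta}$ then makes $\sum_k 2^{-k\beta}(1+k)$ a convergent series (because $\beta>0$), yielding \eqref{pralphabeta2}.

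To establish the bound on $S_k(Q_0)$, the plan is to split the sum according to the location of the ancestor $R = Q^{(k)}$ relative to $Q_0$. Since $Q$ and $Q_0$ are dyadic and $Q\subset Q_0$ while $Q\subset R$, either $R\subset Q_0$ or $R\supsetneq Q_0$. In the first case, group the sum by $R$: the cubes $Q$ with $Q^{(k)}=R$ are contained in $R$ and their $\sigma$-measures sum to at most $\sigma(R)$, so this part of $S_k(Q_0)$ is bounded by
\[
\sum_{R\in\D_{\pom}(Q_0)} \alpha_\sigma(R)^2 \sigma(R) \leq C\sigma(Q_0)
\]
by \eqref{defUR}, uniformly in $k$. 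In the second case, $R\supsetneq Q_0$, so there are at most $k$ possible ancestors $R$, and the cubes $Q\subset Q_0$ whose $k$-th ancestor strictly contains $Q_0$ are precisely those whose generation is in $\{k(Q_0),\ldots,k(Q_0)+k-1\}$; at each such generation the cubes contained in $Q_0$ partition it up to $\sigma$-null sets, so their $\sigma$-masses sum to $\sigma(Q_0)$. Using the uniform bound $\alpha_\sigma(R)\le C$ from \eqref{eqalphaQbdd}, this part contributes at most $Ck\,\sigma(Q_0)$.

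Combining the two cases gives $S_k(Q_0) \leq C(1+k)\sigma(Q_0)$, and inserting this back into the $k$-sum yields
\[
\sum_{Q\in\D_{\pom}(Q_0)} |\alpha_{\sigma,\beta}(Q)|^2\sigma(Q) \leq C_\beta \sum_{k\in\N} 2^{-k\beta} C(1+k)\sigma(Q_0) = C_{\sigma,\beta}\sigma(Q_0),
\]
which is \eqref{pralphabeta2}. No step here is expected to be an obstacle; the only mildly subtle point is the bookkeeping for the ancestors that sit above $Q_0$, which is handled cleanly by the exponential decay of the weight $2^{-k\beta}$ against the linear growth $k$ from the $L^\infty$ estimate on $\alpha_\sigma$.
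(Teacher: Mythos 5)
Your proposal is correct and follows essentially the same route as the paper: Cauchy--Schwarz in $k$, interchange the sums, and split the resulting inner sum according to whether $Q^{(k)}$ lies inside or strictly above $Q_0$, bounding the latter by $Ck\sigma(Q_0)$ via \eqref{eqalphaQbdd} and the former by $C\sigma(Q_0)$ via \eqref{defUR}. The only micro-difference is in the small-scale part: you group directly by the common ancestor $R=Q^{(k)}$ and use $\sum_{Q:\,Q^{(k)}=R}\sigma(Q)\le\sigma(R)$, whereas the paper instead writes $\sigma(Q)\approx 2^{-kd}\sigma(Q^{(k)})$ via Ahlfors regularity and then counts the $\lesssim 2^{kd}$ descendants of each $R$; your version is marginally cleaner since it avoids invoking Ahlfors regularity at that step, but the two are equivalent.
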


\bp
By Cauchy-Schwarz, 
\[
\abs{\alpha_{\sigma,\beta}(Q)}^2\le \br{\sum_{k\in\N}2^{-\beta k}\alpha_{\sigma}(Q,k)^2}\br{\sum_{k\in\N}2^{-\beta k}}\le C\sum_{k\in\N}2^{-\beta k}\alpha_{\sigma}(Q,k)^2.
\]
Therefore,
\begin{multline}\label{alphabeta1}
    \sum_{Q\in \D_{\pom}(Q_0)} |\alpha_{\sigma,\beta}(Q)|^2 \sigma(Q)\le C\sum_{Q\in\D_{\pom}(Q_0)}\sum_{k\in\N}2^{-\beta k}\alpha_{\sigma}(Q,k)^2\sigma(Q)\\
    =C\sum_{k\in\N}2^{-\beta k}\sum_{Q\in\D_{\om}(Q_0)}\alpha_{\sigma}(Q,k)^2\sigma(Q)
    =: C\sum_{k\in\N}2^{-\beta k} \br{I_1+I_2},
\end{multline}
where $I_1$ is the sum over $Q\in\D_{\sigma}(Q_0)$ such that $2^k\ell(Q)<\ell(Q_0)$, and $I_2$ is the rest. By \eqref{eqalphaQbdd} and Alfhors regularity of $\pom$,
\[
    I_2\le C\sum_{Q\in\D_{\pom}(Q_0), \, \ell(Q)\ge 2^{-k}\ell(Q_0)}\sigma(Q)\le C\sum_{j=0}^k\sum_{\ell(Q)=2^{-j}\ell(Q_0)}\sigma(Q)\le C\sum_{j=0}^k\ell(Q_0)^d=Ck\sigma(Q_0).
\]
For $I_1$, we observe that $\sigma(Q^{(k)})\approx 2^{kd}\sigma(Q)$, and that for each $Q^{(k)}$, it has at most $C2^{kd}$ descendants such that $\ell(Q^{(k)}) = 2^k\ell(Q)$. Therefore, 
\begin{multline*}
    I_1\le C\sum_{Q\in\D_{\pom}(Q_0), \, \ell(Q)< 2^{-k}\ell(Q_0)}\alpha_\sigma(Q^{(k)})^2\sigma(Q^{(k)})2^{-kd}\\
    \le C\sum_{Q^{(k)}\in\D_{\pom}(Q_0), \, \ell(Q^{(k)})
    \le \ell(Q_0)}\alpha_\sigma(Q^{(k)})^2\sigma(Q^{(k)})=C\sum_{Q\in\D_{\pom}(Q_0)}\alpha_\sigma(Q)^2\sigma(Q)\le C\sigma(Q_0)
\end{multline*}
by \eqref{defUR}. Returning to \eqref{alphabeta1}, we have that 
\[
\sum_{Q\in \D_{\pom}(Q_0)} |\alpha_{\sigma,\beta}(Q)|^2 \sigma(Q)\le C\sum_{k\in\N}2^{-\beta k}(k+1)\sigma(Q_0)\le C\sigma(Q_0),
\]
as desired.
\ep

The quantities $\alpha_{\sigma,\beta}$ are convenient, because we can now obtain an analogue of Lemma \ref{Lmupr} where we don't need to pay too much attention on the choices of $p$ and $t$.

\begin{lemma} \label{LmuprG} Let $\beta >0$ and $K\geq 1$.
For $Q\in \cS$, $p\in \Pi( K B_Q)$, and $\ell(Q)/K \leq \abs{t} \leq K\ell(Q)$,  we have
\begin{equation} \label{Lmupr1G}
 \dist_Q(\sigma,\mu_{p,t}) \leq C_{\beta,K} \alpha_{\sigma,\beta}(Q),
\end{equation}
where $C_{\beta,K}>0$ depends only on $n$, $C_\sigma$, $\beta$, and $K$.
\end{lemma}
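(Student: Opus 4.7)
The plan is to reduce Lemma~\ref{LmuprG} to Lemma~\ref{Lmupr} via a scale adjustment. Let $k_0 \in \N$ be the smallest integer with $2^{k_0} \ge 100 K$, so that $k_0 = O(\log K)$, and set $\tilde Q := Q^{(k_0)}$. The coherence of $\cS$ ensures $\tilde Q \in \cS$ provided $\tilde Q \subset Q(\cS)$; the edge case $Q^{(k_0)} \not\subset Q(\cS)$ is handled by replacing $\tilde Q$ with $Q(\cS)$, at the cost of a bounded multiplicative factor depending on $K$. With this choice one gets the inclusions $\Pi(KB_Q) \subset \Pi(\tfrac{3}{2} B_{\tilde Q})$, $|t| \le K\ell(Q) \le \ell(\tilde Q)/4$, and $B(x_Q, 10^3 \ell(Q)) \subset B(x_{\tilde Q}, 10^3 \ell(\tilde Q))$.

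Next I rerun the proof of Lemma~\ref{Lmupr} with $Q$ replaced by $\tilde Q$, keeping the given $(p,t)$ even though now $|t|\le \ell(\tilde Q)/4$ rather than $|t| \in [\ell(\tilde Q)/4, \ell(\tilde Q)/2]$. The key inputs are \eqref{ftestinalpha} applied to $\tilde Q$ and Lemma~\ref{Lbbalpha} at scale $\tilde Q$, which yields
\[
\int_{\Pi(2B_{\tilde Q})} \dist(\bb(q), P_{\tilde Q})\,dq \lesssim \ell(\tilde Q)^n \alpha_\sigma(\tilde Q) = \ell(\tilde Q)^n \alpha_\sigma(Q,k_0).
\]
Since $B(p,|t|)\cap P \subset \Pi(2B_{\tilde Q})$, the analogues of \eqref{LambdaPQ1} and \eqref{PiQbotv} give $\dist(\bb^t(p), P_{\tilde Q}) \lesssim \ell(\tilde Q)\alpha_\sigma(Q,k_0)$ and a small-angle estimate between $\Lambda(p,t)$ and $P_{\tilde Q}$. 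The density bound \eqref{lambdapt-cQ} becomes $|\lambda(p,t) - c_{\tilde Q}| \lesssim (\ell(\tilde Q)/|t|)^{n-1}\alpha_\sigma(Q,k_0) \le C_K \alpha_\sigma(Q,k_0)$, with the factor $(\ell(\tilde Q)/|t|)^{n-1}\le (4K)^{n-1}$ arising from the conversion of an integral estimate to a pointwise one. Assembled via the triangle inequality $\dist_{\tilde Q}(\sigma,\mu_{p,t}) \le \dist_{\tilde Q}(\sigma,\mu_{\tilde Q}) + \dist_{\tilde Q}(\mu_{\tilde Q},\mu_{p,t})$ as in the closing step of Lemma~\ref{Lmupr}, these bounds give
\[
\dist_{\tilde Q}(\sigma,\mu_{p,t}) \le C_K\,\alpha_\sigma(Q,k_0).
\]

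Finally, to transfer the estimate from $\tilde Q$ back to $Q$, note that every $f \in Lip(Q)$ is automatically in $Lip(\tilde Q)$ by the inclusion of balls above, whence
\[
\dist_Q(\sigma,\mu_{p,t}) \le \bigl(\ell(\tilde Q)/\ell(Q)\bigr)^n \dist_{\tilde Q}(\sigma,\mu_{p,t}) \le C_K\, 2^{k_0 n}\alpha_\sigma(Q,k_0).
\]
The definition \eqref{defalphabeta} yields $\alpha_\sigma(Q,k_0) \le 2^{k_0\beta}\alpha_{\sigma,\beta}(Q)$, so $\dist_Q(\sigma,\mu_{p,t}) \le C_K\, 2^{k_0(n+\beta)}\alpha_{\sigma,\beta}(Q) \le C_{\beta,K}\,\alpha_{\sigma,\beta}(Q)$, as claimed. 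The main technical obstacle lies in the middle paragraph: verifying that each estimate in the proof of Lemma~\ref{Lmupr} survives the weakened hypothesis $|t| \le \ell(\tilde Q)/4$; the only non-trivial modification is the $K$-dependent loss in the density bound, and that loss is safely absorbed into the final constant.
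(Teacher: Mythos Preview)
Your approach is genuinely different from the paper's. The paper proceeds in two steps: first, for $p\in\Pi(\tfrac32 B_Q)$ and $\ell(Q)/K\le|t|\le\ell(Q)/2$, it proves $\dist_Q(\sigma,\mu_{p,t})\le C_K\alpha_\sigma(Q)$ by a telescoping argument comparing the flat measures $\mu_{p_0,t_j}$ at intermediate scales $t_j=2^{-2-j}\ell(Q)$, invoking Lemma~A.5 of \cite{FenUR} (Wasserstein distances between two flat measures whose supports meet a common ball are controlled at the smaller scale) together with Lemma~\ref{Lmupr}; second, for general $(p,t)$ it passes to the smallest ancestor $Q^*$ with $p\in\Pi(\tfrac32 B_{Q^*})$ and $|t|\le\ell(Q^*)/2$, applies step one there, and uses $\alpha_\sigma(Q^*)\lesssim\alpha_{\sigma,\beta}(Q)$. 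You bypass the telescoping and the external lemma entirely: you jump straight to an ancestor $\tilde Q$ and reopen the proof of Lemma~\ref{Lmupr} with $|t|$ allowed down to $\ell(\tilde Q)/C_K$, absorbing the resulting $(\ell(\tilde Q)/|t|)^k$ losses into the constant. This is more elementary but less modular, and your claim that ``the only non-trivial modification is the $K$-dependent loss in the density bound'' understates the bookkeeping: the estimates \eqref{LambdaPQ1}, \eqref{PiQbotv}, \eqref{AmuAsigma}, \eqref{eqthetapt} all pick up powers of $\ell(\tilde Q)/|t|$, and the conclusion $c_{\tilde Q}\approx 1$ now needs an independent argument (e.g.\ via the proof of Lemma~\ref{Lalphatobeta}) rather than following from \eqref{lambda-cQ2}, since the latter would require $C_K\epsilon_1<1$. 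Your edge case $Q^{(k_0)}\not\subset Q(\cS)$ is also more delicate than you indicate: replacing $\tilde Q$ by $Q(\cS)$ leaves $|t|$ possibly as large as $K\ell(Q(\cS))$ and $p$ possibly in $\Pi((K{+}1)B_{Q(\cS)})$, so $\supp\theta_{p,t}$ may escape $10^3 B_{Q(\cS)}$ for large $K$; this is fixable (exploiting that $b\equiv 0$ outside $\Pi(4B_{Q(\cS)})$ and $P_{Q(\cS)}=P$), but not for free.
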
 

\bp
First, we prove that when $p\in \Pi(\frac32 B_Q)$ and $\ell(Q)/K \leq |t| \leq \ell(Q)/2$, we have
\begin{equation} \label{Lmupr2G}
 \dist_Q(\sigma,\mu_{p,t}) \leq C_K \alpha_{\sigma}(Q),
\end{equation}
We set $t_j = 2^{-2-j}\ell(Q)$. We also take a dyadic cube $Q' \subset Q$ such that $\ell(Q')/4 \leq |t| \leq \ell(Q')/2$, and then we pick $p_0\in \Pi(\frac32 B_{Q'})$. By Lemma \ref{Lmupr}, we have that 
\[ \dist_{Q}(\sigma,\mu_{p,t_0}) + \dist_{Q}(\sigma,\mu_{p_0,t_0}) \lesssim \alpha_\sigma(Q),\]
so $\dist_{Q}(\mu_{p,t_0},\mu_{p_0,t_0}) \lesssim \alpha_\sigma(Q)$ too. Consequently, the claim \eqref{Lmupr2G} is reduced to
\begin{equation} \label{Lmupr3G}
\dist_{Q}(\mu_{p_0,t},\mu_{p_0,t_0}) \leq C_K \alpha_\sigma(Q).
\end{equation}
For this latter bound, we decompose
\begin{equation} \label{Lmupr4G}
\dist_{Q}(\mu_{p_0,t},\mu_{p_0,t_0}) \leq \dist_{Q}(\mu_{p_0,t},\mu_{p_0,t_k}) + \sum_{j=0}^{k-1} \dist_{Q}(\mu_{p_0,t_j},\mu_{p_0,t_{j+1}}),
\end{equation}
where $k$ is chosen so that $t_k = \ell(Q')/2$, and $k\leq 1+\log_2(K)$ is bounded by $K$. We look at $\dist_{Q}(\mu_{p_0,t},\mu_{p_0,t_k})$, but since we are dealing with two flat measures that intersects $B_{Q'}$, Lemma A.5  in \cite{FenUR} shows that
\begin{equation} \label{Lmupr5G}
\dist_{Q}(\mu_{p_0,t},\mu_{p_0,t_k}) \lesssim \dist_{Q'} (\mu_{p_0,t},\mu_{p_0,t_k})
\end{equation}
and then Lemma \ref{Lmupr} and the fact that $\ell(Q') \approx_K \ell(Q)$ entail that
\begin{equation} \label{Lmupr6G}
\dist_{Q}(\mu_{p_0,t},\mu_{p_0,t_k}) \lesssim \dist_{Q'} (\mu_{p_0,t},\sigma) + \dist_{Q'} (\sigma,\mu_{p_0,t_k}) \lesssim \alpha_{\sigma}(Q') \leq C_K \alpha_{\sigma}(Q).
\end{equation}
A similar reasoning gives that 
\begin{equation} \label{Lmupr7G}
\dist_{Q}(\mu_{p_0,t_j},\mu_{p_0,t_{j+1}}) \lesssim C_K \alpha_{\sigma}(Q)
\end{equation}
whenever $0\leq j \leq k-1$. The combination of \eqref{Lmupr4G}, \eqref{Lmupr6G}, and \eqref{Lmupr7G} shows the claim \eqref{Lmupr3G} and thus \eqref{Lmupr2G}.

In the general case, we pick the smallest ancestor $Q^*$ of $Q$ such that $p\in \Pi(\frac32B_{Q^*})$ and $|t| \leq \ell(Q^*)/2$, and we apply \eqref{Lmupr2G} to get
\[ \dist_Q(\sigma,\mu_{p,t}) \leq C_K \alpha_{\sigma}(Q^*).\]
The lemma follows then by simply observing that $\alpha_\sigma(Q^*) \lesssim \alpha_{\sigma,\beta}(Q)$.
\ep

We need the constant
\begin{equation} \label{defcbeta}
c_\beta := \int_{\R^{n-1}} (1+|p|^2)^{-\frac{d+\beta}2} dy
\end{equation}
and the unit vector $N_{p,t}$ defined as the vector 
\begin{equation} \label{defNpr}
N_{p,t}(X) := [\nabla \dist(.,\Lambda(p,t))](X)
\end{equation}
which is of course constant on the two connected components of $\R^n \sm \Lambda(p,t)$. We are now ready to compare $D_\beta$ with the distance to $\Lambda(p,t)$.

\begin{lemma}  \label{LestD}
Let $Q\in \cS$, $X\in W_{\Omega}(Q)$, $p\in \Pi(2^5Q)$ and $2^{-5}\ell(Q) \leq |t| \leq 2^5 \ell(Q)$. We have
\begin{equation} \label{estD1}
|D^{-\beta}_\beta(X) - c_\beta \lambda(p,t) \dist(X,\Lambda(p,t))^{-\beta}| \leq C \ell(Q)^{-\beta} \alpha_{\sigma,\beta}(Q).
\end{equation}
and
\begin{equation} \label{estD2}
|\nabla [D^{-\beta}_\beta](X) + \beta c_\beta \lambda(p,t) \dist(X,\Lambda(p,t))^{-\beta-1} N_{p,t}(X)| \\ \leq C \ell(Q)^{-\beta-1} \alpha_{\sigma,\beta+1}(Q),
\end{equation}
where the constant $C>0$ depends only $C_\sigma$ and $\beta$.
\end{lemma}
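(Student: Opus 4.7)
The plan is to write the difference as $\int F\,d(\sigma-\mu_{p,t})$ with $F(y):=|X-y|^{-d-\beta}$, decompose $F$ dyadically into annuli around $X$, and apply Lemma \ref{LmuprG} scale by scale. First, a direct computation using Fubini and the substitution $y = \Pi_\Lambda(X) + hz$ with $h = \dist(X,\Lambda)$ shows that for any affine hyperplane $\Lambda$, $\int |X-y|^{-d-\beta}\,d\mu_\Lambda(y) = c_\beta \dist(X,\Lambda)^{-\beta}$; hence $\int F\,d\mu_{p,t} = c_\beta \lambda(p,t) \dist(X,\Lambda(p,t))^{-\beta}$. Differentiating in $X$ and using that $\nabla_X \dist(X,\Lambda) = N_\Lambda(X)$ on $\R^n \setminus \Lambda$ gives $\int \nabla_X F\,d\mu_{p,t} = -\beta c_\beta \lambda(p,t) \dist(X,\Lambda(p,t))^{-\beta-1} N_{p,t}(X)$, which identifies the subtracted quantity in \eqref{estD2}.

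Choose a smooth partition of unity $\{\phi_k\}_{k \geq 0}$ with $\phi_k$ supported in the annulus $\{y : 2^{k-2}\ell(Q) \leq |y-X| \leq 2^{k+1}\ell(Q)\}$ for $k \geq 1$, $\phi_0$ supported in $B(X, 2\ell(Q))$, satisfying $\sum_k \phi_k \equiv 1$ on a neighborhood of $\partial\Omega \cup \Lambda(p,t)$, and $|\nabla \phi_k| \lesssim (2^k \ell(Q))^{-1}$. Since $X \in W_\Omega(Q)$ gives $\delta(X) \approx \ell(Q)$ and $|X - x_Q| \leq 2\ell(Q)$, on $\supp \phi_k$ we have $|y - X| \approx 2^k \ell(Q)$, thus $|\phi_k F| + (2^k\ell(Q))|\nabla(\phi_k F)| \lesssim (2^k\ell(Q))^{-d-\beta}$. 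Moreover $\supp \phi_k \subset B(x_{Q^{(k)}}, 4\ell(Q^{(k)}))$, so up to a dimensional constant $C_1$, the function $g_k := C_1^{-1}(2^k\ell(Q))^{d+\beta+1}\phi_k F$ lies in $Lip(Q^{(k)})$.

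Introduce $t_k := 2^k t$; then the pair $(p, t_k)$ meets the hypothesis of Lemma \ref{LmuprG} at $Q^{(k)}$ with a uniform $K = 2^5$. Fix any $\tilde\beta > \beta$; Lemma \ref{LmuprG} then yields
\[
\left| \int \phi_k F \, d(\sigma - \mu_{p, t_k}) \right| \lesssim (2^k\ell(Q))^{-\beta} \, \alpha_{\sigma,\tilde\beta}(Q^{(k)}).
\]
To return from $\mu_{p, t_k}$ to $\mu_{p, t}$ at scale $Q^{(k)}$, telescope $\mu_{p,t} - \mu_{p, t_k} = \sum_{j=0}^{k-1} (\mu_{p, t_j} - \mu_{p, t_{j+1}})$; at each intermediate scale $Q^{(j)}$, Lemma \ref{LmuprG} and the triangle inequality through $\sigma$ give $\dist_{Q^{(j)}}(\mu_{p, t_j}, \mu_{p, t_{j+1}}) \lesssim \alpha_{\sigma,\tilde\beta}(Q^{(j)})$. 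The key technical observation is that for two flat measures the local Wasserstein distance is essentially monotone nonincreasing in scale (the angles and densities are scale-invariant, while positional offsets contribute only a term of order (offset)/$s$), so $\dist_{Q^{(k)}}(\mu_{p, t_j}, \mu_{p, t_{j+1}}) \lesssim \dist_{Q^{(j)}}(\mu_{p, t_j}, \mu_{p, t_{j+1}}) \lesssim \alpha_{\sigma,\tilde\beta}(Q^{(j)})$; this is the role of Lemma A.5 of \cite{FenUR}. Summing the telescope gives $|\int \phi_k F \, d(\mu_{p, t_k} - \mu_{p, t})| \lesssim (2^k\ell(Q))^{-\beta} \sum_{j<k} \alpha_{\sigma,\tilde\beta}(Q^{(j)})$.

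Summing over $k \geq 0$ and exchanging the order of summation, both contributions are controlled by sums of the form $\sum_{k,i} 2^{-k\beta} \cdot 2^{-i\tilde\beta}\alpha_\sigma(Q^{(k+i)})$. Setting $m = k+i$, this rearranges to $\sum_m \alpha_\sigma(Q^{(m)}) 2^{-m\beta} \sum_{k=0}^{m} 2^{-(m-k)(\tilde\beta - \beta)} \lesssim \sum_m 2^{-m\beta}\alpha_\sigma(Q^{(m)}) = \alpha_{\sigma,\beta}(Q)$, valid for any $\tilde\beta > \beta$ with constant depending on $\tilde\beta - \beta$. This yields the bound $\lesssim \ell(Q)^{-\beta} \alpha_{\sigma,\beta}(Q)$, proving \eqref{estD1}. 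The gradient estimate \eqref{estD2} follows from the same argument with $F$ replaced by $\nabla_X F(y) = -(d+\beta)(X-y)|X-y|^{-d-\beta-2}$: the additional factor $|X-y|^{-1}$ shifts $\beta \to \beta+1$ throughout, producing $\alpha_{\sigma,\beta+1}(Q)$ in place of $\alpha_{\sigma,\beta}(Q)$. The principal obstacle is the scale-transfer step for flat measures; while intuitively clear from the scale-invariance of their Wasserstein distance, it requires either a direct geometric argument using the Lipschitz and mollification estimates on $\bb^t$ from Section \ref{SSLipschitz}, or an invocation of the scaling lemma in \cite{FenUR}.
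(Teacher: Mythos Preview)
Your overall strategy---write the difference as $\int F\,d(\sigma-\mu_{p,t})$, cut $F$ into dyadic annuli, and telescope through a chain of flat measures---matches the paper's. But there is a genuine gap in the choice of intermediate measures. You apply Lemma~\ref{LmuprG} at the ancestors $Q^{(k)}$ (and at $Q^{(j)}$ in the telescope), yet that lemma is stated only for cubes in $\cS$. Once $2^k\ell(Q)>\ell(Q(\cS))$ the cube $Q^{(k)}$ is no longer in $\cS$, and then the plane $\Lambda(p,t_k)$ carries no information about $\partial\Omega$ at that scale: since $b$ is supported in $4B_{Q(\cS)}$, the mollified gradient $\nabla b^{t_k}$ is nearly zero, $\Lambda(p,t_k)$ is essentially $P=P_{Q(\cS)}$, and there is no reason for $P$ to approximate $\partial\Omega$ in $10^3 B_{Q^{(k)}}$. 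So $\dist_{Q^{(k)}}(\sigma,\mu_{p,t_k})$ can be of order one, not $\alpha_{\sigma,\tilde\beta}(Q^{(k)})$, and your summation collapses. The paper sidesteps this by telescoping through the canonical best approximators $\mu_{Q^{(j)}}$ (defined for \emph{every} dyadic cube, with $\dist_{Q^{(j)}}(\sigma,\mu_{Q^{(j)}})\le 2\alpha_\sigma(Q^{(j)})$ by construction), and invokes Lemma~\ref{LmuprG} only once, at the base cube $Q\in\cS$, to link $\mu_Q$ with $\mu_{p,t}$.

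A smaller issue: your test function $g_0=C_1^{-1}\ell(Q)^{d+\beta+1}\phi_0 F$ is not in $Lip(Q)$, because $F(y)=|X-y|^{-d-\beta}$ is singular at $X\in\supp\phi_0$. The bound $|y-X|\gtrsim\ell(Q)$ holds only on $\partial\Omega\cup\Lambda(p,t)$, not on all of $\R^n$. The paper fixes this by first proving $|X-y|\ge c\,2^kr$ on $(\partial\Omega\cup\Lambda(p,t))\cap\supp\wt\theta_k$ and then replacing $f_k$ by the truncated $\wt f_k(y)=\max\{|X-y|,c\,2^kr\}^{-d-\beta}\wt\theta_k(y)$, which agrees with $f_k$ on both measures and is genuinely Lipschitz on $\R^n$.
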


\bp
Denote $r=\abs{t}$, and $d=n-1$. 
By the definition of $W_\om(Q)$, $\dist(X,\pom)>\ell(Q)/2$ and $X\in 2B_Q$. We show that in addition, 
\begin{equation}\label{eqXball}
    X\in B(\bb^t(p),2^6\ell(Q)),
\end{equation}
and 
\begin{equation}\label{eqXdist}
    \dist(X,\Lambda(p,t)\cup\pom)>\frac{\ell(Q)}{20}.
\end{equation}
Since $X\in 2B_Q$, $\abs{\Pi(X)-p}\le (2^5+2) \ell(Q)$. Then $\abs{\bb(\Pi(X))-\bb(p)}\le (1+2\epsilon_0)(2^5+2)\ell(Q)$ because $\bb$ is the graph of a $2\epsilon_0$-Lipschitz function.  Write 
\[
\abs{X-\bb^t(p)}\le \abs{X-\bb(\Pi(X))}+\abs{\bb(\Pi(X))-\bb(p)}+\abs{\bb(p)-\bb^t(p)},
\]
then use \eqref{claimdPib} and \eqref{eqbtb} to get 
\[\abs{X-\bb^t(p)}\le (1+2\epsilon_0)\br{\delta(X)+(2^5+2)\ell(Q)}+2\epsilon_0r\le 2^6\ell(Q),\]
 and thus \eqref{eqXball} follows. To see \eqref{eqXdist}, we only need to show that 
$\dist(X,\Lambda(p,t))>\frac{\ell(Q)}{20}$. Notice that $(\nabla b^t(p),-1)$ is a normal vector of the plane $\Lambda(p,t)$, and that $\bb^t(p)\in\Lambda(p,t)$. So 
\begin{multline}\label{distXLambda}
    \dist(X,\Lambda(p,t))=\frac{\abs{\br{X-\bb^t(p)}\cdot\br{\nabla b^t(p),-1}}}{\abs{(\nabla b^t(p),1)}}=\frac{\abs{(\Pi(X)-p)\cdot\nabla b^t(p)+\br{b^t(p)-\Pi^\bot(X)}}}{\sqrt{\abs{\nabla b^t(p)}^2+1}}\\
    \ge \frac12\br{\abs{\Pi^\bot(X)-b^t(p)}-\abs{(\Pi(X)-p)\cdot\nabla b^t(p)}}\ge\frac12\abs{\Pi^\bot(X)-b^t(p)}-C\,2^5\ell(Q)\epsilon_0,
\end{multline}
by $\norm{\nabla b^t}_\infty\le C\epsilon_0$ (see \eqref{brbdd}). We have that $\abs{b(\Pi(X))-b(p)}\le 2\epsilon_0\abs{\Pi(X)-p}\le 2^6\epsilon_0$, and that 
\(
\abs{\Pi^\bot(X)-b(\Pi(X))}\ge\dist(X,\Gamma_S)\ge\frac{\delta(X)}{1+3\epsilon_0}\ge \frac{\ell(Q)}{2(1+3\epsilon_0)}
\)
by \eqref{claimdGPib}. So 
\[
\abs{\Pi^\bot(X)-b^t(p)}\ge\abs{\Pi^\bot(X)-b(\Pi(X))}-\abs{b(\Pi(X))-b(p)}-\abs{b^t(p)-b(p)}\ge \frac{\ell(Q)}{5}
\]
by \eqref{eqbtb}. Then $\dist(X,\Lambda(p,t))>\frac{\ell(Q)}{20}$ follows from this and \eqref{distXLambda}.

Now we prove \eqref{estD1}. We intend to cut the integral $D_\beta^{-\beta}=\int_{\pom}\abs{X-y}^{-d-\beta}d\sigma(y)$ into pieces. So we introduce a cut-off function $\theta_0\in C_c^\infty(B(0,r/2))$, which is radial, $\1_{B(0,r/4)}\le\theta_0\le \1_{B(0,r/2)}$, and $\abs{\nabla\theta_0}\le 2r$. Then we set $\theta_k(y):=\theta_0(2^{-k}y)-\theta_0(-2^{-k+1}y)$ for $k\ge 1$ and $y\in\Rn$, and define $\wt\theta_k(y)=\theta_k(y-\bb^t(p))$ for $k\in\N$. Denote $B_k=B(\bb^t(p),2^{k-1}r)$. We have that $\supp\wt\theta_0\subset B_0$, $\supp\wt\theta_k\subset B_k\setminus B_{k-2}$ for $k\ge 1$, and that 
\[
\sum_{k\in\N}\wt\theta_k=1.
\]
Now we can write 
\[D_\beta(X)^{-\beta}=\sum_{k\in\N}\int_{\pom}\abs{X-y}^{-d-\beta}\wt\theta_k(y)d\sigma(y)=:\sum_{k\in\N}\int_{\pom} f_k(y)d\sigma(y),\]
with $f_k(y)=\abs{X-y}^{-d-\beta}\wt\theta_k(y)$. We intend to compare $\int f_k(y)d\sigma(y)$ and $\int f_k(y)d\mu_{p,t}(y)$. Both integrals are well-defined because of \eqref{eqXdist}. Observe that 
\begin{multline*}
    \sum_{k\in\N}\int f_k(y)d\mu_{p,t}(y)=\lambda(p,t)\int_{\Lambda(p,t)}\abs{X-y}^{-d-\beta}d\mu_{\Lambda(p,t)}(y)\\
    =\lambda(p,t)\int_{\R^d}\br{\dist(X,\Lambda(p,t))^2+\abs{y}^2}^{-(d+\beta)/2}dy=\lambda(p,t)c_\beta\dist(X,\Lambda(p,t))^{-\beta}
\end{multline*}
by a change of variables.  So
\begin{equation}\label{Dbeta}
     D^{-\beta}_\beta(X) - c_\beta \lambda(p,t)\dist(X,\Lambda(p,t))^{-\beta}=\sum_{k\in\N}\int f_k\,(d\sigma-d\mu_{p,t}).
\end{equation}
We are interested in the Lipschitz properties of $f_k$ because we intend to use Wasserstein distances. 
We claim that 
\begin{equation}\label{X-ylwbd}
    \abs{X-y}\ge c2^kr \quad\text{when }y\in\pom\cup\Lambda(p,t) \text{ is such that } \wt\theta_k(y)\neq0,
\end{equation}
where $c=10^{-1}2^{-21}$.
In fact, by \eqref{eqXball} and the support properties of $\wt\theta_k$, if $k\ge 15$, then 
\[
\abs{X-y}\ge 2^{k-3}r-2^6\ell(Q)\ge (2^{k-3}-2^{11})r\ge 2^{k-4}r \quad\text{for }y\in\supp\wt\theta_k.
\]
If $0\le k<15$, then by \eqref{eqXdist}, for $y\in\pom\cup\Lambda(p,t)$,
\[
\abs{X-y}\ge\dist(X,\pom\cup\Lambda(p,t))\ge\frac{\ell(Q)}{20}\ge \frac{2^{-6}r}{10}\ge \frac{2^{-21}}{10}2^kr.
\]
So \eqref{X-ylwbd} is justified. But $f_k$ is not a Lipschitz function in $\Rn$ because $y$ can get arbitrary close to $X$ when $k$ is small. Set
\[
\wt f_k(y):=\max\set{\abs{X-y},c\,2^kr}^{-d-\beta}\wt\theta_k(y).
\]
Then by \eqref{X-ylwbd}, $\wt f_k(y)=f_k(y)$ for $y\in\pom\cup\Lambda(p,t)$, and therefore, 
\begin{equation}\label{fkwtfk}
    \int f_k\,(d\sigma-d\mu_{p,t})=\int \wt f_k\,(d\sigma-d\mu_{p,t}).
\end{equation}
The good thing about $\wt f_k$ is that it is Lipschitz. A direct computation shows that 
    $\norm{\wt f_k}_\infty\le C\br{2^kr}^{-d-\beta}$, and $\norm{\nabla \wt f_k}_{\infty}\le C(2^kr)^{-d-\beta-1}$. Moreover, $\wt f_k$ is supported on $B(\bb^t(p),2^{k-1}r)$, which is contained in $B(x_{Q^{(k)}},10^3\ell(Q^{(k)}))$. To see this, one can use \eqref{eqbtb}, \eqref{bbpxQ}, and $\abs{x_Q-x_{Q^{(k)}}}\le 2^{k-1}\ell(Q)$ to get that 
\[
\abs{\bb^t(p)-x_{Q^{(k)}}}\le \br{2\epsilon_0+2^{k-1}}r+10\,2^5\ell(Q)+2^{k-1}\ell(Q)\le 2^5(2^k+11)\ell(Q)\le 10^32^k\ell(Q).
\]
Write 
\begin{multline*}
    \int \wt f_k\,(d\sigma-d\mu_{p,t})
    =\int \wt f_k\,(d\sigma-d\mu_{Q^{(k)}})+\int \wt f_k\,(d\mu_Q-d\mu_{p,t})
    +\sum_{j=1}^k\int \wt f_k\,(d\mu_{Q^{(j)}}-d\mu_{Q^{(j-1)}})\\
    =:I+II+\sum_{j=1}^kIII_j.
\end{multline*}
By the definition \eqref{defmuQ} of $\mu_{Q^{(k)}}$ and properties of $\wt f_k$, 
\(
\abs{I}\le C\br{2^kr}^{-\beta}\alpha_\sigma(Q,k)
\). 
 We then have $|II| \leq \br{2^kr}^{-\beta} \dist_{Q^{(k)}} (\mu_Q , \mu_{p,t})$, but because we are looking at the Wasserstein distance between two flat measures whose supports intersect $10B_Q$, Lemma A.5 in \cite{FenUR} shows that 
\[\dist_{Q^{(k)}} (\mu_Q , \mu_{p,t}) \lesssim \dist_{Q} (\mu_Q , \mu_{p,t})\]
and thus
\[ |II| \lesssim \br{2^kr}^{-\beta} \dist_{Q} (\mu_Q , \mu_{p,t}) \leq \br{2^kr}^{-\beta} \Big( \dist_{Q} (\mu_Q, \sigma) + \dist_Q( \sigma , \mu_{p,t}) \Big) \lesssim \br{2^kr}^{-\beta} \alpha_{\sigma,\beta}(Q)\]
by Lemma \ref{LmuprG}. The terms $III_j$ can be bounded by a Wasserstein distance between planes, and similarly to $II$, we get
\[ |III_j| \lesssim \br{2^kr}^{-\beta} \dist_{Q^{(k)}} (\mu_{Q^{(j)}} , \mu_{Q^{(j-1)}}) \lesssim \br{2^kr}^{-\beta} \dist_{Q^{(j)}} (\mu_{Q^{(j)}} , \mu_{Q^{(j-1)}}) \lesssim \br{2^kr}^{-\beta} \alpha_\sigma(Q,j).\]
Altogether, we obtain that 
\[
\abs{ \int \wt f_k\,(d\sigma-d\mu_{p,t})}\le C\br{2^kr}^{-\beta} \left( \alpha_{\sigma,\beta}(Q) + \sum_{j=0}^k\alpha_\sigma(Q,j) \right).
\]
Then by \eqref{fkwtfk} and \eqref{Dbeta},
\begin{multline*}
     \abs{D^{-\beta}_\beta(X) - c_\beta \lambda(p,t)\dist(X,\Lambda(p,t))^{-\beta}}\le C\sum_{k\in\N}\br{2^kr}^{-\beta}\left( \alpha_{\sigma,\beta}(Q) + \sum_{j=0}^k\alpha_\sigma(Q,j) \right) \\
    \le C\ell(Q)^{-\beta}\alpha_{\sigma,\beta}(Q),
\end{multline*}
which is \eqref{estD1}.

We claim that \eqref{estD2} can be established similarly to \eqref{estD1} as long as one expresses the left-hand side of \eqref{estD2} appropriately. 
A direct computation shows that 
\[\nabla(D_\beta^{-\beta})(X)=-(d+\beta)\int\abs{X-y}^{-d-\beta-2}(X-y)d\sigma(y).\] 
On the other hand,
\begin{multline*}
    \int\abs{X-y}^{-d-\beta-2}(X-y)d\mu_{\Lambda(p,t)}(y)= N_{p,t}(X)\int\abs{X-y}^{-d-\beta-2}(X-y)\cdot N_{p,t}(X)d\mu_{\Lambda(p,t)}(y)\\
    =N_{p,t}(X)\int\abs{X-y}^{-d-\beta-2}\dist(X,\Lambda(p,t))d\mu_{\Lambda(p,t)}(y)=c_{\beta+2}\dist(X,\Lambda(p,t))^{-\beta-1}N_{p,t}(X).
\end{multline*}
By \cite{FenUR} (3.30), $(\beta+d)c_{\beta+2}=\beta c_\beta$ for all $\beta>0$. Hence
\begin{multline*}
    \nabla [D^{-\beta}_\beta](X) + \beta c_\beta \lambda(p,t) \dist(X,\Lambda(p,t))^{-\beta-1} N_{p,t}(X)\\
    =-(d+\beta)\int\abs{X-y}^{-d-\beta-2}(X-y)\br{d\sigma(y)-d\mu_{p,t}(y)}.
\end{multline*}
Now we set $f_k'(y)=\abs{X-y}^{-d-\beta-2}(X-y)$. Using \eqref{eqXball} and \eqref{eqXdist}, we can see that $f_k'$ is Lipschitz on $\pom\cup\Lambda(p,t)$. Then we can play with measures as before to obtain \eqref{estD2}. 
\ep

\begin{corollary} \label{CestD}
Let $Q\in \cS$, $X\in W_{\Omega}(Q)$, $p\in \Pi(2^5Q)$ and $2^{-5}\ell(Q) \leq |t| \leq 2^5 \ell(Q)$. We have
\begin{equation} \label{estD3}
\left|\dfrac{\nabla D_\beta(X)}{D_\beta(X)} - \dfrac{N_{p,t}(X)}{\dist(X,\Lambda(p,t))}\right| \leq C \ell(Q)^{-1} \alpha_{\sigma,\beta}(Q).
\end{equation}
where the constant $C>0$ still depends only $C_\sigma$ and $\beta$.
\end{corollary}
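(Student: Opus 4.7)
The corollary should follow purely algebraically from Lemma~\ref{LestD}, which is why the authors state it as a corollary. The key observation is that if we set
\[
A := D_\beta^{-\beta}(X), \quad B := c_\beta \lambda(p,t) \dist(X,\Lambda(p,t))^{-\beta},
\]
\[
A' := \nabla[D_\beta^{-\beta}](X), \quad B' := -\beta c_\beta \lambda(p,t) \dist(X,\Lambda(p,t))^{-\beta-1} N_{p,t}(X),
\]
then direct differentiation gives $\frac{\nabla D_\beta}{D_\beta} = -\frac{1}{\beta}\frac{A'}{A}$, while a similar computation shows $\frac{N_{p,t}(X)}{\dist(X,\Lambda(p,t))} = -\frac{1}{\beta}\frac{B'}{B}$. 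So the left-hand side of \eqref{estD3} equals $\frac{1}{\beta}\left|\frac{A'}{A}-\frac{B'}{B}\right|$, and I will use the standard identity
\[
\frac{A'}{A}-\frac{B'}{B} = \frac{A'}{A}\cdot\frac{B-A}{B} + \frac{A'-B'}{B}.
\]

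\noindent The next step is to establish the size estimates $A\approx B\approx \ell(Q)^{-\beta}$ and $|A'|,|B'|\lesssim \ell(Q)^{-\beta-1}$. For $B$ and $B'$, I need to verify that $\dist(X,\Lambda(p,t))\approx \ell(Q)$ and $\lambda(p,t)\approx 1$. The lower bound $\dist(X,\Lambda(p,t))\ge \ell(Q)/20$ is \eqref{eqXdist}; the upper bound follows from $\dist(X,\Lambda(p,t))\le |X-\bb^t(p)|\le 2^6\ell(Q)$ via \eqref{eqXball}. For $\lambda(p,t)\approx 1$, Lemma~\ref{Lbbeps} together with \eqref{eqbtb} gives $\dist(\bb^t(p),\dr\Omega)\lesssim \ell(Q)\lesssim |t|$, so $d(p)\lesssim |t|\lesssim \ell(Q(\cS))$ and \eqref{lambda1} applies. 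The bound on $A$ then follows from \eqref{equivD} and $\delta(X)\approx \ell(Q)$ for $X\in W_\Omega(Q)$, while $|A'|\le |B'|+|A'-B'|\lesssim \ell(Q)^{-\beta-1}$ using that $\alpha_{\sigma,\beta+1}(Q)$ is bounded (by \eqref{eqalphaQbdd}).

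\noindent Finally, I plug the bounds from Lemma~\ref{LestD} into the identity:
\[
\left|\frac{A'}{A}-\frac{B'}{B}\right| \lesssim \frac{\ell(Q)^{-\beta-1}}{\ell(Q)^{-\beta}}\cdot\frac{\ell(Q)^{-\beta}\alpha_{\sigma,\beta}(Q)}{\ell(Q)^{-\beta}} + \frac{\ell(Q)^{-\beta-1}\alpha_{\sigma,\beta+1}(Q)}{\ell(Q)^{-\beta}} \lesssim \ell(Q)^{-1}\bigl(\alpha_{\sigma,\beta}(Q)+\alpha_{\sigma,\beta+1}(Q)\bigr).
\]
Since $2^{-k(\beta+1)}\le 2^{-k\beta}$ for $k\ge 0$, the definition \eqref{defalphabeta} yields $\alpha_{\sigma,\beta+1}(Q)\le \alpha_{\sigma,\beta}(Q)$, collapsing the two terms into the desired bound $C\ell(Q)^{-1}\alpha_{\sigma,\beta}(Q)$. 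I do not anticipate any real obstacle here; the only mildly delicate point is verifying $\lambda(p,t)\approx 1$ for the enlarged parameter range $p\in\Pi(2^5Q)$, $|t|\in[2^{-5}\ell(Q),2^5\ell(Q)]$, but this reduces directly to the fact that $\bb^t(p)$ lies within $O(\ell(Q))$ of $\dr\Omega$, already essentially contained in \eqref{bbpxQ} and \eqref{eqbtb}.
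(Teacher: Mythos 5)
Your proposal is correct and follows essentially the same route as the paper: write $\frac{\nabla D_\beta}{D_\beta} = -\frac{1}{\beta}\frac{\nabla[D_\beta^{-\beta}]}{D_\beta^{-\beta}}$, substitute the estimates \eqref{estD1}--\eqref{estD2}, and close using $D_\beta(X)\approx\delta(X)\approx\ell(Q)$. The only cosmetic difference is the algebraic decomposition of $\frac{A'}{A}-\frac{B'}{B}$ (you weight the cross-term by $A'/A$ and divide the other by $B$, while the paper implicitly weights by $B'$ and divides by $A$), but since $A\approx B\approx\ell(Q)^{-\beta}$ and $|A'|,|B'|\lesssim\ell(Q)^{-\beta-1}$ this produces the same bound, and the domination $\alpha_{\sigma,\beta+1}(Q)\leq\alpha_{\sigma,\beta}(Q)$ at the end is exactly how both arguments collapse the two error terms.
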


\bp
To lighten the notation, we denote by $\mathcal O_{CM}$ any quantity such that $$|\mathcal O_{CM}| \leq C \alpha_{\sigma,\beta}(Q)$$ for some constant $C$. Then by \eqref{estD2},
\begin{multline*}
    \frac{\nabla D_\beta(X)}{D_\beta(X)}  = - \frac1{\beta} \frac{\nabla[D_\beta^{-\beta}](X)}{D_\beta^{-\beta}(X)} \\
 =  - \frac1{\beta}\br{ \frac{-\beta c_\beta \lambda(p,t) \dist(X,\Lambda(p,t))^{-\beta-1}N_{p,t}(X)}{D_{\beta}^{-\beta}(X)} + \frac{\ell(Q)^{-\beta-1}\mathcal O_{CM}}{D_{\beta}^{-\beta}(X)}}. 
\end{multline*}
Using $D_\beta(X)\approx\delta(X)\approx\ell(Q)$ and \eqref{estD1}, we can further write the above as 
\[
    \frac{\nabla D_\beta(X)}{D_\beta(X)} = \dfrac{N_{p,t}(X)}{\dist(X,\Lambda(p,t))} + \ell(Q)^{-1} \mathcal O_{CM},
\]
which implies the corollary.
\ep

\section{The bi-Lipschitz change of variable $\rho_{\cS}$}

\label{Srho}

The results in this section are similar, identical, or often even easier than the ones found in Sections 2, 3, and 4 of \cite{DFM3}. Many proofs will only be sketched and we will refer to the corresponding result in \cite{DFM3} for details.

As in the previous sections, we take $0<\epsilon_0 \ll \epsilon_1 \ll 1$ and we use Lemma \ref{Lcorona} with such $\epsilon_0,\epsilon_1$ to obtain a collection $\mathfrak S$ of coherent regimes. We take then $\cS$ that either belongs to $\mathfrak S$, or is a coherent regime included in an element of $\mathfrak S$.
We keep the notations introduced in Sections \ref{SUR}, \ref{SWhitney}, and \ref{SDbeta}. 

\subsection{Construction of $\rho_{\cS}$}

In this section, the gradients are column vectors. The other notation is fairly transparent. A hyperplane $P$ is equipped with an orthonormal basis and $\nabla_p$ correspond to the vector of the derivatives in each coordinate of $p$ in this basis; $\partial_t$ or $\partial_s$ are the derivatives with respect to $t$ or $s$, that are always explicitly written; $\nabla_{p,t}$ or $\nabla_{p,s}$ are the gradients in $\R^n$ seen as $P \times P^\bot$.

\begin{lemma} \label{LbrCM}
The quantities $\nabla_{p,t} b^t$ and $t \nabla_{p,t} \nabla_p b^t$ are bounded, that is, for any $t \neq0$ and any $p_0\in P$,
\begin{equation} \label{brbdd}
|\nabla_{p,t} b^t| + |t \nabla_{p,t} \nabla_p b^t| \leq C\epsilon_0.
\end{equation}
In addition, $|\partial_{t} b^t| + |t \nabla_{p,t} \nabla_p b^t| \in CM_{P\times (P^\bot \sm \{0\})}$, that is, for any $r>0$ and any $p_0\in P$,
\begin{equation} \label{brCM}
\iint_{B(p_0,r)} \Big( |\partial_t b^t(p)|^2 + |t \nabla_{p,t} \nabla_p b^t(p)|^2  \Big) \frac{dt}{t} \, dp \leq C \epsilon_0^2 r^{n-1}.
\end{equation}
In both cases, the constant $C>0$ depends only on $n$ (and $\eta$).
\end{lemma}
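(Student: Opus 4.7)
The plan is to reduce both statements to the classical Carleson-measure bound for Littlewood--Paley square functions applied to the bounded function $\nabla b$. The decisive algebraic observation (in dimension $n-1$) is the identity
\[ \widetilde\eta(p) := (n-1)\,\eta(p) + p\cdot\nabla\eta(p) = \div(p\,\eta(p)), \]
which, writing $f^t(p) := |t|^{1-n}f(p/|t|)$ for the $L^1$-normalized rescaling and setting $\Theta_i(p) := p_i\,\eta(p)$ and $\Phi(p) := \nabla\eta(p)$, yields the scaling relations
\[ \partial_t\eta_t = -\sgn(t)\sum_i\partial_{p_i}\Theta_i^t = -\sgn(t)\,|t|^{-1}\,\widetilde\eta^{\,t}, \qquad \nabla_p\eta_t = |t|^{-1}\,\Phi^t. \]
Radiality of $\eta$ makes each $\Theta_i$ odd in $p_i$, hence mean zero, and $\Phi_j$ and $\widetilde\eta$ are mean zero automatically; all these kernels are smooth and compactly supported on $P\cong\R^{n-1}$.

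For the pointwise bound \eqref{brbdd}: $b$ is $2\epsilon_0$-Lipschitz by Lemma \ref{LALip}, so $\nabla_p b\in L^\infty(P)$ with $\|\nabla_p b\|_\infty\le 2\epsilon_0$. Differentiating under the convolution and integrating by parts against the compactly supported kernels one gets
\[ \nabla_p b^t = \eta_t*\nabla_p b,\quad t\,\nabla_p^2 b^t = \sgn(t)\,\Phi^t*\nabla_p b,\quad t\,\partial_t\nabla_p b^t = \pm\,\widetilde\eta^{\,t}*\nabla_p b, \]
together with $\partial_t b^t = -\sgn(t)\sum_i \Theta_i^t*\partial_{p_i}b$. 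Each convolution kernel has $L^1$ norm bounded independently of $t$, and \eqref{brbdd} follows.

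For the Carleson bound \eqref{brCM}: the rewrites above express every quantity of interest as $\psi^t * (\partial_{p_i} b)$ with $\psi\in\{\Theta_i,\Phi_j,\widetilde\eta\}$ smooth, compactly supported, and mean-zero on $P$. The classical Carleson-measure estimate for Littlewood--Paley square functions of $\mathrm{BMO}$ inputs then gives, for each such $\psi$, any $f\in L^\infty(P)\subset\mathrm{BMO}(P)$, any $p_0\in P$, and any $r>0$,
\[ \iint_{B(p_0,r)\cap(P\times(P^\bot\sm\{0\}))} \bigl|\psi^t*f(p)\bigr|^2\,\frac{dt\,dp}{|t|} \le C\,\|f\|_\infty^2\,r^{n-1}. \]
Applying this with $f = \partial_{p_i}b$ and summing over the finitely many kernels and indices yields \eqref{brCM} with constant $C\epsilon_0^2$.

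The only real content is the identity $\widetilde\eta = \div(p\,\eta)$: it is what allows us to absorb the $1/|t|$ factor in the Carleson integrand into a spatial divergence, producing precisely the mean-zero cancellation that puts the argument in the Littlewood--Paley framework. Everything else is standard.
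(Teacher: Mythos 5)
Your proposal is correct and follows exactly the route the paper indicates but does not write out: the paper's ``proof'' is a one-line citation to Lemmas 3.17 and 4.11 of \cite{DFM3}, with the remark that the Carleson bound is a ``simple application of the Littlewood-Paley theory \ldots to the bounded function $\nabla b$.'' You have supplied the details behind that remark --- rewriting $\partial_t b^t$, $t\nabla_p^2 b^t$, and $t\partial_t\nabla_p b^t$ as convolutions of mean-zero, compactly supported kernels ($\Theta_i$, $\Phi$, $\widetilde\eta=\div(p\,\eta)$) with $\nabla b\in L^\infty$, then invoking the standard Carleson measure estimate for Littlewood-Paley square functions of BMO data --- so this is the same argument, just spelled out.
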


\bp
The result is well known and fairly easy. The boundedness is proven in Lemma 3.17 of \cite{DFM3}, while the Carleson bound is established Lemma 4.11 in \cite{DFM3} (which is itself a simple application of the Littlewood-Paley theory found in \cite[Section I.6.3]{Stein93} to the bounded function $\nabla b$).
\ep

Observe that the convention that we established shows that $(\nabla_p b^t(p))^T$ is $n-1$ dimensional horizontal vector. We define the map $\rho: P\times P^\bot \to P\times P^\bot$ as
\begin{equation} \label{defrhoS}
\rho_{\cS}(p,t) := (p - t(\nabla b^t(p))^T, t + b^t(p))
\end{equation}
if $t\neq 0$ and $\rho_{\cS}(p,0) = \bb(p)$.
Because the codimension of our boundary in 1 in our paper, contrary to \cite{DFM3} which stands in the context of domains with higher codimensional boundaries, our map is way easier than the one found in \cite{DFM3}. However, the present mapping is still different from the one found in \cite{KePiDrift}, and has the same weak and strong features as the change of variable in \cite{DFM3}. Note that the $i^{th}$ coordinate of $\rho_{\cS}$, $1\leq i \leq n-1$, is
\begin{equation} \label{defrhoSi}
\rho_{\cS}^i(p,t) := p_i - t\partial_{p_i} b^t(p).
\end{equation}
Note that $\rho_{\cS}$ is continuous on $P\times P^\bot = \R^n$, because both $t\nabla b^t$ and $b^t-b$ converges (uniformly in $p \in P$) to $0$ as $t\to 0$. The map $\rho_{\cS}$ is $C^\infty$ on $\R^n \setminus P$, and we compute the Jacobian $\Jac$ of $\rho_{\cS}$ which is
\begin{equation} \label{defJacS}
\Jac(p,t) = \begin{pmatrix} I - t\partial_{p_i} \partial_{p_j} b^t(p) & \partial_{p_i} b^t(p) \\ - t \partial_t \partial_{p_j} b^t(p) - \partial_{p_j} b^t(p) & 1 + \partial_t b^t(p) \end{pmatrix},
\end{equation}
where $i$ and $j$ refers to respectively the line and the column of the matrix. We define the approximation of the Jacobian $\Jac$ as
\begin{equation} \label{defJS}
J = \begin{pmatrix} I & \partial_{p_i} b^t(p) \\  - \partial_{p_j} b^t(p) & 1 \end{pmatrix} = \begin{pmatrix} I & \nabla_p b^t(p) \\  - (\nabla_p b^t(p))^T & 1 \end{pmatrix}.
\end{equation}

\begin{lemma} \label{LestonJ}
We have the following pointwise bounds:
\begin{enumerate}[(i)]
\item $\ds \|J-I\| \lesssim |\nabla_p b^t|  \lesssim \epsilon_0$, \smallskip
\item $\ds \|\Jac - J\| \lesssim |\partial_t b^t| + |t\nabla_{p,t} \nabla_p b^t| \lesssim \epsilon_0$, \smallskip
\item $\ds |\det(J)-1| \lesssim |\nabla_p b^t|  \lesssim \epsilon_0$, \smallskip
\item $\ds |\det(\Jac) - \det(J)| \lesssim |\partial_t b^t| + |t\nabla_{p,t} \nabla_p b^t|$, \smallskip
\item $\ds \|(\Jac)^{-1} - J^{-1}\| \lesssim |\partial_t b| + |t\nabla_{p,t} \nabla_p b^t|$, \smallskip
\item $\ds |\nabla_{p,t} \det(J)| + \||\nabla_{p,t} J^{-1}|\| \lesssim |\nabla_{p,t} \nabla_p b^t|$, \smallskip
\end{enumerate}
In each estimate, the constants depends only on $n$ and $\eta$.
\end{lemma}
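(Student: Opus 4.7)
The plan is to obtain all six pointwise estimates by elementary matrix algebra from the explicit formulas \eqref{defJacS} for $\Jac$ and \eqref{defJS} for $J$, using Lemma \ref{LbrCM} whenever an $\epsilon_0$ bound is required. No analytic input beyond that lemma should be needed.

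Estimates (i) and (ii) can be read off directly from the definitions. The matrix $J - I$ has last column $\nabla_p b^t$, last row $-(\nabla_p b^t)^T$, and is zero elsewhere, giving $\|J - I\| \lesssim |\nabla_p b^t|$; Lemma \ref{LbrCM} then supplies the $\epsilon_0$ bound. Subtracting \eqref{defJS} from \eqref{defJacS}, the matrix $\Jac - J$ has an $(n-1) \times (n-1)$ upper-left block equal to $-t\, \partial_{p_i} \partial_{p_j} b^t$, additional entries $-t\, \partial_t \partial_{p_j} b^t$ in the last row, and $\partial_t b^t$ in the bottom-right corner, so its norm is controlled by $|\partial_t b^t| + |t \nabla_{p,t} \nabla_p b^t|$, which is again $\lesssim \epsilon_0$ by Lemma \ref{LbrCM}.

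For (iii), I will exploit the block structure of $J$: writing $J = \begin{pmatrix} I & v \\ -v^T & 1 \end{pmatrix}$ with $v = \nabla_p b^t$, the Schur complement formula yields $\det(J) = 1 + |\nabla_p b^t|^2$, so $|\det(J) - 1| = |\nabla_p b^t|^2 \lesssim \epsilon_0 |\nabla_p b^t|$. By (i), (ii), and Lemma \ref{LbrCM}, both $J$ and $\Jac$ lie within $O(\epsilon_0)$ of the identity, so both are invertible with bounded inverses. Setting $E := \Jac - J$, estimate (iv) follows by writing $\det(\Jac) = \det(J) \det(I + J^{-1} E)$ and using the expansion $\det(I + M) = 1 + \Tr(M) + O(\|M\|^2)$ valid for small $\|M\|$; estimate (v) follows from the resolvent identity $\Jac^{-1} - J^{-1} = -\Jac^{-1}\, E\, J^{-1}$ combined with $\|J^{-1}\|, \|\Jac^{-1}\| \lesssim 1$.

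Finally, for (vi), differentiating the explicit formula $\det(J) = 1 + |\nabla_p b^t|^2$ gives $|\nabla_{p,t} \det(J)| \lesssim |\nabla_p b^t|\, |\nabla_{p,t} \nabla_p b^t| \lesssim |\nabla_{p,t} \nabla_p b^t|$, the last step again using Lemma \ref{LbrCM}. The bound on $\nabla_{p,t} J^{-1}$ comes from the identity $\nabla_{p,t} J^{-1} = -J^{-1} (\nabla_{p,t} J) J^{-1}$ together with $\|J^{-1}\| \lesssim 1$ and the fact that the nontrivial entries of $\nabla_{p,t} J$ are of type $\nabla_{p,t} \nabla_p b^t$. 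Since the argument is an elementary matrix calculation tied to the uniform smallness of $\nabla_p b^t$ and $t \nabla_{p,t} \nabla_p b^t$ from Lemma \ref{LbrCM}, I do not anticipate any real obstacle; the only care needed is to keep track of the block structure of $\Jac$ correctly.
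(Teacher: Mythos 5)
Your argument is correct and yields all six estimates; the difference from the paper is in the choice of elementary identities used for items (iii)--(vi). The paper runs everything through the Leibniz expansion $\det(M) = \sum_{\sigma\in S_n}\sgn(\sigma)\prod_i M_{i,\sigma(i)}$ (observing that $\det(M_2)-\det(M_1)$ is a sum of products each containing at least one entry of $M_2-M_1$) and through Cramer's rule for $M^{-1}$, differentiating both formulas term-by-term for (vi). You instead compute $\det(J) = 1+|\nabla_p b^t|^2$ exactly via the Schur complement, write $\det(\Jac) = \det(J)\det(I+J^{-1}E)$ with $E=\Jac-J$ and use the first-order expansion of $\det(I+M)$ for small $M$, and use the resolvent identity $\Jac^{-1}-J^{-1}=-\Jac^{-1}E\,J^{-1}$ together with $\nabla J^{-1}=-J^{-1}(\nabla J)J^{-1}$. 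Both routes are purely algebraic and rest on the same input from Lemma \ref{LbrCM}; yours is more modular and avoids the combinatorial bookkeeping of the Leibniz formula, at the small cost of invoking that $J$ and $\Jac$ are uniformly invertible (which you correctly justify via (i), (ii), and the smallness of $\epsilon_0$). You even get a slightly sharper quadratic bound in (iii), which is harmless. One minor point worth writing out is the observation $|\det(I+M)-1|\lesssim\|M\|$ for $\|M\|\le 1$ that underlies (iv), but that is standard.
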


\bp Only a rapid proof is provided, and details are carried out in the proof of Lemmas 3.26, 4.12, 4.13, and 4.15 in \cite{DFM3}.  

\medskip

The items (i) and (ii) are direct consequences of \eqref{brbdd} and the definitions of $J$ and $\Jac$.

For items $(iii)$ and $(iv)$, we use the fact that the determinant is the sum of products of coefficients of the matrix. More precisely, the Leibniz formula states that
\begin{equation} \label{defdetM}
\det(M) := \sum_{\sigma \in S_n} \sgn(\sigma) \prod_{i=1}^n M_{i,\sigma(i)},
\end{equation}
where $S_n$ is the sets of permutations of $\{1,\dots,n\}$ and $\sgn$ is the signature. So the difference between the determinant of two matrices $M_1$ and $M_2$ is the sum of products of coefficients of $M_1$ and $M_2-M_1$, and each product contains at least one coefficient of $M_2-M_1$. With this observation, $(iii)$ and $(iv)$ follow from $(i)$ and $(ii)$.

The items $(iii)$ and $(iv)$ shows that both $\det(J)$ and $\det(\Jac)$ are close to $1$ - say in $(1/2,2)$ - as long as $\epsilon_0$ is small enough. This implies that
\begin{equation} \label{diff1/detJ}
\left|\frac1{\det(\Jac)} - \frac1{\det(J)}\right| = \left|\frac{\det(J) - \det(\Jac)}{\det(\Jac)\det(J)} \right| \lesssim |\partial_t b| + |t\nabla_{p,t} \nabla_p b^t|
\end{equation} 
by $(iv)$. Cramer's rule states that the coefficients of $M^{-1}$ is the quotient of a linear combination of product of coefficients of $M$ over $\det(M)$. By using Cramer's rule to $\Jac$ and $J$, \eqref{diff1/detJ}, and $(ii)$, we obtain $(v)$. 

Finally, the bound on $\nabla \det(J)$ and $\nabla J^{-1}$ are obtained by taking the gradient respectively in \eqref{defdetM} and in Cramer's rule.
\ep

\begin{lemma} \label{LprrhoS}
For any $p\in P$ and $t\in P^\bot \sm \{0\}$, we have
\begin{equation} \label{prrhoS1}
(1-C\epsilon_0) |t| \leq \dist(\rho_{\cS}(p,t),\Gamma_{\cS}) \leq |\rho_{\cS}(p,t) - \bb(p)| \leq (1+C\epsilon_0) |t|
\end{equation}
and
\begin{equation} \label{prrhoS4}
|\rho_{\cS}(p,t) - \bb(p) - (0,t)| \leq C\epsilon_0 |t|,
\end{equation}
where $C>0$ depends only on $n$ (and $\eta$).
\end{lemma}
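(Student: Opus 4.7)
The plan is to do straightforward computation and decomposition, relying entirely on Lemma~\ref{LbrCM} (to bound $|\nabla_p b^t|$ pointwise by $C\epsilon_0$) and on Lemma~\ref{LALip} (which says $b$ is $2\epsilon_0$-Lipschitz and $b^t = \eta_t*b$). First I would compute directly, using $\bb(p)=(p,b(p))$ and the definition \eqref{defrhoS}, that
\[
\rho_{\cS}(p,t) - \bb(p) - (0,t) = \bigl(-t(\nabla_p b^t(p))^T,\; b^t(p) - b(p)\bigr).
\]
The first component has norm $|t|\,|\nabla_p b^t(p)| \leq C\epsilon_0|t|$ by \eqref{brbdd}. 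For the second, since $\eta_t$ is supported in $B(0,|t|)$ with integral $1$, and $b$ is $2\epsilon_0$-Lipschitz,
\[
|b^t(p) - b(p)| = \Bigl|\int \eta_t(q)(b(p-q)-b(p))\,dq\Bigr| \leq 2\epsilon_0 |t|.
\]
Together this gives \eqref{prrhoS4}, and then the triangle inequality $|\rho_{\cS}(p,t)-\bb(p)| \leq |t| + |\rho_{\cS}(p,t)-\bb(p)-(0,t)|$ yields the rightmost inequality in \eqref{prrhoS1}; the middle inequality is immediate from $\bb(p)\in \Gamma_{\cS}$.

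The only remaining point is the lower bound $\dist(\rho_{\cS}(p,t),\Gamma_{\cS}) \geq (1-C\epsilon_0)|t|$. For this I would fix an arbitrary $q\in P$ and split
\[
\rho_{\cS}(p,t) - \bb(q) = \bigl(V,W\bigr), \qquad V := p-q-t(\nabla_p b^t(p))^T, \quad W := t + b^t(p) - b(q),
\]
and consider two cases. If $|V| \geq (1-C\epsilon_0)|t|$, there is nothing to prove since $|(V,W)| \geq |V|$. Otherwise, the triangle inequality combined with $|t(\nabla_p b^t(p))^T| \leq C\epsilon_0|t|$ gives $|p-q| \leq (1+C\epsilon_0)|t|$, and then, writing $b^t(p)-b(q) = (b^t(p)-b(p)) + (b(p)-b(q))$ and using the two Lipschitz bounds above, we get $|b^t(p)-b(q)| \leq C\epsilon_0|t|$; hence $|W| \geq |t| - C\epsilon_0|t|$. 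Taking the infimum over $q$ delivers the left inequality of \eqref{prrhoS1}.

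There is no real obstacle: both estimates reduce to bookkeeping with the elementary bounds $|\nabla_p b^t| \lesssim \epsilon_0$ and $\|b\|_{\mathrm{Lip}}\lesssim \epsilon_0$, plus the standard approximation-of-identity inequality $|b^t - b| \lesssim \epsilon_0 |t|$. The mildly delicate point is simply to notice that the lower bound on $\dist(\rho_{\cS}(p,t),\Gamma_{\cS})$ cannot be obtained by looking at a single ``nearest candidate'' in $\Gamma_{\cS}$ (such as $\bb(p)$), but must be argued uniformly over $q \in P$ via the two-case split above.
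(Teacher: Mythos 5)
Your proof is correct and follows essentially the same route as the paper: the same decomposition $\rho_{\cS}(p,t)-\bb(p)-(0,t)=(-t(\nabla b^t(p))^T, b^t(p)-b(p))$, the same reduction of the lower bound to controlling $|p-q|$ and hence $|b(p)-b(q)|$, and the same appeal to \eqref{brbdd} and to $b$ being $2\epsilon_0$-Lipschitz. Two cosmetic differences: you bound $|b^t(p)-b(p)|$ directly via the Lipschitz continuity of $b$ under the mollifier (rather than the paper's mean-value-theorem argument applied to $s\mapsto b^s(p)$ and \eqref{brCM}'s pointwise bound on $\partial_s b^s$), and for the lower bound on $\dist(\rho_{\cS}(p,t),\Gamma_{\cS})$ you do a uniform two-case split over all $q\in P$ instead of invoking the minimizing $q$ and chaining through $|\bb(p)-\bb(q)+(0,t)|$; both variants are valid and the net computation is the same.
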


\bp The lemma is an analogue of Lemma 3.40 in \cite{DFM3}. But since the lemma is key to understand why $\rho_\cS$ is a bi-Lipschitz change of variable, and since it is much easier in our case, we prove it carefully.

By definition of $\rho_{\cS}$, 
\[\rho_{\cS}(p,t) - \bb(p) - (0,t) = (-t(\nabla b^t(p))^T, b^t(p) - b(p)).\]
So the mean value theorem applied to the continuous function $t \mapsto b^t(p)$ [recall that $b$ is Lipschitz and $b^t$ is the convolution of $b$ with a mollifier, so we even have a uniform convergence of $b^t$ to $b$] entails that
\begin{equation} \label{prrhoS3}
\begin{split}
|\rho_{\cS}(p,t) - \bb(p) - (0,t)| & \leq |t\nabla b^t(p)| + |b^t(p) - b(p)| \\
& \leq |t\nabla b^t(p)| + |t| \sup_{s\in (0,|t|)} |\partial_s b^s(p)| \lesssim \epsilon_0|t|
\end{split}
\end{equation}
by \eqref{brbdd}. Therefore, \eqref{prrhoS4} is proven and we have 
\begin{equation} \label{prrhoS2}
|\rho_{\cS}(p,t) - \bb(p)| \leq (1+C\epsilon_0) |t|,
\end{equation}
is the upper bound in \eqref{prrhoS1}. The middle bound of \eqref{prrhoS1} is immediate, since $\bb(p) \in \Gamma_{\cS}$. It remains thus to prove the lower bound in \eqref{prrhoS1}. 
Let $q\in P$ be such that $|\rho_{\cS}(p,t) - \bb(q)| = \dist(\rho_{\cS}(p,t), \Gamma_{\cS})$. We know that
\[|\bb(q) - \bb(p)| \leq |\bb(q) - \rho_{\cS}(p,t)| + |\rho_{\cS}(p,t) - \bb(p)| \leq 2|\rho_{\cS}(p,t) - \bb(p)| \leq 3|t|,\]
if $\epsilon_0\ll 1$ is small enough, hence $|q-p| \leq 3|t|$ too. So 
\[\begin{split}
\dist(\rho_{\cS}(p,t), \Gamma_{\cS}) & =  |\rho_{\cS}(p,t) - \bb(q)| \geq |\bb(p) - \bb(q) + (0,t)| - |\rho_{\cS}(p,t) - \bb(p) - (0,t)| \\
& \geq |b(p) - b(q) + t| - C\epsilon_0|t|
\end{split}\]
by \eqref{prrhoS4}. But by Lemma \ref{LALip}, the function $b$ is $2\epsilon_0$-Lipschitz, so we can continue with
\[\dist(\rho_{\cS}(p,t), \Gamma_{\cS}) \geq (1-C{\epsilon_0})|t| - |b(p)-b(q)| \geq (1-C{\epsilon_0}) |t| - 2\epsilon_0 |p-q| \geq (1 - C'{\epsilon_0}) |t|. \]
The lemma follows.
\ep

\begin{lemma} 
The map $\rho_{\cS}$ is a bi-Lipschitz change of variable that maps $P$ to $\Gamma_{\cS}$.
\end{lemma}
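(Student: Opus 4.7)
The proof should be a direct calculation leveraging the Jacobian estimates of Lemma~\ref{LestonJ}. Since $\rho_\cS(p,0) = \bb(p)$ by definition, and since $p \mapsto \bb(p) = (p,b(p))$ is obviously injective, the restriction $\rho_\cS|_P$ is already a bijection $P \to \Gamma_\cS$. So everything reduces to showing that $\rho_\cS : \R^n \to \R^n$ is a bi-Lipschitz homeomorphism.

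The key analytic input is the combination of parts (i) and (ii) of Lemma~\ref{LestonJ}:
\[\|\Jac(p,t) - I\| \le \|\Jac(p,t) - J(p,t)\| + \|J(p,t) - I\| \le C\epsilon_0\]
uniformly for $(p,t) \in P \times (P^\bot \setminus \{0\})$. Moreover $\rho_\cS$ is continuous on all of $\R^n$: as $t \to 0$, $b^t(p) \to b(p)$ uniformly in $p$ (since $b$ is Lipschitz and $\eta_t$ is an approximate identity), and $|t\,\nabla_p b^t(p)| \le C\epsilon_0 |t| \to 0$ by \eqref{brbdd}, so $\rho_\cS(p,t) \to \bb(p) = \rho_\cS(p,0)$.

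Given any $X_1,X_2 \in \R^n$, form the segment $\gamma(s) := (1-s)X_1 + sX_2$, $s\in[0,1]$. Either (a) $\gamma \subset P$, in which case both $X_i$ lie in $P$ and the desired bi-Lipschitz bound follows at once from $b$ being $2\epsilon_0$-Lipschitz (Lemma~\ref{LALip}); or (b) $\gamma$ meets $P$ in at most one point $\gamma(s_0)$, on the complement of which $\rho_\cS$ is $C^1$. In case (b), I would apply the fundamental theorem of calculus separately on the (at most two) open sub-segments that avoid $P$, and pass to the limit at $\gamma(s_0)$ using the continuity of $\rho_\cS$ (the integrand $\Jac(\gamma(s))\gamma'(s)$ is uniformly bounded by $(1+C\epsilon_0)|X_1-X_2|$, so the resulting improper integral converges absolutely). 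This yields
\[\rho_\cS(X_2) - \rho_\cS(X_1) = \int_0^1 \Jac(\gamma(s))\, \gamma'(s) \, ds.\]
Subtracting $X_2 - X_1 = \int_0^1 \gamma'(s)\, ds$ and using $\|\Jac - I\| \le C\epsilon_0$ then gives
\[|\rho_\cS(X_1) - \rho_\cS(X_2) - (X_1 - X_2)| \le C\epsilon_0 |X_1 - X_2|,\]
hence $(1-C\epsilon_0)|X_1-X_2| \le |\rho_\cS(X_1) - \rho_\cS(X_2)| \le (1+C\epsilon_0)|X_1-X_2|$. For $\epsilon_0$ small enough this is the bi-Lipschitz estimate and also gives injectivity. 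Surjectivity of $\rho_\cS$ onto $\R^n$ then follows from properness (which is built into the lower Lipschitz bound) together with invariance of domain, so $\rho_\cS$ is indeed a bi-Lipschitz homeomorphism of $\R^n$ sending $P$ onto $\Gamma_\cS$.

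The only non-routine step is justifying the FTC for the Lipschitz map $\rho_\cS$ across $P$, where $\rho_\cS$ fails to be $C^1$. My plan addresses this exactly as sketched above: split the segment at its single $P$-crossing, invoke the classical FTC on each smooth piece, and paste using continuity of $\rho_\cS$ and the uniform $L^\infty$ bound on $\Jac$ from Lemma~\ref{LestonJ}.
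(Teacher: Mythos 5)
Your proof is correct, but it takes a genuinely different route from the paper's. The paper works separately on $P\times(0,\infty)$ and $P\times(-\infty,0)$, observes that $\rho_\cS$ is a local diffeomorphism there (since $\Jac$ is uniformly close to $I$ by Lemma~\ref{LestonJ}), and then establishes global bijectivity via a degree argument: it shows that the degree $N(z)$ of $\rho_\cS$ is locally constant on $\Omega_\cS^+$ and computes $N(z_0)=1$ at an explicit point far from the support of $b$, where $\rho_\cS$ is the identity. You instead prove the global estimate $(1-C\epsilon_0)|X_1-X_2|\le|\rho_\cS(X_1)-\rho_\cS(X_2)|\le(1+C\epsilon_0)|X_1-X_2|$ directly by integrating $\Jac$ along segments (splitting at the single crossing with $P$ and pasting via continuity of $\rho_\cS$), from which injectivity is immediate; surjectivity then follows from properness plus invariance of domain. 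Both are valid. Your approach has the advantage of producing the bi-Lipschitz constants $(1\pm C\epsilon_0)$ explicitly, which is what is actually used downstream (e.g.\ in Lemmas~\ref{LprAS} and~\ref{LprDb}, and in Step~5 of Subsection~\ref{Sproof}); in the paper's version the global lower Lipschitz bound is left somewhat implicit, recovered from local bi-Lipschitz plus bijectivity. On the other hand, the paper's degree computation is self-contained and parallels the argument in~\cite{DFM3}, whereas you invoke invariance of domain as a black box---though since invariance of domain is itself a degree-theoretic fact, neither route is strictly more elementary.
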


\bp See Theorem 3.53 in \cite{DFM3} for more details. We shall show that $\rho_{\cS}$ is a bi-Lipschitz change of variable from $P \times (0,\infty)$ to
\[\Omega^+_{\cS} := \{(p,t) \in P \times P^\bot, \, t>b(p)\}\]
and a similar argument also give that $\rho_{\cS}$ is a bi-Lipschitz change of variable from $P \times (-\infty,0)$ to
\[\Omega^-_{\cS} := \{(p,t) \in P \times P^\bot, \, t<b(p)\}.\]
The lemma follows because we know that $\rho_{\cS}$ is continuous on $P \times P^\bot$.

\medskip

First, we know by the lower bound in \eqref{prrhoS1} that the range of $\rho_{\cS}(P\times (0,\infty))$ never intersects $\Gamma_{\cS}$, so since $\rho_{\cS}$ is connected, it means that $\rho_{\cS}(P\times (0,\infty))$ is included in either $\Omega^+_{\cS}$ or $\Omega^-_{\cS}$. A quick analysis of $\rho_{\cS}$, for instance \eqref{prrhoS4}, shows that $\rho_{\cS}(P\times (0,\infty)) \subset \Omega^+_{\cS}$.

At any point $(p,t) \in P\times (0,\infty)$ the Jacobian of $\rho_{\cS}$ is close to the identity, as shown by $(i)$ and $(ii)$ of Lemma \ref{LestonJ}. So $\rho_{\cS}$ is a local diffeomorphism, and the inversion function theorem shows that there exists a neighborhood $V_{p,t} \subset P\times (0,\infty)$ of $(p,t)$ such that $\rho_{\cS}$ is a bijection between $V_{p,t}$ and its range $\rho(V_{p,t})$, which is a neighborhood of $\rho_{\cS}(p,t)$. Since the Jacobian is uniformly close to the identity, all the $\rho_{\cS}:\, V_{p,t} \mapsto \rho(V_{p,t})$ are bi-Lipschitz maps with uniform Lipschitz constant.

\medskip

If $z\in \Omega^+_{\cS}$, we define the degree of the map $\rho_{\cS}$ as
\[N(z) := \text{``number of points $(p,t) \in P\times (0,\infty)$ such that $\rho(p,t) = z$''} \in \N \cup \{+\infty\}.\]
We want to prove that $N(z)$ is constantly equal to 1. If this is true, then the lemma is proven and we can construct the inverse $\rho^{-1}$ locally by inversing the appropriate bijection $\rho_{\cS}:\, V_{p,t} \mapsto \rho(V_{p,t})$.

We already know that the number of points that satisfies $\rho(p,t) = z$ is countable, because we can cover $P\times (0,\infty)$ by a countable union of the neighborhoods $V_{p,t}$ introduced before. Moreover, if $N(z) \geq v >0$, then we can find $v$ points $(p_i,t_i)\in P\times (0,\infty)$ such that $\rho_{\cS}(p_i,t_i) = z$ and so $v$ disjoint neighborhoods $V_{p_i,t_i}$ of $(p_i,t_i)$. Consequently, each point $z'$ in the neighborbood $\bigcap_i \rho_{\cS}(V_{p_i,t_i})$ of $z$ satisfies $N(z') \geq v$. This proves that $N$ is constant on any connected component, that is
\[ \text{$N$ is constant on $\Omega^+_{\cS}$}.\]
It remains to prove that $N(z_0) = 1$ for one point $z_0$ in $\Omega^+_{\cS}$. Take $p_0$ far from the support of $b$, for instance $\dist(p_0,\Pi(Q(\cS)) \geq 99 \ell(Q(\cS))$ and $t_0 = \ell(Q(\cS))$. In this case, we have $\rho_{\cS}(p_0,t_0) = (p_0,t_0)$ and $\dist(\rho_{\cS}(p_0,t_0),\Gamma_{\cS}) = t_0$. Let $(p_1,t_1) \in P \times (0,\infty)$ be such that $\rho_{\cS}(p_1,t_1) = (p_0,t_0)$, the bound \ref{prrhoS1} entails that $|t_1-t_0| \leq C\epsilon_1 |t_0| \leq \ell(Q(\cS))$ and \eqref{prrhoS1} implies that $|p_1 - p_0| \leq C\epsilon_0 |t_1| \leq \ell(Q(\cS))$. Those conditions force $p_1$ to stay far away from the support of $b$, which implies that $\rho_{\cS}(p_1,t_1) = (p_1,t_1) = (p_0,t_0)$. We just proved that $N(p_0,t_0) = 1$, as desired.
\ep

\subsection{Properties of the operator $L_\cS$}

\begin{lemma} \label{LdefAS}
Let $L = - \div \A \nabla$ is a uniformly elliptic operator satisfying \eqref{defelliptic} and \eqref{defbounded} on $\Omega$. Construct on $\rho_{\cS}^{-1}(\Omega)$ the operator $L_{\cS} =-\div \A_{\cS} \nabla$ with
\begin{equation} \label{defAS}
\A_{\cS}(p,t) := \det(\Jac(p,t)) \Jac^{-T}(p,t) \A(\rho_{\cS}(p,t)) \Jac^{-1}(p,t) \qquad \text{ for } (p,t) \in \rho_{\cS}^{-1}(\Omega).
\end{equation}
Then $L_{\cS}$ is the conjugate operator of $L$ by $\rho_{\cS}$, that is, $u\circ \rho_{\cS}$ is a weak solution to $L_{\cS}(u\circ \rho_{\cS}) = 0$ in $\rho_{\cS}^{-1}(\Omega)$ if and only if $u$ is a weak solution to $Lu = 0$ in $\Omega$.
\end{lemma}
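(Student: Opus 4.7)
\medskip
\noindent\emph{Proof proposal for Lemma~\ref{LdefAS}.} The plan is the standard conjugation argument for divergence-form operators under a bi-Lipschitz change of variable, made possible here by the fact that, in the preceding lemma, $\rho_{\cS}$ was shown to be a bi-Lipschitz homeomorphism from $\R^n$ onto itself. Consequently $\rho_{\cS}$ and $\rho_{\cS}^{-1}$ are differentiable almost everywhere (Rademacher), and composition with either map preserves $W^{1,2}_{\mathrm{loc}}$ as well as compact support. In particular $v:=u\circ\rho_{\cS}\in W^{1,2}_{\mathrm{loc}}(\rho_{\cS}^{-1}(\Omega))$ whenever $u\in W^{1,2}_{\mathrm{loc}}(\Omega)$, and the map $\psi\mapsto\varphi:=\psi\circ\rho_{\cS}$ is a bijection from $C^\infty_c(\Omega)$-like test functions on $\Omega$ to Lipschitz, compactly supported test functions on $\rho_{\cS}^{-1}(\Omega)$ (a density argument, or directly working with Lipschitz test functions, handles the slight loss of smoothness).

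The key computation is the chain rule. With the convention of \eqref{defJacS}, namely $\Jac_{ij}=\partial_i \rho_{\cS}^j$, the chain rule reads $\nabla v(p,t)=\Jac(p,t)\,(\nabla u)(\rho_{\cS}(p,t))$, so $(\nabla u)\circ\rho_{\cS}=\Jac^{-1}\nabla v$. The same identity applied to $\psi,\varphi$ gives $(\nabla\psi)\circ\rho_{\cS}=\Jac^{-1}\nabla\varphi$. Since $\det\Jac$ is positive (it is close to $1$ by Lemma~\ref{LestonJ}$(iii)$-$(iv)$, provided $\epsilon_0$ is small), the change of variables $X=\rho_{\cS}(p,t)$ with $dX=\det(\Jac)\,dp\,dt$ transforms the weak formulation of $Lu=0$ into
\[
\iint_{\Omega}\A(X)\nabla u(X)\cdot\nabla\psi(X)\,dX
=\iint_{\rho_{\cS}^{-1}(\Omega)}\A(\rho_{\cS})\,\Jac^{-1}\nabla v\cdot\Jac^{-1}\nabla\varphi\,\det(\Jac)\,dp\,dt.
\]
Using the elementary identities $M\xi\cdot\eta=\xi\cdot M^{T}\eta$ and $M^{T}\xi\cdot\eta=\xi\cdot M\eta$ to move both copies of $\Jac^{-1}$ across the inner product, the right-hand side becomes
\[
\iint_{\rho_{\cS}^{-1}(\Omega)} \bigl[\det(\Jac)\,\Jac^{-T}\A(\rho_{\cS})\Jac^{-1}\bigr]\nabla v\cdot\nabla\varphi\,dp\,dt
=\iint_{\rho_{\cS}^{-1}(\Omega)}\A_{\cS}\nabla v\cdot\nabla\varphi\,dp\,dt.
\]
Thus the vanishing of the left-hand side for every admissible $\psi$ is equivalent to the vanishing of the right-hand side for every admissible $\varphi$, which is exactly the weak formulation of $L_{\cS}v=0$.

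Since all algebraic manipulations above are reversible, the equivalence ``$u$ solves $Lu=0$ in $\Omega$ iff $v=u\circ\rho_{\cS}$ solves $L_{\cS}v=0$ in $\rho_{\cS}^{-1}(\Omega)$'' follows. I do not expect any serious obstacle: the only delicate point is justifying the chain rule and the change of variables for Sobolev functions composed with a bi-Lipschitz map, but this is a standard fact (see e.g.\ the analogous Lemma~2.\textit{x} in \cite{DFM3} which treats the higher-codimension analogue). The ellipticity and boundedness of $\A_{\cS}$, as well as its quantitative relation to the weak DKP condition on $\A$, are not needed for this lemma and will be addressed separately (see Lemma~\ref{LprAS}).
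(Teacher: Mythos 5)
Your proof is correct and takes essentially the same route as the paper's: chain rule for the Jacobian, change of variables $X=\rho_{\cS}(p,t)$ with $dX=\det(\Jac)\,dp\,dt$, and the observation that test functions remain admissible after composition with the bi-Lipschitz map $\rho_{\cS}$ (the paper spells this out by noting $\varphi\circ\rho_{\cS}^{-1}\in W^{1,\infty}$ with compact support, hence a valid test function). The only cosmetic difference is the direction: the paper starts from the $\A_{\cS}$-bilinear form and unravels it back to $\Omega$, while you push the $\A$-bilinear form forward; since the manipulations are symmetric and reversible, both give the two-sided equivalence.
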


\bp The maps $\rho_{\cS}$ is a bi-Lipschitz change of variable on $\R^n = P \times P^\bot$, so the construction \eqref{defAS} properly define a matrix of coefficients in $L^\infty(\rho_{\cS}^{-1}(\Omega))$. 

Let $u$ be a weak solution to $Lu=0$ in $\Omega$. Then, for any $\varphi \in C^\infty_0(\rho_{\cS}^{-1}(\Omega))$, we have
\[\begin{split}
\iint_{\R^n} \A_{\cS} \nabla (u\circ \rho_{\cS}) \cdot \nabla \varphi \, dp\, dt & 
= \iint_{\R^n} \det(\Jac) \Jac^{-T} (\A\circ \rho_{\cS}) \Jac^{-1}  \nabla (u\circ \rho_{\cS}) \cdot \nabla \varphi \, dp\, dt  \\
& = \iint_{\R^n} \det(\Jac) (\A\circ \rho_{\cS}) \Jac^{-1} \nabla (u\circ \rho_{\cS}) \cdot \Jac^{-1} \nabla \varphi \, dp\, dt \\
& = \iint_{\R^n} \det(\Jac) (\A\circ \rho_{\cS}) (\nabla u \circ \rho_{\cS}) \cdot (\nabla [\varphi \circ  \rho_{\cS}^{-1}] \circ \rho_{\cS})\, dp\, dt 
\end{split}\]
because $\nabla (f\circ \rho_{\cS})$ is equal to the matrix multiplication $\Jac (\nabla f \circ \rho_{\cS})$ by definition of the Jacobian. Recall that $\det(\Jac) >0$, so doing the change of variable $X=\rho_{\cS}(p,t)$ gives
\begin{equation} \label{defAS1}
\iint_{\R^n} \A_{\cS} \nabla (u\circ \rho_{\cS}) \cdot \nabla \varphi \, dp\, dt = \iint_{\R^n}  \A \nabla u \cdot \nabla [\varphi \circ  \rho_{\cS}^{-1}] \, dX.
\end{equation}
The function $\varphi \circ  \rho_{\cS}^{-1}$ may not be smooth anymore, but is still compactly supported in $\Omega$ and in $W^{1,\infty}(\Omega) \subset W^{1,2}_{loc}(\Omega)$, so $\varphi \circ  \rho_{\cS}^{-1}$ is a valid test function for the weak solution $u$, and so the right-hand side of \eqref{defAS1} is 0. We conclude that 
\[\iint_{\R^n} \A_{\cS} \nabla (u\circ \rho_{\cS}) \cdot \nabla \varphi \, dp\, dt = 0\]
for any $\varphi \in C^\infty_0(\rho_{\cS}^{-1}(\Omega))$, hence $u\circ \rho_{\cS}$ is a weak solution to $L_{\cS}(u\circ \rho_{\cS}) = 0$ in $\rho_{\cS}^{-1}(\Omega)$. 

The same reasoning shows that $u$ is a weak solution to $Lu=0$ in $\Omega$ whenever  $u\circ \rho_{\cS}$ is a weak solution to $L_{\cS}(u\circ \rho_{\cS}) = 0$ in $\rho_{\cS}^{-1}(\Omega)$. The lemma follows. \ep

We want to say that $A_{\cS}$ satisfies the same Carleson-type condition as $A \circ \rho_\cS$. For instance, we want to say that $\delta \nabla A \in CM_{\Omega}$ - which implies $(\delta\circ \rho_\cS) \nabla (A\circ \rho_\cS) \in CM_{\rho_{\cS}^{-1}(\Omega)}$ - will give that $(\delta\circ \rho_\cS) \nabla A_{\cS} \in CM_{\rho_{\cS}^{-1}(\Omega)}$. 
However, it is not true, for the simple reason that the Carleson estimates on $\Jac$ are related to the set $\R^n \setminus P$ while the ones on $A \circ \rho_\cS$ are linked to the domain $\rho_{\cS}^{-1}(\Omega)$. Since $A_\cS$
is the product of these two objects, we only have Carleson estimates for $A_{\cS}$ in the areas of $\R^n$ where $\rho_{\cS}(\partial \Omega)$ looks like $P$. 

\begin{lemma} \label{LprAS}
Assume that the matrix function $\A$ defined on $\Omega$ satisfies \eqref{defelliptic} and \eqref{defbounded}, and can be decomposed as $\A=\B+\C$ where 
\begin{equation} \label{prAS2}
 |\delta \nabla \B| + |\C| \in CM_{\Omega}(M)
 \end{equation}
Then the matrix $\A_{\cS}$ constructed in \eqref{defAS} can also be decomposed as $\A_{\cS} = \B_{\cS} + \C_{\cS}$ where $\B_{\cS}$ satisfies \eqref{defelliptic} and \eqref{defbounded} with the constant $2C_{\A}$, $|t\nabla \B_\cS|$ is uniformly bounded by $CC_{\A}$, and
\begin{equation} \label{prAS1}
(|t \nabla \B_{\cS}| + |\C_{\cS}|)\1_{\rho^{-1}_{\cS}(W^*_\Omega(\cS))} \in CM_{\R^n \sm P}(C(\epsilon_0^2+M)) 
\end{equation}
for a constant $C$ that depends only on $n$ and the ellipticity constant $C_\A$.
\end{lemma}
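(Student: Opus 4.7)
The plan is to first regularise $\B$ so that its gradient is pointwise controlled, then transport the decomposition by $\rho_\cS$ but using the simplified Jacobian $J$ from \eqref{defJS} rather than the full $\Jac$. Concretely, begin by applying Lemma~\ref{LellipB=ellipA} to $\A = \B + \C$ so that without loss of generality $\B$ is elliptic and bounded with constant $C_\A$, satisfies the pointwise bound $|\delta \nabla \B| \leq C C_\A$, and still obeys $|\delta \nabla \B| + |\C| \in CM_\Omega(CM)$. Then set
\[
\B_\cS(p,t) := \det(J(p,t))\, J^{-T}(p,t)\, \B(\rho_\cS(p,t))\, J^{-1}(p,t), \qquad \C_\cS := \A_\cS - \B_\cS,
\]
so that $\A_\cS = \B_\cS + \C_\cS$ holds by construction.

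Ellipticity and boundedness of $\B_\cS$ are immediate from items (i) and (iii) of Lemma~\ref{LestonJ}: $J = I + O(\epsilon_0)$ and $\det J = 1 + O(\epsilon_0)$, so conjugation by $J$ distorts the ellipticity constants by at most $(1+C\epsilon_0)$, which for $\epsilon_0$ small gives the bound $2C_\A$. For the pointwise estimate on $|t\nabla \B_\cS|$, apply the product rule: the terms coming from $\nabla \det J$ and $\nabla J^{-1}$ are bounded by $|\nabla_{p,t}\nabla_p b^t|$ via Lemma~\ref{LestonJ}(vi), and multiplying by $|t|$ gives $\lesssim \epsilon_0$ thanks to \eqref{brbdd}; the term from $\nabla(\B\circ \rho_\cS) = \Jac^T\, (\nabla \B)\circ \rho_\cS$ is bounded pointwise by $|\Jac|\cdot C C_\A/\delta(\rho_\cS(p,t))$, which on the relevant support is $\lesssim C_\A$ since Lemma~\ref{LprrhoS} together with \eqref{claimdGPib} gives $\delta \circ \rho_\cS \approx |t|$.

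For the Carleson estimate, decompose
\[
\C_\cS = \det\Jac\,\Jac^{-T}(\C\circ \rho_\cS)\Jac^{-1} + (\det\Jac - \det J)\Jac^{-T}(\B\circ \rho_\cS)\Jac^{-1}
\]
\[
\quad + \det J\,(\Jac^{-T} - J^{-T})(\B\circ \rho_\cS)\Jac^{-1} + \det J\, J^{-T}(\B\circ \rho_\cS)(\Jac^{-1} - J^{-1}).
\]
The last three terms are controlled pointwise by $|\partial_t b^t| + |t\nabla_{p,t}\nabla_p b^t|$ through items (ii), (iv), (v) of Lemma~\ref{LestonJ}, and this quantity lies in $CM_{\R^n \sm P}(C\epsilon_0^2)$ by \eqref{brCM}. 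For the first term and for the term $|t\nabla \B_\cS|$ on $\rho_\cS^{-1}(W^*_\Omega(\cS))$, the key is to push forward the Carleson integral: since $\rho_\cS$ is bi-Lipschitz with constants close to $1$ and $|t| \approx \delta \circ \rho_\cS$ on the support, one has
\[
\iint_{B} |\C\circ \rho_\cS|^2 \1_{\rho_\cS^{-1}(W^*_\Omega(\cS))}\, |t|^{-1}\, dt\, dp \;\lesssim\; \iint_{\rho_\cS(B)\cap W^*_\Omega(\cS)} |\C|^2\, \delta^{-1}\, dX,
\]
and likewise for $|\delta\nabla \B|^2$ in place of $|\C|^2$.

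The main obstacle is precisely this change of variables for Carleson measures, because a ball $B\subset \R^n\sm P$ centred on $P$ is mapped by $\rho_\cS$ near $\Gamma_\cS$, which is not $\partial \Omega$. The cutoff $\1_{\rho_\cS^{-1}(W^*_\Omega(\cS))}$ forces the integral to live where $\Gamma_\cS$ genuinely approximates $\partial \Omega$, so one finishes by covering $\rho_\cS(B)\cap W^*_\Omega(\cS)$ by finitely many boundary balls of comparable radius (using Ahlfors regularity of $\partial \Omega$ together with Lemma~\ref{Lbbeps}) and invoking the original Carleson hypotheses on $\Omega$, yielding the constant $C(\epsilon_0^2 + M)$.
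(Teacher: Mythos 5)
Your proposal is correct and follows essentially the same route as the paper: regularise $\B$ via Lemma~\ref{LellipB=ellipA}, define $\B_\cS$ through the simplified Jacobian $J$ rather than $\Jac$, control the discrepancies via Lemma~\ref{LestonJ} and \eqref{brCM}, and transfer the remaining Carleson bound by the change of variable $X=\rho_\cS(p,t)$. The one place you gesture rather than prove is the final localisation step; the paper establishes the single inclusion $\rho_\cS(B(p_0,r_0))\cap W^*_\Omega(\cS)\subset B(x_0,4r_0)$ for a boundary point $x_0\in\pom$ (via \eqref{claimdPib}, \eqref{claimbxd} and the near-isometry $\|\Jac-I\|\lesssim\epsilon_0$) rather than a covering, and uses Lemma~\ref{LclaimPib} where you cite Lemma~\ref{Lbbeps}; both serve the same purpose, namely that on the support of the cutoff one has $\delta\circ\rho_\cS\approx|t|$ and the relevant region sits in a boundary ball of radius $\approx r_0$.
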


\bp
Let $\A = \B + \C$ as in the lemma. Without loss of generality, we can choose $\B$ to be a smooth average of $\A$ (see Lemma \ref{LellipB=ellipA}) and so $\B$ satisfies \eqref{defelliptic} and \eqref{defbounded} with the constant $C_{\A}$ and $|\delta \nabla \B| \leq CC_\A$. Define
\[\B_{\cS} := \det(J) J^{-T} (\B \circ \rho_{\cS}) J^{-1} \]
and of course $\C_{\cS} := \A_{\cS} - \B_{\cS}$. First, Lemma \ref{LestonJ} shows that $\det(J)$ is close to $1$ and $J^{-1}$ is close to the identity, so $\B_{\cS}$ satisfies \eqref{defelliptic} and \eqref{defbounded} with the constant $(1+C\epsilon_0)C_\A \leq 2C_\A$.
Moreover, the same Lemma \ref{LestonJ}  gives that $|\det(J)| + \|J^{-1}\| \leq 3$, $\|\Jac - I\| \leq 3$, and $\ds |\nabla_{p,t} \det(J)| + \||\nabla_{p,t} J^{-1}|\| \lesssim |\nabla_{p,t} \nabla_p b^t|$, and hence
\[|\nabla \B_{\cS}| \lesssim |(\nabla \B) \circ \rho_{\cS}| +  |\nabla_{p,t} \nabla_p b^t|,\]
and
\[\begin{split} 
|\C_{\cS}| & \lesssim |\det(\Jac) - \det(J)| + \|\Jac^{-1} - J^{-1}\| + |\C \circ \rho_{\cS}| \\
& \lesssim |\partial_t b^t| + |t\nabla_{p,t} \nabla_p b^t| + |\C \circ \rho_{\cS}|.
\end{split}\]
Lemma \ref{LbrCM} entails that $|t\nabla_{p,t} \nabla_p b^t| \lesssim \epsilon_0 \leq 1 \leq C_{\A}$, so $\B_{\cS}$ verifies $|t\nabla \B_{\cS}| \lesssim C_\A$, so thus \eqref{prAS1} is the only statement we still have to prove. Lemma \ref{LbrCM} also implies that $|\partial_t b^t| + |t\nabla_{p,t} \nabla_p b^t| \in CM_{P\times (0,\infty)}(C\epsilon_0^2)$. Therefore, it suffices to establish that 
\begin{equation} \label{prAS3}
(|t\nabla \B \circ \rho_{\cS}| + |\C \circ \rho_{\cS}|) \1_{\rho^{-1}_{\cS}(W^*_\Omega(\cS))} \in CM_{P\times (0,\infty)}(CM).
 \end{equation}
Take $p_0 \in P$ and $r_0>0$. We want to show that 
\begin{equation} \label{prAS4}
\iint_{B(p_0,r_0) \cap \rho^{-1}_{\cS}(W^*_\Omega(\cS))} (|t\nabla \B \circ \rho_{\cS}|^2 + |\C \circ \rho_{\cS}|^2) \, \frac{dt}{t} \, dp  \leq CM(r_0)^{n-1}.
 \end{equation}
If $\rho_{\cS}(B(p_0,r_0)) \cap W^*_{\Omega}(\cS) = \emptyset$, the left hand side above is zero and there is nothing to prove. Otherwise, pick a point $X\in \rho_{\cS}(B(p_0,r_0)) \cap W^*_{\Omega}(\cS)$. The fact that $X\in \rho_{\cS}(B(p_0,r_0))$ means that 
\begin{equation} \label{prAS5}
|X-\bb(p_0)| \leq (1+C\epsilon_0)r_0.
 \end{equation}
since $\rho_{\cS}(p_0) = \bb(p_0)$ and $\|\Jac - I\| \leq C\epsilon_0$ by Lemma \ref{LestonJ}. Because $b$ is $2\epsilon_0$-Lipschitz with $\epsilon_0 \ll 1$, we deduce
\begin{equation} \label{prAS6}
|X-\bb(\Pi(X))| \leq (1+\epsilon_0) |X-\bb(p_0)| \leq (1+2C\epsilon_0)r_0.
 \end{equation}
 The fact that $X\in W^*_{\Omega}(\cS)$ implies by \eqref{claimdPib} that
 \begin{equation} \label{prAS7} 
\delta(X) \leq (1+2\epsilon_0)|X-\bb(\Pi(X))| \leq 2r_0
\end{equation}
thanks to \eqref{prAS6}.
Moreover, if $x \in \partial \Omega$ is such that $|X-x| = \delta(X)$,
\begin{multline} \label{prAS8}
|x-\bb(p_0)| \leq |x-\bb(\Pi(X))| + |\bb(\Pi(X) - \bb(p_0)| \\ \leq  2\epsilon_0\delta(X) + (1+\epsilon_0) |\Pi(X) - p_0|  \leq \frac12 r_0 + (1+\epsilon_0) |X-\bb(p_0)| \leq 2r_0
 \end{multline}
by using in order \eqref{claimbxd}, the fact that $b$ is $2\epsilon_0$-Lipschitz, \eqref{prAS7}, and \eqref{prAS5}. Fix $X_0 \in \rho_{\cS}(B(p_0,r_0)) \cap W^*_{\Omega}(\cS)$ and $x_0\in\pom$ such that $|X_0 - x_0| = \delta(X_0)$. The inequalities \eqref{prAS5} and \eqref{prAS8} show that, 
\[|X-x_0| \leq |X-\bb(p_0)| + |x_0 - \bb(p_0)| \leq 4r_0 \qquad \text{ for } X\in \rho_{\cS}(B(p_0,r_0)) \cap W^*_{\Omega}(\cS),\]
that is 
\begin{equation} \label{prAS9}
\rho_{\cS}(B(p_0,r_0)) \cap W^*_{\Omega}(\cS) \subset B(x_0,4r_0).
 \end{equation}
 We are now ready to conclude. We make the change of variable $X = \rho_{\cS}(p,s)$ in \eqref{prAS4}, and since $\rho_\cS$ is a bi-Lipschitz change of variable that almost preserves the distances (because $\|\Jac - I\| \leq C\epsilon_0 \ll 1$), we obtain 
\[\begin{split}
 \iint_{B(p_0,r) \cap \rho^{-1}_{\cS}(W^*_\Omega(\cS))} & (|t\nabla \B \circ \rho_{\cS}|^2 + |\C \circ \rho_{\cS}|^2) \, \frac{dt}{t} \, dp \\
 & \leq 2  \iint_{B(x_0,4r) \cap W^*_\Omega(\cS)} (|\dist(X,\Gamma_{\cS}) \nabla \B|^2 + |\C|^2) \,  \frac{dX}{\dist(X,\Gamma_{\cS})} \\
 & \leq 4 \iint_{B(x_0,4r)} (|\delta\nabla \B|^2 + |\C|^2) \,  \frac{dX}{\delta(X)} \leq CM(r_0)^{n-1}  
 \end{split}\]
by using \eqref{claimdGPib} and then the fact that $|\delta\nabla \B| + |\C| \in CM_{\Omega}(M)$. The lemma follows.
\ep

\subsection{Properties of the composition of the smooth distance by $\rho_\cS$}

The change of variable $\rho_{\cS}$ maps $P\times (P^\bot\sm \{0\})$ to $\R^n \setminus \Gamma_{\cS}$, so for any $X\in \R^n \sm \Gamma_{\cS}$, the quantities $N_{\rho^{-1}_{\cS}(X)}(Y)$ and $\Lambda(\rho^{-1}_{\cS}(X))$ make sense as $N_{p,t}(Y)$ and $\Lambda(p,t)$, respectively, where $(p,t) = \rho^{-1}_{\cS}(X)$.  With this in mind, we have the following result.

\begin{lemma} \label{LprDb}
For any $Q\in \cS$, we have
\begin{equation} \label{prDb1}
\iint_{W_\Omega(Q)} \left| \frac{\nabla D_\beta(X)}{D_\beta(X)} - \frac{N_{\rho^{-1}_{\cS}(X)}(X)}{\dist(X,\Lambda(\rho^{-1}_{\cS}(X))} \right|^2\,  \delta(X) \, dX \leq C |\alpha_{\sigma,\beta}(Q)|^2 \sigma(Q),
\end{equation}
with a constant $C>0$ that depends only on $n$, $C_\sigma$, and $\beta$.
\end{lemma}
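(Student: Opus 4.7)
This is essentially a direct integration of the pointwise estimate from Corollary \ref{CestD}. For $X \in W_\Omega(Q)$, set $(p,t) = \rho_\cS^{-1}(X)$. The plan is to check that the pair $(p,t)$ satisfies the hypotheses of Corollary \ref{CestD} (with the same $Q$), apply that corollary pointwise, and then integrate, using the fact that $W_\Omega(Q)$ has volume comparable to $\ell(Q)^n$ and $\delta(X) \approx \ell(Q)$ on it.

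\textbf{Verification of the hypotheses of Corollary \ref{CestD}.} First, since $X \in W_\Omega(Q)$, by definition $\ell(Q)/2 < \delta(X) \leq \ell(Q)$, and there is $x \in Q$ with $|X-x|=\delta(X)$, so $X \in 2B_Q$ and $\Pi(X) \in \Pi(2B_Q)$. Combining Lemma \ref{LprrhoS} with \eqref{claimdGPib} of Lemma \ref{LclaimPib}, we get
\[
\frac{1-2\epsilon_0}{1+C\epsilon_0}\,\delta(X) \;\leq\; |t| \;\leq\; \frac{1+3\epsilon_0}{1-C\epsilon_0}\,\delta(X),
\]
so for $\epsilon_0$ small enough, $2^{-5}\ell(Q) \leq |t| \leq 2^5 \ell(Q)$. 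On the other hand, by the definition \eqref{defrhoSi} of $\rho_\cS$ and Lemma \ref{LbrCM}, $|p - \Pi(X)| = |t\,\nabla_p b^t(p)| \leq C\epsilon_0 |t| \leq C\epsilon_0 \ell(Q)$. Since $\Pi(X) \in \Pi(2B_Q)$, for $\epsilon_0$ small enough we have $p \in \Pi(2^5 B_Q)$, so $(p,t)$ lies in the range required by Corollary \ref{CestD}.

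\textbf{Integration.} Applying Corollary \ref{CestD} pointwise yields
\[
\left| \frac{\nabla D_\beta(X)}{D_\beta(X)} - \frac{N_{\rho^{-1}_\cS(X)}(X)}{\dist(X,\Lambda(\rho^{-1}_\cS(X)))} \right|^2 \leq C \ell(Q)^{-2} |\alpha_{\sigma,\beta}(Q)|^2 \qquad \text{for } X \in W_\Omega(Q).
\]
Multiplying by $\delta(X) \leq \ell(Q)$ and integrating over $W_\Omega(Q)$, whose Lebesgue measure is at most $|2B_Q| \lesssim \ell(Q)^n$, gives
\[
\iint_{W_\Omega(Q)} \left| \frac{\nabla D_\beta(X)}{D_\beta(X)} - \frac{N_{\rho^{-1}_\cS(X)}(X)}{\dist(X,\Lambda(\rho^{-1}_\cS(X)))} \right|^2 \delta(X)\, dX \;\lesssim\; \ell(Q)^{-1} |\alpha_{\sigma,\beta}(Q)|^2 \cdot \ell(Q)^n \;\lesssim\; |\alpha_{\sigma,\beta}(Q)|^2 \sigma(Q),
\]
where the last step uses the Ahlfors regularity $\sigma(Q) \approx \ell(Q)^{n-1}$ from \eqref{defADR} and \eqref{cube-ball}.

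\textbf{Main obstacle.} There is no real obstacle here: all the heavy lifting was done in Section \ref{SDbeta} (specifically in Lemma \ref{LestD} and Corollary \ref{CestD}) and in Lemma \ref{LprrhoS}, which relate $\rho_\cS^{-1}(X)$ back to parameters $(p,t)$ that are admissible for the approximation of $D_\beta$ by the flat measure $\mu_{p,t}$ on $\Lambda(p,t)$. The only mildly delicate point is keeping track of the constants implicit in $\epsilon_0$ so that $p$ indeed lands in the admissible projection window $\Pi(2^5 B_Q)$ and $|t|$ in $[2^{-5}\ell(Q),2^5\ell(Q)]$; this is immediate from Lemmas \ref{LbrCM} and \ref{LprrhoS} once $\epsilon_0$ is chosen sufficiently small.
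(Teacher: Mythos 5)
The proposal is correct and follows the same route as the paper: verify that $(p,t) = \rho_\cS^{-1}(X)$ is an admissible pair for Corollary \ref{CestD} (with $|t| \approx \delta(X) \approx \ell(Q)$ from Lemma \ref{LprrhoS} and \eqref{claimdGPib}, and $p$ within $C\epsilon_0\ell(Q)$ of $\Pi(X)\in\Pi(2B_Q)$ from the explicit formula \eqref{defrhoSi} and \eqref{brbdd}), apply the pointwise estimate, and integrate over $W_\Omega(Q)$ using $|W_\Omega(Q)|\lesssim \ell(Q)^n$ and Ahlfors regularity. Your derivation of $|p-\Pi(X)| = |t||\nabla b^t(p)|$ directly from \eqref{defrhoSi} is a slight shortcut compared to the paper, which passes through \eqref{prrhoS4} and \eqref{claimbxd} to place $p$ near $\Pi(Q)$ rather than near $\Pi(X)$, but both yield the needed localization, and the rest is identical.
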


\bp
The lemma is a consequence of Corollary \ref{CestD} and the definition of $\rho_{\cS}$.

First , Lemma \ref{lemWOG} (d) entails that $W_\Omega(Q) \subset  \R^n \setminus \Gamma_{\cS}$, which means that the quantities $N_{\rho^{-1}_{\cS}(X)}$ and $\Lambda(\rho^{-1}_{\cS}(X))$ are well defined in \eqref{prDb1}. Let $X\in W_\Omega(Q)$ and set $(p,t) = \rho^{-1}_{\cS}(X)$. 

On one hand, Lemma \ref{LprrhoS} gives that
\[\dist(X,\Gamma_{\cS}) \leq |X- \bb(p)| \leq (1+C\epsilon_0) |t|  \leq (1+C'\epsilon_0) \dist(X,\Gamma_{\cS})\]
and
\[|X - \bb(p) - (0,t)| \leq C\epsilon_0|t|.\]
By projecting the left-hand side on $P$, the latter implies that
\[|\Pi(X) - p| \leq C\epsilon_0|t|.\]
On the other hand, since $X\in W_\Omega(Q)$, Lemma \ref{LclaimPib} gives that
\[\dist(X,\Gamma_{\cS}) \leq |X- \bb(\Pi(X))| \leq (1+2\epsilon_0) \delta(X) \leq (1+C\epsilon_0) \dist(X,\Gamma_{\cS}),\]
and, if $x \in Q$ is such that $|X-x| = \delta(X)$, 
then by \eqref{claimbxd},
\[|\bb(\Pi(X)) - x| \leq 2 \epsilon_0 \delta(X),\]
which implies that 
\[|\Pi(X) - \Pi(x)| \leq 2 \epsilon_0 \delta(X),\]
Altogether, we have 
\[ \delta(X) (1-C\epsilon_0) \leq |t| \leq (1+C\epsilon_0) \delta(X)\]
and
\[\dist(p,\Pi(Q)) \leq |p - \Pi(x)|\le\abs{p-\Pi(X)}+\abs{\Pi(X)-\Pi(x)} \leq C\epsilon_0 \delta(X).\]
If we throw in the fact that $\delta(X) \in [\ell(Q)/2,\ell(Q)]$ by definition of $W_\Omega(Q)$, then we easily observe that $p$ and $t$ satisfy the assumptions of Corollary \ref{CestD}, and so
\[\left| \frac{\nabla D_\beta(X)}{D_\beta(X)} - \frac{N_{\rho^{-1}_{\cS}(X)}(X)}{\dist(X,\Lambda(\rho^{-1}_{\cS}(X))} \right| \leq C \ell(Q)^{-1} \alpha_{\sigma,\beta}(Q) \qquad \text{ for } X\in W_\Omega.\]
We conclude that
\[\begin{split}
\iint_{W_\Omega(Q)} \left| \frac{\nabla D_\beta(X)}{D_\beta(X)} - \frac{N_{\rho^{-1}_{\cS}(X)}(X)}{\dist(X,\Lambda(\rho^{-1}_{\cS}(X))} \right|^2\,  \delta(X) \, dX 
& \leq C |W_\Omega(Q)| |\ell(Q)^{-1} \alpha_{\sigma,\beta}(Q)|^2 \ell(Q) \\
& \leq C |\alpha_{\sigma,\beta}(Q)|^2 \sigma(Q) 
\end{split}\]
because $|W_\Omega(Q)| \approx \sigma(Q) \ell(Q)$ by \eqref{prWO2} and \eqref{defADR}. The lemma follows.
\ep

\begin{lemma} \label{LprNpt}
We have
\begin{equation}\label{prNpt1}
\iint_{\rho_{\cS}^{-1}(W_\Omega(\cS))} \left| \frac{\nabla t}{t} - \frac{\Jac(p,t) N_{p,t}(\rho_{\cS}(p,t))}{\dist(\rho_{\cS}(p,t), \Lambda(p,t))}\right|^2 \abs{t} \, dt\, dp \leq C(\epsilon_0)^2 \sigma(Q(\cS))
\end{equation}
where $C>0$ depends only on $n$ (and $\eta$).
\end{lemma}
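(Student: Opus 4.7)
The proof will be a direct calculation: first compute the quantity $\frac{\Jac(p,t)N_{p,t}(\rho_\cS(p,t))}{\dist(\rho_\cS(p,t),\Lambda(p,t))}$ in closed form in terms of $\nabla_p b^t$ and its derivatives, compare with $\nabla t/t$ to see that the error involves only the ``small'' derivatives of $b^t$, and finally invoke the Carleson packing estimate \eqref{brCM} from Lemma \ref{LbrCM}. No use of the boundary measure $\sigma$, the cubes $Q \in \cS$, or Tolsa's numbers is needed: once we have the pointwise identity the bound is quantitative in $\epsilon_0$.

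The key formulas are as follows. The plane $\Lambda(p,t)$ passes through $\bb^t(p)=(p,b^t(p))$ with unit normal $(\nabla_p b^t(p),-1)/\sqrt{1+|\nabla_p b^t(p)|^2}$, and $\rho_\cS(p,t)-\bb^t(p)=(-t\nabla_p b^t(p),\,t)$ by \eqref{defrhoS}. Therefore
\[
\dist(\rho_\cS(p,t),\Lambda(p,t))=|t|\sqrt{1+|\nabla_p b^t(p)|^2},\qquad N_{p,t}(\rho_\cS(p,t))=\frac{\sgn(t)(-\nabla_p b^t(p),1)}{\sqrt{1+|\nabla_p b^t(p)|^2}}.
\]
Using the formula \eqref{defJacS} for $\Jac$ (with the convention $\Jac_{ij}=\partial_i\rho_\cS^j$ so that $\nabla(f\circ\rho_\cS)=\Jac\,(\nabla f)\circ\rho_\cS$), a direct matrix multiplication gives
\[
\Jac(p,t)\begin{pmatrix}-\nabla_p b^t(p)\\ 1\end{pmatrix}=\begin{pmatrix}t\,(\nabla_p^2 b^t(p))\nabla_p b^t(p)\\ 1+|\nabla_p b^t(p)|^2+\partial_t b^t(p)+t\,\nabla_p b^t(p)\cdot\partial_t\nabla_p b^t(p)\end{pmatrix}.
\]
Dividing by $\dist(\rho_\cS(p,t),\Lambda(p,t))=|t|\sqrt{1+|\nabla_p b^t|^2}$ (the factor $\sgn(t)/\sqrt{1+|\nabla_p b^t|^2}$ cancels), and subtracting $\nabla t/t=e_n/t$, we obtain the clean identity
\[
\frac{\nabla t}{t}-\frac{\Jac(p,t)N_{p,t}(\rho_\cS(p,t))}{\dist(\rho_\cS(p,t),\Lambda(p,t))}=\begin{pmatrix}-\dfrac{(\nabla_p^2 b^t)\nabla_p b^t}{1+|\nabla_p b^t|^2}\\[4pt] -\dfrac{\nabla_p b^t\cdot\partial_t\nabla_p b^t}{1+|\nabla_p b^t|^2}-\dfrac{\partial_t b^t}{t(1+|\nabla_p b^t|^2)}\end{pmatrix}.
\]

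Since $|\nabla_p b^t|\lesssim\epsilon_0$ by \eqref{brbdd}, $1+|\nabla_p b^t|^2\approx 1$ and the pointwise bound becomes
\[
\left|\frac{\nabla t}{t}-\frac{\Jac N_{p,t}(\rho_\cS(p,t))}{\dist(\rho_\cS(p,t),\Lambda(p,t))}\right|^2\lesssim \epsilon_0^2\bigl(|\nabla_p^2 b^t|^2+|\partial_t\nabla_p b^t|^2\bigr)+\frac{|\partial_t b^t|^2}{t^2}.
\]
Multiplying by $|t|$ and integrating, we get
\[
\iint_{\rho_\cS^{-1}(W_\Omega(\cS))}\left|\frac{\nabla t}{t}-\frac{\Jac N_{p,t}}{\dist}\right|^2|t|\,dt\,dp\lesssim \iint_{\rho_\cS^{-1}(W_\Omega^*(\cS))}\!\!\left(\epsilon_0^2\,|t\nabla_{p,t}\nabla_p b^t|^2+|\partial_t b^t|^2\right)\frac{dt\,dp}{|t|}.
\]

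To finish, observe that by \eqref{prWO4}, \eqref{defWS}, and the bound $\|\Jac-I\|\lesssim\epsilon_0$ from Lemma \ref{LestonJ}, the region $\rho_\cS^{-1}(W_\Omega(\cS))$ is contained in a Euclidean ball $B(p_0,r)\subset \R^n$ with $r\lesssim\ell(Q(\cS))$. The Carleson estimate \eqref{brCM} in Lemma \ref{LbrCM} then bounds the right-hand side by $C\epsilon_0^2\,r^{n-1}\lesssim \epsilon_0^2\,\ell(Q(\cS))^{n-1}\approx\epsilon_0^2\,\sigma(Q(\cS))$, which is the desired estimate \eqref{prNpt1}. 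The only delicate point in the whole argument is the bookkeeping of the Jacobian/normal conventions in the matrix product above; once that is done correctly, the cancellation of the ``size one'' terms is forced by the geometry (the vector $(-\nabla_p b^t,1)$ is essentially the normal to $\Gamma_\cS$, so $\Jac$ acting on it must recover the vertical direction up to $\mathcal{O}(\epsilon_0)$), and the remainder is handled by the Littlewood--Paley/Carleson estimates on $b$.
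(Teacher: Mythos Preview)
Your proof is correct and follows essentially the same route as the paper: compute $N_{p,t}(\rho_\cS(p,t))$ and $\dist(\rho_\cS(p,t),\Lambda(p,t))$ explicitly, apply the Jacobian, observe that the size-one terms cancel, and bound the remainder via the Carleson estimate \eqref{brCM}. The only cosmetic difference is that the paper routes the calculation through the approximate Jacobian $J$ of \eqref{defJS}, for which one has the \emph{exact} identity $\dfrac{J\,N_{p,t}(\rho_\cS(p,t))}{\dist(\rho_\cS(p,t),\Lambda(p,t))}=\dfrac{\nabla t}{t}$, and then invokes $\|\Jac-J\|\lesssim|\partial_t b^t|+|t\nabla_{p,t}\nabla_p b^t|$ from Lemma \ref{LestonJ}; you compute $\Jac\,N_{p,t}$ directly and read off the same error terms. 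Both arguments yield the same pointwise bound and finish identically.
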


\bp
From the definition, we can see that $\Lambda(p,t)$ is the affine plane that goes through the point $\bb^t(p)$ and whose directions are given by the vectors $(q,q \nabla b^t(p))$, that is $\Lambda(p,t)$ is the codimension 1 plane that goes through $\bb^t(p)$ and with upward unit normal vector 
\[N_{p,t} = \frac{1}{|(-(\nabla b^r(p))^T,1)|} \begin{pmatrix} -\nabla b^t(p) \\1  \end{pmatrix} = \frac{1}{\sqrt{1+|\nabla b^t(p)|^2}} \begin{pmatrix} -\nabla b^t(p) \\1  \end{pmatrix} \]
The vector function $N_{p,t}(X)$ is just $+N_{p,t}$ or $-N_{p,t}$, depending whether $X$ lies above or below $\Lambda(p,t)$.

Observe that $\rho_{\cS}(p,t) - \bb^t(p) = t(-(\nabla b^t(p))^T,1)$, which means that $\bb^t(p)$ is the projection of $\rho_{\cS}(p,t)$ onto $\Lambda(p,t)$ and that 
\[\dist(\rho_{\cS}(p,t), \Lambda(p,t)) = |t| |(-(\nabla b^t(p))^T,1)| = |t| \sqrt{1+|\nabla b^t(p)|^2}.\]
Moreover, $\rho_{\cS}(p,t)$ lies above $\Lambda(p,t)$ if $t>0$ and below otherwise, that is
\[N_{p,t}(\rho(p,t)) = \sgn(t) N_{p,t} =  \frac{\sgn(t)}{\sqrt{1+|\nabla b^t(p)|^2}} \begin{pmatrix} -\nabla b^t(p) \\1  \end{pmatrix}.\]
From all this, we deduce
\begin{equation}\label{prNpt2}
\begin{split}
\frac{J(p,t) N_{p,t}(\rho_{\cS}(p,t))}{\dist(\rho_{\cS}(p,t), \Lambda(p,t))} & = \frac{1}{t(1+|\nabla b^t(p)|^2)} \begin{pmatrix} I & \nabla b^t(p) \\ -(\nabla b^t(p))^T & 1  \end{pmatrix} \begin{pmatrix} -\nabla b^t(p) \\1  \end{pmatrix} \\
& = \frac{1}{t} \begin{pmatrix} 0_{\R^{n-1}} \\ 1  \end{pmatrix} = \frac{\nabla t}t.
\end{split}\end{equation} 
Recall that $|\Jac - J| \lesssim |\partial_t b^t| + |t \nabla_{p,t} \nabla_p b^t|$. Together with \eqref{prNpt2}, we obtain that the left-hand side of \eqref{prNpt1} is equal to
\begin{multline}
I = \iint_{\rho^{-1}_{\cS}(W_\Omega(\cS))} \left| \frac{\Jac(p,t) - J(p,t)}{t(1+|\nabla b^t(p)|^2)} \begin{pmatrix} -\nabla b^t(p) \\1  \end{pmatrix} \right|^2 \abs{t} \, dt\, dp 
\\ \lesssim \iint_{\rho_{\cS}^{-1}(W_\Omega(\cS))} (|\partial_t b^t|^2 + |t \nabla_{p,t} \nabla_p b^t|^2) \, \frac{dt}{\abs{t}} \, dp.
\end{multline}
Take $X_0 \in W_{\Omega}(\cS)$, and notice that the set $W_{\Omega}(\cS)$ is included in $B(\bb(\Pi(X_0)),4\ell(Q(\cS)))$ by definition of $W_\Omega(\cS)$ and by \eqref{claimbxd}. Since the Jacobian of $\rho_{\cS}$ is close to the identity, $\rho_{\cS}^{-1}$ almost preserves the distance, and hence $\rho^{-1}_{\cS}(W_{\Omega}(\cS)) \subset B(\Pi(X_0),5\ell(Q(\cS)))$. We conclude that
\[I \lesssim \iint_{B(\Pi(X_0),5\ell(Q(\cS)))} (|\partial_t b^t|^2 + |t \nabla_{p,t} \nabla_p b^t|^2) \, \frac{dt}{t} \, dp \lesssim (\epsilon_0)^2 \ell(Q(\cS))^{n-1} \lesssim (\epsilon_0)^2 \sigma(Q(\cS))\]
by Lemma \ref{LbrCM} and then \eqref{defADR}. The lemma follows.
\ep

\section{The flat case.}

\label{Sflat}

In this section, we intend to prove an analogue of Theorem \ref{Main1} when the boundary is flat, that is when the domain is  $\Omega_0:= \R^n_+$. This is our main argument on the PDE side (contrary to other sections which are devoted to geometric arguments) and the general case of Chord-Arc Domains is eventually brought back to this simpler case.

We shall bring a little bit of flexibility in the following manner. We will allow $\Omega$ to be different from $\R^n_+$, but we shall stay away from the parts where $\partial \Omega$ differs from $\partial\Rn_+$ with some cut-off functions. More exactly, we shall use cut-off functions $\phi$ that guarantee that $\delta(X) := \dist(X,\partial \Omega) \approx t$ whenever $X = (x,t) \in \supp \phi$. We shall simply use $\R^{n-1}$ for $\partial\Rn=\R^{n-1}\times\set{0}$. We start with the precise definition of the cut-off functions that we are allowing.

\begin{definition} \label{defcutoffboth}
We say that $\phi \in L^\infty(\Omega)$ is a cut-off function associated to both $\partial \Omega$ and $\R^{n-1}$ if $0 \leq \phi \leq 1$, and there is a constant $C_\phi \geq 1$ such that $|\nabla \phi| \leq C_\phi \delta^{-1}$,
\begin{equation} \label{t=dist}
(C_\phi)^{-1} |t| \leq \delta(X) \leq C_\phi |t| \qquad \text{ for all } X=(x,t) \in \supp \phi,
\end{equation}
and there exists a collection of dyadic cubes $\{Q_i\}_{i\in I_\phi}$ in $\D_{\partial \Omega}$ such that
\begin{equation} \label{Qioverlap}
\text{$\{Q_i\}_{i\in I_\phi}$ is finitely overlapping with an overlap of at most $C_\phi$,}
\end{equation}
and
\begin{equation} \label{1phiCM3}
\Omega \cap (\supp \phi) \cap \supp (1-\phi) \subset \bigcup_{i\in I_\phi} W_\Omega^{**}(Q_i). 
\end{equation}
\end{definition}

The condition \eqref{t=dist} allows us to say that  
\begin{multline} \label{BOtoBR}
\text{ if, for $x\in \partial \Omega$ and $r>0$, $B(x,r) \cap \supp \phi \neq \emptyset$,} \\
\text{ then there exists $y\in \R^{n-1}$ such that $B(x,r) \subset B(y,Cr)$;}
\end{multline}
so we can pass from Carleson measures in $\Omega$ to Carleson measure in $\R^n \setminus \R^{n-1}$. For instance, we have
\begin{equation} \label{CMOCMR} \begin{array}{c}
f \in CM_{\Omega}(M) \implies f\phi, \, f\1_{\supp \phi} \in CM_{\R^n \setminus \R^{n-1}}(C'_\phi M), \\
\delta \nabla g \in CM_{\Omega}(M) \implies t \phi \nabla g \in  CM_{\R^n \setminus \R^{n-1}}(C'_\phi M).
\end{array} \end{equation}
and vice versa. The conditions \eqref{Qioverlap} and \eqref{1phiCM3} ensure that $\1_{(\supp \phi) \cap \supp (1-\phi)}$ (and hence $\delta \nabla \phi$) satisfies the Carleson measure condition on $\om$. So by \eqref{CMOCMR}, 
\begin{equation} \label{1phiCM}
|t \nabla \phi| + \1_{\supp \nabla \phi } + \1_{(\supp \phi) \cap \supp (1-\phi)}  \in CM_{ \R^n \setminus \R^{n-1}}(C'_\phi).
\end{equation}
And if the support of of $\phi$ is contained in a ball of radius $r$ centered on $\partial \Omega$, then 
\begin{equation} \label{1phiCM2}
\iint_\Omega \big(|\nabla \phi|t + |t\nabla \phi|^2\big) \, \frac{dt}{t}\, dy \lesssim r^{n-1}.
\end{equation}

\medskip

We are ready to state the main result of the section.

\begin{lemma} \label{lemflat}
Let $\Omega$ be a Chord-Arc Domain and let $L=-\diver \mathcal A \nabla$ be a uniformly elliptic operator on $\Omega$, that is $\A$ verifies \eqref{defelliptic} and \eqref{defbounded}. Assume that the $L^*$-elliptic measure $\omega_{L^*}\in A_\infty(\sigma)$, where $L^*$ is the adjoint operator of $L$, and $\sigma$ is an Ahlfors regular measure on $\pom$. Let $\phi$ be as in Definition \ref{defcutoffboth} and be supported in a ball $B:=B(x,r)$ centered on the boundary $\partial \Omega$. Assume that the coefficients $\A$ can be decomposed as $\mathcal A  = \mathcal B + \mathcal C$ where
\begin{equation} \label{NB+CareCM}
 (|t\nabla \mathcal B| + |\mathcal C|)\1_{\supp \phi} \in CM_{\R^n \sm \R^{n-1}}(M)\footnote{We actually only need the Carleson condition on the last column of $\mathcal B$ and $\mathcal C$ (instead of the full matrix).}.
\end{equation}
 Then for any non-negative nontrivial weak solution $u$ to $Lu = 0$ in $2B \cap \Omega$ with zero trace on $\partial \Omega \cap 2B$, one has
\begin{equation} \label{flat1}
\iint_{\Omega}  |t|\left|\frac{\nabla u}{u} - \frac{\nabla t}{t} \right|^2 \phi^2 \, dt\, dy = \iint_{\Omega}  |t|\left|\nabla \ln\Big( \frac{u}{|t|} \Big) \right|^2 \phi^2 \, dt\, dy \leq C(1+M) r^{n-1},\end{equation}
where $C$ depends only on the dimension $n$, the elliptic constant $C_\A$, the 1-sided CAD constants of $\Omega$, the constant $C_\phi$ in Definition \ref{defcutoffboth}, and the intrinsic constants in $\omega_{L^*} \in A_\infty(\sigma)$.
\end{lemma}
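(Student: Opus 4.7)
The plan is to prove \eqref{flat1} by an energy identity for $v:=\log(u/|t|)$, obtained from testing $Lu=0$ against a suitable function, and then controlling the resulting error terms by the Carleson assumptions and --- for one critical term --- by the $A_\infty$ hypothesis. First I would normalize $u$ so that $u(X_B)=r$ at a corkscrew point $X_B$ for $B$; by the CFMS estimates (Lemma \ref{LCFMS}) together with the Harnack chain condition for the CAD $\om$ and the comparability $\delta\approx|t|$ on $\supp\phi$ from \eqref{t=dist}, this gives $v$ uniformly bounded by a constant $K$ on $\supp\phi$, with $K$ depending only on the CAD constants, $C_\A$, and the $A_\infty$ constants.

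Writing $I:=\iint_\om|t||\nabla v|^2\phi^2\, dX$, by ellipticity it suffices to bound $\iint\A\nabla v\cdot\nabla v\,|t|\phi^2$. I would expand using $\nabla v=\nabla u/u-\sgn(t)\,e_n/|t|$, and then integrate by parts against $Lu=0$ with test function $\Phi=v|t|\phi^2/u$ --- legitimate since $|t|/u$ and $u/|t|$ are both bounded on $\supp\phi$, and an approximation by $\Phi\cdot\chi_\varepsilon(t)$ with $\chi_\varepsilon$ a smooth cutoff of $\{|t|>\varepsilon\}$ handles the non-vanishing on $\partial\om$ (the boundary contribution in the limit $\varepsilon\to 0$ vanishes by standard energy bounds on $\nabla u$). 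The IBP produces three types of error terms: \emph{(a) cutoff terms} of the shape $\iint v\phi\, t\nabla\phi\cdot \A\nabla u/u$, which I would bound by $\tfrac14 I+Cr^{n-1}$ via Cauchy--Schwarz, boundedness of $v$ and $u/|t|$, the inequality $|\nabla u|/u\le|\nabla v|+1/|t|$, and the estimate $|t\nabla\phi|,\1_{\supp\nabla\phi}\in CM_{\R^n\setminus\R^{n-1}}$ from \eqref{1phiCM}; \emph{(b) coefficient terms} arising from $\A\nabla u\cdot e_n$ and the zero-order piece $\A e_n\cdot e_n/|t|\cdot\phi^2$, which I would handle by splitting $\A=\B+\C$ and invoking \eqref{NB+CareCM} --- the $\C$-part controlled directly by the Carleson--$L^\infty$ duality, and the $\B$-part after a second integration by parts moving the $\partial_t$-derivative off of $\B$ so that $|t\nabla\B|\in CM(M)$ applies --- yielding a bound $\le\tfrac14 I+C(1+M)r^{n-1}$ (note the footnote in \eqref{NB+CareCM} that only the last column of $\B$ and $\C$ enters, consistent with the appearance of $e_n$ in all these error terms).

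The main obstacle is the remaining \emph{log term} $L_{\log}=\iint\sgn(t)\,v\,\A\nabla u\cdot e_n\,\phi^2\,dX$, in which $v=\log(u/|t|)$ is bounded but not small, while $\A\nabla u\cdot e_n\cdot\phi^2$ is not a-priori Carleson; hence the naive Carleson--$L^\infty$ pairing is unavailable. I would treat this term exactly as in the referenced Lemma \ref{lemlogk}: use the hypothesis $\omega_{L^*}\in A_\infty(\sigma)$ to show that $\log(u/|t|)$ has bounded BMO-type oscillation with respect to the $L^*$-elliptic measure (equivalently, $u/|t|$ is locally a reverse-H\"older weight for $\omega_{L^*}$), and rewrite $L_{\log}$ via the adjoint Green function $G_{L^*}$ as a boundary integral against $\omega_{L^*}$ with $L^\infty$-density controlled by $K$; the $A_\infty$ relation $\omega_{L^*}(\Delta)\le C\sigma(\Delta)$ then gives $|L_{\log}|\le C\sigma(B)\le Cr^{n-1}$. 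Finally, collecting the estimates of (a), (b), and the log term, and absorbing $\tfrac12 I$ into the left-hand side, yields $I\le C(1+M)r^{n-1}$, which is \eqref{flat1}.
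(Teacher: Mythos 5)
Your proposal has a genuine gap, and it is in the first step: the claim that $v=\log(u/|t|)$ is \emph{uniformly bounded} on $\supp\phi$ after normalizing $u$ at a corkscrew point. This is false. The CFMS/boundary-Harnack estimates, together with $\delta\approx|t|$ on $\supp\phi$, give
\[
\frac{u(Y)}{|t|}\approx \frac{u(X_0)}{r}\cdot\frac{\sigma(\Delta)}{\sigma(Q)}\cdot\frac{\omega_{L^*}^{X_1}(Q)}{\omega_{L^*}^{X_1}(\Delta)}\qquad\text{for }Y\in\supp\phi\cap W_\Omega^{**}(Q),
\]
and the last factor is a Poisson-kernel type ratio which is \emph{not} uniformly bounded above or below under the $A_\infty$ hypothesis --- $A_\infty$ is equivalent to a reverse-H\"older inequality, not an $L^\infty$ bound. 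What one does get is exactly the dyadic-level-set estimate exploited in Lemma~\ref{lemlogk}, namely $\sigma(E_k)\lesssim 2^{-|k|\theta}\sigma(\Delta^*)$ for the sets $E_k$ on which the ratio is $\approx 2^k$, which controls $\iint|\nabla\phi|\,|\ln(Ku/|t|)|$ but not $\|\ln(Ku/|t|)\|_{L^\infty(\supp\phi)}$. Your own third paragraph tacitly concedes this by invoking ``BMO-type oscillation'' --- but your first paragraph and your handling of the cutoff terms (a) explicitly rely on $\|v\|_{L^\infty}\le K$, and that reliance is fatal.

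This error compounds through your choice of test function. Testing $Lu=0$ against $\Phi=v|t|\phi^2/u$ produces, among the error terms, a piece of the form
\[
\iint v\,|t|\,\phi^2\,\frac{\A\nabla u\cdot\nabla u}{u^2}\,dX,
\]
which, writing $\nabla u/u=\nabla v+\nabla t/t$, is comparable to $\iint v\,|t|\,|\nabla v|^2\phi^2 + \iint v\,\phi^2/|t|$. Neither term can be absorbed into the left-hand side $I=\iint|t||\nabla v|^2\phi^2$ without a uniform bound on $v$; and the second is not even controlled by the cutoff Carleson bound \eqref{1phiCM2}. The paper's test function $t\phi^2/(bu)$ is precisely chosen to avoid introducing the factor $v$, so that the only place the logarithm appears is the single term $T_{21}=-\iint\partial_t\ln(u/t)\,\phi^2$, which after one integration by parts in $t$ alone becomes $\iint\ln(Ku/t)\,\partial_t[\phi^2]$ and is handled by Lemma~\ref{lemlogk}.

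A second omission: you do not address the off-diagonal structure of $\A$. The integration-by-parts-in-$t$-only argument, which is what makes the log term land on $\nabla\phi$ rather than producing an unmanageable $\nabla\B\cdot\nabla v$ term, requires the block structure $\B_{i,n}=0$ for $i<n$ (so that $\B\nabla t\cdot\nabla v\,\phi^2=b\,\partial_tv\,\phi^2$). The paper achieves this in Steps~2--3 by mollifying $\A$ at a chosen scale $N\delta$ and applying a bi-Lipschitz change of variable $\rho(y,t)=(y+tv(y,t),t)$ that conjugates $L$ to an operator whose $\B$-part has the desired structure, while preserving the Carleson conditions. Your phrase ``a second integration by parts moving the $\partial_t$-derivative off of $\B$'' does not substitute for this reduction; without the block structure, the term you call $L_{\log}$ does not factor as a pure $\partial_t$ of a logarithm and the argument of Lemma~\ref{lemlogk} is not directly applicable.
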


The above lemma is the analogue of Theorem 2.21 from \cite{DFMGinfty} in our context, and part of our proof will follow the one from \cite{DFMGinfty} but a new argument is needed to treat the non-diagonal structure of $\mathcal A$.

We need $\omega_{L^*} \in A_\infty(\sigma)$ for the proof of the following intermediate lemma. Essentially, we need that the logarithm of the Poisson kernel lies in $BMO$. Let us state and prove it directly in the form that we need.

\begin{lemma} \label{lemlogk}
Let $\Omega$, $L$, $\phi$, $B:=B(x,r)$, and $u$ be as in Lemma \ref{lemflat}. Assume that $\omega_{L^*}\in A_\infty(\sigma)$ as in Lemma \ref{lemflat}. Then there exists $K:= K(u,B)$ such that 
\[\iint_{\Omega} |\nabla \phi| \left|\ln\Big( \frac{K u}{|t|} \Big) \right| dt\, dy \leq C r^{n-1},\]
where $C$ depends only on $n$, $C_\A$, the 1-sided CAD constants of $\Omega$, the constant $C_\phi$ in Definition \ref{defcutoffboth},  and the intrinsic constants in $\omega_{L^*} \in A_\infty(\sigma)$.
\end{lemma}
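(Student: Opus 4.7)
My strategy is to cover $\supp\nabla\phi$ by the Whitney-type regions built into Definition~\ref{defcutoffboth}, use Harnack on each piece to replace $u$ by its corkscrew value, and then exploit $\omega_{L^*}\in A_\infty(\sigma)$ in its BMO form $\log(d\omega_{L^*}/d\sigma)\in BMO(\sigma)$.

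By Definition~\ref{defcutoffboth}, $\supp\nabla\phi\subset\bigcup_{i\in I_\phi}W_\Omega^{**}(Q_i)$ with $\{2Q_i\}$ finitely overlapping; since $\supp\phi\subset B$, each $Q_i$ satisfies $\ell(Q_i)\lesssim r$ and $\Delta_{Q_i}\subset C\Delta(x,r)$, so $\sum_i\sigma(Q_i)\lesssim r^{n-1}$. On each region $|t|\approx\delta\approx\ell(Q_i)$, $|\nabla\phi|\lesssim \ell(Q_i)^{-1}$, $|W_\Omega^{**}(Q_i)|\lesssim \ell(Q_i)^n$, and by Harnack (Lemma~\ref{Harnack}) along a chain inside the Whitney region $u(X)\approx u(X_{Q_i})$, where $X_{Q_i}$ is a corkscrew for $\Delta_{Q_i}$. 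Consequently,
\[
\iint_\Omega|\nabla\phi|\,\bigl|\log(Ku/|t|)\bigr|\,dX \;\lesssim\; r^{n-1}+\sum_i\sigma(Q_i)\,|F_i|,\qquad F_i:=\log\frac{K\,u(X_{Q_i})}{\ell(Q_i)},
\]
so it suffices to produce $K$ for which $\sum_i\sigma(Q_i)|F_i|\lesssim r^{n-1}$.

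Let $A:=X_{\Delta(x,r)}$ be a corkscrew for $\Delta(x,r)$ and fix $X_0\in\Omega$ outside $B(x,4r)$ (using the exterior corkscrew condition; in the borderline case $\diam\Omega\approx r$ one substitutes a Green function with pole at infinity). Set $v(X):=G_L(X,X_0)=G_{L^*}(X_0,X)$, a positive $L$-solution in $2B\cap\Omega$ with zero trace on $\partial\Omega\cap 2B$. By the comparison principle Lemma~\ref{LCFMS}(3), $u(X)/v(X)\approx u(A)/v(A)$ throughout $B\cap\Omega$. Applying the CFMS estimate~\eqref{eq.CFMS} to the $L^*$-Green function $G_{L^*}(X_0,\cdot)$ at both $X_{Q_i}$ and $A$ yields
\[
\frac{u(X_{Q_i})}{u(A)}\approx\frac{v(X_{Q_i})}{v(A)}\approx\frac{\ell(Q_i)}{r}\cdot\frac{a_{Q_i}}{a_B},\qquad a_Q:=\frac{\omega_{L^*}^{X_0}(\Delta_Q)}{\sigma(\Delta_Q)}.
\]
Setting $K:=\delta(A)/u(A)$ (so $Ku(A)/\delta(A)=1$) then gives $Ku(X_{Q_i})/\ell(Q_i)\approx a_{Q_i}/a_B$, hence $F_i=\log(a_{Q_i}/a_B)+O(1)$.

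Because $\omega_{L^*}\in A_\infty(\sigma)$, the density $k_{L^*}:=d\omega_{L^*}^{X_0}/d\sigma$ exists and satisfies $\log k_{L^*}\in BMO(\sigma)$ with norm controlled by the intrinsic constants, and the reverse-Jensen inequality for $A_\infty$ weights gives $\log a_Q=(\log k_{L^*})_{\Delta_Q}+O(1)$. By John--Nirenberg and the finite overlap of $\{Q_i\}$,
\[
\sum_i\sigma(Q_i)\,|F_i| \;\lesssim\; \sum_i\int_{Q_i}\bigl|\log k_{L^*}-(\log k_{L^*})_{\Delta(x,r)}\bigr|\,d\sigma + r^{n-1} \;\lesssim\; r^{n-1},
\]
finishing the proof. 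The main difficulty is the middle step: identifying which Poisson kernel governs $u/|t|$. The Green function symmetry $G_L(X,Y)=G_{L^*}(Y,X)$ swaps $L$ and $L^*$ when CFMS is applied with $X_0$ as the pole of the first argument of $G_L$, so the relevant kernel is $k_{L^*}$ (not $k_L$), which is precisely what the hypothesis controls and explains why the $A_\infty$ assumption is placed on the adjoint measure.
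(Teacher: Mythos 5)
Your argument follows the paper's proof essentially verbatim through the geometric and CFMS stages: covering $\supp \nabla\phi$ by the regions $W_\Omega^{**}(Q_i)$, using Harnack to reduce to corkscrew values, introducing a far-away pole, and converting via the comparison principle and Lemma~\ref{LCFMS} to a ratio of elliptic-measure densities, with $K$ chosen to normalize at the corkscrew $A$. The only genuine divergence is the last step. The paper estimates $\sum_i \sigma(Q_i)\,|F_i|$ by a dyadic level-set decomposition: it groups the $Q_i$ into sets $E_k$ where the density ratio is $\approx 2^k$, bounds $\sigma(E_k)$ by $2^{-k}$ directly when $k\geq 0$ and by $2^{k\theta}$ via the $A_\infty$ inequality when $k\leq 0$, then sums the geometric series. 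You instead invoke the equivalent (and classical) BMO formulation of $A_\infty$: the density $k_{L^*}=d\omega_{L^*}^{X_0}/d\sigma$ exists and $\log k_{L^*}\in\mathrm{BMO}(\sigma)$, reverse Jensen identifies $\log a_{Q_i}$ with $(\log k_{L^*})_{\Delta_{Q_i}}$ up to $O(1)$, and the bound then reduces to the local $L^1$ BMO estimate (for which even John--Nirenberg is overkill — the definition of BMO norm suffices). Both finishing moves are correct; yours is shorter to state once one is willing to cite the density/BMO theory, while the paper's is self-contained from the single inequality \eqref{defAinfty}. One small point worth being explicit about in a written version: after reverse Jensen you have averages over $\Delta_{Q_i}$, while your displayed estimate integrates over $Q_i$; to pass between them you need the BMO-oscillation bound $|(\log k_{L^*})_{\Delta_{Q_i}}-(\log k_{L^*})_{Q_i}|\lesssim \|\log k_{L^*}\|_{\mathrm{BMO}}$ (since $Q_i$ and $\Delta_{Q_i}$ are comparable sets), which avoids any need for bounded overlap of the balls $\Delta_{Q_i}$ — only the finite overlap of the dyadic cubes $Q_i$ guaranteed by \eqref{Qioverlap} is used.
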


\noindent {\em Proof of Lemma \ref{lemlogk}.} The first step is to replace $Ku/t$ by the elliptic measure. Take $X_0 \in B(x,r) \cap \Omega$ and $X_1\in \Omega \setminus B(x,4r)$ to be two corkscrew points for $x$ at the scale $r$. 
If $G(Y,X)$ is the Green function associated to $L$ in $\Omega$ and $\{\omega^X_*\}_{X\in \Omega}$ is the elliptic measure associated to the adjoint $L^*$, the CFMS estimates (Lemma \ref{LCFMS}) entails, for $Y \in W_{\Omega}^{**}(Q) \cap B$, that
\[\frac{u(Y)}{u(X_0)} \approx \frac{G(Y,X_1)}{G(X_0,X_1)} \approx  \frac{\ell(Q)}{r} \frac{\sigma(\Delta)}{\sigma(Q)} \frac{\omega_*^{X_1}(Q)}{\omega_*^{X_1}(\Delta)},\]
where $\Delta = B\cap \partial \Omega$.
Moreover, if $Y=(y,t)\in \supp \phi \cap W^{**}_\Omega(Q)$, then $\ell(Q) \approx |t|$ by \eqref{t=dist}. Altogether, we have
\begin{equation} \label{CFMSa}
\frac{u(Y)}{|t|} \approx \frac{u(X_0)}{r} \frac{\sigma(\Delta)}{\sigma(Q)} \frac{\omega_*^{X_1}(Q)}{\omega_*^{X_1}(\Delta)} \qquad \text{ for } Y=(y,t) \in \supp \phi \cap W_{\Omega}^{**}(Q).
\end{equation}
Set $K := r/u(X_0)$, and $I_\phi':=\set{i\in I_\phi:  W_\om^{**}(Q_i) \text{ intersects }\supp\nabla\phi}$,
\begin{equation} \label{logk1}
\begin{split}
\iint_{\Omega} |\nabla \phi| \left|\ln\Big( \frac{K u}{|t|} \Big) \right| dt\, dy
& \lesssim \sum_{i\in I_\phi'} \ell(Q_i)^{-1} \int_{W_\Omega^{**}(Q_i)} \left|\ln\Big( \frac{K u}{|t|} \Big) \right| \, dt\, dy \\
& \lesssim \sum_{i\in I_\phi'} \sigma(Q_i) \left[ 1 + \left|\ln\Big(\frac{\sigma(\Delta)}{\sigma(Q_i)} \frac{\omega_*^{X_1}(Q_i)}{\omega_*^{X_1}(\Delta)} \Big) \right| \right]
 \end{split}
 \end{equation}
 by \eqref{1phiCM3},  \eqref{CFMSa}, and the fact that $|W^{**}_\Omega(Q_i)| \approx \ell(Q_i) \sigma(Q_i)$.
 
 The second step is to use the fact that $\omega^{X_1}_*$ is $A_\infty$-absolutely continuous with respect to $\sigma$. To that objective, we define for $k\in \mathbb Z$
 \[\mathcal I_k := \Big\{i \in I_\phi', \, 2^{k} \leq \frac{\sigma(\Delta)}{\sigma(Q_i)} \frac{\omega_*^{X_1}(Q_i)}{\omega_*^{X_1}(\Delta)}  \leq 2^{k+1}\Big\}\]
 and then $E_k:= \bigcup_{i\in \mathcal I_k} Q_i$. Since the collection $\{Q_i\}_{i\in I_\phi}$ is finitely overlapping, due to \eqref{Qioverlap}, the bound \eqref{logk1} becomes
\begin{equation} \label{logk2}
\begin{split}
\iint_{\Omega} |\nabla \phi| \left|\ln\Big( \frac{K u}{|t|} \Big) \right| dt\, dy
& \lesssim \sum_{k\in \mathbb Z} (1+|k|) \sigma(E_k). 
 \end{split}
 \end{equation}
We want thus to estimate $\sigma(E_k)$. Observe first that for any $i\in I_\phi'$, $W_\Omega^{**}(Q_i)$ intersects $\supp \phi \subset B$. Therefore $Q_i$ and $E_k$ have to be inside $\Delta^*:=C\Delta$ for a large $C$ depending only on the constant $K^{**}$ in \eqref{defWO**}. The finite overlapping \eqref{Qioverlap} also implies that 
\[\frac{\sigma(\Delta^*)}{\sigma(E_k)} \frac{\omega_*^{X_1}(E_k)}{\omega_*^{X_1}(\Delta^*)} \approx 2^k\]
For $k\geq 0$, we have
\begin{equation} \label{logk3}
\frac{\sigma(E_k)}{\sigma(\Delta^*)} \approx 2^{-k} \frac{\omega_*^{X_1}(E_k)}{\omega_*^{X_1}(\Delta^*)} \lesssim 2^{-k}.
 \end{equation}
The elliptic measure $\omega_*^{X_1}$ is $A_\infty$-absolutely continuous with respect to $\sigma$ by assumption, so for $k\leq 0$, we use the characterization (iv) from Theorem 1.4.13 in \cite{KenigB} to deduce
\begin{equation} \label{logk4}
\frac{\sigma(E_k)}{\sigma(\Delta^*)} \lesssim \left(\frac{\omega_*^{X_1}(E_k)}{\omega_*^{X_1}(\Delta^*)}\right)^\theta  \approx 2^{k\theta} \left(\frac{\sigma(E_k)}{\sigma(\Delta^*)}\right)^\theta  \lesssim 2^{k\theta}
 \end{equation}
for some $\theta\in (0,1)$ independent of $x$, $r$, and $k$. We reinject \eqref{logk3} and \eqref{logk4} in \eqref{logk2} to conclude that
\[\begin{split}
\iint_{\Omega} |\nabla \phi| \left|\ln\Big( \frac{K u}{|t|} \Big) \right| dt\, dy
& \lesssim \sigma(\Delta^*) \sum_{k\in \mathbb Z} (1+|k|) 2^{-|k|\theta} \lesssim \sigma(\Delta^*) \lesssim r^{n-1} 
 \end{split}\]
because $\sigma$ is Ahlfors regular. The lemma follows.
\ep

\medskip

\noindent {\em Proof of Lemma \ref{lemflat}.} The proof is divided in two parts: the first one treats the case where $\B_{i,n}=0$ for $i<n$, 
and the second one shows that we can come back to the first case by a change of variable, by adapting the method presented in \cite{FenDKP}.

Observe that 
$\phi$ can be decomposed as $\phi=\phi_+ + \phi_-$ where $\phi_+ = \1_{t>0} \phi$ and $\phi_- = \1_{t<0} \phi$. Both $\phi_+$ and $\phi_-$ are as in Definition \ref{defcutoffboth} with constant $C_\phi$. So it is enough to prove the lemma while assuming 
\begin{equation} \label{phit>0}
\supp \phi \subset  \{t\geq 0\} = \overline{\R^n_+}.
\end{equation}
The proof of the case $\supp \phi \subset \R^n_-$ is of course identical up to obvious changes.

\medskip

\noindent {\bf Step 1:} Case where $\mathcal B_{i,n} = 0$ for $i<n$ on $\supp \phi$ and $\B$ satisfies \eqref{defelliptic} and \eqref{defbounded} with the same constant $C_\A$ as $\A$. If $b:= \B_{n,n}$, this assumption on $\mathcal B$ implies that 
\begin{equation} \label{prBinS1}
\B \nabla t \cdot \nabla v \, \phi^2= b \, \partial_t v \, \phi^2.
\end{equation}
whenever $v\in W^{1,1}_{loc}(\Omega)$ and
\begin{equation}  \label{Blambda}
(C_\A)^{-1} \leq b \leq C_\A.
\end{equation}
We want to prove \eqref{flat1} with the assumption \eqref{phit>0}, and for this, we intend to establish that 
\begin{equation} \label{flat2}
T:= \iint_{\R^n_+} t\left| \nabla \ln\Big( \frac{u}{t} \Big) \right|^2 \phi^2 \, dt\, dy \lesssim T^{\frac12} r^{\frac{n-1}2} + r^{n-1},
\end{equation}
which implies the desired inequality \eqref{flat1} provided that $T$ is {\em a priori} finite. However that is not necessary the case, because some problems can occur when $t$ is close to 0. 
So we take $\psi \in C^\infty(\R)$ such that $\psi(t) = 0$ when $t<1$, $\psi(t) = 1$ when $t\geq 2$, and $0 \leq \psi \leq 1$. We construct then $\psi_k(Y) = \psi(2^k\delta(Y))$ and $\phi_k = \phi \psi_k$. 
It is not very hard to see that 
\[\supp \nabla \psi_k := \{X\in \Omega, \, 2^{-k} \leq \delta(X) \leq 2^{1-k}\} \subset \bigcup_{Q\in \mathbb D_k} W_\Omega^{**}(Q)\]
and therefore that $\phi_k$ is as in Definition \ref{defcutoffboth} (with $C_{\phi_k} = C_\phi +1$). The quantity 
\[T(k) := \iint_{\R^n_+} t \left| \nabla \ln\Big( \frac{u}{t} \Big) \right|^2 \phi_k^2 \, dt\, dy
 = \iint_{\R^n_+} t \left| \frac{\nabla u}{u} - \frac{\nabla t}{t} \right|^2 \phi_k^2  \, dt\, dy\]
 is finite, because $\phi_k$ is compactly supported in both $\Omega$ and $\R^{n}_+$ (the fact that $\nabla u/u$ is in $L^2_{loc}(\Omega)$ for a non-negative nontrivial solution to $Lu=0$ is a consequence of the Caccioppoli inequality and the Harnack inequality). So, we prove \eqref{flat2} for $T(k)$ instead of $T$, which implies $T(k) \lesssim r^{n-1}$ as we said, and take $k\to \infty$ to deduce \eqref{flat1}. 
 
 \medskip
 
 We are now ready for the core part of the proof, which can be seen as an elaborate integration by parts. Our previous discussion established that we (only) have to prove \eqref{flat2}, and that we can assume that $\phi$ is compactly supported in $\Omega \cap \R^{n}_+$. 
We use the ellipticity of $\A$ and the boundedness of $b$ to write
\[\begin{split}
T & = \iint_{\R^n_+} t \left| \frac{\nabla u}{u} - \frac{\nabla t}{t}\right|^2 \phi^2 \, dt\, dy
\le C_{\A}^2\iint_{\R^n_+}\frac{\A}{b}\br{\frac{\nabla u}{u}-\frac{\nabla t}{t}}\cdot\br{\frac{\nabla u}{u}-\frac{\nabla t}{t}}\phi^2dtdy
\\
& = C_{\A}^2\br{ \iint_{\R^n_+} \frac{\A \nabla u}{bu} \cdot \left( \frac{\nabla u}{u} - \frac{\nabla t}{t} \right) \, t\phi^2 \, dt\, dy 
- \iint_{\R^n_+} \frac{\A \nabla t}{bt} \cdot \nabla \ln \Big( \frac{u}{t}\Big) \, t\phi^2 \, dt\, dy} \\
& :=C_{\A}^2( T_1 + T_2).
\end{split}\]
We deal first with $T_2$. We use the fact that $\A = \B + \mathcal C$ and \eqref{prBinS1} to obtain
\[T_2  = - \iint_{\R^n_+} \partial_t \ln \Big( \frac{u}{t}\Big) \, \phi^2 \, dt\, dy
- \iint_{\R^n_+} \frac{\mathcal C}{b} \nabla t \cdot \nabla \ln \Big( \frac{u}{t}\Big) \, \phi^2 \, dt\, dy := T_{21} + T_{22}.\]
The term $T_{22}$ can be then bounded with the help of the Cauchy-Schwarz inequality as follows
\[T_{22}   \leq \|b^{-1}\|_\infty \left( \iint_{\R^{n}_+} |\mathcal C|^2\phi^2 \, \frac{dt}{t} \, dy \right)^\frac12 \left( \iint_{\R^{n}_+}  t \left|\nabla \ln \Big( \frac{u}{t}\Big)\right|^2 \, \phi^2 \, dt \, dy \right)^\frac12 \lesssim r^{\frac{n-1}2}T^{\frac12}\]
by \eqref{NB+CareCM}.
As for $T_{21}$, observe that multiplying by any constant $K$ inside the logarithm will not change the term (because we differentiate the logarithm). As a consequence, we have
\[\begin{split}
T_{21} & = - \iint_{\R^{n}_+} \partial_t \ln \Big( \frac{Ku}{t}\Big) \, \phi^2 \, dt\, dy = \iint_{\R^{n}_+}  \ln \Big( \frac{Ku}{t}\Big) \, \partial_n [\phi^2] \, dt\, dy \\
& \leq \iint_{\R^{n}_+}  |\nabla \phi| \left|\ln \Big( \frac{Ku}{t}\Big)\right|  \, dt\, dy \lesssim r^{n-1}
\end{split}\]
by successively using integration by parts and Lemma \ref{lemlogk}.

We turn to $T_1$, and we want now to use the fact that $u$ is a weak solution to $Lu = 0$. So we notice that
\[\begin{split}
T_1 & = - \iint_{\R^{n}_+} \frac{\A}{b} \nabla u \cdot \nabla \Big( \frac{t}{u} \Big) \, \phi^2 \, dt\, dy \\
& = - \iint_{\R^{n}_+} \A \nabla u \cdot \nabla \Big( \frac{t\phi^2}{bu} \Big) \, \, dt\, dy + 2 \iint_{\R^{n}_+}  \A \nabla u \cdot \nabla \phi \, \Big(\frac{t\phi}{bu} \Big) \, dt\, dy - \iint_{\R^n_+} \A \nabla u \cdot \nabla b \, \Big(\frac{t\phi^2}{b^2u} \Big) \, dt\, dy \\
& := - T_{11} + 2 T_{12} - T_{13}.
\end{split}\]
Since $\phi$ is compactly supported, we have that $u > \epsilon_\phi$ on $\supp \phi$ (by the Harnack inequality, see Lemma \ref{Harnack}) and $\nabla u\in L^2_{loc}(\Omega)$ (by the Caccioppoli inequality, see Lemma \ref{Caccio}). Therefore $t\phi^2/(bu)$ is a valid test function for the solution $u\in W^{1,2}_{loc}(\Omega)$ to $Lu=0$, and then $T_{11} = 0$.
As for $T_{12}$, we have
\[\begin{split}
T_{12} & = \iint_{\R^{n}_+} \frac{\A}{b} \left( \frac{\nabla u}{u} -\frac{\nabla t}{t} \right) \cdot \nabla \phi \, (t\phi) \, dt\, dy + \iint_{\R^{n}_+} \frac{\A}b \nabla t \cdot \nabla \phi \, \phi \, dt\, dy := T_{121} + T_{122}.
\end{split}\]
The term $T_{121}$ is similar to $T_{22}$. The boundedness of $\A/b$ and the Cauchy-Schwarz inequality infer that
\[T_{121}   \leq \left( \iint_{\R^{n}_+} t|\nabla \phi|^2 \, \frac{dt}{t} \, dy \right)^\frac12 \left( \iint_{\R^{n}_+}  t \left|\nabla \ln \Big( \frac{u}{t}\Big)\right|^2 \, \phi^2 \, dt \, dy \right)^\frac12 \lesssim r^{\frac{n-1}2}T^{\frac12}\]
by \eqref{1phiCM2}. The quantity $T_{122}$ is even easier since
\[T_{122} \lesssim \iint_{\R^{n}_+}  |\nabla \phi| \, dt\, dy \lesssim r^{n-1},\]
again by \eqref{1phiCM2}. It remains to bound $T_{13}$.  We start as for $T_{12}$ by writing
\[\begin{split}
T_{13} & = \iint_{\R^{n}_+}  \mathcal A \left( \frac{\nabla u}{u} - \frac{\nabla t}{t} \right) \cdot \nabla b \, \frac{t\phi^2}{b^2} \, dt\, dy + \iint_{\R^{n}_+} \mathcal A \nabla t \cdot \nabla b \, \frac{\phi^2}{b^2} \, dt\, dy := T_{131} + T_{132}.
\end{split}\]
The term $T_{131}$ is like $T_{121}$, and by using $t\nabla b \in CM_{\R^n_+}$ instead of $t\nabla \phi \in CM_{\R^n_+}$ , we obtain that $T_{131} \lesssim r^{(n-1)/2}T^{1/2}$. The term $T_{132}$ does not contain the solution $u$, but it is a bit harder than $T_{122}$ to deal with, because $\nabla b$ is not as nice as $\nabla \phi$. We use $\mathcal A = \mathcal B + \mathcal C$ and \eqref{prBinS1} to get
\[\begin{split}
T_{132} & = \iint_{\R^{n}_+}  (\partial_t b) \, \frac{\phi^2}{b} \, dt\, dy + \iint_{\R^{n}_+} \mathcal C \nabla t \cdot \nabla b \, \frac{\phi^2}{b^2} \, dt\, dy := T_{1321} + T_{1322}.
\end{split}\]
We easily deal with $T_{1322}$ by using the Cauchy-Schwarz inequality as follows:
\[T_{1322}   \leq \|b^{-1}\|_\infty^2 \left( \iint_{\R^{n}_+} |\mathcal C|^2 \phi^2 \, \frac{dt}{t} \, dy \right)^\frac12 \left( \iint_{\R^{n}_+}  |t\nabla b|^2 \phi^2 \, \frac{dt}{t} \, dy\right)^\frac12 \lesssim r^{n-1}\]
by \eqref{NB+CareCM}. As last, observe that 
\[T_{1321} = \iint_{\R^{n}_+}  \partial_t [\ln(b)\phi^2] \, \, dt\, dy - \iint_{\R^{n}_+}  \partial_t \phi \, \phi \ln(b) \, dt\, dy, \]
but the first integral in the right-hand side above is zero, so
\[|T_{1321}| \lesssim \|\ln(b)\|_\infty \iint_{\R^{n}_+}  |\partial_t \phi| \, dt\, dy \lesssim r^{n-1}, \]
by \eqref{1phiCM2} and the fact that $b\approx 1$. The inequality \eqref{flat1} under the three assumptions \eqref{prBinS1}, \eqref{Blambda}, and \eqref{NB+CareCM} follows.

\medskip

\noindent {\bf Step 2: We can assume that $\norm{t \abs{\nabla_y\B}}_\infty$ is as small as we want.}

We construct 
\begin{equation} \label{defwtA}
\wt{\mathcal A} := \mathcal A \phi + (1-\phi) I,
\end{equation}
where $I$ is the identity matrix. Note that $\wt{\mathcal A}$ is elliptic with the same elliptic constant $C_\A$ as $\mathcal A$. We choose then a bump function $\theta \in C^\infty_0(\R^n)$ supported in $B(0,1/10)$, that is $0 \leq \theta \leq 1$ and $\iint_{\R^n} \theta \, dX = 1$. We construct $\theta_{y,t}(z,s) = t^{-n}\theta\big(\frac{z-y}{t},\frac{s-t}{t}\big)$, which satisfies $\iint_{\R^n} \theta_{y,t} = 1$, and then
\begin{equation} \label{defwtBbbb}
\wt{\mathcal B}(y,t) := \iint_{\R^n} \wt{\mathcal A} \, \theta_{y,Nt} \, dz\, ds.
\end{equation}
for a large $N$ to be fixed later to ensure that \eqref{defJacrho} below is invertible. Since $\wt{\mathcal B}$ is some average of $\wt{\mathcal A}$, then 
\begin{equation} \label{BellipasA}
\text{$\wt{\mathcal B}$ is elliptic and bounded with the same constant $C_{\A}$ as $\wt{\mathcal A}$ and $\mathcal A$.}
\end{equation}
The construction is similar to the one done in Lemma \ref{LellipB=ellipA}, so we do not write the details again. Observe also that 
\begin{equation} \label{NBissmall2}
|t \nabla_y \wt{\mathcal B}(y,t)| \lesssim \frac1{N} \|\wt{\mathcal A}\|_\infty \quad \text{ and } \quad  |t\, \partial_t \wt{\mathcal B}(y,t)| \lesssim \|\wt{\mathcal A}\|_\infty.
\end{equation}
In addition, we have that
\[|\nabla \wt{\mathcal B}(y,t)| \lesssim t^{-n} \iint_{B_{Nt/10}(y,Nt)} \Big( |\nabla \mathcal B| \phi + |\nabla \phi| + \frac1t |\mathcal C| \phi \Big) dz\, ds,\]
and if $\wt {\mathcal C}$ denotes $(\mathcal A - \wt{\mathcal B}) \1_{\supp \phi}$, the Poincar\'e inequality entails that
\begin{multline*}\int_{\Delta(x,t)} \int_{t}^{3t} |\wt{\mathcal C}(z,s)|^2 \frac{ds}{s}\, dz  
\\ \lesssim \int_{\Delta(x,2Nt)} \int_{t}^{9Nt} \Big( s^2 |\nabla \mathcal B|^2 \phi^2 +  |\mathcal C|^2 \phi^2 + s^2 |\nabla \phi|^2 + |\1_{(\supp \phi) \cap \supp (1-\phi)}|^2 \Big) \frac{ds}{s}\, dz,
\end{multline*}
which means that $t|\nabla \wt{\mathcal B}| + |\wt{\mathcal C}| \in CM_{\R^n_+}$ by \eqref{NB+CareCM}, and \eqref{1phiCM}. 

\medskip

\noindent {\bf Step 3: The change of variable.} We write $\wt{\mathcal B}$ as the block matrix
\begin{equation} \label{defBi} 
\wt{\mathcal B} = \begin{pmatrix} B_1 & B_2 \\ B_3 & b \end{pmatrix},
\end{equation}
where $b$ is the scalar function $\wt{\mathcal B}_{n,n}$, so $B_1$ is a matrix of order $n-1$, $B_2$ and $B_3$ are respectively a vertical and a horizontal vector of length $n-1$. We use $v$ for the horizontal vector $v = - (B_2)^T/b$, and we define 
\begin{equation} \label{defrho12}
\rho(y,t) := (y+t v(y,t), t), 
\end{equation}
which is a Lipschitz map from $\R^n_+$ to $\R^n_+$ (since $v$ and $t |\nabla v|$ are uniformly bounded, see \eqref{BellipasA} and \eqref{NBissmall2}), and we compute its Jacobian
\begin{equation} \label{defJacrho}
Jac_\rho := \begin{pmatrix} I + t\nabla_y v & 0 \\ v+t\partial_t v & 1 \end{pmatrix}.
\end{equation}
We can choose $N$ big enough in \eqref{NBissmall2} such that $Jac_\rho$ is invertible and even $\det(Jac_\rho) \geq 1/2$. Let $J_\rho$ be the matrix 
\begin{equation} \label{defJrho}
J_\rho := \begin{pmatrix} I & 0 \\ v & 1 \end{pmatrix}.
\end{equation}
We easily have that 
\begin{equation} \label{prJrho}
|Jac_\rho - J_\rho| + |\det(Jac_{\rho})^{-1} - 1| \lesssim |t\nabla v| \lesssim |t\nabla \wt {\mathcal B}| . 
\end{equation}

We aim to use $\rho$ for a change of variable. If $u$ is a weak solution to $L=-\diver \mathcal A \nabla$, then $u\circ \rho^{-1}$ is solution to $L_\rho = -\diver (\mathcal A_\rho \circ \rho^{-1}) \nabla$ where 
\begin{equation}
\mathcal A_\rho = \det(Jac_\rho)^{-1} (Jac_\rho)^T \mathcal A \, Jac_\rho.
\end{equation}
We want to compute $\mathcal A_\rho$. To lighten the notation, we write $\mathcal O_{CM}$ for a scalar function, a vector, or a matrix which satisfies the Carleson measure condition with respect to $\R^{n}_+$, i.e. $\mathcal O_{CM}$ can change from one line to another as long as $\mathcal O_{CM} \in CM_{\R^n_+}$. So \eqref{prJrho} becomes
\begin{equation} \label{prJrho2}
Jac_\rho = J_\rho + \mathcal O_{CM} \quad \text{ and } \quad \det(Jac_\rho)^{-1} = 1  + \mathcal O_{CM}. 
\end{equation}
Remember that by construction, the matrix $\mathcal A$ equals $\wt{\mathcal B} + \wt{\mathcal C} = \wt{\mathcal B} + \mathcal O_{CM}$ on $\supp \phi$, and that $\Jac_\rho$ and $\A$ are uniformly bounded, so
\begin{equation}\label{eqBrho}
    \begin{split}
(\1_{\supp \phi}) \mathcal A_\rho 
& = \1_{\supp \phi} \begin{pmatrix} I & v^T \\ 0 & 1 \end{pmatrix} \begin{pmatrix} B_1 & B_2 \\ B_3 & b \end{pmatrix} \begin{pmatrix} I & 0 \\ v & 1 \end{pmatrix} +  \mathcal O_{CM} \\
& = \1_{\supp \phi} \begin{pmatrix} B_1 + v^TB_3 + B_2v + bvv^T & B_2+bv^T \\ B_3 +bv & b \end{pmatrix} +  \mathcal O_{CM} \\
& = \1_{\supp \phi} \underbrace{\begin{pmatrix} b(B_1 + v^TB_3 + B_2v + bvv^T) & 0 \\ B_3-(B_2)^T & b \end{pmatrix}}_{:=\mathcal B_\rho} +  \mathcal O_{CM} 
\end{split}
\end{equation}
with our choices of $v$. We write $\mathcal C_\rho$ for $(\mathcal A_\rho - \mathcal B_\rho)\1_{\supp \phi} = \mathcal O_{CM}$. The matrices $\mathcal B_\rho \circ \rho^{-1}$ and $\mathcal C_\rho \circ \rho^{-1}$ satisfy \eqref{NB+CareCM} (because the Carleson measure condition is stable under bi-Lipschitz transformations) and $\mathcal B_\rho \circ \rho^{-1}$ has the structure \eqref{prBinS1} as in Step 1. So Step 1 gives that 
\begin{equation} \label{flat4}
\iint_{\R^{n}_+} s \left| \nabla \ln\Big( \frac{u\circ \rho^{-1}}{s} \Big) \right|^2 \phi^2\circ \rho^{-1} \, ds\, dz \lesssim r^{n-1}.\end{equation}
If $s$ (and $t$) is also used, by notation abuse, for the projection on the last coordinate, then
\[\begin{split}
\iint_{\R^{n}_+} t \left| \nabla \ln\Big( \frac{u}{t} \Big) \right|^2 \phi^2  \, dt\, dy 
 & = \iint_{\R^{n}_+} t \left| \frac{\nabla u}{u} - \frac{\nabla t}{t} \right|^2 \phi^2 \, dt\, dy \\
& = \iint_{\R^{n}_+} t \left| \frac{Jac_\rho \nabla (u\circ \rho^{-1}) \circ \rho}{u} - \frac{\nabla t}{t} \right|^2 \phi^2 \, dt\, dy \\
&\leq \iint_{\R^{n}_+} t \left| \frac{Jac_\rho \nabla (u\circ \rho^{-1}) \circ \rho}{u} - \frac{Jac_\rho(\nabla s)\circ \rho}{s\circ \rho} \right|^2 \phi^2 \, dt\, dy \\
& \hspace{4.5cm}
+ \iint_{\R^{n}_+} t \left| \frac{Jac_\rho(\nabla s)\circ \rho}{s\circ \rho} - \frac{\nabla t}{t} \right|^2 \phi^2 \, dt\, dy \\
& := I_1 + I_2.
\end{split}\]
Yet, $\rho$ is a bi-Lipschitz change of variable, so $Jac_\rho$ and $\det(Jac_\rho)^{-1}$ are uniformly bounded, and we have
\begin{multline}
I_1  \lesssim \iint_{\R^{n}_+} t \left| \frac{\nabla (u\circ \rho^{-1}) \circ \rho}{u} - \frac{(\nabla s)\circ \rho}{s\circ \rho} \right|^2 \phi^2 \, dt\, dy \\
 \lesssim  \iint_{\R^{n}_+} s \left|\frac{\nabla (u\circ \rho^{-1})}{u \circ \rho^{-1}} - \frac{\nabla s}{s}\right|^2 \phi^2\circ \rho^{-1} \, ds\, dz \\
 = \iint_{\R^{n}_+} s \left| \nabla \ln\Big( \frac{u\circ \rho^{-1}}{s} \Big) \right|^2 \phi^2\circ \rho^{-1} \, ds\, dz \lesssim r^{n-1}
\end{multline}
by \eqref{flat4}. As for $I_2$, we simply observe that $s\circ \rho = t$ and 
\[ Jac_\rho(\nabla s)\circ \rho = \nabla t\]
to deduce that $I_2 = 0$.
The lemma follows.
\ep

\section{Proof of Theorem \ref{Main4}}\label{SecPfofThm4}
In this section we prove Theorem \ref{Main4}, using the same strategy as our proof of Theorem \ref{Main1}. As mentioned in the introduction, we shall explain how to change the 5-step sketch of proof given in Subsection~\ref{Sproof} to prove Theorem \ref{Main4}.

Fix a bounded solution $u$ of $Lu=0$ in $\om$ with $\norm{u}_{L^\infty(\om)}\le 1$ and a ball $B=B(x_0,r)$ centered on $\pom$ with radius $r$. By the same argument as Step 1 in in Subection~\ref{Sproof}, it suffices to show that there exists some constant $C\in(0,\infty)$ depending only on $n$, $M$ and the UR constants of $\pom$, such that 
\begin{equation}
    I := \sum_{Q\in\D_{\pom}(Q_0)}\iint_{W_\om(Q)}\abs{\nabla u(X)}^2\delta(X)dX\le C\sigma(Q_0)
\end{equation}
for any cube $Q_0\in\D_{\pom}$ that satisfies $Q_0\subset\frac87B\cap\pom$ and $\ell(Q_0)\le 2^{-8}r$. 

Then observe that if $E\subset\om$ is a Whitney region, that is, $E\subset\frac74 B$ and $\diam(E)\le K\delta(E)$, then 
\begin{equation}\label{eqWhitney}
    \iint_E\abs{\nabla u}^2\delta\, dX\le C_K\diam(E)^{-1}\iint_{E^*}\abs{u}^2dX\le C_K\delta(E)^{n-1},
\end{equation}
by the Caccioppoli inequality and $\norm{u}_{L^\infty(\om)}\le 1$, where $E^*$ is an enlargement of $E$. This bound \eqref{eqWhitney} is the analogue of \eqref{Main1e}, and proves Step 2.

Step 3 is not modified. We pick $0 <\epsilon_1\ll \epsilon_1 \ll 1$ and  we use the corona decomposition constructed in Section~\ref{SUR} to decompose $I$ as follows.
\[
    I=\sum_{Q\in\B(Q_0)}\iint_{W_\om(Q)}\abs{\nabla u}^2\delta\, dX+\sum_{\cS\in \mathfrak S(Q_0)}\iint_{W_\om(\cS)}\abs{\nabla u}^2\delta\, dX=: I_1+\sum_{\cS\in \mathfrak S(Q_0)} I_\cS.
\]
By \eqref{eqWhitney} and \eqref{packingBS}, 
\[
I_1\le C\sum_{Q\in\B(Q_0)}\ell(Q)^{n-1}\le C\sigma(Q_0).
\]

 Step 4 is significantly simpler for Theorem \ref{Main4}, because we do not need any estimate on the smooth distance $D_\beta$, but the spirit is the same. That is, by using the bi-Lipschitz map $\rho_\cS$ constructed in Section~\ref{Srho}, $I_\cS$ can be turned into an integral on $\R^n \setminus \R^{n-1}$, which can be estimated by an integration by parts argument. More precisely, for any fixed $\cS\in\mathfrak S(Q_0)$, 
\begin{multline*}
    I_\cS=\iint_{\rho_\cS^{-1}(W_\om(\cS))}\abs{(\nabla u)\circ\rho_\cS(p,t)}^2\delta\circ\rho_\cS(p,t)\det\Jac(p,t)dpdt\\
    \le 2\iint\abs{\nabla( u\circ\rho_\cS(p,t))}^2\dist(\rho_\cS(p,t),\Gamma_{\cS})\br{\Psi_{\cS}\circ\rho_{\cS}(p,t)}^2dpdt\\
    \le 3\iint\abs{\nabla v(p,t)}^2\abs{t}\phi(p,t)^2dpdt, \qquad v=u\circ\rho_\cS, \quad\phi=\Psi_\cS\circ\rho_\cS
\end{multline*}
by \eqref{claimdGPib}, Lemmata \ref{lemWOG} (d) and \ref{LestonJ}, as well as \eqref{prrhoS1}, for $\epsilon_0$ sufficiently small. 

 The fifth step consists roughly in proving the result in $\R^n \setminus \R^{n-1}$. The function $\phi$ is  the same as the one used to prove Theorem \ref{Main1}, in particular it is a cutoff function associated to both $\rho_\cS^{-1}(\pom)$ and $\R^{n-1}$ as defined in Definition \ref{defcutoffboth}, and it satisfies
\begin{equation}\label{eqsuppphi}
    \supp \phi\subset \rho_{\cS}^{-1}(W_\om^*(\cS)) ,
\end{equation}
and
\begin{equation}\label{eqphi}
    \iint\abs{\nabla\phi}dtdp+\iint\abs{\nabla\phi}^2tdtdp\lesssim\sigma(Q(\cS)),
\end{equation}
where the implicit constant depends on $n$ and the AR constant in \eqref{defADR}. 
 Notice that $v=u\circ\rho_{\cS}$ is a bounded solution of $L_{\cS}=-\diver\A_\cS\nabla$ that satisfies $\norm{v}_{L^\infty}\le1$, where $\A_\cS$ is defined in \eqref{defAS}. By Lemma \ref{LprAS}, $I_\cS\le C\sigma(Q(\cS))$ will follow from the following lemma, which is essentially a result in $\R^n \setminus \R^{n-1}$. 
 
 \begin{lemma}\label{LCarlesonflat}
 Let $L=-\diver \mathcal A \nabla$ be a uniformly elliptic operator on $\om_{\cS}:=\rho^{-1}_{\cS}(\Omega)$. Assume that the coefficients $\A$ can be decomposed as $\mathcal A  = \mathcal B + \mathcal C$ where
\begin{equation} \label{B+CCMflat}
 (|t\nabla \mathcal B| + |\mathcal C|)\1_{\supp \phi} \in CM_{\R^n \sm \R^{n-1}}(M),
\end{equation}
where $\phi=\Psi_\cS\circ\rho_{\cS}$ is as above. 
Then for any solution $v$ of $Lv=0$ in $\rho^{-1}_{\cS}(\Omega)$ that satisfies $\norm{v}_{L^\infty}\le 1$, there holds
\begin{equation} \label{vflat}
\iint_{ \R^n \setminus \R^{n-1}} \abs{\nabla v}^2\phi^2 { |t|}\,dtdy\leq C(1+M) \sigma(Q(\cS)),
\end{equation}
where $C$ depends only on the dimension $n$, the elliptic constant $C_\A$, the AR constant of $\pom$, and the implicit constant in \eqref{eqphi}.
 \end{lemma}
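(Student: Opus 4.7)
The plan is to adapt the proof of Lemma \ref{lemflat} to the setting of bounded solutions, exploiting that $v$ itself (rather than $\ln(u/t)$) is what we want to estimate. By splitting $\phi = \phi_+ + \phi_-$ with $\phi_{\pm} := \1_{\pm t \ge 0}\phi$, I would reduce by symmetry to $\supp\phi \subset \overline{\R^n_+}$. Then, following Steps 2 and 3 of Lemma \ref{lemflat} but applied to the transpose $\A^T$ rather than $\A$ itself (so as to normalize the last row rather than the last column of the leading part), I would perform a bi-Lipschitz change of variable bringing $\A$ to a new form $\A = \B + \C$ in which $(\B\nabla v)_n = b\partial_n v$ holds on the new $\supp\phi$, where $b = \B_{n,n} \in [C_\A^{-1},C_\A]$, while preserving ellipticity and the Carleson bound \eqref{B+CCMflat}. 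By Lemma \ref{LellipB=ellipA} one may furthermore assume the pointwise bound $|t\nabla\B| \le CC_\A$.

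Plugging the test function $\psi = v\phi^2 t$ into the weak formulation $\iint \A\nabla v \cdot \nabla\psi = 0$ yields the identity
\[
I + T_1 + T_2 = 0, \qquad I := \iint \phi^2 t\,\A\nabla v\cdot\nabla v, \quad T_1 := \iint vt\,\A\nabla v\cdot\nabla\phi^2, \quad T_2 := \iint v\phi^2 (\A\nabla v)_n.
\]
By ellipticity, $I$ dominates the integral in \eqref{vflat} up to a multiplicative constant. Cauchy--Schwarz combined with \eqref{eqphi} gives $|T_1| \le CI^{1/2}\sigma(Q(\cS))^{1/2}$. Splitting $T_2 = T_2^{(\B)} + T_2^{(\C)}$ with $T_2^{(\B)} = \iint v\phi^2 b\partial_n v$, Cauchy--Schwarz together with $|\C|\1_{\supp\phi} \in CM_{\R^n_+}(M)$ and $\|v\|_{L^\infty} \le 1$ yields $|T_2^{(\C)}| \le CI^{1/2}(M\sigma(Q(\cS)))^{1/2}$.

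The remaining piece is $T_2^{(\B)}$. Integrating by parts in $t$, using that $\phi$ vanishes at $t=0$ on $\supp\phi$ (since $\delta\approx|t|$ there and $\supp\Psi_\cS$ is separated from $\pom$ by Lemma \ref{lemWOG}(c)), gives
\[
T_2^{(\B)} = -\tfrac{1}{2}\iint v^2 b\,\partial_n\phi^2 - \tfrac{1}{2}\iint v^2\phi^2\,\partial_n b,
\]
where the first term is bounded by $C\sigma(Q(\cS))$ thanks to $\iint|\nabla\phi| \le C\sigma(Q(\cS))$.

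The main obstacle is the term $\iint v^2\phi^2\,\partial_n b$. A direct Cauchy--Schwarz using the Carleson norm of $|t\partial_n b|^2/t$ leaves the companion factor $\iint \phi^2/t$, which can exceed $\sigma(Q(\cS))$ because the coherent regime $\cS$ may have arbitrarily many cubes. I expect to close this estimate by a logarithmic substitution in the spirit of the treatment of $T_{1321}$ in Step 1 of Lemma \ref{lemflat}: since $\ln b$ is uniformly bounded, one writes $\partial_n b = b\,\partial_n(\ln b)$, integrates by parts to shift $\partial_n$ onto the bounded factor $v^2\phi^2 b$, and controls the resulting boundary residue by $\iint|\nabla\phi| \le C\sigma(Q(\cS))$, with the $v\partial_n v$ term that reappears absorbed via a further Cauchy--Schwarz into $I^{1/2}(M\sigma(Q(\cS)))^{1/2}$. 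The detailed implementation parallels the arguments of Theorem 7.5 in \cite{HMMtrans} for DKP operators and Theorem 1.32 in \cite{DFM3} in the higher-codimensional setting, adapted here to the weak DKP condition. Combining all bounds gives $I \le CI^{1/2}(1+M)^{1/2}\sigma(Q(\cS))^{1/2} + C(1+M)\sigma(Q(\cS))$, from which \eqref{vflat} follows by absorption of $I^{1/2}$ into the left-hand side.
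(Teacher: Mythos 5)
Your structural reductions are correct and match the paper: split $\phi=\phi_++\phi_-$, reduce by change of variable (after mollification via Lemma \ref{LellipB=ellipA}) to the case where the last \emph{row} of $\B$ has the form $(0,\dots,0,b)$, and close by an absorption. However, there is a genuine gap in the treatment of $T_2^{(\B)}$. You take the test function $\psi = v\phi^2 t$, and after the integration by parts in $t$ you face $\iint v^2\phi^2\,\partial_n b$. You correctly see that a plain Cauchy--Schwarz against the Carleson norm of $|t\partial_n b|$ leaves the factor $\iint\phi^2/t$, which is \emph{not} $\lesssim \sigma(Q(\cS))$: since $\phi$ is supported near $W_\Omega^*(\cS)=\bigcup_{Q\in\cS}W_\Omega^*(Q)$ and $\iint_{W^*_\Omega(Q)}\delta^{-1}\approx\sigma(Q)$, the quantity $\iint\phi^2/t$ scales like $\sum_{Q\in\cS}\sigma(Q)$, which grows with the number of generations in $\cS$. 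Unfortunately, the logarithmic substitution you propose does not repair this: after writing $\partial_n b=b\,\partial_n(\ln b)$ and integrating by parts, the new term $\iint v\,\partial_n v\,\phi^2\,(b\ln b)$ contains no Carleson factor at all ($b\ln b$ is merely bounded), so Cauchy--Schwarz again produces the uncontrolled $\iint\phi^2/t$, not $(M\sigma(Q(\cS)))^{1/2}$ as you assert. The same problem persists after any finite number of such integrations by parts. By contrast, the analogous term $T_{1321}$ in Lemma \ref{lemflat} has no solution $v$ in it, which is why the $\ln b$ trick works there but not here.

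The paper's fix is to build $b^{-1}$ directly into the test function: take $\psi = v\phi^2 b^{-1}t$ rather than $v\phi^2 t$. Then the $\B$-part of the analogue of your $T_2$ becomes $\tfrac12\iint\frac{\B}{b}\nabla(v^2\phi^2)\cdot\nabla t\,dydt=\tfrac12\iint\partial_t(v^2\phi^2)\,dydt=0$, exactly, by the identity \eqref{eqtsol} — there is no $\nabla b$ residue at all. The price is that $\nabla(\phi^2 b^{-1})$ in the $T_1$-type term contributes $\iint\A\nabla v\cdot\nabla b\,\phi^2 b^{-2}vt$, but this one carries the extra factor $\nabla v$, so Cauchy--Schwarz pairs $|\nabla v|^2\phi^2 t$ (i.e.\ $I$) against $|t\nabla b|^2\phi^2/t$ (the Carleson density), giving $\lesssim I^{1/2}(M\sigma(Q(\cS)))^{1/2}$, and everything absorbs. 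So the missing ingredient is not a more clever handling of $\partial_n b$ after the fact, but the inclusion of $b^{-1}$ in the test function before integrating by parts.
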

 The proof of this lemma is similar to the proof of Lemma \ref{lemflat}, except that there is no need to invoke the CFMS estimates and $A_\infty$ as in Lemma \ref{lemlogk}, essentially because $v$ is bounded and we do not need information of $v$ on the boundary. For the same reason, with the properties of the cutoff function $\phi$ in mind, we can forget about the domain $\om_\cS$, and in particular, we do not need the corkscrew and Harnack chain conditions in the proof.
 
 \begin{proof}[Proof of Lemma \ref{LCarlesonflat}]
 We can decompose $\phi = \phi \,\1_{t>0} + \phi \,\1_{t<0} := \phi_+ + \phi_-$ and prove the result for each of the functions $\phi_+$ and $\phi_- $, and since the proof is the same in both cases (up to a sign), we can restrain ourselves as in the proof of Lemma \ref{lemflat} to the case where $\phi = \phi \1_{t>0}$. By an approximation argument as in Step 1 of the proof of Lemma \ref{lemflat}, we can assume that $T:=\iint_{\R^n_+}\abs{\nabla v}^2t\phi^2dydt$ is finite, and that $\phi$ is compactly supported in $\om\cap\R^n_+$. We first assume that $\B$ has the special structure that 
 \begin{equation}\label{Bni}
     \B_{n i}=0\qquad \text{for all } 1\le i\le n-1, \qquad \B_{nn}=b.
 \end{equation}
 Then for any $f\in W_0^{1,2}(\R^n_+)$, 
 \begin{equation}\label{eqtsol}
     \iint\frac{\B}{b}\nabla f\cdot\nabla t\,dydt=\iint \partial_t f\,dydt=0.
 \end{equation}
Using ellipticity of $\A$ and boundeness of $b$, we write
\begin{multline*}
    T\le C_{\A}^2\iint\frac{\A}{b}\nabla v\cdot\nabla v\,\phi^2t\,dydt\\
    =C_{\A}^2\Big\{\iint \A\nabla v\cdot\nabla\br{v\phi^2 b^{-1}t}dydt-\iint \A\nabla v\cdot\nabla\br{\phi^2b^{-1}}vt\,dydt-\iint \A\nabla v\cdot\nabla t\,v\phi^2b^{-1}dydt\Big\}\\
    =-C_\A^2\Big\{\iint \A\nabla v\cdot\nabla\br{\phi^2b^{-1}}vt\,dydt+\iint \A\nabla v\cdot\nabla t\,v\phi^2b^{-1}dydt\Big\}=: -C_\A^2\br{T_1+T_2}
\end{multline*}
since $Lv=0$. We write $T_1$ as 
\[
    T_1=2\iint \A\nabla v\cdot\nabla\phi\, \phi b^{-1}vt \,dydt-\iint\A \nabla v\cdot\nabla b \,\phi^2 b^{-2}vt\,dydt=:T_{11}-T_{12}.
\]
By Cauchy-Schwarz and Young's inequalities, as well as the boundedness of $v$ and $b$,
\[
\abs{T_{11}}\le \frac{C_\A^{-2}}{6} T+ C\iint\abs{\nabla\phi}^2t\,dydt, \qquad \abs{T_{12}}\le \frac{C_\A^{-2}}8 T+ C\iint\abs{\nabla b}^2t\phi^2dtdy.
\]
So \eqref{eqphi} and \eqref{B+CCMflat}, as well as \eqref{eqsuppphi} give that 
\begin{equation*}
    \abs{T_1}\le \frac{C_\A^2}{4} T + C\ell(Q(\cS))^{n-1}.
\end{equation*}
For $T_2$, we write
\begin{multline*}
    T_2=\frac12\iint\frac{\A}{b}\nabla\br{v^2\phi^2}\cdot\nabla t\,dydt -\iint \frac{\A}{b}\nabla\phi\cdot\nabla t\,v^2\phi\,dydt\\
    =\frac12\iint\frac{\C}{b}\nabla\br{v^2\phi^2}\cdot\nabla t\,dydt -\iint \frac{\A}{b}\nabla\phi\cdot\nabla t\,v^2\phi\,dydt=:T_{21}+T_{22}
\end{multline*}
by writing $\A=\B+\C$ and applying \eqref{eqtsol}. For $T_{21}$, we use Cauchy-Schwarz and Young's inequalities, and get
\begin{multline*}
    \abs{T_{21}}\le\abs{\iint \C\nabla v\cdot\nabla t\,v\phi^2b^{-1}dydt}+2\abs{\iint \C\nabla\phi\cdot\nabla t\,v^2\phi b^{-1}dydt}\\
    \le \frac{C_\A^{-2}}{4}T+ C\iint\abs{\C}^2\phi^2t^{-1}dydt +\iint\abs{\nabla\phi^2}tdydt\le \frac{C_\A^{-2}}{4}T+ C\ell(Q(\cS))^{n-1}
\end{multline*}
by the boundedness of $v$, \eqref{eqphi}, \eqref{B+CCMflat}, and \eqref{eqsuppphi}. The boundedness of the coefficients and $v$ implies that \[
\abs{T_{22}}\le C\iint\abs{\nabla\phi}dydt\le C\ell(Q(\cS))^{n-1}
\]
by \eqref{eqphi}. Altogether, we have obtained that
$T\le \frac12 T+ C\ell(Q(\cS))^{n-1}$, and thus the desired estimate \eqref{vflat} follows. 

We claim that the lemma reduces to the case when \eqref{Bni} holds by almost the same argument as in Steps 2 and 3 in the proof of Lemma \ref{lemflat}. That is, we can assume that $\norm{\abs{\nabla_y\B}t}_\infty\lesssim\frac{C_\A}{N}$ with $N$ to be chosen to be sufficiently large, and then we do a change of variables, which produces the structure \eqref{Bni} in the conjugate operator. The only difference from the proof of Lemma \ref{lemflat} is that now we need to choose $v=- B_3/b$ in the bi-Lipschitz map $\rho$ defined in \eqref{defrho12} because we want $B_3+bv=0$ in \eqref{eqBrho}. We leave the details to the reader. 
 \end{proof}

\section{The converse} \label{Sconv}
In this section, we show 
that $(v) \implies (i)$ in Theorem \ref{Main1}, that is, we establish that under certain conditions on the domain $\om$ and the operator $L$, the Carleson condition \eqref{Main2b} on the Green function implies that $\pom$ is uniformly rectifiable. More precisely, we prove the following. 

\begin{theorem}\label{thm.conv}
Let $\Omega$ be a 1-sided Chord-Arc Domain (bounded or unbounded) and let $L=-\div \A \nabla$ be an uniformly elliptic operator which satisfies the weak DKP condition with constant $M\in(0,\infty)$ on $\om$. Let $X_0\in\om$, and when $\om$ is unbounded, $X_0$ can be $\infty$. We write $G^{X_0}$ for the Green funtion of $L$ with pole at $X_0$. Suppose that there exists $C\in(0,\infty)$ and $\beta>0$ such that for all balls $B$ centered at the boundary and such that $X_0 \notin 2B$, we have
\begin{equation} \label{Main1b'}
\iint_{\Omega \cap B} \left| \frac{\nabla G^{X_0}}{G^{X_0}} - \frac{\nabla D_\beta}{D_\beta} \right|^2 D_\beta \, dX \leq C \sigma(B\cap\pom).
\end{equation}
Then $\pom$ is uniformly rectifiable.
\end{theorem}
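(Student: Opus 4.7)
The plan is to adapt the free-boundary strategy of \cite{DM2}: show that \eqref{Main1b'} forces $\partial\Omega$ to satisfy a quantitative flatness condition with a Carleson packing bound on its failures, which by Tolsa's characterization \eqref{geomlem} is equivalent to uniform rectifiability. As the authors warn, the stronger pointwise Carleson condition \eqref{Main1b'} does not obviously imply the weaker averaged version used in \cite{DM2}, so the argument must be recast directly rather than invoked as a black box.

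Fix a small $\epsilon>0$, and call $Q\in \D_{\partial\Omega}$ \emph{$\epsilon$-bad} if the Hausdorff distance of $\partial\Omega\cap 999\Delta_Q$ to every affine hyperplane exceeds $\epsilon\ell(Q)$; by Lemma~\ref{Lalphatobeta} this is comparable to $\alpha_\sigma(Q)\gtrsim \epsilon^n$, so by \eqref{geomlem} the theorem reduces to proving the Carleson packing estimate
\[
\sum_{\substack{Q \in \D_{\partial\Omega}(Q_0)\\ Q\ \epsilon\text{-bad}}} \sigma(Q) \leq C_\epsilon\,\sigma(Q_0)
\qquad\text{for every } Q_0\in\D_{\partial\Omega}.
\]
Given $Q_0$, I would choose a surface ball $B$ with $Q_0\subset B\cap\partial\Omega$, $\ell(Q_0)\approx r(B)$, and $X_0\notin 2B$ (subdividing dyadically if $Q_0$ is too close to $X_0$), and then apply \eqref{Main1b'}. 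The proof then rests on the pointwise-in-$Q$ lower bound
\[
\iint_{W_\Omega(Q)}\Bigl|\frac{\nabla G^{X_0}}{G^{X_0}}-\frac{\nabla D_\beta}{D_\beta}\Bigr|^2D_\beta\,dX \,\geq\, c(\epsilon)\,\sigma(Q)
\qquad\text{for every $\epsilon$-bad }Q\subset Q_0,
\]
which, combined with the bounded overlap of the Whitney regions $W_\Omega(Q)$ and the hypothesis \eqref{Main1b'}, immediately yields the desired packing inequality.

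The lower bound above I would establish by contradiction and a rescaling/compactness argument. If it failed, there would be a sequence of $\epsilon$-bad cubes $Q_k$ with vanishing normalized contribution. After translating $x_{Q_k}$ to the origin and dilating by $\ell(Q_k)^{-1}$, the blown-up domains $\Omega_k$ remain $1$-sided CAD with uniform constants, and the scale-invariance of the weak DKP condition (Lemma~\ref{LellipB=ellipA}) preserves the operator class. Normalizing the Green function by its value at a corkscrew point for $Q_k$, the Harnack chain condition propagates uniform bounds to every fixed compact subset of the limit, so together with the CFMS estimates (Lemma~\ref{LCFMS}) one can extract a limit $G_\infty$ on a limit domain $\Omega_\infty$, solving $L_\infty G_\infty=0$ with zero trace on $\partial\Omega_\infty$. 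The rescaled $D_\beta$ converges to $D_{\beta,\infty}$ built from the Ahlfors regular limit of $\sigma$. Vanishing of the Carleson integrand passes to the limit, forcing $\nabla\log(G_\infty/D_{\beta,\infty})\equiv 0$ on every Whitney region and hence $G_\infty\equiv c_\infty\, D_{\beta,\infty}$ on $\Omega_\infty$. Combining this identity with $L_\infty G_\infty=0$ and the explicit integral formula for $D_{\beta,\infty}$ then yields, via the rigidity argument of \cite{DM2}, that $\partial\Omega_\infty$ is an affine hyperplane; this contradicts the persistence of $\epsilon$-non-flatness inherited from the $Q_k$.

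The main obstacle is to make the compactness step fully rigorous: one must verify uniform interior H\"older continuity and uniform nondegeneracy of $G^{X_0}$ on the rescaled corkscrew points, the convergence of the coefficient matrices $\A_k$ within the weak DKP class on a varying sequence of domains, and the convergence of the surface measures $\sigma_k$ needed to identify $D_{\beta,\infty}$. Handling the pole uniformly -- whether $X_0=\infty$ from the start or $X_0$ finite but escaping under rescaling -- requires identifying the limit of the normalized Green function with a Green function with pole at infinity on $\Omega_\infty$; here the Harnack chain condition is essential to guarantee a well-defined, nonzero limit. A secondary technical step is the final rigidity argument transplanted from \cite{DM2}, which must be checked to apply to the limit operator $L_\infty$; this is where most of the PDE work really lies, since the geometric corona decomposition from Sections \ref{SUR}--\ref{Srho} plays no direct role in this converse implication.
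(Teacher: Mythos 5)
Your overall architecture — extract a weak local condition, blow up along a sequence of scales where it fails, pass to a limit domain and operator, conclude $G_\infty = c_\infty D_{\beta,\infty}$, and invoke the rigidity of this identity from \cite{DM2} — is the right one and is exactly what the paper does. But two steps of your plan contain genuine gaps.

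First, the pointwise lower bound you want,
$\iint_{W_\Omega(Q)}\bigl|\nabla G^{X_0}/G^{X_0}-\nabla D_\beta/D_\beta\bigr|^2 D_\beta\,dX \geq c(\epsilon)\sigma(Q)$
for every $\epsilon$-bad cube $Q$, is not available: $W_\Omega(Q)$ sits at distance $\sim\ell(Q)$ from $\partial\Omega$, and the integrand over a single Whitney region at that separation need not register the non-flatness of $\partial\Omega$ near $Q$. Your blow-up would then only give $\nabla\log(G_\infty/D_{\beta,\infty})=0$ on the unit-scale Whitney region of the limit, not on all of $\Omega_\infty$, so you could not deduce $G_\infty\equiv c_\infty D_{\beta,\infty}$. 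The paper circumvents this by first deriving a \emph{Carleson-prevalent} set (Lemma \ref{lem.Carlprev}) via Chebyshev, which lets it work with the expanding Whitney regions $W_K(x,r)$ with $K=2^k\to\infty$; in the rescaled picture those regions eventually fill any compact of $\Omega_\infty$ and get arbitrarily close to the boundary, so that $G_\infty\equiv c_\infty D_{\beta,\infty}$ holds on all of $\Omega_\infty$. Without that extrapolation step, the compactness argument stops short.

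Second, and more fundamentally, you target the wrong quantitative geometric condition. You propose to show Carleson packing of $\epsilon$-bad (non-flat) cubes, and to contradict $\epsilon$-badness by claiming that $G_\infty=c_\infty D_{\beta,\infty}$ forces $\partial\Omega_\infty$ to be an affine hyperplane. That rigidity is not what \cite{DM2} provides and is not true in codimension one. What $G_\infty=c_\infty D_{\beta,\infty}$ yields (via the constant-coefficient limit operator and CFMS estimates) is that the elliptic measure for $L_0$ is comparable to $\mathcal H^{n-1}|_{\partial\Omega_\infty}$, and then \cite{HMMTZ} gives uniform rectifiability of $\partial\Omega_\infty$, hence the \emph{exterior corkscrew condition}. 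Uniform rectifiability is perfectly compatible with $\epsilon$-non-flatness at a fixed scale, so your intended contradiction does not materialize. The paper therefore aims at the corkscrew/two-ball condition $\mathcal G_{CB}(c)$ — i.e. it proves the packing of cubes where two opposite corkscrew points cannot be found — precisely because that is the statement the rigidity delivers; uniform rectifiability of $\partial\Omega$ then follows from Lemma \ref{lem.UReqv}. Your choice of $\alpha$-number badness cannot be closed by this blow-up, and you would have to replace your ``bad cube'' definition by the corkscrew failure condition for the argument to go through.
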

In \cite{DM2} Theorem 7.1, uniform rectifiability is obtained from some weak condition on the Green function, namely, $G^\infty$ being prevalently close to $D_\beta$. Following \cite{DM2}, we say that $G^\infty$ is prevalently close to $D_\beta$ if for each choice of $\epsilon>0$ and $M\ge 1$, the set $\mathcal{G}_{GD_\beta}(\epsilon,M)$ of pairs $(x,r)\in \pom\times(0,\infty)$ such that there exists a positive constant $c>0$, with 
\[
\abs{D_\beta(X)-c\,G^\infty(X)}\le\epsilon r \quad \text{for }X\in\om\cap B(x,Mr),
\]
is {\it Carleson-prevalent}.
\begin{definition}[Carleson-prevalent]
We say that $\mathcal{G}\subset\pom\times(0,\infty)$ is a Carleson-prevalent set if there exists a constant $C\ge0$ such that for every $x\in\pom$ and $r>0$,
\[
\int_{y\in\pom\cap B(x,r)}\int_{0<t<r}\1_{\mathcal{G}^c}(y,t)\frac{d\sigma(y)dt}{t}\le C\,r^{n-1}.
\]
\end{definition}

One could say that our condition \eqref{Main1b'} is stronger than $G^\infty$ being prevalently close to $D_\beta$, and so the theorem follows from \cite{DM2}. But actually it is not so easy to link the two conditions directly. Nonetheless, we can use Chebyshev's inequality to derive a weak condition from \eqref{Main1b'}, which can be used as a replacement of $G^\infty$ being prevalently close to $D_\beta$ in the proof.

We will soon see that the condition on the operator in Theorem \ref{thm.conv} can be relaxed. 
Again following \cite{DM2}, given an elliptic operator $L=-\div{\A \nabla}$, we say that $L$ is {\it locally sufficiently close to a constant coefficient elliptic operator} if for every choice of $\tau>0$ and $K\ge 1$, $\mathcal{G}_{cc}(\tau,K)$ is a Carleson prevalent set, where $\mathcal{G}_{cc}(\tau,K)$ is the set of pairs $(x,r)\in\pom\times(0,\infty)$ such that there is a constant matrix $\A_0=\A_0(x,r)$ such that 
\[
\iint_{X\in W_K(x,r)}\abs{\A(X)-\A_0}dX\le\tau r^n,
\]
where 
\begin{equation}\label{Wkxr}
    W_K(x,r)=\set{X\in\om\cap B(x, Kr): \dist(X,\pom)\ge K^{-1}r}.
\end{equation}
We will actually prove Theorem \ref{thm.conv} for elliptic operators $L$ that are sufficiently close locally to a constant coefficient elliptic operator.

The first step of deriving weak conditions from the strong conditions on the operator and $G^\infty$ is the observation that for any integrable function $F$, if there is a constant $C\in(0,\infty)$ such that
\[
\iint_{B(x,r)\cap\om}\abs{F(Y)}dY\le C\,r^{n-1} \quad \text{for  }x\in\pom, r>0,
\]
then for any $K\ge1$,
\begin{equation}\label{obs.avg}
   \int_{y\in B(x,r)\cap\pom}\int_{0<t<r}\fiint_{W_K(y,t)}\abs{F(Y)}dY\,dt\,d\sigma(y)\le C\,K^{n-1}r^{n-1}  
\end{equation}
for $x\in\pom$, $r>0$. This follows immediately from Fubini's theorem and the fact that $W_K(x,r)$ defined in \eqref{Wkxr} is a Whitney region which is away from the boundary. 

\begin{lemma}\label{lem.Carlprev}
\begin{enumerate}
    \item Let $L=-\div \A \nabla$ be a uniformly elliptic operator which satisfies the weak DKP condition with constant $M\in(0,\infty)$ on $\om$. Then $L$ is locally sufficiently close to a constant coefficient elliptic operator.
    \item If $G^{X_0}$ satisfies \eqref{Main1b'} for all $B$ centered at the boundary and such that $X_0 \notin 2B$, then for every choice of $\epsilon>0$ and $K\ge1$, the set \begin{multline}\label{def.setG}
        \mathcal{G}^{X_0}(\epsilon,K):=  \Big\{(x,r)\in\pom\times(0,\infty): \, X_0 \notin B(x,2Kr) \text{ and } \\ \iint_{W_K(x,r)}\abs{\nabla\ln\br{\frac{G^{X_0}}{D_\beta}(X)}}^2D_\beta(X)dX\le\epsilon\,r^{n-1} \Big\}
    \end{multline}
    is Carleson-prevalent. 
\end{enumerate}
\end{lemma}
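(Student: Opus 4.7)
The plan for both parts is to apply Chebyshev's inequality to derive Carleson-prevalence from integrated bounds, combined with a Fubini argument in the spirit of the observation \eqref{obs.avg}. Since \eqref{obs.avg} is phrased for $L^1$ averages, whereas the Carleson conditions we have access to are genuinely of $L^2$ type (with weight $\delta^{-1}$), Jensen's inequality and Chebyshev at level two are the appropriate tools.

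For Part (1), invoke Lemma \ref{LellipB=ellipA} to fix a decomposition $\A = \B + \C$ with $\B$ smooth and $|\delta \nabla \B| + |\C| \in CM_\om(CM)$. For each pair $(y,t)$, set $\A_0(y,t) := \fiint_{W_K(y,t)} \B \, dX$. The Whitney region $W_K(y,t)$ is a John domain with constants depending only on $K$ and the 1-sided CAD structure of $\om$, so the $L^2$-Poincar\'e inequality together with $\delta \approx_K t$ on $W_K(y,t)$, Jensen, and a final Cauchy--Schwarz yield
\[
f(y,t) := \fiint_{W_K(y,t)} |\A - \A_0| \, dX \lesssim_K t^{(1-n)/2} \biggl( \iint_{W_K(y,t)} \bigl(|\delta \nabla \B|^2 + |\C|^2\bigr)\delta^{-1} \, dX \biggr)^{1/2}.
\]
Since $\mathcal{G}_{cc}(\tau,K)^c \subseteq \{(y,t) : f(y,t) > c_K \tau\}$, Chebyshev applied at level $L^2$ and Fubini (after computing that the visibility function $\iint \1_{X \in W_K(y,t)}\,d\sigma(y)\,dt/t^n \lesssim K^{n-1} \log K$) give
\[
\iint_{(\pom \cap B(x,r)) \times (0,r)} \1_{\mathcal{G}_{cc}^c}(y,t)\,\frac{d\sigma(y)\, dt}{t} \lesssim \frac{C_K}{\tau^{2}} \iint_{B(x,(1+K)r) \cap \om} \bigl(|\delta \nabla \B|^2 + |\C|^2\bigr)\delta^{-1} \, dX,
\]
and the right-hand side is $\lesssim C_K M \tau^{-2} r^{n-1}$ by the weak DKP condition.

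For Part (2), set $F(X) := |\nabla \ln(G^{X_0}/D_\beta)(X)|^2 D_\beta(X)$ and split $\mathcal{G}^{X_0}(\epsilon, K)^c = \mathcal{A} \cup \mathcal{B}$, where $\mathcal{A} = \{X_0 \in B(y,2Kt)\}$ and $\mathcal{B} = \{X_0 \notin B(y,2Kt), \ \iint_{W_K(y,t)} F \, dX > \epsilon t^{n-1}\}$. The $\mathcal{A}$-contribution to the Carleson-prevalence sum is $\lesssim (Kr)^{n-1}$ by a direct computation: the inner integral in $t$ gives a factor $\log(2Kr/|y-X_0|)$, and a dyadic annulus decomposition on $\pom$ combined with Ahlfors regularity sums to $O((Kr)^{n-1})$. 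For $\mathcal{B}$, Chebyshev and Fubini yield
\[
\iint \1_\mathcal{B}(y,t)\,\frac{d\sigma(y)\, dt}{t} \leq \frac{1}{\epsilon} \iint_{\om} F(X)\, \phi(X)\, dX, \qquad \phi(X) \lesssim K^{n-1} \log K,
\]
where $\phi$ is supported on $\mathcal{R} := \{X \in B(x,(1+K)r) \cap \om : |X - X_0| \geq \dist(X, \pom)\}$; this constraint comes from combining $X_0 \notin B(y,2Kt)$ with $X \in B(y,Kt)$ in the Fubini bookkeeping.

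The main obstacle is estimating $\iint_\mathcal{R} F\, dX$ when $X_0 \in 2B(x,(1+K)r)$, since \eqref{Main1b'} cannot be applied to $B(x,(1+K)r)$ directly. Set $d_0 := \dist(X_0,\pom)$; the constraint $|X-X_0| \geq \dist(X,\pom)$ excludes $B(X_0, d_0/2)$ from $\mathcal{R}$. Split $\mathcal{R} = H \cup E$ with $H := \{X \in \mathcal{R} : |X-X_0| \leq 4\,\dist(X,\pom)\}$ and $E := \mathcal{R} \setminus H$. On $H$, $\delta(X) \approx |X - X_0|$, and the Whitney-average bound $\fiint_{B(X,\delta(X)/2)} F\, dY \lesssim \delta(X)^{-1}$ (from Caccioppoli and Harnack applied to $G^{X_0}$ together with the elementary estimate $|\nabla D_\beta/D_\beta| \lesssim \delta^{-1}$) yields $\iint_H F \, dX \lesssim \sum_k 2^{k(n-1)} d_0^{n-1} \lesssim r^{n-1}$ after a dyadic decomposition into annuli around $X_0$. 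On $E$, where $|X - X_0| > 4\,\dist(X,\pom)$, one constructs a Whitney-type covering by boundary balls $B_i = B(x_i, r_i)$ with $r_i := |x_i - X_0|/4$ satisfying $X_0 \notin 2B_i$ and having finite overlap on $\pom$, so $\sum \sigma(B_i \cap \pom) \lesssim r^{n-1}$ by Ahlfors regularity of $\pom$, and applying \eqref{Main1b'} to each $B_i$ concludes the argument.
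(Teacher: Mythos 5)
Both parts follow the same overall strategy as the paper (integrated Carleson bound plus Fubini plus Chebyshev, with Poincar\'e for Part (1) and a near-pole/far-from-pole dichotomy for Part (2)), but Part (2) is organized differently enough to be worth commenting on.

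For Part (1) your argument is essentially the paper's, and in fact a bit more careful: you keep the $\tau^{-2}$ from the two-step Jensen/Cauchy--Schwarz explicit, whereas the paper records the final constant as $CMK^{2n}/\tau$ (which appears to be a small bookkeeping slip in the exponents; the logic is the same). For Part (2) the paper swaps the order of your two main steps. It first produces a Vitali-type cover of $B\cap\pom$ by boundary balls $B_i = B(x_i, |x_i - X_0|/(20K))$ (non-overlapping, $5B_i$ covering), applies its ``far-from-pole'' estimate \eqref{case1GeK} to each $5B_i$ (these satisfy $X_0\notin 4K(5B_i)$), and disposes of the leftover set $S = (B\cap\pom)\times(0,r)\setminus\bigcup (5B_i\cap\pom)\times(0,5r_i)$ trivially, using only $\1\le 1$ and the observation $S\subset\{t>|y-X_0|/(100K)\}$. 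You instead split off the pole region $\mathcal A$ (your analogue of $S$, handled by the same log-cone computation), do Chebyshev/Fubini on the remainder $\mathcal B$ to reduce to $\iint_{\mathcal R} F\,dX$, and only then do the near/far split $\mathcal R = H\cup E$. Because you run Fubini before covering, you end up with the extra region $H$ (points at Whitney distance from $X_0$), which you have to estimate separately via a Caccioppoli--Harnack bound on $G^{X_0}$. The paper's ordering avoids this extra step entirely, because the Vitali radii are shrunk by a factor of $K$ so that the Whitney regions $W_K(y,t)$ over each $(5B_i)\times(0,5r_i)$ automatically avoid the pole. Both routes work; the paper's is a little shorter, yours is perhaps more transparent about where the pole causes trouble.

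There is one arithmetical slip in the $H/E$ split. With $H:=\{|X-X_0|\le 4\delta(X)\}$ you have, on $E$, $|X-X_0|>4\delta(X)$; the nearest boundary point $x_X$ then satisfies $|x_X-X_0|\ge |X-X_0|-\delta(X)>3\delta(X)$, so $r_{x_X}:=|x_X-X_0|/4>\tfrac{3}{4}\delta(X)$, which does \emph{not} guarantee $X\in B(x_X,r_{x_X})$ since that would require $\delta(X)<r_{x_X}$. Replace $4$ by $5$ in the definition of $H$ (so that $|x_X-X_0|>4\delta(X)$), or equivalently use $r_i:=|x_i-X_0|/3$ (then $2B_i$ still has radius $<|x_i-X_0|$, so $X_0\notin 2B_i$), and the covering of $E$ goes through. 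With that fix the argument is correct. You should also make sure, when invoking Caccioppoli/Harnack to get $\fiint_{B(X,\delta(X)/2)}F\lesssim\delta(X)^{-1}$ on $H$, to note explicitly that the constraint $|X-X_0|\ge\delta(X)$ coming from $\mathcal R$ keeps $X_0$ outside $B(X,\delta(X))$, so $G^{X_0}$ is a positive solution where you need it; you gesture at this but it is worth stating since it is exactly where the pole could cause trouble.
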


\begin{proof}
Both results follow from the previous observation \eqref{obs.avg} and Chebyshev's inequality. In fact, for (1), we have $\A=\B+\C$ such that for any $x\in\pom$ and $r>0$,
\begin{equation}\label{eq9a1}
   \int_{y\in B(x,r)\cap\om}\int_{0<t<r}\fiint_{W_K(y,t)}\br{\abs{\nabla\B}^2\delta+\abs{\C}^2\frac{1}{\delta}}dY\,dt\,d\sigma(y)\le M\,K^{n-1}r^{n-1}.  
\end{equation}
By the Poincar\'e inequality, the left-hand side is bounded from below by 
\begin{multline*}
     c\int_{y\in B(x,r)\cap\om}\int_{0<t<r}\fiint_{W_K(y,t)}\br{\abs{\B-(\B)_{W_K(y,t)}}^2+\abs{\C}^2}dY(Kt)^{-1}dt\,d\sigma(y)\\
     \ge\frac{c}2\int_{y\in B(x,r)\cap\om}\int_{0<t<r}\fiint_{W_K(y,t)}\br{\abs{\A-(\B)_{W_K(y,t)}}^2}dY(Kt)^{-1}dt\,d\sigma(y)\\
     \ge \frac{c}2\frac{\tau}{K^{n+1}}\int_{y\in B(x,r)\cap\om}\int_{0<t<r}\1_{\mathcal{G}_{cc}(\tau,K)^c}(y,t)\frac{dt\,d\sigma(y)}{t},
\end{multline*}
where we have used the fact that $(\B)_{W_K(y,t)}$ is a constant matrix and the definition of the set $\mathcal{G}_{cc}(\tau,K)$. Combining with \eqref{eq9a1}, we have that
\[
\int_{y\in B(x,r)\cap\om}\int_{0<t<r}\1_{\mathcal{G}_{cc}(\tau,K)^c}(y,t)\frac{dt\,d\sigma(y)}{t}\le \frac{CMK^{2n}}{\tau}r^{n-1},
\]
which proves (1).

Now we justify (2). Let $\epsilon>0$, $K\ge1$ and $X_0\in\om$ be fixed, and let $B$ be a ball of radius $r$ centered at the boundary. 
Our goal is to show that
\[
    \int_{y\in B\cap\pom}\int_{0<t<r}\1_{\mathcal{G}^{X_0}(\epsilon,K)^c}(y,t)\frac{dt\,d\sigma(y)}{t}\le C_{\epsilon,K}\sigma(B\cap \pom).
\]
We discuss two cases. If $X_0 \notin 4KB$, then since $G^{X_0}$ satisfies \eqref{Main1b'} for the ball $2KB$, we have that 
\[
\int_{y\in B\cap\pom}\int_{0<t<r}\fiint_{W_K(y,t)}\abs{\nabla\ln\br{\frac{G^{X_0}}{D_\beta}}(Y)}^2D_\beta(Y)dY dtd\sigma(y)\le CK^{n-1}r^{n-1}.
\]
Notice that the assumption $X_0 \notin 4KB$ guarantees that  $X_0\notin B(y,2Kt)$ for all $y\in B\cap\pom$ and $0<t<r$. Therefore, if $(y,t)\in \mathcal{G}^{X_0}(\epsilon,K)^c$, then  
\[\iint_{W_K(y,t)}\abs{\nabla\ln\br{\frac{G^{X_0}}{D_\beta}}(Y)}^2D_\beta(Y)dY>\epsilon\,r^{n-1}.\]
From this, it follows that  
\begin{equation} \label{case1GeK}
\int_{y\in B\cap\pom}\int_{0<t<r}\1_{\mathcal{G}^{X_0}(\epsilon,K)^c}(y,t)\frac{dt\,d\sigma(y)}{t}\le \frac{CK^{2n-1}}{\epsilon}\sigma(B\cap \pom).
\end{equation}
Now let us deal with the case where $X_0 \in 4KB$. For $x\in B \cap \partial \Omega$, we define $B_x:= B(x,|x-X_0|/20K)$. Since $\{B_x\}_{x\in B}$ covers $B \cap \partial \Omega$, we can find a non-overlapping subcollection $\{B_{i}\}_{i\in I}$ such that $\{5B_{i}\}_{i\in I}$ covers $B \cap \partial \Omega$. We write $r_i>0$ for the radius of $B_i$ and we define
\[S:= (B\cap \partial \Omega) \times (0,r) \setminus \bigcup_{i\in I} (5B_i\cap \partial \Omega) \times (0,5r_i) \]
We have 
\begin{multline*}
    \int_{y\in B\cap\pom}\int_{0<t<r}\1_{\mathcal{G}^{X_0}(\epsilon,K)^c}(y,t)\frac{dt\,d\sigma(y)}{t} \leq \sum_{i\in I} \int_{y\in 5B_i\cap\pom}\int_{0<t<5r_i}\1_{\mathcal{G}^{X_0}(\epsilon,K)^c}(y,t)\frac{dt\,d\sigma(y)}{t} \\
    + \iint_{S} \1_{\mathcal{G}^{X_0}(\epsilon,K)^c}(y,t)\frac{dt\,d\sigma(y)}{t} =: T_1 + T_2.
\end{multline*}
Since $X_0 \notin 20KB_i$, we can apply \eqref{case1GeK}, and we have
\[
    T_1 \leq C_{K,\epsilon} \sum_{i\in I} \sigma(5B_i \cap \partial \Omega) \lesssim \sum_{i\in I} \sigma(B_i \cap \partial \Omega) \leq \sigma(2B \cap \partial \Omega) \lesssim \sigma(B \cap \partial \Omega),
\]
because $\{B_i\}$ is a non-overlapping and included in $2B$. It remains to prove a similar bound on $T_2$. Remark first that 
\[S \subset \{(y,t) \in \partial\Omega \times (0,r): \, |y-X_0|/100K < t\},\]
and therefore
\[
    T_2 \leq \int_{0}^r \int_{y\in B(X_0,100Kt) \cap \partial \Omega}  \frac{d\sigma(y)\, dt}{t} \leq C K^{n-1} r^{n-1} \lesssim \sigma(B \cap \partial \Omega).
\]
The lemma follows.
\end{proof}

Before we continue, we need to adapt Theorem 2.19 in \cite{DM2} to our situation, that is we want to construct a positive solution in a domain which is the limit of a sequence of domain.

\begin{lemma}\label{lem.cvg}
Let $\Omega_k$ be a sequence of 1-sided Chord-Arc domains domains in $\R^n$ with uniform 1-sided CAD constants. Let $\partial \Omega_k$ be its Ahlfors regular boundary equipped with a Ahlfors regular measure $\sigma_k$ (such that the constant in \eqref{defADR} is uniform in $k$).

Assume that $0\in \partial \Omega_k$ and $\diam \Omega_k \geq 2^k$. Moreover, assume that the $\partial \Omega_k$ and $\Omega_k$ converges to $E_\infty$ and $\Omega_\infty$ locally in  the Hausdorff distance, that is, for any $j\in \mathbb N$, we have
\[\lim_{k\to \infty} d_{0,2^j}(E_\infty,\partial \Omega_k) = 0 \text{ and } \lim_{k\to \infty} d_{0,2^j}(\Omega_\infty,\Omega_k) = 0.\]
Here, for a couple of sets $(E,F)$, we define the Hausdorff distance
\[d_{0,2^j}(E,F):= \sup_{x\in E \cap B(0,2^j)} \dist(x,F) + \sup_{y\in F \cap B(0,2^j)} \dist(y,E).\]
Then $E_\infty = \partial \Omega_\infty$, $E_\infty$ is an unbounded $(n-1)$-Ahlfors regular set, $\Omega_\infty$ is a 1-sided Chord-Arc Domain. Moreover, if the Radon measure $\sigma$ is any weak-* limit of the $\sigma_k$, then $\sigma$ is an Ahlfors regular measure on $E_\infty = \partial \Omega_\infty$.

\medskip

Let $Y_0$ be a corkscrew point of $\Omega_\infty$ for the boundary point $0$ at the scale 1. If $L_k = - \diver A_k \nabla$ and $L_\infty = -\diver A_\infty \nabla$ are operators - in $\Omega_k$ and $\Omega_\infty$ respectively - that satisfies
\[\lim_{k\to \infty} \|A_k - A_\infty\|_{L^1(B)} = 0 \qquad \text{ for any ball $B$ such that $2B \subset \Omega_\infty$},\]
and if $u_k$ are positive solutions in $\Omega_k \cap B(0,2^{k+1})$ to $L_k u_k = 0$ with $\Tr u_k=0$ on $\pom_k\cap B(0,2^{k+1})$, then the sequence of functions $v_k:= u_k/u_k(Y_0)$ converges, uniformly on every compact subset of $\om_\infty$, and in $W^{1,2}_{\loc}(\om_\infty)$, to $G^\infty$, the unique Green function with pole at infinity which verifies $G^\infty(Y_0) = 1$. 
\end{lemma}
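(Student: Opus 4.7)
The proof decomposes into a geometric part (identifying the limit set and its regularity) and a PDE part (compactness and identification of the limit). I now sketch each in turn.

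\textbf{Geometric convergence.} Since the $\sigma_k$ are Ahlfors regular with a uniform constant, they are locally uniformly bounded and the weak-$*$ limit $\sigma$ is Radon. Any $x \notin E_\infty$ has a neighborhood disjoint from $\partial \Omega_k$ for $k$ large, so $\supp \sigma \subset E_\infty$. Conversely, for $x \in E_\infty$ pick $x_k \in \partial \Omega_k$ with $x_k \to x$; testing the bounds $C^{-1} r^{n-1} \le \sigma_k(B(x_k,r)) \le C r^{n-1}$ against continuous cutoffs supported on $B(x, r(1\pm \varepsilon))$ and letting $\varepsilon \to 0$ shows $\sigma$ is $(n-1)$-Ahlfors regular on $E_\infty$. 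To see $E_\infty = \partial \Omega_\infty$: given $x \in E_\infty$ and $x_k\to x$ as above, the uniform interior corkscrew condition yields $Y_k\in \Omega_k$ with $|Y_k-x_k|\le r/2$ and $B(Y_k, cr/2)\subset \Omega_k$; any limit $Y$ of $Y_k$ satisfies $B(Y,cr/4)\subset \Omega_\infty$ by Hausdorff convergence, so $x\in \overline{\Omega_\infty}$, and meanwhile $x_k$ being boundary points forces $x\in \partial \Omega_\infty$. The reverse inclusion follows by a diagonal argument approximating any $x\in \partial \Omega_\infty$ by interior points of $\Omega_\infty$, hence of $\Omega_k$, whose distance to $\partial \Omega_k$ tends to $0$. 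Finally, the uniform corkscrew and Harnack chain conditions pass to $\Omega_\infty$ by the same compactness/Hausdorff stability argument, so $\Omega_\infty$ is a $1$-sided CAD.

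\textbf{PDE compactness.} For any compact $K \subset \Omega_\infty$, Hausdorff convergence gives $K \subset \Omega_k$ for large $k$, and $K$ can be joined to $Y_0$ by Harnack chains of length uniformly bounded in $k$; chaining Lemma \ref{Harnack} yields $C_K^{-1}\le v_k\le C_K$ on $K$. Interior De Giorgi--Nash--Moser regularity (depending only on ellipticity) gives uniform $C^\alpha_{\loc}$ bounds, and the Caccioppoli inequality (Lemma \ref{Caccio}) gives uniform $W^{1,2}_{\loc}$ bounds. Near $\partial \Omega_\infty$, the boundary Hölder estimate for nonnegative solutions vanishing on an Ahlfors-regular boundary---a direct consequence of the CFMS nondegeneracy of elliptic measure (Lemma \ref{LCFMS})---gives
\[
v_k(X) \le C\Bigl(\dist(X,\partial \Omega_k)/r\Bigr)^\alpha \sup_{B(x,r)\cap \Omega_k} v_k
\]
uniformly in $k$, which by Hausdorff convergence forces $v_k\to 0$ uniformly in a neighborhood of $\partial \Omega_\infty$ in $\overline{\Omega_\infty}$. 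Arzelà--Ascoli and a diagonal extraction then provide a subsequence $v_k\to v_\infty$ locally uniformly on $\overline{\Omega_\infty}$ and weakly in $W^{1,2}_{\loc}(\Omega_\infty)$, with $v_\infty\ge 0$, $v_\infty(Y_0)=1$, and $\Tr v_\infty=0$ on $\partial \Omega_\infty$.

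\textbf{Identification as $G^\infty$.} For $\varphi\in C^\infty_c(\Omega_\infty)$ and $k$ large, $\supp \varphi \subset \Omega_k$, so $\iint \nabla v_k\cdot A_k^T\nabla \varphi\,dX=0$. Since $\|A_k\|_\infty \le C_\A$ uniformly and $A_k\to A_\infty$ in $L^1(\supp \varphi)$, after passing to an a.e.\ convergent subsequence the dominated convergence theorem gives $A_k^T\nabla \varphi\to A_\infty^T\nabla \varphi$ strongly in $L^2(\supp \varphi)$; combined with the weak $L^2$ convergence of $\nabla v_k$, this yields $\iint A_\infty\nabla v_\infty\cdot\nabla \varphi\,dX=0$, so $v_\infty$ is a positive weak solution of $L_\infty v_\infty=0$ with zero trace on $\partial \Omega_\infty$. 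By the uniqueness of the Green function with pole at infinity (Lemma 6.5 of \cite{DEM}, whose argument only uses the CFMS estimates now guaranteed by Lemma \ref{LCFMS}), the normalization $v_\infty(Y_0)=1$ pins down $v_\infty=G^\infty$. Because every subsequential limit is the same $G^\infty$, the full sequence converges. The main obstacle is obtaining the uniform boundary decay $v_k\to 0$ near $\partial \Omega_\infty$ when the boundaries $\partial \Omega_k$ themselves are moving: this requires a Hausdorff-stable boundary Hölder estimate with constants depending only on the uniform 1-sided CAD and ellipticity data, and not on the (potentially different) DKP structure of each $L_k$.
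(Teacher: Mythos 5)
Your geometric analysis and your compactness setup are essentially the same as the paper's (which simply defers the geometric part to Theorem~2.19 in \cite{DM2} and then runs Harnack + interior H\"older + Caccioppoli to extract a locally uniformly and weakly $W^{1,2}_{\loc}$ convergent subsequence), and your identification of the limit as a weak solution via a strong-times-weak $L^2$ pairing is the same in substance as the paper's explicit two-term splitting. The use of the uniqueness of $G^\infty$ from Lemma~6.5 of \cite{DEM} to identify the limit and upgrade subsequential convergence to full-sequence convergence is also fine. The boundary decay worry you flag at the end is not actually a problem: the boundary H\"older estimate you need comes from the CFMS package (Lemma~\ref{LCFMS}), whose constants depend only on ellipticity and the 1-sided CAD constants, not on any DKP structure, so it is uniform in $k$.

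However, there is a genuine gap: the lemma asserts convergence in $W^{1,2}_{\loc}(\om_\infty)$, i.e.\ \emph{strong} local $W^{1,2}$ convergence, and you only produce weak convergence of $\nabla v_k$ in $L^2_{\loc}$. Weak convergence is enough to identify the limit as a weak solution, but it is not enough for the way the lemma is subsequently used in the proof of Theorem~\ref{thm.conv}: there the quantity $\iint_K |\nabla G_k/G_k - \nabla D_k/D_k|^2 D_k\,dX$ is shown to converge to the corresponding quantity for the limit, which requires $\nabla G_k \to \nabla G$ strongly in $L^2(K)$. The paper explicitly flags this as the delicate step and handles it by the intermediate-function argument of Lemma~2.29 in \cite{DM2}: on a ball $B_\rho$ with a well-chosen radius $\rho$, one introduces $V_k$ solving $L_k V_k=0$ in $B_\rho$ with $V_k=v_k$ on $\partial B_\rho$, uses the variational characterization to compare energies of $v_k$, $V_k$, and the limit, and thereby upgrades weak to strong convergence of gradients on the smaller ball. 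Your write-up stops short of this; to be complete you need either to reproduce that energy-comparison argument or to give an alternative (e.g.\ using $v_k$ and $v_\infty$ themselves, suitably cut off, as test functions in the respective equations and passing to the limit in the resulting energy identities to show $\|\nabla v_k\|_{L^2(K)} \to \|\nabla v_\infty\|_{L^2(K)}$, which together with weak convergence yields strong convergence).
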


\begin{proof}
The geometric properties of $E_\infty$ and $\om_\infty$ can be derived verbatim as in the proof of Theorem 2.19 in \cite{DM2}. The uniform convergence of a subsequence of $v_k$ on any compact set $K\Subset\om_\infty$ follows from the standard argument of uniform boundedness of $\set{v_k}$ on $K$, and H\"older continuity of solutions. The Caccioppoli inequality would give the weak convergence of another subsequence of $v_k$ to some $v_\infty$ in $W_
{\loc}^{1,2}(\om_\infty)$. This is enough to show that $v_\infty\in W_{\loc}^{1,2}(\om_\infty)\cap C(\overline{\om_\infty})$ is a weak solution of $L_\infty v_\infty=0$ in $\om_\infty$, as we can write 
\[
\iint_{\om_\infty}A_\infty\nabla v_\infty\cdot\nabla\vp dX=\iint_{\om_\infty}A_\infty(\nabla v_\infty-\nabla v_k)\cdot\nabla\vp \,dX+\iint_{\om_\infty}(A_\infty-A_k)\nabla v_k\cdot\nabla\vp \,dX
\]
for every $\vp\in C_0^\infty(\om_\infty)$ and any $k$ sufficiently big so that $\supp\vp\subset\om_k\cap B(0,2^{k+1})$. Therefore, $v_\infty=G^\infty$ is the Green function with pole at infinity for $L_\infty$ in $\om_\infty$ and normalized so that $G^\infty(Y_0)=1$.

That $v_k$ converges to $G^\infty$ (strongly) in $W_{\loc}^{1,2}(\om_\infty)$ needs more work, but we can directly copy the proof of Lemma 2.29 in \cite{DM2}. Roughly speaking, for any fixed ball $B$ with $4B\subset\om$, we would need to introduce an intermediate function $V_k$, which satisfies $L_kV_k=0$ in $B_\rho$ for some $\rho\in(r,2r)$, and $V_k=v_k$ on the sphere $\partial B_\rho$.  We refer the readers to \cite{DS2} for the details.
\end{proof}

We shall need the following result on compactness of closed sets, which has been proved in \cite{DS3}.
\begin{lemma}[\cite{DS3} Lemma 8.2]{\label{lem.DS3}}
Let $\set{E_j}$ be a sequence of non-empty closed
subsets of $\Rn$, and suppose that there exists an $r>0$ such that $E_j\cap B(0,r)\neq\emptyset$ for all $j$. Then there is a subsequence of $\set{E_j}$ that converges to a nonempty closed subset $E$ of $\Rn$ locally in the Hausdorff distance. 
\end{lemma}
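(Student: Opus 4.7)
The plan is to prove the lemma by combining the Blaschke selection theorem with a standard diagonal argument. For each integer $N\geq r$, the restrictions $E_j\cap\overline{B(0,N)}$ are nonempty closed subsets of the compact metric space $\overline{B(0,N)}$, and Blaschke's selection theorem guarantees that the family of nonempty closed subsets of $\overline{B(0,N)}$ is sequentially compact in the Hausdorff metric. Applying this successively for $N=\lceil r\rceil, \lceil r\rceil+1, \lceil r\rceil+2, \ldots$ and extracting diagonally, I obtain a single subsequence $\{E_{j_k}\}$ such that, for every integer $N\geq\lceil r\rceil$, the sets $E_{j_k}\cap\overline{B(0,N)}$ converge in Hausdorff distance to some nonempty closed set $F_N\subseteq\overline{B(0,N)}$.

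The second step is to record a consistency property of the family $\{F_N\}$: for integers $\lceil r\rceil\leq M\leq N$,
\[
F_M\subseteq F_N\cap\overline{B(0,M)} \qquad\text{and}\qquad F_N\cap B(0,M)\subseteq F_M.
\]
The first inclusion is immediate since any sequence $x_k\in E_{j_k}\cap\overline{B(0,M)}$ also lies in $E_{j_k}\cap\overline{B(0,N)}$. The second uses that a sequence $y_k\in E_{j_k}\cap\overline{B(0,N)}$ converging to a point $y\in B(0,M)$ must eventually satisfy $y_k\in\overline{B(0,M)}$ by continuity. In particular, $\{F_N\}$ is an increasing family.

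I will then define $E:=\bigcup_{N\geq \lceil r\rceil}F_N$. The set $E$ is nonempty because $F_{\lceil r\rceil}\neq\emptyset$, and $E$ is closed: if $x_n\to x$ with $x_n\in E$, pick $N_0$ with $x\in B(0,N_0-1)$; then for all large $n$, $x_n\in B(0,N_0)$ and $x_n\in F_{M_n}$ for some $M_n$, and the two consistency inclusions (depending on whether $M_n\geq N_0$ or $M_n<N_0$) place $x_n$ in $F_{N_0}$, so $x\in F_{N_0}\subseteq E$ by closedness of $F_{N_0}$.

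The last step is to verify that $E_{j_k}\to E$ locally in the Hausdorff distance, meaning $d_{0,R}(E_{j_k},E)\to 0$ for every $R>0$. For a given $R$, I will choose an integer $N\geq R+1$ and use $F_N\subseteq E$ together with $E\cap B(0,R)\subseteq F_N$ (a consequence of the second consistency inclusion) to bound both sides of the local Hausdorff distance by $d_H\bigl(E_{j_k}\cap\overline{B(0,N)},F_N\bigr)$, which tends to $0$ by construction. The main delicate point of the argument is precisely this last verification: the ball-by-ball Hausdorff convergence does not by itself control the behavior near the spheres $\partial B(0,N)$, so one must be careful to distinguish open and closed balls and to pass to a slightly larger radius $N\geq R+1$ in order to turn the family of limits $\{F_N\}$ into a genuine local Hausdorff limit of the $E_{j_k}$.
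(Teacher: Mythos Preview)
The paper does not supply its own proof of this lemma: it is simply quoted from \cite{DS3}, Lemma~8.2, and used as a black box in Section~\ref{Sconv}. Your argument --- Blaschke selection on each closed ball, a diagonal extraction, the consistency relations between the limits $F_N$, and the final passage to a slightly larger radius $N\geq R+1$ to control the local Hausdorff distance --- is the standard route and is correct. The only point worth flagging is that you should make explicit (as you implicitly do) that for $y\in F_N$ one has $\dist(y,E_{j_k})\leq\dist\bigl(y,E_{j_k}\cap\overline{B(0,N)}\bigr)$, so that the Hausdorff convergence on $\overline{B(0,N)}$ indeed controls the second supremum in $d_{0,R}$; this is immediate since $E_{j_k}\cap\overline{B(0,N)}\subseteq E_{j_k}$, but it is the step where the ``open versus closed ball'' issue you mention actually matters.
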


\ms
Now we are ready to prove the main theorem of this section.
\begin{proof}[Proof of Theorem \ref{thm.conv}]
We prove that $\pom$ is uniformly rectifiable by showing that $\om_{\rm ext}$ satisfies the corkscrew condition (see Lemma \ref{lem.UReqv}). Following the proof of Theorem 7.1 in \cite{DM2}, it suffices to show that the set $\mathcal{G}_{CB}(c)$ is Carleson-prevalent for some $c>0$, where $\mathcal{G}_{CB}(c)$ is the set of pairs $(x,r)\in\pom\times(0,\infty)$ such that we can find $Z_1,Z_2\in B(x,r)$, that lie in different connected components of $\Rn\setminus\pom$, and such that $\dist(Z_i,\pom)\ge cr$ for $i=1,2$. To do that, we will rely on the fact that, on 1-sided CAD domains, if the elliptic measure is comparable to the surface measure, then the complement $\om_{\rm ext}$ satisfies the corkscrew condition, which is implied by the main result of \cite{HMMTZ}.

Thanks to Lemma \ref{lem.Carlprev}, for each choice of $\epsilon>0$ and $M\ge1$, the sets $\mathcal{G}^{X_0}(\epsilon,M)$  and $\mathcal{G}_{cc}(\epsilon,M)$ are Carleson-prevalent. So it suffices to show that
\begin{equation}\label{eq9b1}
    \mathcal{G}^{X_0}(\epsilon,M)\cap\mathcal{G}_{cc}(\epsilon,M)\subset\mathcal{G}_{CB}(c) \quad \text{for some }c>0,\epsilon>0, \text{ and }M\ge1.
\end{equation}
We prove by contradiction. Assume that \eqref{eq9b1} is false, then for $c_k=\epsilon_k=M_k^{-1}=2^{-k}$, we can find a 1-sided NTA domain $\om_k$ bounded by an Ahlfors regular set $\pom_k$, a point $X_k\in\om_k$ (or $X_k\in\om_k\cup\set{\infty}$ when $\om$ is unbounded), an elliptic operator $L_k=-\div\A_k\nabla$ that is locally sufficiently close to a constant coefficient elliptic operator, and a pair $(x_k,r_k)\in\pom_k\times(0,\infty)$ for which 
\[
(x_k,r_k)\in  \mathcal{G}^{X_k}(\epsilon_k,M_k)\cap\mathcal{G}_{cc}(\epsilon_k,M_k)\setminus\mathcal{G}_{CB}(c_k).
\]
By translation and dilation invariance, we can assume that $x_k=0$ and $r_k=1$. Notice that $(0,1)\in\mathcal{G}^{X_k}(\epsilon_k,M_k)$ implies that $X_k\notin B(0,2^k)$, and in particular, $\diam(\om_k)\ge 2^k$, and $X_k$ tends to infinity as $k\to\infty$.

By Lemma \ref{lem.DS3}, we can
 extract a subsequence so that $\om_k$ converges to a limit $\om_\infty$. By Lemma \ref{lem.cvg}, $\om_\infty$ is 1-sided NTA, $\pom_k$ converges to $\pom_\infty$ which is Ahlfors regular. Moreover, by Lemma \ref{lem.DS3}, we
can extract a further subsequence so that the Ahlfors regular measure $\sigma_k$ given on $\pom_k$ converges weakly
to an Ahlfors regular measure $\sigma$. 
Since $(0,1)\in\mathcal{G}_{cc}(2^{-k},2^k)$, $\A_k$ converges to some constant matrix $\A_0$ in $L^1_{\loc}(\om_\infty)$.

Choose a corkscrew point $Y_0\in\om_\infty$ for some ball $B_0$ centered on $\pom_\infty$, and let $G_k=G_k^{X_k}$ be the Green function for $L_k$ in $\om_k$, normalized so that $G_k(Y_0)=1$. Since $L_kG_k=0$ in $\om_k\cap B(0,2^k)$, Lemma \ref{lem.cvg} asserts that $G^k$ converges to the Green function $G=G_\infty^\infty$ with pole at infinity for the constant-coefficient operator $L_0=-\div \A_0\nabla$, uniformly on compact sets of $\om_\infty$, and in $W_{\loc}^{1,2}(\om_\infty)$. Since  $\sigma_k\rightharpoonup\sigma$, $D_k=D_{\beta,\sigma_k}$ converges to $D=D_{\beta,\sigma}$ uniformly on compact sets of $\om_\infty$, and so does $\nabla D_k$ to $\nabla D$. Since $(0,1)\in\mathcal{G}^{X_k}(2^{-k},2^k)$,
\begin{equation}\label{eq9b2}
    \iint_{W_{2^k}(0,1)}\abs{\frac{\nabla G_k}{G_k}-\frac{\nabla D_k}{D_k}}^2D_k(X)dX\le 2^{-k} \qquad \text{for all } k\in\mathbb{Z}_+,
\end{equation}
where $W_{2^k}(0,1)$ is the Whitney region defined as in \eqref{Wkxr} for $\om_k$. Fix any compact set $K\Subset\om_\infty$. We claim that
\begin{equation}\label{eq9b3}
    \lim_{k\to\infty} \iint_{K}\abs{\frac{\nabla G_k}{G_k}-\frac{\nabla D_k}{D_k}}^2D_k(X)dX=\iint_{K}\abs{\frac{\nabla G}{G}-\frac{\nabla D}{D}}^2D(X)dX.
\end{equation}
In fact, since $G$ is a positive solution of $L_0 G=0$ in $\om_\infty$ with $G(Y_0)=1$, the Harnack inequality implies that $G\ge c_0$ on $K$ for some $c_0>0$. Then the uniform convergence of $G_k$ to $G$ on $K$ implies that for $k$ large enough, $\set{G_k^{-1}}$ is uniformly bounded on $K$, and so $G_k^{-1}$ converges uniformly to $G^{-1}$ on $K$. Then \eqref{eq9b3} follows from the fact that $\nabla G_k$ converges to $\nabla G$ in $L^2(K)$, the uniform convergence of $G_k^{-1}$ to $G^{-1}$ on $K$, and the uniform convergences of $\nabla D_k$ and $D_k^{-1}$ to $\nabla D$ and $D^{-1}$.

Now by \eqref{eq9b2} and \eqref{eq9b3}, we get that 
\[\iint_K\abs{\nabla\ln\br{\frac{G}{D}}(X)}^2D(X)dX=0,\]
and so $G=CD_{\beta,\sigma}$ in $\om_\infty$. We can copy the proof of Theorem 7.1 of \cite{DM2} verbatim from now on to conclude that this leads to a contradiction. Roughly speaking, $G=CD_{\beta,\sigma}$ would imply that the elliptic measure $\omega^\infty$ for $L_0$, with a pole at $\infty$, is comparable to $\H_{|\pom_\infty}^{n-1}$. Then by \cite{HMMTZ} Theorem 1.6 one can conclude that $\pom_\infty$ is uniformly rectifiable, and hence $\Rn\setminus\overline\om_\infty$ satisfies the corkscrew condition, which contradicts with the assumption that $(0,1)=(x_k,r_k)\notin\mathcal{G}_{CB}(c_k)$. 
\end{proof}

\section{Assuming that $\Omega$ is semi-uniform is not sufficient.} \label{Scount}

In this subsection, we will give an example of domain where the harmonic measure on $\partial \Omega$ is $A_\infty$-absolutely continuous with respect to the $(n-1)$-dimensional Hausdorff measure, but where Theorem \ref{Main1} fails. It is known that the harmonic measure is $A_\infty$-absolute continuous with respect to the surface measure whenever the domain $\Omega$ is semi-uniform and its boundary is $(n-1)$-Ahlfors regular and uniformly rectifiable (see \cite[Theorem III]{Azzam}). The notion of semi-uniform domain is given by the next definition.

\begin{definition}[{\bf Semi-uniform domains}] \label{defSUD}
We say that $\Omega$ is semi-uniform if it satisfies the corkscrew condition and  (see Definition \ref{def1.cork}) if for every $\Lambda\geq 1$, there exists $C_\Lambda>0$ such that for any $\rho>1$ and every pair of points $(X,x) \in \Omega \times \partial \Omega$ such that $\abs{X-x}<\Lambda\rho$, there exists a Harnack chain of length bounded by $C_\Lambda$ linking $X$ to one of the corkscrew points for $x$ at scale $\rho$.
\end{definition}

Semi-uniform domains were first introduced by Aikawa and Hirata in \cite{AiHi} using {\it cigar curves}. The two definitions of semi-uniform domains are known to be equivalent, see for instance, \cite{Azzam} Theorem 2.3.

Our counterexample is constructed in $\R^2$ for simplicity but can easily be extended to any dimension.
\begin{figure}
\begin{center}
\includegraphics[scale=0.8]{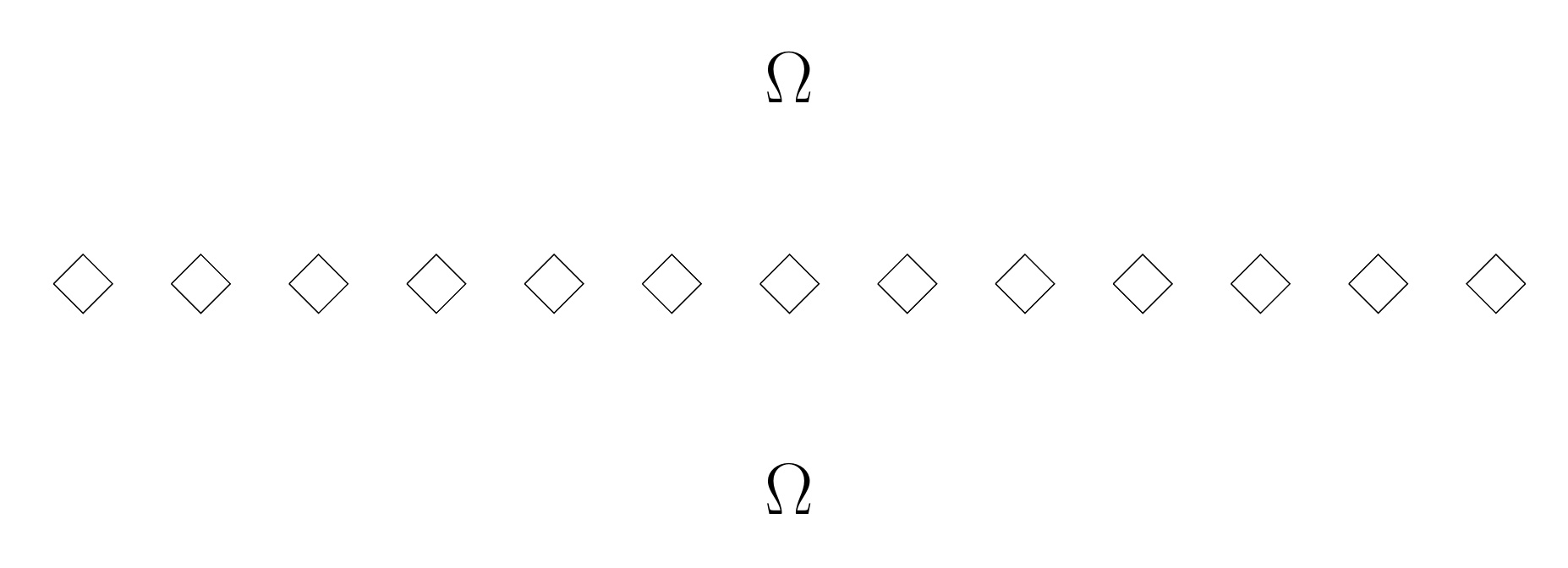}
\end{center}
\caption{The domain $\Omega$} \label{fig1}
\end{figure}
Our domain (see Figure \ref{fig1}) will be 
$$\Omega := \R^2 \setminus \bigcup_{k\in \mathbb Z} \Big\{(x,t)\in \R^2, |x-2k|+|t| < \frac12\Big\}$$
Note that $\partial \Omega$ is uniformly rectifiable, but the domain contains two parts ($\Omega \cap \R^2_+$ and $\Omega \cap \R^2_-$) which are not well connected to each other, that is, this domain does not satisfy the Harnack Chain Condition (see Definition \ref{def1.hc}). We let the reader check that the domain is still semi-uniform.

\medskip

Due to the lack of Harnack chains, the space $\Omega$ does not have a unique - up to constant - Green function with pole at $\infty$. If we take the pole at $t \to -\infty$, then we can construct a positive function $G$ which will be bounded on $\Omega \cap\R^2_+$, and we shall prove that this is incompatible with our estimate \eqref{Main2d} that says that $\frac{\partial_t G}{G}$ is ``close'' to $\frac1t$ when $t$ is large enough.

\subsection{Construction of $G$}

The goal now will be to construct a positive function in $\Omega$, which is morally the Green function with pole at $t=-\infty$. We could have used the usual approach, that is taking the limit when $n$ goes to infinity of - for instance - $G(X,X_n)/G(X_0,X_n)$ in the right sense, where $G$ is the Green function on $\Omega$ for the Laplacian, and $X_n := (1,n)$. However, the authors had difficulty proving the 2-periodicity in $x$ of the limit and didn't know where to find the right properties in the literature (as our domains are unbounded). So we decided to make the construction from scratch.

\medskip

We want to work with the Sobolev space 
\begin{multline*}
W = \Big\{u\in W^{1,2}_{loc}(\overline{\Omega}), \, u(x,t) = u(x+2,t) \text{ and} \, u(x,-t) = u(x,t) \text{ for $(x,t)\in \Omega$}, \\ 
\, \iint_{S_0} |\nabla u(x,t)|^2 dx \, dt < +\infty\Big\}. 
\end{multline*}
Here and in the sequel $S_k$ is the strip $\Omega \cap \Big( [k,k+1) \times \R \Big)$.
Note that due to the 2-periodicity in $x$ and the symmetry, the function $u\in W$ is defined on $\R^2$ as soon as $u$ is defined on any of the sets $S_k$. We will also need 
\[W^+:= \{u_{|\Omega \cap \R^2_+}, \, u \in W\} \text{ and } W_0:= \{u\in W, \, \Tr(u) = 0 \text{ on } \partial \Omega\}.\]
We let the reader check that the quantity
\[\|u\|_W := \left( \iint_{S_0} |\nabla u(x,t)|^2 dx \, dt \right)^\frac12\]
is a norm on the space $W_0$, and the couple $(W_0,\|.\|_W)$ is a Hilbert space. 

The bilinear form 
\[a(u,v):= \iint_{S_0} \nabla u \cdot \nabla v \, dt \, dx\]
is continuous and coercive on $W_0$, so for any $k\in \mathbb N$, there exists $\wt G_k \in W_0$ such that 
\begin{equation} \label{defwtGk}
a(\wt G_k, v ) = \iint_{S_0} \nabla \wt G_k \cdot \nabla v \, dx \, dt = 2^{-k} \int_0^1 \int_{-2^{k+1}}^{-2^k} v(x,t) \, dt\, dx \qquad \text{ for } v\in W_0.
\end{equation}
The first key observation is:

\begin{proposition} \label{prwtGk}
$\wt G_k \in W_0$ is a positive weak solution to $-\Delta u = 0$ in $\Omega \cap \{t>-2^k\}$.
\end{proposition}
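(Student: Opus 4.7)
My plan is to prove the two assertions of the proposition—that $\wt G_k$ is a weak solution of $-\Delta u = 0$ in $\Omega \cap \{t > -2^k\}$ and that $\wt G_k$ is non-negative—separately.

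For the harmonicity, I will test the defining equation $a(\wt G_k, v) = 2^{-k}\int_0^1\int_{-2^{k+1}}^{-2^k} v\,dt\,dx$ against an element $v \in W_0$ built from an arbitrary test function $\varphi \in C_c^\infty(\Omega \cap \{t > -2^k\})$ by periodization in $x$ and $t$-symmetrization:
$$v(x,t) := \tfrac{1}{2}\sum_{j \in \mathbb Z}\bigl[\varphi(x-2j,t) + \varphi(x-2j,-t)\bigr].$$
The compact support of $\varphi$ makes the sum locally finite, and $v$ inherits 2-periodicity in $x$, symmetry in $t$, vanishing on $\partial\Omega$, and the $W^{1,2}_{\rm loc}$-regularity from $\varphi$, so $v \in W_0$. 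Substituting into the defining equation and unfolding via the 2-periodicity of $\wt G_k$ (using $\wt G_k(x'+2j,t) = \wt G_k(x',t)$ after the change of variables $x' = x - 2j$) together with the identity $\iint_{S_0}\nabla\wt G_k\cdot\nabla[\varphi(\cdot,-\cdot)] = \iint_{S_0}\nabla\wt G_k\cdot\nabla\varphi$ (which follows from the $t$-symmetry of $\wt G_k$ by reversing $t$), I expect the left-hand side to collapse to $\iint_\Omega\nabla\wt G_k \cdot \nabla\varphi$. On the right-hand side, the support hypothesis $\supp\varphi \subset \{t > -2^k\}$ makes the terms $\varphi(x-2j,t)$ vanish on $\{-2^{k+1} < t < -2^k\}$, and the reflected contribution from $\varphi(x-2j,-t)$ lives on $\{2^k < t < 2^{k+1}\}$ where it combines with the compensating piece from the unfolding to produce $0$, yielding $\iint_\Omega \nabla\wt G_k\cdot\nabla\varphi = 0$.

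For the positivity, I will invoke the truncation trick. The map $u \mapsto |u|$ preserves each defining property of $W_0$ (the 2-periodicity in $x$, the symmetry in $t$, vanishing on $\partial\Omega$, and the local $W^{1,2}$-regularity via Stampacchia's theorem), and $|\nabla |u|| = |\nabla u|$ a.e., so $a(|\wt G_k|,|\wt G_k|) = a(\wt G_k,\wt G_k)$. Since the linear functional $L(v) = 2^{-k}\int_0^1\int_{-2^{k+1}}^{-2^k} v\,dt\,dx$ is monotone in nonnegative arguments, $L(|\wt G_k|) \geq L(\wt G_k)$. Hence $|\wt G_k|$ attains at most the same value of the quadratic energy $\tfrac{1}{2} a(u,u) - L(u)$ as $\wt G_k$, and by uniqueness of the Lax--Milgram minimizer, $\wt G_k = |\wt G_k| \geq 0$.

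I anticipate the main obstacle to lie in the precise bookkeeping of the LHS-unfolding step: making sure the combination of the 2-periodic sum in $x$ and the $t$-symmetrization actually reassembles the full integral $\iint_\Omega \nabla\wt G_k \cdot \nabla\varphi$ rather than a proper subintegral (e.g., only the translates of $S_0$, i.e., the ``even strips''). Handling test functions $\varphi$ whose support sits in ``odd'' strips, and marrying this with the shape of the source region $[0,1] \times [-2^{k+1}, -2^k]$ under $t$-symmetry, will require either a supplementary construction of $v$ or the use of the reflection symmetry of $\Omega$ about $x = 0$ that is (implicitly) inherited by $\wt G_k$; this is where the main calculation will sit.
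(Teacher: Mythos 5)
Your positivity argument reproduces the truncation trick the paper references (it cites \cite{DFMprelim} for it); the argument is correct and yields $\wt G_k\geq 0$, but it only gives non-negativity. You should finish, as the paper does, by observing that $\wt G_k$ is a non-negative, non-trivial solution in $\Omega\cap\{t>-2^k\}$, so the Harnack inequality upgrades this to strict positivity.

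The obstacle you flag in the harmonicity step is real, and your $t$-symmetrization does not surmount it. With $v(x,t)=\tfrac12\sum_j\bigl[\varphi(x-2j,t)+\varphi(x-2j,-t)\bigr]$, the change of variables $t\mapsto -t$ together with the $t$-symmetry of $\wt G_k$ shows that the two summands in the $j$-th term contribute the \emph{same} quantity to $\iint_{S_0}\nabla\wt G_k\cdot\nabla v$, so the unfolded left-hand side collapses exactly to $\iint_{\bigcup_j S_{2j}}\nabla\wt G_k\cdot\nabla\varphi$ --- the even strips only --- and the odd strips are never reached. The right-hand side is also genuinely nonzero: on $[0,1]\times(-2^{k+1},-2^k)$ the direct pieces $\varphi(x-2j,t)$ do vanish, but the reflected pieces $\varphi(x-2j,-t)$ are evaluated at $-t\in(2^k,2^{k+1})\subset\{t>-2^k\}$, where $\varphi$ may be nonzero, and there is no ``compensating piece from the unfolding'' on the left to absorb this. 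The mechanism that actually works is a reflection in $x$, not in $t$: the domain and the source term are symmetric under $x\mapsto 2j-x$ for each $j\in\mathbb Z$, this symmetry (combined with $2$-periodicity) is what identifies $\iint_{S_j}$ with $\iint_{S_0}$ for \emph{every} $j$, odd as well as even, and --- crucially --- it leaves the $t$-coordinate untouched, so the reflected test function still vanishes on the source region $[0,1]\times[-2^{k+1},-2^k]$. This is exactly what the paper does: for each $j$ it sets $\phi_j\in W_0$ to be the unique periodic, symmetrically extended element of $W_0$ agreeing with $\varphi$ on $S_j$, so that $\iint_{S_j}\nabla\wt G_k\cdot\nabla\varphi=\iint_{S_0}\nabla\wt G_k\cdot\nabla\phi_j=a(\wt G_k,\phi_j)=0$ for every $j$, and summing in $j$ gives $\iint_\Omega\nabla\wt G_k\cdot\nabla\varphi=0$. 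You mention the $x$-reflection symmetry as a possible fallback at the end of your write-up, but it is in fact the essential and only workable choice; the $t$-reflection cannot be repaired.
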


\bp The fact that $\wt G_k$ is nonnegative is a classical result that relies on the fact that $u\in W_0 \implies |u|\in W_0$ and the bilinear form $a(u,v)$ is coercive. See for instance \cite{DFMprelim}, (10.18)--(10.20). 

In order to prove that $\wt G_k$ is a solution in $\Omega \cap \{t>-2^k\}$, take $\phi \in C^\infty_0(\Omega \cap \{t>-2^k\})$. For $j\in\mathbb{Z}$, let $\phi_j$ be the only symmetric and 2-periodic function in $x$ such that $\phi_j = \phi$ on $S_j$. Observe that $\phi_j$ is necessary continuous, and so $\phi_j$ lies in $W_0$. Thus
\begin{multline*}
\iint_\Omega \nabla \wt G_k  \cdot \nabla \phi \, dx\, dt 
= \sum_{j\in \mathbb Z} \iint_{S_j} \nabla \wt G_k \cdot \nabla \phi \, dx\, dt 
= \sum_{j\in \mathbb Z}  \iint_{S_j} \nabla \wt G_k \cdot \nabla \phi_j \, dx\, dt \\
= \sum_{j\in \mathbb Z} \iint_{S_0} \nabla \wt G_k \cdot \nabla \phi_j \, dx\, dt = 0
\end{multline*}
by \eqref{defwtGk}, since $\phi_j =\phi \equiv 0$ on $\{t\leq -2^k\}$ for all $j\in\mathbb Z$. 

Since $\wt G_k$ is a solution, which is nonnegative and not identically equal to 0 (otherwise \eqref{defwtGk} would be false), the Harnack inequality (Lemma \ref{Harnack}) entails that $\wt G_k$ is positive. The proposition follows.
\ep

Let $X_0:= (1,0) \in \Omega$. From the above proposition, $\wt G_k(X_0) >0$ so we can define
\begin{equation} \label{defGk}
G_k(X) := \frac{\wt G_k(X)}{\wt G_k(X_0)}.
\end{equation}

\begin{proposition} \label{prGk}
For each $k\in \mathbb N$, the function $G_k(X) \in W_0$ is a positive weak solution to $-\Delta u = 0$ in $\Omega \cap \{t>-2^k\}$. Moreover, we have the following properties:
\begin{enumerate}[(i)]
\item for any compact set $K\Subset \overline{\Omega}$, there exists $k:=k(K)$ and $C:=C(K)$ such that $G_j(X) \leq C_K$ for all $j\geq k$ and $X\in K$ and $\{G_j\}_{j\geq k}$ is equicontinuous on $K$;
\item  there exists $C>0$ such that
\[ \iint_{\Omega \cap ([-2,2] \times [-1,1])} |\nabla G_k(x,t)|^2 dx\, dt \leq C \qquad \text{ for all } k\in \mathbb N;\]
\item there exists $C>0$ such that 
\[\|G_k\|_{W^+}^2 := \iint_{S_0 \cap \R^2_+} |\nabla G_k|^2 dx\, dt \leq C \qquad \text{ for all } k\in \mathbb N.\] 
\end{enumerate}
\end{proposition}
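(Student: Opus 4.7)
The fact that each $G_k$ is a positive weak solution to $-\Delta u=0$ on $\Omega\cap\{t>-2^k\}$ with $G_k(X_0)=1$ is immediate from Proposition \ref{prwtGk} and the normalization, so all three parts reduce to uniform-in-$k$ quantitative control of this family of normalized positive harmonic functions.

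For (i), the plan is Harnack combined with boundary regularity. Given $K\Subset\overline\Omega$, I would fix $k(K)$ so large that a fixed neighborhood of $K$ sits inside $\overline\Omega\cap\{t>-2^{k(K)}\}$; then for every $j\ge k(K)$ the function $G_j$ is a non-negative solution in that neighborhood, vanishing on the adjacent part of $\partial\Omega$. Since $\Omega$ has piecewise-linear boundary and $X_0$ can be joined to any point of $K$ by a Harnack chain of length depending only on $K$, the interior Harnack inequality gives $G_j\le C_K$ at points of $K$ away from $\partial\Omega$, and boundary Hölder estimates (valid up to a Lipschitz boundary) extend the bound and furnish a uniform Hölder modulus up to $K\cap\partial\Omega$, yielding equicontinuity.

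For (ii), I would apply Caccioppoli with a fixed cutoff. Picking $\phi\in C^\infty_c(\R^2)$ equal to $1$ on $[-2,2]\times[-1,1]$ and supported in $[-3,3]\times[-3/2,3/2]$, then for every $k\ge 1$ one has $\supp\phi\subset\{t>-2\}\subset\{t>-2^k\}$ and $\phi^2 G_k$ is an admissible test function (it has zero trace on $\partial\Omega$ since $G_k$ does); the standard Caccioppoli computation yields
\[
\iint|\nabla G_k|^2\phi^2\,dX\le 4\iint G_k^2|\nabla\phi|^2\,dX\le C\|G_k\|_{L^\infty(\supp\phi)}^2,
\]
which is uniformly bounded by part (i). The single remaining case $k=0$ is handled directly since $\wt G_0$ is one specific element of $W_0$.

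Part (iii) is where the main obstacle lies, because $S_0\cap\R^2_+$ is unbounded in $t$ and a compactly supported Caccioppoli cannot suffice. My plan is first to upgrade (i) to a uniform $L^\infty$ bound of $G_k$ on the whole half space $\R^2_+\cap\overline\Omega$, and then to run Caccioppoli with an arbitrarily tall strip cutoff. Harnack chains from $X_0$ give $G_k\le C$ uniformly on $\Omega\cap\{|t|\le 1\}$, and on $\{t>1/2\}$ the obstacles vanish, so $G_k$ is harmonic on the free cylinder $\R/2\Z\times(1/2,\infty)$ with square-integrable gradient on $S_0\cap\{t>1/2\}$. Separating variables gives
\[
G_k(x,t)=c_0^{(k)}+d_0^{(k)}t+\sum_{n\neq 0}\bigl(c_n^{(k)} e^{-\pi|n|t}+d_n^{(k)} e^{\pi|n|t}\bigr)e^{\pi inx}\qquad(t>1/2),
\]
and the square-integrability of $\nabla G_k$ forces $d_0^{(k)}=0$ and $d_n^{(k)}=0$ for $n\ne 0$. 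Averaging in $x$ at $t=1$ identifies $c_0^{(k)}=\tfrac{1}{2}\int_0^2 G_k(x,1)\,dx$, which is bounded uniformly by the Harnack bound from (i); Cauchy–Schwarz applied to the surviving exponentially decaying modes then gives $|G_k(x,t)-c_0^{(k)}|\le C$ for $t\ge 1$, producing the desired uniform $L^\infty$ bound on $\R^2_+\cap\overline\Omega$. With this bound in hand, I would take $\phi_T$ depending only on $t$, equal to $1$ on $\{-1\le t\le T\}$, supported in $\{-2\le t\le T+1\}$, with $|\phi_T'|\le 2$; then for $k\ge 1$, $\phi_T^2 G_k$ is admissible and Caccioppoli yields
\[
\iint\phi_T^2|\nabla G_k|^2\,dX\le 4\iint G_k^2|\nabla\phi_T|^2\,dX\le C\,\|G_k\|_{L^\infty(\R^2_+\cap\Omega)}^2,
\]
with the right-hand side independent of $T$ since $\supp\nabla\phi_T\cap S_0$ has area at most $2$. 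Letting $T\to\infty$ and keeping only the $t>0$ part yields (iii); the isolated case $k=0$ is again finite and handled by hand. The hard part throughout is thus the Fourier/harmonic-analytic step that upgrades the local Harnack bound to a global $L^\infty$ bound on $\R^2_+$, which is what lets the Caccioppoli argument survive the integration over an unbounded region.
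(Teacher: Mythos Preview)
Your arguments for (i) and (ii) match the paper's: Harnack chains from $X_0$ (combined with the $2$-periodicity, which makes the bound on a fundamental domain global in $x$) plus boundary H\"older regularity for (i), and the boundary Caccioppoli inequality for (ii).

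For (iii) your route is genuinely different from the paper's. The paper views $(G_k)|_{\Omega\cap\R^2_+}$ as the unique periodic solution of the Dirichlet problem in $W^+$ with data $(G_k)|_{\partial(\Omega\cap\R^2_+)}$, and invokes the continuity of the solution operator $H^{1/2}_{\partial\Omega_+}\to W^+$ together with the trace estimate $\|\,\cdot\,\|_{H^{1/2}}\lesssim\|\nabla\cdot\|_{L^2}$ on the strip from (ii); this is abstract but short. Your approach instead exploits the concrete geometry: on $\{t>1/2\}$ the obstacles disappear, so $G_k$ is harmonic on the flat cylinder and separates in Fourier modes; the finiteness of $\|G_k\|_W$ kills the linear and exponentially growing modes, and the uniform $L^2$ control of $G_k(\cdot,1)$ from (i) bounds the decaying part, yielding a uniform $L^\infty$ bound on all of $\R^2_+$. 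Caccioppoli with a cutoff $\phi_T$ whose gradient lives on two unit-height slabs then gives a bound independent of $T$. One small adjustment: your Cauchy--Schwarz step $\sum_{n\neq 0}|c_n^{(k)}|e^{-\pi|n|t}\le(\sum|c_n^{(k)}e^{-\pi|n|}|^2)^{1/2}(\sum e^{-2\pi|n|(t-1)})^{1/2}$ diverges at $t=1$; take the Fourier data at $t=1$ and apply the estimate only for $t\ge 3/2$, covering the remaining strip $\{1/2\le t\le 3/2\}$ by (i) and periodicity. Your method is more elementary and self-contained (no trace theory or periodic Lax--Milgram), but it leans on the explicit flat structure above the obstacles, whereas the paper's argument would transfer to more general periodic geometries.
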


\bp
The fact that $G_k$ is a positive weak solution is given by Proposition \ref{prwtGk}. So it remains to prove (i), (ii) and (iii).

We start with (i). Since $G_k$ is a weak solution in $\Omega_0 := \Omega \cap [(-4,4) \times (-2,2)]$ when $k\geq 1$, and since $\Omega_0$ is a Chord Arc Domain, we can invoke the classical elliptic theory and we can show that there exists $C>0$ such that 
\[ \sup_{\Omega \cap ([-2,2] \times [-1,1])} G_k \leq C G_k(1,0) = C \quad\text{for all }k\ge 1,\]
see for instance Lemma 15.14 in \cite{DFMprelim2}. By the 2-periodicity of $G_k$, it means that 
\[ \sup_{k\geq 1} \sup_{\Omega \cap ( \R \times [-1,1])} G_k \leq C,\]
and then since we can link any point of a compact $K \Subset \overline{\Omega}$ back to $\Omega \cap ( \R \times [-1,1])$ with a Harnack chain (the length of the chain depends on $K$), we have
\[ \sup_{j\geq k} \sup_{K} G_j \leq C_K,\]
whenever $G_j$ is a solution in the interior of $K$, which is bound to happen if $j\geq k(K)$ is large enough. 

The functions $G_k$ are also H\"older continuous up to the boundary in the areas where they are solutions, so $\{G_j\}_{j\geq k}$ is equicontinuous on $K$ as long as $k$ is large enough so that $K \subset \overline{\Omega} \cap \set{t>-2^k}$.

\medskip

Point (ii) is a consequence of the Caccioppoli inequality at the boundary. We only need to prove the bound when $k\geq 2$, since all the $G_k$ are already in $W_0$ by construction. We have by the Caccioppoli inequality at the boundary (see for instance Lemma 11.15 in \cite{DFMprelim2}) that
\begin{multline*}
\iint_{\Omega \cap ([-2,2] \times [-1,1])} |\nabla G_k(x,t)|^2 dx\, dt \lesssim \iint_{\Omega \cap ([-4,4] \times [-2,2])} |G_k(x,t)|^2 dx\, dt \\
\lesssim \sup_{\Omega \cap ([-4,4] \times [-2,2])} |G_k(x,t)|^2 \lesssim 1.
\end{multline*}

\medskip

Point (iii) is one of our key arguments. We define $W_0^+$ as the subspace of $W^+$ that contained the functions with zero trace on $\partial (\Omega \cap \R^2_+)$. 

Since $G_k \in W_0$, its restriction $(G_k)_{|\Omega \cap \R^2_+}$ is of course in $W^+$. Moreover, $G_k$ is a solution to $-\Delta u = 0$ in $\Omega \cap \R^2_+$. We can invoke the uniqueness in Lax-Milgram theorem (see Lemma 12.2 in \cite{DFMprelim2}, but adapted to our periodic function spaces $W_0^+$ and $W^+$) to get that $G_k$ is the only weak solution to $-\Delta u = 0$ in $\Omega \cap \R^2_+$ for which the trace on $\partial (\Omega \cap \R^2_+)$ is $(G_k)_{|\partial (\Omega \cap \R^2_+)}$. Moreover,
\[\|G_k\|_{W^+} \leq C \|(G_k)_{|\partial (\Omega \cap \R^2_+)}\|_{H^{1/2}_{\partial\Omega_+}},\]
where $H^{1/2}_{\partial\Omega_+}$ is the space of traces on $\partial \Omega_+:= \partial (\Omega \cap \R^2_+)$ for the symmetric 2-periodic functions defined as
\begin{multline*}
H^{1/2}_{\partial\Omega_+} := \Big\{ f: \, \partial \Omega_+ \mapsto \R \text{ measurable such that $f$ is symmetric and $2$-periodic in $x$,}  \\
 \text{ and } \|f\|_{H^{1/2}_{\partial\Omega_+}}:= \left( \int_{\partial\Omega_+ \cap S_0} \int_{\partial\Omega_+ \cap S_0} \frac{|f(x)-f(y)|^2}{|x-y|^{3/2}} d\mathcal H^1(x) \, d\mathcal H^1(y)\right)^\frac12 < +\infty \Big\}.
\end{multline*}
So in particular, we have by a classical argument that
\[\|(G_k)_{|\partial (\Omega \cap \R^2_+)}\|^2_{H^{1/2}_{\partial\Omega_+}} \leq C \iint_{\Omega \cap ([-2,2] \times [-1,1])} |\nabla G_k(x,t)|^2 dx\, dt.\]
We conclude that 
\[ \iint_{S_0 \cap \R^2_+} |\nabla G_k|^2 dx\, dt \lesssim  \iint_{\Omega \cap ([-2,2] \times [-1,1])} |\nabla G_k(x,t)|^2 dx\, dt \lesssim 1\]
by (ii). Point (iii) follows.
\ep

\begin{proposition}
There exists a symmetric (in $x$), 2-periodic (in $x$), positive weak solution $G\in W_{\loc}^{1,2}(\om)\cap C(\overline\om)$ to $-\Delta G = 0$ in $\om$ such that $G=0$ on $\pom$ and $G(X_0) = 1$ and
\begin{equation}  \label{GinW+} \iint_{S_0 \cap \R^2_+} |\nabla G|^2 dx\, dt < +\infty.
\end{equation}
\end{proposition}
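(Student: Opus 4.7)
My plan is to obtain $G$ as a limit of the sequence $\{G_k\}$ constructed in Proposition~\ref{prGk}. All three estimates $(i)$, $(ii)$, $(iii)$ of that proposition are designed precisely to give enough compactness to extract a convergent subsequence and to pass to the limit in the weak formulation of $-\Delta G_k = 0$, while preserving symmetry, periodicity, zero trace, and the normalization $G(X_0)=1$.

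\medskip

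\textbf{Extraction of a limit.} First I would fix an exhaustion $\{K_j\}_{j\in \N}$ of $\overline{\Omega}$ by compact subsets with $K_j \subset K_{j+1}$ and $\bigcup_j K_j = \overline{\Omega}$. By property $(i)$ of Proposition~\ref{prGk}, for each $j$ the family $\{G_k\}_{k \geq k(K_j)}$ is uniformly bounded and equicontinuous on $K_j$, so by the Arzel\`a--Ascoli theorem I can extract a subsequence that converges uniformly on $K_j$; a diagonal argument then yields a subsequence (still denoted $\{G_k\}$) converging uniformly on every compact subset of $\overline{\Omega}$ to a continuous function $G \in C(\overline{\Omega})$. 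Next, for any ball $B$ with $2B \Subset \Omega$, property $(i)$ gives $\sup_k \|G_k\|_{L^\infty(2B)} < \infty$, and then the Caccioppoli inequality (Lemma~\ref{Caccio}) gives $\sup_k \|\nabla G_k\|_{L^2(B)} < \infty$. Hence $\{G_k\}$ is bounded in $W^{1,2}_{\loc}(\Omega)$, so (up to a further subsequence) $G_k \rightharpoonup G$ weakly in $W^{1,2}_{\loc}(\Omega)$, confirming $G \in W^{1,2}_{\loc}(\Omega)$.

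\medskip

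\textbf{Verification of the listed properties.} Each $G_k$ is a weak solution of $-\Delta u = 0$ in $\Omega \cap \{t > -2^k\}$ by Proposition~\ref{prGk}, so for any $\varphi \in C_0^\infty(\Omega)$ one has $\iint \nabla G_k \cdot \nabla \varphi \, dx\, dt = 0$ for all $k$ sufficiently large, and weak $W^{1,2}_{\loc}(\Omega)$ convergence lets me pass to the limit and conclude $-\Delta G = 0$ weakly in $\Omega$. Symmetry $G(x,-t) = G(x,t)$ and $2$-periodicity in $x$ pass to the uniform limit. The normalization $G(X_0) = \lim G_k(X_0) = 1$ follows from uniform convergence and the definition \eqref{defGk}. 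Since $G_k \equiv 0$ on $\partial \Omega$ and $G_k \to G$ uniformly on compact subsets of $\overline{\Omega}$, one has $G \equiv 0$ on $\partial \Omega$. Because each $G_k \geq 0$, the limit is nonnegative; since $G(X_0)=1 > 0$ and $G$ is a nonnegative solution in the connected open set $\Omega$, the Harnack inequality (Lemma~\ref{Harnack}, applied along chains of balls) yields $G > 0$ throughout $\Omega$. Finally, by property $(iii)$ of Proposition~\ref{prGk} and the weak lower semicontinuity of the $L^2$ norm of the gradient (using weak convergence in $W^{1,2}(S_0 \cap \R^2_+)$, which is a consequence of the uniform bound in that space),
\[
\iint_{S_0 \cap \R^2_+} |\nabla G|^2 \, dx\, dt \;\leq\; \liminf_{k \to \infty} \iint_{S_0 \cap \R^2_+} |\nabla G_k|^2\, dx\, dt \;<\; +\infty,
\]
which is exactly \eqref{GinW+}.

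\medskip

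\textbf{Main subtlety.} The only nontrivial point is that I cannot hope for a uniform $W^{1,2}$ bound on the lower half strip $S_0 \cap \R^2_-$, because $G_k$ behaves like the Green function with pole approaching $t = -\infty$ and its Dirichlet energy there blows up. That is why the statement only asserts finite Dirichlet energy on the upper half $S_0 \cap \R^2_+$, via $(iii)$, and why local $W^{1,2}$ control in $\Omega$ (via Caccioppoli) is exactly the right substitute on the lower side: it suffices to pass to the limit in the equation on arbitrary compact subsets of $\Omega$, without needing any global energy bound downward.
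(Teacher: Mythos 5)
Your proposal is correct and follows essentially the same path as the paper: Arzel\`a--Ascoli via Proposition~\ref{prGk}~(i) to get locally uniform convergence on $\overline{\Omega}$, Caccioppoli for boundedness (hence weak convergence) in $W^{1,2}_{\loc}$, passage to the limit in the weak formulation, Harnack for strict positivity, and Proposition~\ref{prGk}~(iii) together with weak lower semicontinuity for \eqref{GinW+}. You spell out a couple of routine details (the diagonal extraction and the explicit invocation of lower semicontinuity) that the paper leaves implicit, but there is no substantive difference.
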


\bp We invoke the Arzel\`a-Ascoli theorem - whose conditions are satisfied thanks to Proposition \ref{prGk} (i) - to extract a subsequence of $G_k$ that converges uniformly on any compact to a continuous function $G$. The fact $G$ is non-negative, symmetric, 2-periodic, and satisfies $G(X_0) =1$ is immediate from the fact that all the $G_k$ are already like this. The functions $G_k$ converges to $G$ in $W^{1,2}_{loc}(\overline{\Omega})$ thanks to the Caccioppoli inequality, and then by using the weak convergence of $G_k$ to $G$ in  $W^{1,2}_{loc}(\overline{\Omega})$, we can easily prove that $G$ is a solution to $-\Delta u = 0$ in $\Omega$ (hence $G$ is positive by the Harnack inequality, since it was already non-negative). The convergence of $G_k$ to $G$ in $W^{1,2}_{loc}(\overline{\Omega})$ also allow the uniform bound on $\|G_k\|_{W^+}$ given by Proposition \ref{prGk} (iii) to be transmitted to $G$, hence \eqref{GinW+} holds. The proposition follows.
\ep

\subsection{$G$ fails the estimate given in Theorem \ref{Main1}} 

\begin{lemma} \label{lemdtGinW}
$\partial_t G$ is harmonic in $\om$, that is, it is a solution of $-\Delta u=0$ in $\om$, and 
we have
\[\int_1^\infty \int_{0}^1 |\nabla \partial_t G|^2 dx\, dt < +\infty.\]
\end{lemma}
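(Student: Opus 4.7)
The plan is to prove the two claims separately. That $\partial_t G$ is harmonic in $\Omega$ is immediate from interior elliptic regularity: since $-\Delta G = 0$ weakly in the open set $\Omega$, $G$ is $C^\infty$ there, and differentiating the PDE in $t$ gives $-\Delta(\partial_t G) = 0$ in $\Omega$.

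For the integral bound, I would exploit the crucial geometric feature of $\Omega$: since every obstacle is contained in $\{|t| \leq 1/2\}$, the half-plane $\{t > 1/2\}$ is entirely inside $\Omega$. On this flat periodic half-strip, $G$ is harmonic and $2$-periodic in $x$, so it admits a Fourier expansion
\[
G(x,t) = \sum_{n \in \mathbb Z} a_n(t) e^{i n \pi x}, \qquad t > 1/2.
\]
Harmonicity forces $a_n''(t) = (n\pi)^2 a_n(t)$, so
\[
a_n(t) = A_n e^{|n|\pi t} + B_n e^{-|n|\pi t} \quad (n \neq 0), \qquad a_0(t) = A_0 + B_0 t.
\]

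The key step is to eliminate the growing modes. By $2$-periodicity in $x$, the bound \eqref{GinW+} yields
\[
\int_{1/2}^{\infty} \int_0^2 |\nabla G|^2 \, dx\, dt < \infty,
\]
and Parseval's identity translates this into $\sum_n \int_{1/2}^\infty (|a_n'(t)|^2 + (n\pi)^2 |a_n(t)|^2)\, dt < \infty$. The growing exponentials $e^{|n|\pi t}$ for $n \neq 0$, as well as the linear term $B_0 t$, are incompatible with integrability on $(1/2,\infty)$, forcing $A_n = 0$ for $n \neq 0$ and $B_0 = 0$. Hence for $t > 1/2$,
\[
G(x,t) = A_0 + \sum_{n \neq 0} B_n e^{-|n|\pi t}\, e^{i n \pi x},
\]
with the finite-energy condition yielding $\sum_{n \neq 0} |n| |B_n|^2 e^{-|n|\pi} < \infty$; in particular $|n|^2 |B_n|^2 e^{-|n|\pi} \leq C$ uniformly in $n$.

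Differentiating termwise and applying Parseval to $\partial_t G$, $\partial_x \partial_t G$, $\partial_t^2 G$ on the horizontal slice $\{t = \text{const}\}$,
\[
\int_0^1 |\nabla \partial_t G(x,t)|^2 \, dx \leq \int_0^2 |\nabla \partial_t G(x,t)|^2 \, dx = 2 \sum_{n \neq 0} (n\pi)^4 |B_n|^2 e^{-2|n|\pi t}.
\]
Integrating over $t \in [1,\infty)$ gives
\[
\int_1^\infty \int_0^1 |\nabla \partial_t G|^2 \, dx\, dt \leq \pi^3 \sum_{n\neq 0} |n|^3 |B_n|^2 e^{-2|n|\pi},
\]
and using the uniform bound $|B_n|^2 \leq C |n|^{-2} e^{|n|\pi}$ from the previous paragraph, each summand is controlled by $C|n| e^{-|n|\pi}$, which is summable. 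The only mildly delicate point is justifying the termwise differentiation and Parseval identity for $\nabla \partial_t G$, but once the Fourier coefficients are shown to decay like $e^{-|n|\pi}$, this is routine; everything else is bookkeeping. Hence the integral is finite, completing the lemma.
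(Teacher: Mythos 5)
Your proof is correct in substance, but it takes a genuinely different route from the paper's. The paper proves the finiteness by a tangential-regularity argument: since $G$ is harmonic, the trace $g(x) := G(x,1)$ is smooth, and one adapts a Caccioppoli-type estimate (Proposition 7.3 of \cite{DFMpert-reg}) to bound $\int_1^\infty\int_0^1 |\nabla \partial_x G|^2$ by $\|g'\|_{L^2}^2 + \|g''\|_{L^2}^2 + \|\nabla G\|_{L^2}^2$, and then uses the equation $\partial_t^2 G = -\partial_x^2 G$ to convert this into the desired bound on $\nabla \partial_t G$. Your argument instead exploits the fact that $\{t>1/2\}$ lies entirely in $\Omega$ and is a flat periodic half-strip, and there the Laplacian has constant coefficients, so a Fourier-mode analysis applies. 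The finite-energy hypothesis \eqref{GinW+} kills the growing modes and the linear mode, leaving $G(x,t)=A_0+\sum_{n\neq 0}B_n e^{-|n|\pi t}e^{in\pi x}$, and the exponential decay in $t$ of each mode makes every Sobolev norm of $G$ on $\{t\ge 1\}$ finite by direct summation. Your approach is more elementary and self-contained (no external regularity lemma needed), at the cost of relying heavily on the specific flat geometry and constant-coefficient operator; the paper's argument is more robust and would survive perturbations of both.

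One small slip, which is harmless: from $\sum_{n\neq 0}|n|\,|B_n|^2 e^{-|n|\pi}<\infty$ you may conclude that the summand is bounded, i.e.\ $|n|\,|B_n|^2 e^{-|n|\pi}\le C$, hence $|B_n|^2\le C|n|^{-1}e^{|n|\pi}$ — but \emph{not} $|n|^2|B_n|^2 e^{-|n|\pi}\le C$ as you wrote (convergence of a series does not bound $n$ times its terms). With the correct bound you get $|n|^3|B_n|^2 e^{-2|n|\pi}\le C|n|^2 e^{-|n|\pi}$, which is still summable, so your conclusion is unaffected. You should also note that the termwise differentiation is justified on $\{t>1/2\}$ (not just $\{t\ge 1\}$): $|B_n|\lesssim |n|^{-1/2}e^{|n|\pi/2}$, so $|n|^k |B_n| e^{-|n|\pi t}\lesssim |n|^{k-1/2}e^{-|n|\pi(t-1/2)}$ is summable for each $t>1/2$, which suffices.
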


\bp
Morally, we want to prove that if $G$ is a solution (to $-\Delta u = 0$), then $\nabla G \in W^{1,2}$, which is a fairly classical regularity result. The difficulty in our case is that the domain in consideration is unbounded.

Since $G$ is a harmonic function (solution of the Laplacian), the function $g(x):= G(x,1)$ is smooth. We can prove the bound 
\[\int_1^\infty \int_{0}^1  |\nabla \partial_x G|^2 dx\, dt \lesssim \int_0^1 |g'(x)|^2 dx + \int_0^1 |g''(x)|^2 dx + \int_1^\infty \int_{0}^1  |\nabla G|^2 dx\, dt < +\infty \]
by adapting the proof of Proposition 7.3 in \cite{DFMpert-reg} to our simpler context (and invoking \eqref{GinW+} and $g \in C^\infty(\R)$ to have the finiteness of the considered quantities). In order to have the derivative on the $t$-derivative, it is then enough to observe
\begin{multline*} \int_1^\infty \int_{0}^1  |\nabla \partial_t G|^2 dx\, dt \lesssim \int_1^\infty \int_{0}^1  |\partial_x \partial_t G|^2 dx\, dt  +  \int_1^\infty \int_{0}^1 |\partial_t \partial_t G|^2 dx\, dt \\
 =  \int_1^\infty \int_{0}^1 |\partial_t \partial_x G|^2 dx\, dt  +  \int_1^\infty \int_{0}^1 |\partial_x \partial_x G|^2 dx\, dt \\
 \lesssim  \int_1^\infty \int_{0}^1 |\nabla \partial_x G|^2 dx\, dt < +\infty,
\end{multline*}
where we use the fact that $G$ is a solution to $-\Delta u = 0$ - i.e. $\partial_t \partial_t G = - \partial_x \partial_x G$ - for the second line. The lemma follows.
\ep

We will also need a maximum principle, given by

\begin{lemma} \label{maxprinciple} 
If $u$ is a symmetric (in $x$), 2-periodic (in $x$) harmonic function in $\R \times (t_0,\infty)$ that satisfies
\begin{equation} \label{uinWmp} 
\int_{t_0}^\infty \int_{0}^1 |\nabla u|^2 dx\, dt < +\infty,
\end{equation}
then $u$ has a trace - denoted by $\Tr_{t_0} u$ - on $\R\times \{t_0\}$ and
\[\inf_{y\in (0,1)} (\Tr_{t_0} u)(y) \leq u(x,t) \leq \sup_{y\in (0,1)} (\Tr_{t_0} u)(y) \qquad \text{ for all } x\in \R, \, t>t_0.\]
\end{lemma}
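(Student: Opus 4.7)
The plan is to represent $u$ explicitly through a Fourier series in $x$, using the fact that the finite-energy hypothesis prevents any exponential growth, and then to reduce the maximum principle to the classical one on a compact cylinder.

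\textbf{Step 1 (trace).} Since $u$ is harmonic, it is $C^\infty$ on $\R\times(t_0,\infty)$, and in particular uniformly bounded on the slice $\R\times\{t_0+1\}$ by $2$-periodicity. Writing $u(x,t)=u(x,t_0+1)-\int_t^{t_0+1}\partial_s u(x,s)\,ds$ and squaring, the hypothesis \eqref{uinWmp} yields $u\in L^2((0,1)\times(t_0,t_0+1))$, hence $u\in W^{1,2}((0,1)\times(t_0,t_0+1))$, and the standard trace theorem on this Lipschitz rectangle produces $\Tr_{t_0}u\in L^2((0,1))$.

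\textbf{Step 2 (Fourier).} The evenness and $2$-periodicity in $x$ suggest writing
\begin{equation*}
u(x,t) = a_0(t) + \sum_{k=1}^\infty a_k(t)\cos(\pi k x).
\end{equation*}
The equation $\Delta u=0$ separates into $a_0''=0$ and $a_k''=(\pi k)^2 a_k$, so $a_0(t)=\alpha_0+\beta_0 t$ and $a_k(t)=\alpha_k e^{\pi k t}+\beta_k e^{-\pi k t}$. Parseval's identity gives
\begin{equation*}
\int_{t_0}^\infty\!\!\int_0^1 |\nabla u|^2\,dx\,dt = \int_{t_0}^\infty \beta_0^2\,dt + \frac12\sum_{k\geq 1}\int_{t_0}^\infty \bigl(|a_k'(t)|^2+(\pi k)^2|a_k(t)|^2\bigr)dt,
\end{equation*}
and finiteness forces $\beta_0=0$ and $\alpha_k=0$ for every $k\geq 1$. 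Hence
\begin{equation*}
u(x,t) = \alpha_0 + \sum_{k=1}^\infty \beta_k e^{-\pi k t}\cos(\pi k x),
\end{equation*}
with $\sum_{k\geq 1}k\,|\beta_k|^2 e^{-2\pi k t_0}<\infty$. Averaging in $x$ at any fixed $t$ and passing to the trace shows that $\alpha_0 = \int_0^1(\Tr_{t_0}u)(y)\,dy$, which automatically lies in $[\inf_{(0,1)}\Tr_{t_0}u,\,\sup_{(0,1)}\Tr_{t_0}u]$.

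\textbf{Step 3 (maximum principle).} The summability above combined with Cauchy--Schwarz yields $\sum_{k\geq 1}|\beta_k|e^{-\pi k t}\to 0$ as $t\to\infty$, so $\sup_{x\in\R}|u(x,t)-\alpha_0|\to 0$. For every $T>t_0$, the evenness and $2$-periodicity let me view $u$ as a smooth harmonic function on the compact cylinder $(\R/2\Z)\times[t_0,T]$, to which the classical maximum principle applies, giving
\begin{equation*}
\sup_{(x,t)\in \R\times (t_0,T)} u(x,t) \leq \max\Bigl(\sup_{y\in(0,1)}(\Tr_{t_0}u)(y),\,\sup_{x\in\R}u(x,T)\Bigr).
\end{equation*}
Letting $T\to\infty$ replaces the second argument by $\alpha_0$, which is already dominated by $\sup_{(0,1)}\Tr_{t_0}u$; the lower bound is obtained symmetrically. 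The only delicate point is the uniform decay at infinity, which is why the Fourier representation --- rather than a purely variational argument --- is essential.
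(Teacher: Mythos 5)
Your proof is correct, and it takes a genuinely different route from the paper's. The paper disposes of this lemma in two lines: it declares the trace existence classical and invokes a variational maximum principle from \cite{DFMprelim2} (Lemma 12.8 there), which is a purely energy-based argument: one tests the equation against $(u - M)^+$ for $M = \esssup \Tr_{t_0} u$, notes that the finite-energy hypothesis makes this a legitimate test function with vanishing trace, and concludes $\nabla (u-M)^+ \equiv 0$. Your approach is instead explicit and spectral: the $2$-periodicity and evenness in $x$ reduce the problem to a family of separated ODEs, and the energy bound \eqref{uinWmp} forces $\beta_0 = 0$ and $\alpha_k = 0$ for $k \geq 1$, i.e.\ it kills every mode that could grow as $t \to \infty$. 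The two methods cost about the same effort here, but they buy different things. The variational route is robust: it works verbatim for variable-coefficient operators, for non-periodic settings, and requires no spectral decomposition. Your Fourier route is rigid (it relies on the Laplacian and the periodic geometry) but yields strictly more information: you obtain the explicit representation $u = \alpha_0 + \sum_k \beta_k e^{-\pi k t}\cos(\pi k x)$, the identification $\alpha_0 = \int_0^1 \Tr_{t_0} u$, and the uniform convergence $u(x,t) \to \alpha_0$ as $t \to \infty$, none of which the variational argument gives directly. Two small points worth making explicit if you write this up: in Step 3, $u$ is only $W^{1,2}$ up to $\{t = t_0\}$, not continuous, so the ``classical maximum principle'' you invoke on the cylinder $(\R/2\Z) \times [t_0, T]$ should really be the weak (variational) maximum principle with $H^{1/2}$ boundary data, which holds by the standard $(u-M)^+$ test-function argument; and $\inf$, $\sup$ should be read as $\essinf$, $\esssup$ since $\Tr_{t_0} u$ is a priori only an $L^2$ function.
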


\bp
The existence if the trace - in the space $W^{2,\frac12}(\R \times \{t_0\})$ - is common knowledge. The proof of Lemma 12.8 in \cite{DFMprelim2} (for instance) can be easily adapted to prove our case.
\ep

\begin{lemma} \label{lemGeq1}
There exists $C\geq 1$ such that 
\[C^{-1} \leq G(x,t) \leq C \qquad \text{ for } x\in \R, \, t\geq1.\]
\end{lemma}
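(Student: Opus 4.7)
\medskip

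\textbf{Plan of proof.} The key observation is that, because the obstacles defining $\Omega$ are contained in the region $\{|t|\le 1/2\}$, the half plane $\R\times(1/2,\infty)$ lies entirely inside $\Omega$, and hence $G$ is harmonic in this half plane. Together with the $2$-periodicity and symmetry of $G$ in $x$ and the finite energy bound \eqref{GinW+} (which of course gives $\int_1^\infty\!\int_0^1|\nabla G|^2\,dx\,dt<\infty$), this puts us in the exact setup of the maximum principle of Lemma~\ref{maxprinciple} with $t_0=1$. So both bounds on $G$ on $\{t\ge 1\}$ will be controlled by $G(\cdot,1)$, and the problem reduces to showing that $G(\cdot,1)$ is bounded above and bounded below by a positive constant on $[0,1]$.

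The first step is thus to apply Lemma \ref{maxprinciple} to $u=G$ with $t_0=1$, obtaining
\[
\inf_{y\in(0,1)} G(y,1) \;\le\; G(x,t) \;\le\; \sup_{y\in(0,1)} G(y,1) \qquad \text{for all } x\in\R,\ t\ge 1.
\]
The upper bound on $\sup_{y\in(0,1)} G(y,1)$ is essentially immediate: the segment $[0,1]\times\{1\}$ is a compact subset of $\overline\Omega$ on which $G$ is continuous (even smooth, since it lies in the interior), so the sup is finite.

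For the lower bound we use that $G$ is a \emph{positive} harmonic function in $\Omega$. Indeed, $G$ was produced as a uniform-on-compacta limit of the $G_k$'s, each of which is positive on $\Omega\setminus\partial\Omega$ by Proposition~\ref{prGk}, and $G\not\equiv 0$ since $G(X_0)=1$; the Harnack inequality (Lemma~\ref{Harnack}), combined with the fact that every point of $\Omega$ can be connected to $X_0$ by a Harnack chain in $\Omega$ (recall that the segment $[0,1]\times\{1\}$ already lies well inside $\Omega$, away from the obstacles), forces $G>0$ pointwise on $\Omega$. Thus $y\mapsto G(y,1)$ is a continuous, strictly positive, $2$-periodic function on $\R$, so by compactness of $[0,1]$ its infimum is a positive constant. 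Combining the two bounds finishes the proof with $C:=\max\bigl\{\sup_{[0,1]} G(\cdot,1),\; (\inf_{[0,1]} G(\cdot,1))^{-1}\bigr\}$. No step here looks hard; the only subtlety is to check that the hypotheses of Lemma \ref{maxprinciple} (harmonicity in a full half plane and finiteness of the Dirichlet energy) really are satisfied by $G$ above $t=1$, which, as noted, comes for free from the explicit geometry of $\Omega$ and from \eqref{GinW+}.
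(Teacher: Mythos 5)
Your proof is correct and follows essentially the same route as the paper: apply Lemma~\ref{maxprinciple} (with $t_0=1$, after checking harmonicity on $\R\times(1,\infty)$ and the energy bound from \eqref{GinW+}) to reduce to two-sided bounds on $G(\cdot,1)$, then obtain those bounds. The only cosmetic difference is in the last step: the paper gets $C^{-1}\le G(x,1)\le C$ directly from the Harnack inequality via a chain from $X_0=(1,0)$ (exploiting $G(X_0)=1$), whereas you derive finiteness of the sup and positivity of the inf from continuity, strict positivity, $2$-periodicity, and compactness of $[0,1]$. Both arguments give the lemma; yours is marginally less quantitative but logically equivalent.
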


\bp
Since $G(1,0) = G(X_0) = 1$ and $G$ is a positive solution, the Harnack inequality implies that $C^{-1} \leq G(x,1) \leq C$ for $x\in [0,1]$. Since $G$ is symmetric and 2-periodic in $x$, we have $C^{-1} \leq G(x,1) \leq C$ for $x\in \R$. We conclude with the maximum principle (Lemma \ref{maxprinciple}), since the bound \eqref{uinWmp} is given by \eqref{GinW+}.
\ep

\begin{lemma}  \label{lemdtG<c/t}
For every $c>0$, there exists $t_0\geq 1$ such that 
\[\partial_t G(x,t) \leq \frac{c}{t} \text{ for all $x\in \R$, $t\geq t_0$}.\]
\end{lemma}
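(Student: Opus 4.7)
The plan is to exploit the periodicity and boundedness of $G$ on the upper half-plane $\R \times (1/2, \infty)$, which is entirely contained in $\Omega$ since the bumps forming $\Omega^c$ have radius $1/2$. On this half-plane $G$ is harmonic, smooth by elliptic regularity, and both $2$-periodic and symmetric in $x$, so it admits a cosine expansion
\[ G(x,t) = \frac{a_0(t)}{2} + \sum_{k=1}^\infty a_k(t) \cos(\pi k x). \]
The equation $\Delta G = 0$ forces $a_k''(t) = (\pi k)^2 a_k(t)$ for $k \geq 1$ and $a_0''(t) = 0$, so $a_k(t) = A_k e^{\pi k t} + D_k e^{-\pi k t}$ for $k \geq 1$ and $a_0(t) = A_0 + D_0 t$.

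Next I would invoke Lemma~\ref{lemGeq1} to bound $|G(x,t)| \leq C$ for $t \geq 1$. Parseval then gives
\[ \frac{|a_0(t)|^2}{2} + \sum_{k \geq 1} |a_k(t)|^2 \leq \int_{-1}^1 |G(x,t)|^2 \, dx \leq 2C^2 \qquad \text{for all } t \geq 1. \]
Sending $t \to \infty$ forces $A_k = 0$ for every $k \geq 1$ and $D_0 = 0$, while evaluating at $t = 1$ gives $|D_k|^2 e^{-2\pi k} \leq 2C^2$, i.e.\ $|D_k| \leq \sqrt{2}\,C\,e^{\pi k}$.

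The exponential decay of the coefficients $|a_k(t)| \leq \sqrt 2 C e^{-\pi k(t-1)}$ for $t > 1$ justifies termwise differentiation of the series, so that
\[ \partial_t G(x,t) = -\sum_{k=1}^\infty \pi k D_k e^{-\pi k t} \cos(\pi k x). \]
For $t \geq 2$, this yields
\[ |\partial_t G(x,t)| \leq \sqrt{2}\,\pi\, C \sum_{k=1}^\infty k\, e^{-\pi k (t-1)} \leq C' e^{-\pi(t-1)}, \]
the last step following because for $t - 1 \geq 1$ the first term of the series dominates. Since exponentials beat any polynomial, for every $c > 0$ one can choose $t_0 \geq 2$ large enough that $C' e^{-\pi(t-1)} \leq c/t$ holds for all $t \geq t_0$, which is the desired bound.

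There is no real obstacle in the argument once the Fourier decomposition is set up: the crucial observation is simply that the constant-in-$x$ mode $a_0(t)$ must itself be constant in $t$, since any linear drift $D_0 t$ would violate the $L^\infty$ bound on $G$. Without the boundedness of $G$ provided by Lemma~\ref{lemGeq1} — which in turn used our careful construction of $G$ as a limit in $W_0^+$ plus Lax--Milgram uniqueness on the upper half — this step would fail and $\partial_t G$ could well carry a nonzero constant mode of order $1/\text{const}$, which is exactly the behavior expected of the standard Green function with pole at infinity and which would be compatible with \eqref{Main2d}.
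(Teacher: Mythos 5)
Your proof is correct and takes a genuinely different route from the paper's. The paper's proof proceeds indirectly: it first bounds $s\,|\partial_t G|$ via Moser and Caccioppoli (using Lemma~\ref{lemGeq1}), then uses H\"older continuity of the solution $\partial_t G$ to transfer a hypothetical lower bound $\partial_t G(x_0,t)\ge c/t$ at one $x_0$ into a uniform-in-$x$ lower bound $\inf_y\partial_t G(y,t)\ge c/(2t)$, and finally invokes the maximum principle (Lemma~\ref{maxprinciple}) applied to $\partial_t G$, which requires the separate energy bound of Lemma~\ref{lemdtGinW}, to propagate this lower bound to all $s>t$ and contradict the finite-energy condition \eqref{GinW+}. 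Your argument instead exploits that $L=-\Delta$ with constant coefficients and that $G$ is $2$-periodic and even in $x$ on the strip $\R\times(1/2,\infty)$: the separated ODEs $a_k''=(\pi k)^2a_k$ for the cosine modes, together with the $L^\infty$ bound from Lemma~\ref{lemGeq1} and Parseval, kill the exponentially growing branch $A_ke^{\pi k t}$ and the linear drift $D_0 t$, leaving $\partial_t G$ with exponential decay $O(e^{-\pi(t-1)})$, much stronger than the required $c/t$. Your approach is more elementary, self-contained (only Lemma~\ref{lemGeq1} is needed, not Lemma~\ref{lemdtGinW} or the maximum principle), and yields a stronger, explicit conclusion; the paper's argument is softer and would generalize to non-constant-coefficient operators where no Fourier decomposition is available. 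Your closing remark about the constant mode is also apt: for the usual Green function with pole at $\infty$ in the full strip, $a_0(t)$ is genuinely linear and $\partial_t G/G\sim 1/t$, so it is precisely the boundedness of $G$ in the upper half that makes the counterexample work.
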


\bp Let $x$ be fixed. Since $G$ is symmetric and 2-periodic in $x$, we can assume without loss of generality that $x\in (0,1)$. Then recall that $\partial_t G$ is a weak solution in $\Omega$, so in particular, we have the Moser estimate and the Caccioppoli inequality, which give
\begin{multline} \label{eqbdddtG}
\sup_{y\in \R, \, s>4} s|\partial_t G(y,s)| \lesssim \sup_{y\in \R, \, s>4} s \left(\fint_{s/2}^{2s} \fint_{x-s}^{x+s} |\nabla G(z,r)|^2 dz \, dr\right)^\frac12 \lesssim \sup_{y\in \R, \, s>1} G \lesssim 1.
\end{multline}
by Lemma \ref{lemGeq1}. Moreover, $\partial_t G$ is H\"older continuous, that is,
\begin{multline} \label{eqHolderdtG}
\sup_{y\in (0,1)} |\partial_t G(x,t) - \partial_t G(y,t)| 
\leq Ct^{-\alpha}\br{\fint_{(1-t)/2}^{(1+t)/2}\fint_{t/2}^{3t/2}\abs{\partial_t G(y,s)}^2dsdy}^{1/2}\\
\leq C t^{-\alpha} \sup_{y\in \R, \, s>t/2} |\partial_t G(y,s)| \leq C' t^{-\alpha - 1} \qquad \text{ for } t\geq 8
\end{multline}
by \eqref{eqbdddtG}. 

We pick $t_0 \geq 8$ such that $2C'(t_0)^{-\alpha} \leq c$. Assume by contradiction that there exist $x\in (0,1)$ and $t\geq t_0$ are such that $\partial_t G(x,t) \geq c/t$, then 
\begin{multline*} 
\inf_{y\in \R} \partial_t G(y,t) = \inf_{y\in (0,1)} \partial_t G(y,t) \geq \partial_t G(x,t) - \sup_{y\in (0,1)} |\partial_t G(x,t) - \partial_t G(y,t)| \\
\geq \frac{c - C't^{-\alpha}}{t} \geq \frac{c}{2t} 
\end{multline*}
by our choice of $t_0$. Since $\partial_t G$ is a solution that satisfies \eqref{uinWmp} - see Lemma \ref{lemdtGinW} - the maximum principle given by Lemma \ref{maxprinciple} entails that
\[\partial_t G(y,s) \geq \frac{c}{2t} \qquad \text{ for } y \in \R, \, s>t,\]
which implies
\[\int_0^1 \int_t^{\infty} |\nabla G(y,s)|^2 ds \, dy = +\infty,\]
which is in contradiction with \eqref{GinW+}. We conclude that for every $x\in (0,1)$ and $t\geq t_0$, we necessary have $\partial_t G \leq c/t$. The lemma follows.\ep

\begin{lemma} \label{lemt>t0}
For any $\beta >0$, there exists a $t_0 \geq 1$ and $\epsilon>0$ such that 
\begin{equation} \label{dtG/G-dtD/D>e/t}
\left| \frac{ \partial_t G(x,t)}{G(x,t)} - \frac{\partial_t D_\beta(x,t)}{D_\beta(x,t)} \right| \geq \frac{\epsilon}{t} \qquad \text{ for } x\in \R, \, t\geq t_0.
\end{equation}
\end{lemma}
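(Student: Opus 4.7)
\noindent \textit{Proof proposal.} The strategy is to show that $\partial_t D_\beta/D_\beta$ is bounded below by a positive multiple of $1/t$ uniformly in $x$, whereas $\partial_t G/G$ is forced to be much smaller by the boundedness of $G$ (Lemma \ref{lemGeq1}) and the decay of $\partial_t G$ (Lemma \ref{lemdtG<c/t}). The difference is then bounded below by subtraction.

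\medskip

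\textbf{Step 1 (lower bound on $\partial_t D_\beta/D_\beta$).} Here $n=2$ and $d = 1$. I would first establish that there is a constant $\epsilon_0 > 0$ such that
\[\frac{\partial_t D_\beta(x,t)}{D_\beta(x,t)} \geq \frac{\epsilon_0}{t} \qquad \text{for all } x\in\R,\ t\ge 2.\]
Note that $\partial\Omega \subset \R \times [-1/2, 1/2]$, so for $t\ge 1$ and any $y = (y_1, y_2) \in \partial\Omega$ one has $|X - y| \asymp \max(|x-y_1|, t)$ where $X=(x,t)$. Using Ahlfors regularity and summing over the dyadic shells $\{y : 2^k t \le |x - y_1| < 2^{k+1}t\}$, I would prove the uniform estimates
\[D_\beta(x,t)^{-\beta} = \int_{\partial\Omega} |X-y|^{-1-\beta} d\sigma(y) \asymp t^{-\beta},\]
\[-\partial_t\bigl[D_\beta^{-\beta}\bigr](x,t) = (1+\beta)\int_{\partial\Omega} (t-y_2) |X-y|^{-3-\beta} d\sigma(y) \asymp t^{-\beta-1},\]
where in the second estimate I use $t - y_2 \geq t/2$ for $t \geq 1$. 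Since $\partial_t D_\beta/D_\beta = -\beta^{-1}\, \partial_t[D_\beta^{-\beta}]/D_\beta^{-\beta}$ and $\partial_t[D_\beta^{-\beta}]<0$, dividing the two estimates yields $\partial_t D_\beta/D_\beta \asymp 1/t$ with positive sign, uniformly in $x$ (the uniformity is immediate from $2$-periodicity of $\sigma$ in $x$).

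\medskip

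\textbf{Step 2 (upper bound on $\partial_t G/G$).} By Lemma \ref{lemGeq1}, $G(x,t) \geq C^{-1}$ for $t \geq 1$. Applying Lemma \ref{lemdtG<c/t} with $c := \epsilon_0/(4C)$, there is $t_0 \geq 2$ such that $\partial_t G(x,t) \leq c/t$ for all $x\in\R$ and $t \geq t_0$. Hence
\[\frac{\partial_t G(x,t)}{G(x,t)} \leq \frac{Cc}{t} = \frac{\epsilon_0}{4t}, \qquad x\in\R,\ t\ge t_0\]
(this bound being trivial whenever $\partial_t G(x,t) \leq 0$).

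\medskip

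\textbf{Step 3 (conclusion).} Combining Steps 1 and 2, for $t \geq t_0$ and all $x\in\R$,
\[\left| \frac{\partial_t G(x,t)}{G(x,t)} - \frac{\partial_t D_\beta(x,t)}{D_\beta(x,t)} \right| \geq \frac{\partial_t D_\beta(x,t)}{D_\beta(x,t)} - \frac{\partial_t G(x,t)}{G(x,t)} \geq \frac{\epsilon_0}{t} - \frac{\epsilon_0}{4t} = \frac{3\epsilon_0}{4t},\]
so the lemma holds with $\epsilon := 3\epsilon_0/4$ and this $t_0$.

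\medskip

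The only genuinely substantive step is Step 1, which is a direct multipole-style estimate using the Ahlfors regularity of $\partial\Omega$ and the fact that $\partial\Omega$ is trapped in a thin strip $\R\times[-1/2, 1/2]$. The rest is a triangle-inequality argument combining lemmas already proved.
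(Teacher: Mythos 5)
Your proposal is correct and follows essentially the same route as the paper's proof: lower bound $\partial_t D_\beta/D_\beta\gtrsim 1/t$ via the strip constraint $\partial\Omega\subset\R\times[-1/2,1/2]$ and Ahlfors regularity, upper bound $\partial_t G/G$ by combining Lemmas \ref{lemGeq1} and \ref{lemdtG<c/t}, then subtract. The only cosmetic difference is in Step 1, where you re-derive $D_\beta^{-\beta}\asymp t^{-\beta}$ and $-\partial_t[D_\beta^{-\beta}]\asymp t^{-\beta-1}$ by dyadic-shell summation, whereas the paper gets the same bound more compactly by recognizing $\int|X-y|^{-n-\beta-1}\,d\sigma = D_{\beta+2}^{-\beta-2}$ and invoking the already-established equivalence \eqref{equivD} for both $D_\beta$ and $D_{\beta+2}$; the underlying estimate is the same.
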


\bp
The set $\partial \Omega$ is $(n-1)$-Ahlfors regular, so \eqref{equivD} gives the equivalence $D_\beta(X) \approx \dist(X,\partial \Omega)$ for $X\in \Omega$, and hence the existence of $C_1>0$ (depending on $\beta$ and $n$) such that 
\begin{equation} \label{DbDb+2=t}
(C_1)^{-1} t \leq D_\beta(x,t) \leq C_1 D_{\beta+2}(x,t) \leq (C_1)^2 t \qquad \text{ for } x\in \R, \, t\geq 1.
\end{equation}
Check then that 
\[\partial_t D_\beta(x,t) = \frac{d+\beta}{\beta} D_\beta^{1+\beta}(x,t) \int_{(y,s) \in \partial \Omega} |(x,t)-(y,s)|^{-d-\beta-2} (t-s) \, d\sigma(y,s) \]
In particular, since $s\leq \frac12$ whenever $(y,s) \in \partial \Omega$, we have, for $(x,t) \in \R \times [1,\infty)$, that
\begin{multline*}
\partial_t D_\beta(x,t) \geq \Big( t - \frac12 \Big) \frac{n+\beta-1}{\beta} D^{1+\beta}_\beta(x,t) \int_{(y,s) \in \partial \Omega} |(x,t)-(y,s)|^{-n-\beta-1}  \, d\sigma(y,s) \\
 \geq \frac t2  \frac{n+\beta-1}{\beta} D^{1+\beta}_\beta(x,t) D^{-\beta-2}_{\beta+2}(x,t) \geq c_{\beta,n} 
 \end{multline*}
 for some $c_{\beta,n}>0$, by \eqref{DbDb+2=t}. In conclusion, using \eqref{DbDb+2=t} again, we have the existence of $c_1>0$ such that 
 \begin{equation} \label{NDb/D>c/t}
 \frac{\partial_t D_\beta(x,t)}{D_{\beta}(x,t)} \geq \frac{c_1}{t} \qquad \text{ for } x\in \R, \, t\geq 1.
 \end{equation}
 
Let $C_2$ be the constant in Lemma \ref{lemGeq1}. Thanks to Lemma \ref{lemdtG<c/t}, there exists $t_0\geq 1$ such that $\partial_t G(x,t) \leq c_1/(2C_2t)$ for any $x\in \R$ and $t\geq t_0$, which means that 
 \begin{equation} \label{NG/G<c/t}
 \frac{\partial_t G(x,t)}{G(x,t)} \leq \frac{c_1}{2t}  \qquad \text{ for } x\in \R, \, t\geq t_0.
 \end{equation}
 The combination of \eqref{NDb/D>c/t} and \eqref{NG/G<c/t} gives \eqref{dtG/G-dtD/D>e/t} for $\epsilon = c_1/2$.
\ep

\begin{lemma} \label{counterexample}
The positive solution $G$ does not satisfies \eqref{Main1b}, proving that assuming that $\Omega$ is semi-uniform is not sufficient for Theorem \ref{Main1}.
\end{lemma}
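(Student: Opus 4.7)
The plan is to integrate the pointwise lower bound of Lemma \ref{lemt>t0} over a large ball centered on $\partial\Omega$ and observe that the resulting integral grows like $R \log R$, while the right-hand side of \eqref{Main1b} only grows like $R$.

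Fix $\beta>0$ and let $t_0\ge 1$ and $\epsilon>0$ be as in Lemma \ref{lemt>t0}, so that
\[
\left|\frac{\partial_t G(x,t)}{G(x,t)} - \frac{\partial_t D_\beta(x,t)}{D_\beta(x,t)}\right|\ge \frac{\epsilon}{t}\qquad\text{for all }x\in\R,\, t\ge t_0.
\]
Using \eqref{equivD}, there exists $C_1\ge 1$ (depending only on $\beta$) such that $C_1^{-1} t\le D_\beta(x,t)\le C_1 t$ for $x\in\R$ and $t\ge 1$. Consequently, for $t\ge t_0$,
\[
\left|\frac{\nabla G}{G}-\frac{\nabla D_\beta}{D_\beta}\right|^2 D_\beta(x,t)\ge \left|\frac{\partial_t G}{G}-\frac{\partial_t D_\beta}{D_\beta}\right|^2 D_\beta(x,t)\ge \frac{\epsilon^2}{C_1}\cdot\frac{1}{t}.
\]

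Next, fix the boundary point $x_\ast=(0,1/2)\in\partial\Omega$, and for $R\ge 4t_0$ consider the ball $B_R:=B(x_\ast,R)$. Since $\partial\Omega$ is $1$-Ahlfors regular, $\sigma(B_R)\le C_\sigma R$. On the other hand, the strip $S:=(-R/2,R/2)\times(t_0,R/2)$ is contained in $\Omega\cap B_R$, and by the above lower bound,
\[
\iint_{\Omega\cap B_R}\left|\frac{\nabla G}{G}-\frac{\nabla D_\beta}{D_\beta}\right|^2 D_\beta\,dX\ge \frac{\epsilon^2}{C_1}\iint_{S}\frac{dx\,dt}{t}=\frac{\epsilon^2}{C_1}\cdot R\cdot\log\!\left(\frac{R}{2t_0}\right).
\]
(The strip $S$ is not entirely inside $\Omega$, but the diamonds removed to form $\Omega$ occupy only a uniformly bounded proportion of each horizontal slice $\{t=s\}$ of width $2R$ for $s\ge 1$, and hence the same lower bound holds, possibly after adjusting $\epsilon$ by a fixed multiplicative constant.)

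Now the function $G$ constructed above is positive, harmonic in $\Omega$ (and in particular in $2B_R\cap\Omega$), and vanishes on $\partial\Omega$ by construction, so it is an admissible test function in \eqref{Main1b}. If \eqref{Main1b} held for $u=G$ on the ball $B_R$ with some constant $C<\infty$ independent of $R$, we would obtain
\[
\frac{\epsilon^2}{C_1}\cdot R\cdot\log\!\left(\frac{R}{2t_0}\right)\le C\,\sigma(B_R)\le C\,C_\sigma R,
\]
which fails for $R$ sufficiently large. The only step that required genuine work is the pointwise lower bound, which is already established in Lemma \ref{lemt>t0}; the rest is a logarithmic divergence argument. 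Consequently $G$ does not satisfy \eqref{Main1b}, and the assumption that $\Omega$ is merely semi-uniform (rather than a Chord-Arc Domain) is not enough to guarantee the conclusion of Theorem \ref{Main1}.
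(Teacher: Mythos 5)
Your proof is correct and follows essentially the same route as the paper: apply the pointwise lower bound from Lemma \ref{lemt>t0}, integrate $\epsilon^2/t$ over the region $\{t\ge t_0\}$ inside a large ball centered on $\partial\Omega$, and observe that the resulting $R\log R$ growth is incompatible with the $\sigma(B_R)\approx R$ bound required by \eqref{Main1b}. One small imprecision: your parenthetical remark that the removed diamonds ``occupy only a uniformly bounded proportion of each horizontal slice $\{t=s\}$ for $s\ge1$'' is overly cautious — the diamonds $\{|x-2k|+|t|<\tfrac12\}$ are contained entirely in $\{|t|<\tfrac12\}$, so they are disjoint from $\{t\ge t_0\}$ (recall $t_0\ge1$), and the strip $S$ lies entirely inside $\Omega$ with no correction to $\epsilon$ needed.
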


\bp
Let $B_r$ be the ball of radius $r$ centered at $(0,\frac12) \in \partial \Omega$, and take $r\geq 2t_0$, where $t_0\geq 1$ is the value from Lemma \ref{lemt>t0}. We have
\begin{multline*}
\iint_{\Omega \cap B_r} \left| \frac{\nabla G}{G} - \frac{\nabla D_\beta}{D_\beta} \right|^2 D_\beta \, dx\, dt \geq \iint_{B_r \cap \{t\geq t_0\}} \left| \frac{\nabla G}{G} - \frac{\nabla D_\beta}{D_\beta} \right|^2 D_\beta \, dx\, dt \\
\geq C^{-1}\epsilon^2\iint_{B_r \cap \{t\geq t_0\}}    \frac{dx\,dt}{t}
\end{multline*}
by \eqref{dtG/G-dtD/D>e/t} and \eqref{equivD}. We conclude that
\[\frac{1}{\sigma(B_r)} \iint_{\Omega \cap B_r} \left| \frac{\nabla G}{G} - \frac{\nabla D_\beta}{D_\beta} \right|^2 D_\beta \, dx\, dt \gtrsim \ln\Big(\frac{r}{t_0} \Big) \rightarrow +\infty \text{ as } r\to \infty,\]
which means that $G$ does not satisfies \eqref{Main1b}. The lemma follows.
\ep

\end{document}